\documentclass[a4paper,12pt]{amsart}

\usepackage{graphics,graphicx}
\usepackage{tikz-cd}
\usetikzlibrary{calc}

\usepackage{xcolor}
\usepackage{enumitem}
\usepackage{booktabs}
\usepackage{multicol}

\usepackage{array,longtable}
\newcolumntype{C}{>{$}c<{$}}  
\setlength\tabcolsep{5pt}     

\usepackage[a4paper]{geometry}
\geometry{hscale=0.75,vscale=0.8,centering}

\usepackage{amssymb,amsmath,amsthm}
\usepackage{fdsymbol}

\newtheorem{thm}{Theorem}[section] 
\newtheorem{cor}[thm]{Corollary}

\newtheorem{prop}[thm]{Proposition}

\newtheorem*{ack}{Acknowledgements}

\theoremstyle{definition}
\newtheorem{defn}[thm]{Definition}
\newtheorem{rem}[thm]{Remark}
\newtheorem{exa}[thm]{Example}

\newtheorem{assumption}[thm]{Assumption}

\DeclareMathOperator{\SL}{SL}
\DeclareMathOperator{\PSL}{PSL}
\DeclareMathOperator{\GL}{GL}
\DeclareMathOperator{\Sp}{Sp}

\DeclareMathOperator{\SO}{SO}

\DeclareMathOperator{\PGL}{PGL}
\DeclareMathOperator{\Mat}{Mat}
\DeclareMathOperator{\Aut}{Aut}
\DeclareMathOperator{\Int}{Int}
\DeclareMathOperator{\diag}{diag}
\DeclareMathOperator{\rank}{rank}
\DeclareMathOperator{\ord}{ord}

\DeclareMathOperator{\Hom}{Hom}
\DeclareMathOperator{\cone}{Cone}
\DeclareMathOperator{\Bl}{Bl}
\DeclareMathOperator{\pic}{pic}
\DeclareMathOperator{\degree}{degree}

\newcommand{\bbC}{\mathbb{C}}
\newcommand{\bbQ}{\mathbb{Q}}
\newcommand{\bbZ}{\mathbb{Z}}
\newcommand{\bbP}{\mathbb{P}}
\newcommand{\bbR}{\mathbb{R}}
\newcommand{\bbN}{\mathbb{N}}
\newcommand{\bbG}{\mathbb{G}}
\newcommand{\bbF}{\mathbb{F}}

\newcommand{\id}{\text{id}}

\begin{document}

\title[spherical Fano varieties of dimension \(\leq 4\) and rank \(\leq 2\)]{Spherical actions on locally factorial Fano varieties of dimension \(\leq 4\) and rank \(\leq 2\)}

\author{Thibaut Delcroix}
\address{Thibaut Delcroix, IMAG Univ Montpellier, CNRS, Montpellier, France}
\email{thibaut.delcroix@umontpellier.fr}
\urladdr{http://delcroix.perso.math.cnrs.fr/}

\author{Pierre-Louis Montagard}
\address{Pierre-Louis Montagard, IMAG Univ Montpellier, CNRS, Montpellier, France}
\email{pierre-louis.montagard@umontpellier.fr} 

\date{\today}

\begin{abstract}
We obtain the exhaustive list of 337 faithful spherical actions of rank two or less on locally factorial Fano manifolds of dimension four or less. 
As a preliminary step, we determine the explicit list of spherical homogeneous spaces of dimension four or less, together with their combinatorial data.  
Then we classify the possible locally factorial \(G/H\)-reflexive polytopes for each such spherical homogeneous space \(G/H\). 
From the combinatorial data gathered in this article, one can easily read off the Picard rank (even the Picard group), Fano index, anticanonical volume of the underlying locally factorial Fano variety, etc.
\end{abstract}

\maketitle


\section{Introduction}

Kollár, Miyaoka and Mori proved boundedness of smooth Fano manifolds of a given dimension over \(\bbC\) \cite{Kollar_Miyaoka_Mori_1992, Kollar_1996}, in particular, that there is only a finite number of deformation classes of Fano manifolds of a given dimension. 
This number of deformation classes is expected to grow very fast, but the classification of Fano threefolds (up to deformations) was obtained through the combined work of Iskovskikh \cite{Iskovskih_1977,Iskovskih_1978}, Mori and Mukai \cite{Mori_Mukai_1981,Mori_Mukai_2003}: there are 105 deformation classes. 
Furthermore, their geometry is quite well understood, with various descriptions available. 

The full classification in dimension four is still unknown and challenging. 
It is thus desirable to have at least partial classification results, and in this direction, to consider subclasses of Fano manifolds. 
Almost-homogeneous Fano manifolds form a natural such subclass. 
These are the manifolds which admit a large group of automorphism, acting with an open and dense orbit. 
The most well-known among these are toric manifolds, when there is an algebraic torus of automorphisms acting with an open and dense orbit. 

More importantly, this additional data of a group action is very important in the study of various geometric questions. 
We thus switch to the goal of classifying almost-homogeneous, faithful actions of connected reductive groups on Fano fourfolds. 
This remains widely unknown to our knowledge, and we must restrict to a more reasonable class of actions. 

Toric manifolds are fully encoded by combinatorial data, and in particular, toric Fano manifolds are encoded by a class of polytopes with integral vertices up to unimodular transformations, namely smooth reflexive polytopes. 
Batyrev, Watanabe and Watanabe, and Sato used this combinatorial correspondence to classify toric Fano manifolds in dimension three \cite{Batyrev_1981,Watanabe_Watanabe_1982} and four \cite{Batyrev_1999,Sato_2000}. 
More generally, a huge step forward in the study of equivariant embeddings of homogeneous spaces (in other words, almost-homogeneous varieties) under the action of a connected reductive algebraic group was achieved by Luna and Vust \cite{Luna_Vust_1983}. 
In the case when the homogeneous space is spherical (that is, when not only the group but also any of its Borel subgroups act with an open and dense orbit), there is a combinatorial classification of embeddings, vastly generalizing the case of toric varieties \cite{Knop_1991}. 
Furthermore, spherical homogeneous spaces themselves are uniquely determined by combinatorial data \cite{Losev_2009}. 

While great effort was put on understanding the classification of spherical varieties, it appears that very little was done in an approach via small dimensions. 
Motivated by earlier work of Hofscheier in his PhD thesis, we initiate this approach by classifying all spherical homogeneous spaces of dimension less than four, then all their locally factorial Fano embeddings in rank two or less. 
The rank of a spherical variety being less than the dimension, rank three and four remain, but rank four corresponds to the toric case obtained by Batyrev and Sato. 

\begin{thm}
There are 337 faithful spherical actions on locally factorial Fano varieties of dimension four or less, and rank two or less. 
The combinatorial data of these actions are described in the body of the paper.  
\begin{equation*}
\begin{array}{c|cccc}
 \dim & 1 & 2 & 3 & 4 \\
\toprule
\rank =0 & 1 & 2 & 6 & 9 \\
\rank = 1 & 1 & 5 & 13 & 57 \\
\rank = 2 & 0 & 5 & 44 & 194 \\
\bottomrule
\rank \leq 2 & 2 & 12 & 63 & 260 \\
\end{array}
\end{equation*}
\end{thm}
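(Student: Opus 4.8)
The plan is to reduce the enumeration to a finite combinatorial problem in two stages, following the spherical analogue of the toric correspondence between Fano manifolds and reflexive polytopes. First I would produce the complete list of spherical homogeneous spaces $G/H$ with $\dim G/H \leq 4$ and $\rank \leq 2$ that carry a faithful action of a connected reductive group. By the uniqueness theorem for spherical homogeneous spaces \cite{Losev_2009}, each such space is determined by its combinatorial invariants (the weight lattice, the spherical roots, and the set of colors together with their pairings with the lattice); the dimension and rank bounds constrain these invariants to finitely many possibilities, which can be listed by running through the admissible reductive groups $G$ of small semisimple rank and the admissible homogeneous spherical data. This is the preliminary step announced in the abstract, and its output is a finite catalogue together with the associated valuation cone, colors, and the lattice $N$ of rank equal to $\rank G/H$.

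Second, for each homogeneous space $G/H$ on the list I would classify its locally factorial Fano embeddings. Since a Fano variety is complete, the Luna--Vust theory \cite{Luna_Vust_1983, Knop_1991} encodes such an embedding by a complete colored fan in $N_{\bbR}$. The Fano condition translates, exactly as in the toric case, into the requirement that this fan arise from a single polytope whose vertices supply the anticanonical data --- this is what the body of the paper formalizes as a locally factorial $G/H$-reflexive polytope --- while local factoriality becomes the condition that every maximal colored cone be generated by part of a lattice basis, once the relevant colors are taken into account. Because $\rank G/H \leq 2$, these polytopes live in a lattice of rank at most two, so the problem reduces to the enumeration of decorated reflexive polygons, segments, and points, which is finite and explicit.

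The final step is to pass from candidate polytopes to equivalence classes of actions: I would quotient by the combinatorial automorphisms of the data (the relevant symmetries of $G/H$ acting on $N$ and on the set of colors), so that each isomorphism class of faithful spherical action is counted exactly once, and then tabulate the results by dimension and rank to recover the displayed table.

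The main obstacle I expect is controlling the rank-two, dimension-four case, which accounts for the large entry $194$ and hence for the bulk of the $337$: there the interplay between the several colors and the two-dimensional colored fans makes both the generation of candidates and the elimination of redundant (isomorphic) embeddings delicate, and verifying the locally factorial and reflexivity conditions for each candidate requires careful bookkeeping rather than a single clean argument.
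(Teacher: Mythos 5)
Your proposal has the same two-stage architecture as the paper: first a catalogue of spherical homogeneous spaces with their combinatorial data, then an enumeration of locally factorial \(G/H\)-reflexive polytopes in the sense of Gagliardi--Hofscheier, followed by a quotient by the residual equivariant isomorphisms (the paper's remarks on the non-uniqueness of the characters \(\chi_i\)) before tabulating. Stage 2 of your plan is exactly what the paper does, including the observation that rank \(\leq 2\) reduces everything to decorated segments and polygons. Where you genuinely diverge is the mechanism for stage 1. You lean on Losev's uniqueness theorem \cite{Losev_2009} and propose to run through ``admissible reductive groups of small semisimple rank and admissible homogeneous spherical data''; but uniqueness is not an enumeration device, and to know which triples \((M,\Sigma,(\mathcal{D},\rho,\zeta))\) are actually realizable you would need the full Luna-type classification of spherical systems, which the paper deliberately avoids. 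Moreover, without further input, ``small semisimple rank'' only means \(\rank(G)\leq 4\), which leaves groups such as \(G_2\), \(\Spin_7\) or \(\Sp_6\) to be excluded by hand. The paper's key ingredient, which your plan does not identify, is the local structure theorem in the form \(\dim(G/H)=\rank(G/H)+\dim(G/P)\) for the adapted parabolic \(P\): combined with the table of projective homogeneous spaces of dimension \(\leq 4\), this pins \(G^{sc}\) down to \(\SL_2\), \(\SL_2^2\), \(\SL_3\) (plus \(\Sp_4\), \(\SL_4\), \(\SL_2^3\), \(\SL_3\times\SL_2\), \(\SL_5\) in the residual low-rank cases), after which the explicit classifications of Lie subalgebras of \(\mathfrak{sl}_2\oplus\mathfrak{sl}_2\) and \(\mathfrak{sl}_3\) \cite{Douglas_Repka_2016_so4,Douglas_Repka_2016}, parabolic induction, and Akhiezer's rank-one classification \cite{Akhiezer_1983} finish stage 1 by elementary means. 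A further small discrepancy: the paper classifies \emph{all} spherical homogeneous spaces of dimension \(\leq 4\) regardless of rank, and imposes \(\rank\leq 2\) only at the polytope stage (the rank-three fourfolds under \(\SL_2\times\bbG_m^n\) being exactly what is postponed), whereas you build the rank bound into the homogeneous-space catalogue. Your route, if backed by the classification of spherical systems, is more uniform and would generalize more readily; the paper's route stays self-contained and computation-free at the level of general theory, at the cost of a case-by-case treatment of groups.
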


We leave the rank three case for future work, as it most likely requires computer assistance (see however Section~\ref{section_SL2} for a characterization of the polytopes to classify, and some examples). We learned, as this preprint was almost fully written, that Girtrude Hamm is working out the classification of all canonical spherical Fano fourfolds of rank three, from which it should be easy to read off the locally factorial ones. 
Note that canonical toric Fano varieties of dimension three were classified by Kasprzyk \cite{Kasprzyk_2010} using computer assistance, yielding an astounding list of 674 688 integral polytopes up to \(\GL_3(\bbZ)\)-action. 

It must be noted, and obvious in view of the cases of dimension one, two and three, that a given locally factorial Fano variety may be equipped with several different faithful actions of connected reductive groups. 
These different structures can be useful in different settings, so it is important to classify these. 
As a basic example, \(\bbP^1\times \bbP^1\) is homogeneous under the action of \(\SL_2^2\), but if one considers the diagonal action of \(\SL_2\), one is able to consider the pairs \((\bbP^1\times \bbP^1, \varepsilon(\diag\bbP^1))\) and study their geometry with tools from the theory of spherical varieties.  

In a few exceptional cases, there are two faithful actions of different groups, but one action factors through the other \emph{and} the orbits are the same. 
This is obviously the least interesting case of our classification. 
The only examples that we are aware of in our list (apart from the action on the left of the connected normalizer of the isotropy group, which will always be taken into account), are the homogeneous space \(\bbP^3\) under the action of \(\Sp_4\), as well as the natural Fano \(\bbP^1\)-bundles built from it \(\bbP_{\bbP^3}(\mathcal{O}\oplus \mathcal{O}(k))\) for \(0\leq k\leq 3\), and the Fano cone over it, which is \(\bbP^4\). 

Toric varieties form, as already mentioned, the best-known subclass of spherical varieties. 
Toric varieties are often used to provide tractable examples of higher complexity actions by so-called downgrading the action: one considers a smaller dimensional torus acting, and obtains the combinatorial data for this smaller group action from the combinatorial data of the toric variety. 
In the list of spherical varieties, quite a few consist, to the opposite, of \emph{upgradings} of toric varieties: the connected reductive group is of rank four, so that the maximal torus action provides the structure of toric variety, but as a (horo)spherical variety the rank is lower. 
In particular, moment polytopes of upgradings of toric varieties are of a strictly smaller dimension than the dimension of the variety. 

Despite these comments, we should highlight that many examples in our classification are neither homogeneous nor toric under a larger group action. 
In fact, this was already known in the case of smooth Fano threefolds: there are 18 smooth Fano threefolds that admit a toric structure, 5 that are homogeneous, 3 of which being at the intersection of both classes, some of these admit spherical structures that are neither toric nor homogeneous, and there are nine Fano threefolds that admit only spherical structures which are neither toric nor homogeneous.  
For Fano fourfolds, there are many more examples. 

Let us finally comment on the \emph{locally factorial} assumption. 
We started this introduction discussing smooth Fano manifolds, but switched for our main statement to locally factorial Fano varieties. 
This is because, for spherical varieties, the criterion for being locally factorial is much easier to deal with than the criterion for being smooth. 
For toric varieties the two conditions are in fact equivalent. 
For spherical varieties in general it is not true, but it is still the case for toroidal spherical varieties. 
This observation, added to the various explicit descriptions given throughout the paper and some previously known classifications, allows one to show,  with little effort, that all but 16 of our varieties are smooth. 
Among the 16 remaining, we know for sure that three are singular, and leave it to the interested reader to work out which of the 13 remaining Fano varieties are smooth. 
Combinatorial smoothness criterions for spherical varieties are available \cite{Brion_1991,Gagliardi_2015}, but their application is somewhat involved in general. 
Let us also mention that a method to classify smooth Fano polytopes for spherical varieties under the action of a given group is described in \cite{Cupit-Foutou_Pezzini_Van_Steirteghem_2020}.  
We do not rely on this approach here. 

The paper is organized as follows. 
In Section~\ref{section_reminder}, we review the definitions needed for spherical homogeneous spaces and locally factorial Fano embeddings of such homogeneous spaces, as well as the fundamental properties that will be needed. We conclude that section with a very quick review of how one can recover plenty of geometric information on the embeddings directly from the combinatorial data provided in the paper. 
In Section~\ref{section_homogeneous_spaces}, we carry out the classification of spherical homogeneous spaces of dimension four or less. 
The key ingredient, that severely restricts the possible groups acting faithfully on fourfolds, is the local structure theorem for spherical homogeneous spaces. 
Without this ingredient, one should a priori consider all reductive groups of rank four or less (as there cannot be a faithful action of a torus of dimension strictly more than four on a fourfold). 
With the local structure theorem, together with the known classification of rank one spherical homogeneous spaces, one is essentially reduced to considering spherical subgroups of \(\SL_2\times \bbG_m^3\), \(\SL_2^2\times \bbG_m^2\) and \(\SL_3\times \bbG_m^2\). 
For these, we rely on the known classification of Lie subalgebras to conclude. 

The remaining of the paper provides, for each spherical homogeneous space of dimension four or less, the combinatorial data, as well as the list of locally factorial Fano embeddings of rank two or less, and dimension four or less. 
After introducing our conventions in Section~\ref{section_dimension_2}, we start with the case when the group is of the form \(\SL_2\times \bbG_m^n\) in Section~\ref{section_SL2}. 
Since fourfolds would be of rank three, we do not classify fourfolds in this case, but we do recover the classification results in the PhD theses of Pasquier \cite{Pasquier_2006} and Hofscheier for threefolds. 
In Section~\ref{section_SL22}, we deal with spherical varieties under the action of \(\SL_2^2\times \bbG_m^n\). This is where there are more diverse cases in rank two. 
Some special cases were known thanks to the work of Ruzzi \cite{Ruzzi_2012} for embeddings of symmetric spaces. 
Finally, we deal with the case of \(\SL_3\times \bbG_m^n\) in Section~\ref{section_SL3}, then with the remaining rank one cases in Section~\ref{section_remaining}. 

In the appendices we provide big tables providing information on each spherical action and the underlying Fano variety. 
For threefolds we identify the underlying Fano threefold in each case, while for the case of Fano fourfolds we leave it for later refinement. 
We however do provide geometrical data such as the Picard rank, anticanonical degree, and we determine if they admit Kähler-Einstein metrics. 
These invariants alone allow us to affirm (by comparing with the data from \cite{Batyrev_1999,Sato_2000})  that at least 117 different underlying locally factorial Fano fourfolds appear, among which at least 42 non toric fourfolds.  
A natural next step, that we leave for the future, would be to compute the Cox rings using \cite{Gagliardi_2014}. 
This would give in particular a way to identify the underlying Fano varieties.

Another natural next step to this paper would be to relax the assumptions on the singularities, to classify Gorenstein/terminal/canonical spherical Fano fourfolds. 
Part of our paper (the determination of spherical homogeneous spaces and their combinatorial data) is still applicable, but the determination of all possible polytopes may be more involved. 

Finally, one of the first author's motivation is naturally the quest for canonical Kähler metrics. 
Again, up to the issue of identifying isomorphic underlying Fano fourfolds, we solved here the question of existence of Kähler-Einstein metrics, obtaining at least 24 Kähler-Einstein fourfolds, and at least 93 non Kähler-Einstein fourfolds. 
The existence of many other canonical Kähler metrics could be settled using our data, by applying for example \cite{Delcroix_2023_RK1,Delcroix_2023_KSSV2} for cscK metrics, and \cite{Li_Li_Wang} for weighted solitons. 

The article is written with lots of details when the first notions appear, with lots of examples. 
In the later sections, we give less details on how the classifications are obtained to keep the article not excessively long. 

\begin{ack}
The first author is partially funded by ANR-21-CE40-0011 JCJC project MARGE and ANR-18-CE40-0003 JCJC project FIBALGA.
We thank Girtrude Hamm for informing us of her upcoming work. 
We thank Stéphanie Cupit-Foutou, Liana Heuberger, Alexander Kasprzyk, Laurent Manivel, Naoto Yotsutani and Xiaohua Zhu for their interest and comments on our work. 
\end{ack}

\section{Preliminaries on spherical varieties}
\label{section_reminder}

\subsection{Notation}

Throughout the article, we work over the field of complex numbers \(\bbC\). 

Let \(G\) be a connected complex linear reductive group.  
Fix a Borel subgroup \(B\) of \(G\) (recall that any two Borel subgroups are conjugate) and a maximal torus \(T\subset B\). 
We denote by \(X^*(B)\) the group of characters of \(B\), usually denoted additively (we will use the same notation for other groups, for example \(X^*(G)\) is the group of characters of \(G\), reduced to \(\{0\}\) if \(G\) is semisimple). 

Let \(R\subset X^*(T)\) denote the root system of \(G\) with respect to \(T\), and let \(R^+\) and \(S=\{\alpha_1,\ldots,\alpha_{\rank(G)}\}\subset R^+\subset R\) denote the set of positive roots and the set of simple roots associated to the Borel subgroup \(B\). 
Let \(\{\varpi_1,\ldots, \varpi_{\rank(G)}\}\) denote the fundamental weights associated to the simple roots. 
The integer \(\rank(G)\) is called the rank of \(G\). 

When \(G=G_1\times\cdots\times G_n\), we fix \(T=T_1\times \cdots\times T_n\) and \(B=B_1\times \cdots\times B_n\) where \(T_i\) is a maximal torus of \(G_i\) and \(B_i\) is a Borel subgroup of \(G_i\) containing \(T_i\). 
We always assume that the simple roots and fundamental weights are ordered consistently with the product: the roots \(\alpha_1,\ldots,\alpha_{\rank(G_1)}\) are the simple roots of \(G_1\), the roots \(\alpha_{\rank(G_1)+1},\ldots, \alpha_{\rank(G_1)+\rank(G_2)}\) are the simple roots of \(G_2\), etc. 
When we explicitly write \(G=G_1\times \bbG_m^n\) where \(G_1\) is a semisimple group, we denote by \(\chi_i\) the projection to the \(i\)-th \(\bbG_m\) factor. 
These define characters of \(G\). 

A character \(\lambda\) of \(T\) may, depending on the context, be defined on various groups, either subgroups of \(T\) or groups containing \(T\). 
We distinguish these when necessary by indicating the domain we are considering as \(\lambda|_\bullet\). 
For example, if \(G=G_1\times \bbG_m\), \(\chi_1\) may be considered as the identity character \(\chi_1|_{\{1\}\times\bbG_m}\) on \(\bbG_m\), as a character \(\chi_1|_G\) of the group \(G\), or as an element \(\chi_1|_T\in X^*(T)\). 

Standard parabolic subgroups of \(G\) are the subgroups of \(G\) that contain \(B\). 
They are parametrized by the set of subsets of \(S\). 
More precisely, for \(I\subset S\), we denote by \(P_I\) the (unique) parabolic subgroup containing \(B\) such that \(\varpi_i\) extends to \(P_I\) for \(i\in I\). We may thus write, with the convention above, \(\varpi_i|_{P_I} \in X^*(P_I)\).
Beware that some authors rather associate the finite set \(S\setminus I\) with \(P_I\). 
Note that any parabolic subgroup of \(G\) is conjugate to a standard parabolic subgroup, which is of the form \(P_I\). 

Let \(B^-\) denote the Borel subgroup opposite to \(B\). 
We denote also by \(Q_I\) the (unique) parabolic subgroup containing \(B^-\) such that \(\varpi_i\in X^*(Q_I)\) for \(i\in I\).

\subsection{Spherical homogeneous spaces}

\begin{defn}
An algebraic subgroup \(H\subset G\) is \emph{spherical} if \(B\) acts with an open orbit in the homogeneous space \(G/H\). 
The homogeneous space \(G/H\) itself is also called a \emph{spherical homogeneous space}. 
\end{defn}

Equivalently, an algebraic subgroup is spherical if and only if the action of \(H\) on \(G/B\) has an open orbit. 
Note that this condition reads on the Lie algebras: 
an algebraic subgroup \(H\) is spherical if there exists a \(g\in G\) such that \(\mathfrak{g}=\mathfrak{b} + \operatorname{Ad}(g)(\mathfrak{h})\). 

\begin{exa}
\label{example_torus_subgroups}
We start with a trivial example for reference later in the paper. 
Consider the case where \(G\simeq \bbG_m^n\) is a torus. 
Then algebraic subgroups are all of the form \(H=\bigcap_{i=1}^k \ker(\chi_i : G\to \bbG_m)\) where \(\{\chi_i\}\subset X^*(G)\) is a family of \(\bbZ\)-linearly independent characters of \(G\). 
Any such subgroup is spherical, since \(B=G\) in this case, hence the action of \(B\) on \(G/H\) is surjective. 
\end{exa}

\begin{exa}
\label{example_sl2}
An algebraic subgroup \(H\) of \(\SL_2\) is spherical if it acts with an open orbit on \(\SL_2/B=\bbP^1\). 
As a consequence, any positive dimensional algebraic subgroup of \(\SL_2\) is spherical. 
The classification of subgroups of \(\SL_2\) up to conjugation is well known: 
a positive dimensional algebraic subgroup \(H\) of \(\SL_2\) is either:
\(H=\SL_2\), \(H=T\) a maximal torus, \(H=N_{\SL_2}(T)\) its normalizer, or \(H=\ker\chi\subset B\) where \(B\) is a Borel subgroup of \(\SL_2\) and \(\chi\) is a character of \(B\). 
Note that the group \(X^*(B)\) of characters of \(B\) is isomorphic to \(\bbZ\), and that \(\chi\) and \(-\chi\) define the same algebraic subgroup \(\ker(\chi)\), hence the latter infinite family of subgroups is parametrized by \(\bbZ_{\geq 0}\).  
\end{exa}

From now on, we fix \(H\), a spherical subgroup of \(G\). 
The group \(G\) acts on the vector space \(\bbC(G/H)\) of rational functions on \(G/H\) by \((g\cdot f)(x)=f(g^{-1}x)\). 

\begin{defn}
The \emph{weight lattice} \(M=M(G/H)\) of \(G/H\) is the subset of \(X^*(B)\) formed by all eigenvalues for the action of \(B\) on \(\bbC(G/H)\), that is, \(\lambda\in M\) if there exists \(0\neq f\in \bbC(G/H)\) such that for all \(b\in B\), \(b\cdot f=\lambda(b)f\).   
The \emph{rank} \(\rank(G/H)\) of \(G/H\) is the rank of the lattice \(M(G/H)\).
\end{defn}

\begin{exa}
If \(G\) is a torus and \(H\) is the trivial subgroup, then \(M=X^*(G)\) since any character of \(G\) defines a \(B=G\)-semi-invariant rational function on \(G\).  
\end{exa}

\begin{exa}
\label{exa_rk_0}
By the Bruhat decomposition, a parabolic subgroup \(Q\) of \(G\) is spherical. 
Furthermore, since the unipotent subgroup \(B^u\) also acts with an open dense orbit on \(G/Q\), any \(B\)-eigenvector is constant. 
Hence \(M(G/Q)=\{0\}\) and \(\rank(G/Q)=0\). 

Conversely, if \(\rank(G/H)=0\) then there is an open dense orbit of \(B^u\) in \(G/H\), and by Lie algebra considerations, \(H\) must contain a Borel subgroup of \(G\), i.e. \(H\) is a parabolic subgroup of \(G\). 
\end{exa}

\begin{exa}
\label{P^1xP^1}
Let \(G=\SL_2\) and \(H=T\) the maximal torus of diagonal matrices. Since we are interested in rational functions, we may as well work on a nice projective model of \(G/H\). 
Consider the diagonal action of \(\SL_2\) on \(\bbP^1\times \bbP^1\), by 
\[ \begin{bmatrix} a & b \\ c & d \end{bmatrix} \cdot ([x_1:y_1],[x_2:y_2]) = ([ax_1+by_1:cx_1+dy_1],[ax_2+by_2:cx_2+dy_2]). \]
Then there are two orbits: the diagonal embedding of \(\bbP^1\) and its complement, which is \(G\)-equivariantly isomorphic to \(G/H\) (\(H\) is the stabilizer of the point \(([1:0],[0:1])\). 

Let \(B\) be the Borel subgroup of upper triangular matrices. 
One easily checks that the rational function \(f:([x_1:y_1],[x_2:y_2])\mapsto \frac{x_1}{y_1} - \frac{x_2}{y_2}\) is \(B\)-semi-invariant with weight the character \(\begin{bmatrix} a & b \\ 0 & a^{-1} \end{bmatrix} \mapsto a^{-2}\), which is the negative root \(-\alpha_1=-2\varpi_1\). 
As a consequence, we have \(\alpha_1\bbZ\subset M\). 

Conversely, let \(\lambda\in M\) and let \(f\) be a  \(B\)-semi-invariant rational function with weight \(\lambda\). 
Then since \(-I_2\) acts trivially on \(\bbP^1\times \bbP^1\), we have 
\( -I_2\cdot f = f = \lambda(-I_2) f\), hence \(\lambda\in \alpha_1\bbZ\). 
\end{exa}

\begin{exa}
\label{P^2}
For \(G=\SL_2\) and \(H=N(T)\), we obtain a projective model by considering the projectivization \(\bbP^2\) of size two symmetric matrices, equipped with the action of \(\SL_2\) by congruences. 
Writing such a matrix as \(\begin{bmatrix} x & y \\ y & z \end{bmatrix}\), and homogeneous coordinates as \([x:y:z]\), the action is given by 
\[ \begin{bmatrix} a & b \\ c & d \end{bmatrix} \cdot [x:y:z] = [a^2x+2aby+b^2z: acx+(ad+bc)y+bdz : c^2x+2cdy+d^2z] \]
The rational function \(f:[x:y:z]\mapsto \frac{y^2-xz}{z^2}\) is semi-invariant under the action of the Borel subgroup \(B\) of upper-triangular matrices, with weight \(-4\varpi_1 : \begin{bmatrix} a & b \\ 0 & a^{-1} \end{bmatrix} \mapsto a^{-4}\). 
Hence \(4\varpi_1\bbZ\subset M\). 

This is actually an equality. 
Indeed, note that the equation \(y^2-xz=0\) defines the prime \(\SL_2\)-stable divisor in \(\bbP^2\) which is the complement of the open orbit \(\SL_2/N(T)\). 
If \(M\) was larger than \(4\varpi_1\bbZ\), then there would be a \(B\)-semi-invariant rational function \(r\) on \(\bbP^2\) with weight \(-2\varpi_1\). 
Its square \(r^2\) would be a \(B\)-semi-invariant rational function on \(\bbP^2\) with weight \(-4\varpi_1\) hence by the spherical property, \(r^2=Cf\) for some constant \(C\in \bbC\). 
This is inconsistent since the order of vanishing of \(f\) along the divisor \(\{y^2-xz=0\}\) is one, while the order of vanishing of \(r^2\) along this divisor is an even integer. 
\end{exa}

A fundamental result in the theory of spherical homogeneous spaces and their equivariant embeddings is the local structure theorem of Brion, Luna and Vust \cite{BLV_1986}. 
We only state the result for homogeneous spaces in order to derive some simple consequences. 

\begin{defn}
The \emph{adapted parabolic subgroup} associated to \(G/H\) is the stabilizer \(P\) of the open \(B\)-orbit in \(G/H\).
\end{defn}

\begin{prop}{\cite{BLV_1986}} 
\label{local_structure_theorem}
Assume that \(BH\) is open in \(G\). 
Then there exists a Levi subgroup \(L\) of the adapted parabolic subgroup \(P\), with connected center \(Z\), such that \(P\cap H=L\cap H\) contains \([L,L]\), and that the map 
\(P^u\times Z/(Z\cap H)\to B/(B\cap H), (p,x)\mapsto p\cdot x\) is an isomorphism.  
In particular, \(M=X^*(Z/(Z\cap H))\). 
\end{prop}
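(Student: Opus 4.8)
The plan is to reconstruct the local structure theorem of \cite{BLV_1986} directly in this homogeneous setting. Write \(x_0=eH\), so that the hypothesis that \(BH\) is open means precisely that the \(B\)-orbit \(\mathcal{O}:=B\cdot x_0\) is open and dense in \(G/H\). The first observation is that \(P=\mathrm{Stab}_G(\mathcal{O})\) contains \(B\), hence is parabolic; since \(B\subseteq P\) already acts transitively on \(\mathcal{O}\), so does \(P\), and therefore the open \(B\)-orbit coincides with the open \(P\)-orbit, giving \(\mathcal{O}\cong P/(P\cap H)\). I would then fix a Levi decomposition \(P=P^u\rtimes L\) with \(T\subseteq L\), so that \(P^u\subseteq B\) and \(B=P^u\rtimes B_L\) with \(B_L=B\cap L\) a Borel subgroup of \(L\). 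By construction, the simple roots of \(L\) are exactly the \(\alpha\in S\) whose associated minimal parabolic \(P_\alpha:=B\,G_\alpha\), with \(G_\alpha=\langle U_\alpha,U_{-\alpha}\rangle\), stabilizes \(\mathcal{O}\).

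The heart of the argument, and the step I expect to be the main obstacle, is to prove \([L,L]\subseteq H\). Since \([L,L]\) is generated by the rank-one subgroups \(G_\alpha\) for \(\alpha\) a simple root of \(L\), it suffices to show that each such \(G_\alpha\) fixes \(x_0\), and I would do this by a weight argument. On one hand, for such an \(\alpha\) the parabolic \(P_\alpha\subseteq P\) stabilizes \(\mathcal{O}\); since each weight space \(\bbC(G/H)^{(B)}_\lambda\) is one-dimensional and \(P\)-stable, every \(\lambda\in M\) extends to a character of \(P\), and characters of \(P\) annihilate \(\alpha^\vee\). Hence \(\langle\lambda,\alpha^\vee\rangle=0\) for all \(\lambda\in M\). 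On the other hand, if \(G_\alpha\) did not fix \(x_0\), then \(G_\alpha\cdot x_0\subseteq\mathcal{O}\) would be a positive-dimensional \(G_\alpha\)-orbit; being contained in the orbit \(\mathcal{O}\cong B/(B\cap H)\) of a connected solvable group, it contains no complete curve, so it is one of the affine positive-dimensional homogeneous spaces of \(\SL_2\) recalled in Example~\ref{example_sl2}, each of which carries a nonconstant Borel-eigenfunction whose weight pairs nontrivially with \(\alpha^\vee\). This would force some \(\lambda\in M\) with \(\langle\lambda,\alpha^\vee\rangle\neq0\), contradicting \(\langle M,\alpha^\vee\rangle=0\). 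The precise comparison of the weights of \(G_\alpha\cdot x_0\) with those of \(M\) is the delicate point here. We conclude \(G_\alpha\cdot x_0=\{x_0\}\) for every simple root \(\alpha\) of \(L\), whence \([L,L]\subseteq H\).

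Granting \([L,L]\subseteq H\), the decomposition is then essentially formal. Because \(L=[L,L]\cdot Z\) with \(Z=Z(L)^\circ\) connected and \([L,L]\subseteq H\), the \(L\)-orbit of \(x_0\) is \(L/(L\cap H)\cong Z/(Z\cap H)\), a quotient torus, and \(P\cdot x_0=P^u\cdot Z\cdot x_0\). To see that the resulting multiplication map \(\mu\colon P^u\times Z/(Z\cap H)\to\mathcal{O}\) is an isomorphism, I would first note surjectivity, which is immediate from \(P=P^u[L,L]Z\) together with \([L,L]\subseteq H\), and then injectivity, which reduces to \(P^u\cap H=\{e\}\) and \(P\cap H=L\cap H\). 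This last point follows from a dimension count comparing \(\dim\mathcal{O}=\dim P-\dim(P\cap H)\) with \(\dim P^u+\dim\bigl(Z/(Z\cap H)\bigr)\) via the homomorphism \(P\to P/P^u=L\), whose kernel is \(P^u\); it is the same verification that identifies the slice in the classical proof. As \(G/H\) is smooth, hence normal, the bijective morphism \(\mu\) is an isomorphism.

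Finally, the identification \(M=X^*(Z/(Z\cap H))\) simply reads off the decomposition. Every \(B\)-eigenfunction on \(G/H\) is determined by its restriction to the dense open set \(\mathcal{O}\cong P^u\times Z/(Z\cap H)\), and a character of \(B\) is trivial on the unipotent radical, in particular on \(P^u\); hence a \(B\)-eigenfunction of weight \(\lambda\) descends to a \(Z\)-eigenfunction on the factor \(Z/(Z\cap H)\), while conversely each character of \(Z/(Z\cap H)\) lifts to a \(B\)-eigenfunction on \(\mathcal{O}\). This yields a bijection between \(M\) and \(X^*(Z/(Z\cap H))\), completing the proof.
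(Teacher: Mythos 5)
You should first note that the paper contains no proof of this proposition at all: it is quoted verbatim from \cite{BLV_1986}, so your reconstruction can only be compared with the classical argument, and against that benchmark it has a genuine gap, sitting exactly at the step you yourself flag as delicate. You fix the Levi \(L\) of \(P\) containing \(T\) and set out to prove \([L,L]\subseteq H\); but the proposition only asserts the \emph{existence} of a suitable Levi, and for the \(T\)-containing Levi the claim is false. Indeed, conjugating \(H\) by any \(u\in P^u\) preserves every one of your hypotheses: since \(P^u\subseteq B\) one has \(B(uHu^{-1})=BHu^{-1}\), still open, with the same open \(B\)-orbit, the same adapted parabolic \(P\) and the same lattice \(M\), and the base point \(eH'\) still lies in the open orbit — while \([L,L]\subseteq uHu^{-1}\) fails for generic \(u\). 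Concretely, take \(G=\SL_3\times\bbG_m\) and \(H=\ker(\varpi_1+\chi_1)|_{Q_{\{\alpha_1\}}}\), one of the paper's own examples: here \(P=P_{\{\alpha_1\}}\), \([L,L]\) is the \(\SL_2\) in the \(\alpha_2\)-block, \([L,L]\subseteq H\), and \(M=\bbZ(\varpi_1+\chi_1)\) is orthogonal to \(\alpha_2^\vee\). Conjugating by \(u=I_3+E_{12}\in P^u\) gives \(H'=uHu^{-1}\) with \(BH'\) open and \(M\) unchanged, yet a direct computation shows \(G_{\alpha_2}\cap H'\) is the one-parameter unipotent subgroup \(\{I_3+cE_{32}\}\), so \(G_{\alpha_2}\cdot x_0\cong\SL_2/U\) is two-dimensional. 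All your premises hold, and your conclusion fails; moreover \(P\cap H'\not\subseteq L\), so your later claim \(P\cap H=L\cap H\) fails for this Levi as well.

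This pinpoints why the weight comparison you defer cannot be carried out: the orbit \(G_\alpha\cdot x_0\) does carry nonconstant \(B\cap G_\alpha\)-eigenfunctions whose weights pair nontrivially with \(\alpha^\vee\) (in the example, multiples of the fundamental weight of \(G_{\alpha_2}\)), but these functions do not extend to \(B\)-eigenfunctions on \(G/H\), and their weights are simply not elements of \(M\); there is no contradiction with \(\langle M,\alpha^\vee\rangle=0\). What your argument can legitimately yield — once the assertion that the eigenspaces \(\bbC(G/H)^{(B)}_\lambda\) are \(P\)-stable is actually justified (it is not immediate, since \(P\) does not normalize \(B\); one uses that \(\mathrm{div}(f_\lambda)\) is supported off the open orbit and preserved by the connected group \(P\), plus Rosenlicht's theorem on invertible functions on \(P/(P\cap H)\)) — is the weaker, correct statement that the \emph{image} of \(P\cap H\) in \(L\cong P/P^u\) contains \([L,L]\), starting from \(P=B\,(P\cap H)\), which follows from \(PH=BH\). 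The real content of the proof in \cite{BLV_1986} is the lifting step your fixed-Levi setup forecloses: the preimage of \([L,L]\) in \(P\cap H\) is an extension of the semisimple group \([L,L]\) by the unipotent group \(P^u\cap H\), hence by the Levi--Mostow decomposition it contains a copy of \([L,L]\), and that copy is conjugate to the standard one by an element of \(P^u\); conjugating the Levi by this element produces the \(L\) of the statement, and only for this adapted Levi do your closing steps — \(P^u\cap H=\{e\}\), \(P\cap H=L\cap H\), bijectivity of \(\mu\), and \(M=X^*(Z/(Z\cap H))\) — go through as you describe.
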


Note that we can always reduce to the case when \(BH\) is open in \(G\), by replacing \(H\) or \(B\) with a conjugate. Anyway we will mostly use the following consequence, which does not require conjugating \(H\) or \(B\). 

\begin{cor}
\label{cor_local_structure_theorem}
Let \(P\) be the adapted parabolic subgroup of \(G\). 
Then \(\dim(G/H)=\rank(G/H)+\dim(G/P)\).
Furthermore, if \(P\) contains a simple factor of \(G\), then this factor acts trivially on \(G/H\). 
\end{cor}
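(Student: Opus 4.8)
The plan is to deduce both assertions from the local structure theorem (Proposition~\ref{local_structure_theorem}), after first reducing to the situation where \(BH\) is open in \(G\). All quantities in the statement are invariant under replacing the pair \((B,H)\) by a conjugate: \(\dim(G/H)\) and \(\rank(G/H)\) do not depend on the choice of Borel subgroup, and conjugating \(B\) (or \(H\)) replaces the adapted parabolic \(P\) by a conjugate, leaving \(\dim(G/P)\) unchanged. Likewise, a simple factor \(G_i\) of \(G\) is normal, so the condition that \(P\) contains \(G_i\) and the conclusion that \(G_i\) acts trivially on \(G/H\) are both unaffected by such conjugation. Hence it suffices to prove the corollary assuming \(BH\) open, where Proposition~\ref{local_structure_theorem} applies and furnishes a Levi subgroup \(L\) of \(P\) with connected center \(Z\).

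For the dimension formula I would start from the observation that, \(BH\) being open, the orbit \(B\cdot eH\) is open in \(G/H\), so that \(\dim(G/H)=\dim\bigl(B/(B\cap H)\bigr)\). The isomorphism \(P^u\times Z/(Z\cap H)\xrightarrow{\sim} B/(B\cap H)\) from the local structure theorem then gives \(\dim(G/H)=\dim P^u+\dim\bigl(Z/(Z\cap H)\bigr)\), and two identifications finish the computation. First, \(\dim P^u=\dim(G/P)\): for a Levi decomposition \(P=L\ltimes P^u\) one has \(\mathfrak{g}=\mathfrak{l}\oplus\mathfrak{p}^u\oplus(\mathfrak{p}^u)^{-}\) with the two nilpotent summands of equal dimension, whence \(\dim G-\dim P=\dim P^u\). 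Second, \(Z/(Z\cap H)\) is a connected quotient of the torus \(Z\), hence itself a torus, so its dimension equals the rank of its character lattice; and the local structure theorem identifies this lattice with \(M\), giving \(\dim\bigl(Z/(Z\cap H)\bigr)=\rank M=\rank(G/H)\). Combining the two yields \(\dim(G/H)=\dim(G/P)+\rank(G/H)\).

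For the second assertion, suppose the simple factor \(G_i\) is contained in \(P\). The crucial point is that \(G_i\), being connected and semisimple (so \(G_i=[G_i,G_i]\) and \(G_i\) has no nontrivial connected normal unipotent subgroup), cannot meet the unipotent radical \(P^u\) nontrivially and must lie in the derived subgroup of a Levi. Concretely, writing \(G\) as the almost-direct product of its central torus with its simple factors, a parabolic decomposes compatibly, and \(G_i\subseteq P\) forces the \(i\)-th factor of \(P\) to be all of \(G_i\); consequently \(G_i\subseteq[L,L]\) for every Levi \(L\) of \(P\) (the various \([L,L]\) being \(P^u\)-conjugate and \(G_i\) normal). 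Since the local structure theorem gives \([L,L]\subseteq P\cap H\subseteq H\), we conclude \(G_i\subseteq H\). A normal subgroup contained in \(H\) acts trivially on \(G/H\), which finishes the proof.

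I expect the only genuinely delicate point to be this last step: verifying that a simple factor sitting inside \(P\) necessarily lands in \([L,L]\) rather than escaping into the unipotent radical. This is exactly where the semisimplicity of \(G_i\) and the product structure of parabolics in a reductive group are used, and it is what allows the inclusion \([L,L]\subseteq H\) coming from the local structure theorem to be leveraged into \(G_i\subseteq H\).
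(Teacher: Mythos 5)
Your proof is correct and takes exactly the approach the paper intends: the corollary is stated there without a written proof, as an immediate consequence of Proposition~\ref{local_structure_theorem} together with the preceding remark that one may conjugate \(H\) or \(B\) to make \(BH\) open. Your justification of the conjugation-invariance of all the data, the dimension count via the isomorphism \(P^u\times Z/(Z\cap H)\simeq B/(B\cap H)\) (using \(\dim P^u=\dim(G/P)\) and \(M=X^*(Z/(Z\cap H))\)), and the chain \(G_i=[G_i,G_i]\subseteq[L,L]\subseteq P\cap H\subseteq H\) followed by normality of \(G_i\) supply precisely the details the paper leaves implicit.
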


\subsection{Valuation cone}

A \emph{valuation} on \(G/H\) is a group morphism \(\nu:\bbC(G/H)^*\to \bbQ\) such that \(\nu\) takes value zero on constant functions, and \(\nu(f_1+f_2)\geq \min(\nu(f_1),\nu(f_2))\) whenever \(f_1\), \(f_2\) and \(f_1+f_2\) are in \(\bbC(G/H)\). 
Given a valuation \(\nu\) on \(G/H\), let \(\rho(\nu)\) denote the morphism 
\(M \to \bbQ\) which sends \(\lambda \in M\) to \(\nu(f_{\lambda})\) where \(f_{\lambda}\) is an eigenvector of \(B\) in \(\bbC(G/H)\) corresponding to the eigenvalue \(\lambda\).  
Set \(N:=\Hom(M,\bbZ)\), and let \(\langle\cdot,\cdot\rangle\) denote the duality bracket between \(M\otimes \bbQ\) and \(N\otimes\bbQ\).  

\begin{prop}{\cite{Brion_Pauer_1987}}
\label{prop_valuation_cone}
\begin{itemize}
    \item The map \(\rho\) induces an isomorphism from the set of all \(G\)-invariant valuations on \(G/H\) to a closed cosimplicial convex cone \(\mathcal{V}\) in \(N\otimes\bbQ\). 
    \item The cone \(\mathcal{V}\) is the cone of elements \(x\in N\otimes\bbQ\) such that \(\langle \sigma,x\rangle\leq 0\) for all \(\sigma\) such that there exist two simple \(G\)-sub-modules of \(\bbC[G]^{(H)}\), \(M_1\) and \(M_2\), and  \(B\)-eigenvectors \(f_1\in M_1\), \(f_2\in M_2\) and \(f\in M_1M_2\), such that \(\sigma\) is the \(B\)-eigenvalue of \(f_1f_2f^{-1}\).  
\end{itemize}
\end{prop}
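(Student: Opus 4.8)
The plan is to prove the two bullets in turn. I would first show that $\rho$ is injective on $G$-invariant valuations (the substance of the first bullet), and that a $G$-invariant valuation $\nu$ is determined by its values on $B$-eigenvectors, i.e. by $\rho(\nu)$. First I record that sphericity makes each $B$-weight space of $\bbC(G/H)^{(B)}$ one-dimensional: if $f,f'$ are $B$-eigenvectors of the same weight $\lambda$, then $f/f'$ is $B$-invariant, hence constant because $B$ has a dense orbit, so $f_\lambda$ is well-defined up to scalar and $\rho(\nu)(\lambda)=\nu(f_\lambda)$ is unambiguous. Realizing an arbitrary $f\in\bbC(G/H)^*$ as a quotient of two sections of a $G$-linearized line bundle on a projective model, it is then enough to compute $\nu$ on a nonzero element $s$ of a finite-dimensional $G$-submodule $V\subset\bbC(G/H)$. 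By multiplicity-freeness of the spherical space (a standard consequence of sphericity, equivalent to the one-dimensionality just noted) one has $V=\bigoplus_\lambda V_{(\lambda)}$ with each $V_{(\lambda)}$ simple, carrying a unique highest-weight line $\bbC f_\lambda$; write $s=\sum_\lambda s_\lambda$.

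Next I would prove the min-formula $\nu(s)=\min\{\rho(\nu)(\lambda):s_\lambda\neq0\}$. Since any nonzero vector of the simple module $V_{(\lambda)}$ generates it under $G$, one can write both $s_\lambda=\sum_i c_i\,g_i f_\lambda$ and $f_\lambda=\sum_j d_j\,h_j s_\lambda$; applying $\nu$, its $G$-invariance, and the ultrametric inequality gives $\nu(s_\lambda)=\nu(f_\lambda)=\rho(\nu)(\lambda)$ whenever $s_\lambda\neq0$, whence $\nu(s)\geq\min\{\rho(\nu)(\lambda):s_\lambda\neq0\}$. For the reverse inequality I would use the isotypic projection $\pi_{\lambda_0}\colon V\to V_{(\lambda_0)}$ onto the component realizing the minimum: it lies in the image of the group algebra, so $\pi_{\lambda_0}=\sum_k a_k\,g_k$ as an operator on $V$, and then $\nu(s_{\lambda_0})=\nu(\pi_{\lambda_0}s)\geq\min_k\nu(g_k s)=\nu(s)$ by $G$-invariance, giving $\rho(\nu)(\lambda_0)\geq\nu(s)$. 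This establishes the min-formula, so $\nu$ is determined by $\rho(\nu)$ and $\rho$ is injective.

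The second bullet then has an easy and a hard half. For $\mathcal{V}\subseteq\{x:\langle\sigma,x\rangle\leq0\}$, take simple submodules $M_1,M_2$ of $\bbC[G]^{(H)}$ with $B$-eigenvectors $f_1,f_2$ of weights $\lambda_1,\lambda_2$ and a $B$-eigenvector $f\in M_1M_2$ of weight $\mu$, so $\sigma=\lambda_1+\lambda_2-\mu$. The product $f_1f_2$ is the highest-weight vector of the Cartan component of $M_1M_2$, while $f$ is a highest-weight vector of some simple constituent of $M_1M_2$; the min-formula applied to $f_1f_2$ inside $M_1M_2$ gives $\nu(f_1f_2)\leq\nu(f)$, that is $\langle\lambda_1+\lambda_2,x\rangle\leq\langle\mu,x\rangle$, i.e. $\langle\sigma,x\rangle\leq0$ for $x=\rho(\nu)$. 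The reverse inclusion is the crux: given $x$ satisfying all these inequalities one must produce a genuine $G$-invariant valuation $\nu_x$ with $\rho(\nu_x)=x$. Here I would define $\nu_x$ on $B$-eigenvectors by $\langle\cdot,x\rangle$, extend it additively along products, and verify the ultrametric inequality; the inequalities $\langle\sigma,x\rangle\leq0$ are precisely the relations, coming from the multiplication maps $M_1\otimes M_2\to\bbC(G/H)$, needed for this extension to be consistent and to dominate each summand. Checking that this recipe yields a well-defined valuation on the whole function field is the main obstacle.

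Finally, the inequality description exhibits $\mathcal{V}$ as an intersection of rational half-spaces through the origin, so it is a closed convex cone (closure under positive scaling is clear, as $c\nu$ is again a valuation for $c>0$). That $\mathcal{V}$ is moreover \emph{cosimplicial}, i.e. that its facet normals are linearly independent, is the deepest statement; I would invoke it from \cite{Brion_Pauer_1987}, noting that it reflects the interpretation of $\mathcal{V}$ as a fundamental chamber for a finite reflection group acting on $N\otimes\bbQ$. In the low-rank cases actually needed in this paper, one can alternatively confirm cosimpliciality directly from the finitely many $\sigma$ computed above.
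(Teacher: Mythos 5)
The paper gives no proof of this proposition: it is quoted verbatim from \cite{Brion_Pauer_1987}, so your attempt must be judged on its own merits rather than against an in-paper argument. Your treatment of the first bullet is sound and standard: one-dimensionality of \(B\)-weight spaces of \(\bbC(G/H)\), the min-formula \(\nu(s)=\min\{\langle\lambda,\rho(\nu)\rangle : s_\lambda\neq0\}\) obtained from the two generation tricks (any nonzero vector of a simple module generates it, and the isotypic projector lies in the image of the group algebra by the double centralizer theorem), hence injectivity of \(\rho\) on invariant valuations. One local misstep: in the forward inclusion of the second bullet, "the min-formula applied to \(f_1f_2\) inside \(M_1M_2\)" does not by itself give \(\nu(f_1f_2)\leq\nu(f)\) — applying the min-formula to elements of \(M_1M_2\) presupposes lower bounds on \(\rho(\nu)\) at the weights of \(M_1M_2\), which is exactly what is being proved, so the argument as phrased is circular. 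The repair is available in your own first paragraph: \(\nu\) is constant, equal to \(\nu(f_i)\), on all nonzero elements of the simple module \(M_i\), so writing \(f=\sum_j u_jv_j\) with \(u_j\in M_1\), \(v_j\in M_2\) yields \(\nu(f)\geq\min_j\bigl(\nu(u_j)+\nu(v_j)\bigr)=\nu(f_1)+\nu(f_2)=\nu(f_1f_2)\).

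The genuine gap is the reverse inclusion of the second bullet, which you flag as "the main obstacle" and then leave open. Setting \(\nu_x(s)=\min\{\langle\lambda,x\rangle : s_\lambda\neq0\}\) gives the ultrametric inequality and \(\nu_x(st)\geq\nu_x(s)+\nu_x(t)\) for free from the hypotheses \(\langle\sigma,x\rangle\leq0\); what those hypotheses do \emph{not} give is the opposite inequality, because the products of the minimizing isotypic components of \(s\) and \(t\) can cancel against contributions from other pairs of components inside \(\bbC[G]^{(H)}\), and ruling out such cancellation (equivalently, proving multiplicativity of \(\nu_x\), and then extending it from semi-invariants to the whole function field) is essentially the content of the theorem. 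Brion and Pauer, following Luna--Vust, close this with a genuinely geometric input: invariant valuations are produced from formal curves/one-parameter degenerations, equivalently as orders of vanishing along \(G\)-stable divisors in suitable elementary embeddings, and one checks that every rational point of the cone cut out by the \(\sigma\)'s arises this way. Some input of this kind is unavoidable and is absent from your sketch. Deferring cosimpliciality to \cite{Brion_Pauer_1987} is defensible — the paper itself cites that reference for the entire statement — but note that the "fundamental chamber of a finite reflection group" interpretation you invoke is later work of Brion on the little Weyl group, not contained in the 1987 paper; and if citation is permitted for that part, the reverse inclusion should either be cited on the same footing or actually proved. As written, your proposal establishes the first bullet and one half of the second.
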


\begin{defn}
The cone \(\mathcal{V}\) is called the \emph{valuation cone} of \(G/H\).
The (finite) set \(\Sigma\) of primitive elements of \(M\) such that \(\mathcal{V}=\{x\in N\otimes\bbQ\mid \langle\sigma,x\rangle\leq 0\}\) is called the set of \emph{spherical roots} of \(G/H\). 
\end{defn}

\begin{exa}
\label{exa_valuation_cone_SL2}
Consider the case \(G=\SL_2\) and \(H=T\). 
We write an element of \(G\) as 
\[ \begin{bmatrix} a & b\\ c & d\end{bmatrix} \]
and choose as usual \(B\) the subgroup of upper triangular matrices, and \(T\) the subgroup of diagonal matrices. 
Consider the regular functions on \(G\) defined by  
\[ f_1: \begin{bmatrix} a & b\\ c & d\end{bmatrix} \mapsto d \]
and 
\[ f_2: \begin{bmatrix} a & b\\ c & d\end{bmatrix} \mapsto c \]
By direct computation, these two functions are \(B\)-semi-invariant (on the right) with weight \(\varpi_1\), and \(T\)-semi-invariant (on the left). 
Let \(M_1\) and \(M_2\) the simple \(G\)-sub-modules of \(\bbC[G]^{(T)}\) with \(B\)-stable line generated by \(f_1\) and \(f_2\). 
Consider now the function 
\begin{align*} f & = f_1\left(\begin{bmatrix} 0 & 1\\ -1 & 0\end{bmatrix}\cdot f_2\right) - \left(\begin{bmatrix} 0 & 1\\ -1 & 0\end{bmatrix}\cdot f_1\right)f_2 \in M_1M_2 \\
f & : \begin{bmatrix} a & b\\ c & d\end{bmatrix} \longmapsto da-bc \equiv 1
\end{align*}
Since \(f\) is constant, it is a \(B\)-eigenvector, and the \(B\)-eigenvalue of \(f_1f_2f^{-1}\) is equal to \(2\varpi_1=\alpha_1\). 

Recall from Example~\ref{P^1xP^1} that \(M=\alpha_1\bbZ\), and let \(\xi\in N\) be the primitive generator of \(N\) such that \(\langle\xi,\alpha_1\rangle=1\).  
From the above we deduce that \(\alpha_1\in \Sigma\), and thus \(\mathcal{V}\subset \xi \bbQ_{\leq 0}\). 
But from Example~\ref{P^1xP^1} again, we know that \(\mathcal{V}\neq \{0\}\) since the order of vanishing along the diagonal in \(\bbP^1\times \bbP^1\) is a  non-trivial \(G\)-invariant valuation. 
Thus \(\mathcal{V}=\xi \bbQ_{\leq 0}\) and \(\{\alpha_1\} = \Sigma\).
\end{exa}

From that description of the valuation cone, Brion and Pauer derive strong conditions on the possible spherical roots. 

\begin{prop}{\cite{Brion_Pauer_1987}}
\label{prop_spherical_root_as_sum_of_positive_roots}
Choose a maximal torus \(T\subset B\). 
An element of \(\Sigma\) is, up to multiple, either a positive root or the sum of two strongly orthogonal positive roots of \(G\). 
\end{prop}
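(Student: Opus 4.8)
The plan is to exploit the explicit description of the defining inequalities of \(\mathcal{V}\) given in Proposition~\ref{prop_valuation_cone}: each candidate \(\sigma\) is the \(B\)-eigenvalue of a function \(f_1f_2f^{-1}\), and I would first rewrite this eigenvalue as a difference of highest weights. With notation as in that proposition, each \(M_i\) is a simple \(G\)-module (for the left action), so its unique \(B\)-stable line is its highest weight line; hence \(f_i\) is a highest weight vector and \(M_i\simeq V_{\mu_i}\), the simple \(G\)-module of highest weight \(\mu_i\), where \(\mu_i\in X^*(B)\) is the \(B\)-weight of \(f_i\). Likewise \(f\) is a \(B\)-eigenvector of weight \(\nu\) lying in the \(G\)-module \(M_1M_2\), which is a quotient of \(M_1\otimes M_2\); thus \(\nu\) occurs as a highest weight in \(V_{\mu_1}\otimes V_{\mu_2}\). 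Since the \(H\)-semi-invariance of \(f_1,f_2,f\) cancels, \(f_1f_2f^{-1}\) descends to a genuine element of \(\bbC(G/H)\), and its \(B\)-weight is \(\sigma=\mu_1+\mu_2-\nu\). The standard fact that every highest weight \(\nu\) of \(V_{\mu_1}\otimes V_{\mu_2}\) satisfies \(\mu_1+\mu_2-\nu\in\sum_i\bbZ_{\ge0}\alpha_i\) then shows that each such \(\sigma\) (discarding the Cartan component \(\nu=\mu_1+\mu_2\), which gives \(\sigma=0\)) is a nonzero nonnegative integral combination of simple roots. As every element of \(\Sigma\) is a positive multiple of such a \(\sigma\), this already places \(\Sigma\) inside the root lattice.

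Next I would reduce the question of the precise shape of \(\sigma\) to its \emph{support} \(J=\operatorname{supp}(\sigma)\subset S\). The pairing of \(\sigma\) with \(N\) only sees the coordinates indexed by \(J\), and the whole construction of \(\sigma\) can be transported to the semisimple part of the Levi subgroup generated by \(T\) and the root subgroups \(U_{\pm\alpha_i}\) for \(i\in J\); I may therefore assume \(G\) semisimple with \(S=J\). Furthermore, since Proposition~\ref{prop_valuation_cone} asserts that \(\mathcal{V}\) is cosimplicial, the spherical roots in \(\Sigma\) are linearly independent and each is \emph{indecomposable}: it is a primitive normal to a single facet of \(\mathcal{V}\) and cannot be written as a positive combination of weight drops coming from proper sub-configurations. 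The task is thus reduced to listing, for each Dynkin type carried by \(J\), the indecomposable dominant weight drops \(\sigma=\mu_1+\mu_2-\nu\) that can actually arise.

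This last step I would settle by a case analysis over the type of \(J\), comparing \(\sigma\) against the decomposition of \(V_{\mu_1}\otimes V_{\mu_2}\) and keeping track of which components admit a \(B\)-eigenvector lying in \(\bbC[G]^{(H)}\). In every case the indecomposable drops turn out to be, up to a positive multiple, either a single positive root (such as \(\alpha_i\), or \(\alpha_1+\cdots+\alpha_k\) in type \(A_k\)), twice a positive root (as for \(\SL_2/N(T)\), where \(\sigma=2\alpha_1\)), or a sum of two strongly orthogonal positive roots --- be it on a disconnected support, as \(\alpha+\alpha'\) for two commuting \(\SL_2\) factors, or even on a connected one, as \(2\varepsilon_1=(\varepsilon_1-\varepsilon_2)+(\varepsilon_1+\varepsilon_2)\) in type \(B_2\). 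Since twice a positive root is ``up to multiple'' a positive root, this is exactly the claimed dichotomy. The main obstacle is precisely this classification: one must control the tensor-product multiplicities and the existence of the matching eigenvector \(f\) finely enough to exclude every other combination, and recognize that \emph{strong} orthogonality is forced --- it is exactly the condition guaranteeing that \(V_{\mu_1+\mu_2-\alpha-\beta}\) is the relevant minimal non-Cartan constituent carrying an admissible \(f\). Concretely, this amounts to the classification of rank-one spherical homogeneous spaces, into which each wall of \(\mathcal{V}\) degenerates, and it is where essentially all the work lies.
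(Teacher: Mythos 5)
The paper offers no proof of this proposition: it is quoted directly from Brion and Pauer \cite{Brion_Pauer_1987}, and Proposition~\ref{prop_valuation_cone} is stated immediately before precisely because the original derivation of the constraint on spherical roots proceeds from that description of \(\mathcal{V}\). The first half of your outline follows that same road and is sound: writing each defining linear form as \(\sigma=\mu_1+\mu_2-\nu\), with \(\nu\) a highest weight of a constituent of \(V_{\mu_1}\otimes V_{\mu_2}\), does place every \(\sigma\), hence every element of \(\Sigma\), in \(\sum_i\bbZ_{\geq 0}\alpha_i\); and the observation that cosimpliciality forces the elements of \(\Sigma\) to be linearly independent, extremal, hence indecomposable among the weight drops, is also correct.

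The decisive half, however, is missing, and you say so yourself: the claim that an indecomposable drop must be, up to multiple, a positive root or a sum of two \emph{strongly} orthogonal positive roots is asserted as the outcome of a case analysis that is never carried out, and you concede that this ``is where essentially all the work lies.'' That concession is accurate, and it marks a genuine gap rather than a routine verification, for two reasons. First, the reduction to the Levi attached to the support of \(\sigma\) is not automatic: the functions \(f_1\), \(f_2\), \(f\) live on \(G\) and their \(H\)-semi-invariance does not obviously restrict to a Levi-localized picture; in modern language this is the localization of spherical data at a subset of \(S\), a nontrivial fact you would have to establish from scratch. Second, the fallback you propose --- identifying each wall of \(\mathcal{V}\) with a rank-one spherical homogeneous space and then invoking the classification \cite{Akhiezer_1983} --- requires the theory of boundary degenerations (or wonderful localizations), machinery that is heavier than, and historically posterior to, the result itself; without that bridge the appeal to the rank-one classification does not close the argument. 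What Brion and Pauer actually supply at this point is a direct representation-theoretic lemma characterizing the minimal non-Cartan weight drops \(\mu_1+\mu_2-\nu\) compatible with the multiplicity-free structure of \(\bbC[G]^{(H)}\), and it is exactly this lemma --- in particular the mechanism forcing strong orthogonality, which you gesture at via \(V_{\mu_1+\mu_2-\alpha-\beta}\) but do not prove --- that your proposal would need to reconstruct.
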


\begin{exa}
When \(G=\SL_2\), or more generally \(\SL_2\times \bbG_m^n\), then either \(\Sigma=\emptyset\) or \(\Sigma\) is the singleton whose element is the primitive multiple of \(\alpha_1\) in \(M\).  
\end{exa}

\subsection{Colors}

\begin{defn}
A codimension one orbit of \(B\) in \(G/H\) is called a \emph{color} of \(G/H\). 
We will often identify the orbit with its orbit closure, which is a prime divisor in \(G/H\), or its closure in a given embedding \(X\) of \(G/H\), which is a prime divisor in \(X\).  
The set of colors of \(G/H\) is denoted by \(\mathcal{D}(G/H)\). 
There are two maps associated to colors. 
The first is the restriction of the map \(\rho\) to \(\mathcal{D}(G/H)\), identified with the set of divisorial valuations induced by the colors. 
The second is the map \(\zeta\) from \(\mathcal{D}(G/H)\) to the set  \(\mathcal{P}(S)\) of parabolic subgroups of \(G\) containing \(B\), which sends a color to its stabilizer in \(G\). 
For simplicity we identify \(P_I\) with \(I\) in the notations when it does not create confusions. 
\end{defn}

Beware that the valuations associated to colors are not \(G\)-invariant, hence the color map \(\rho: \mathcal{D}(G/H)\to N\) is not injective in general.  
Note also that the adapted parabolic is \(P_I\) where \(I=\bigcup_{D\in \mathcal{D}} \zeta(D)\). 

\begin{exa}
We go on with Example~\ref{P^1xP^1}, with the same notation. 
It follows from the description of the action that there are two colors \(D_+\) and \(D_-\) of \(\SL_2/T\) whose equations are given by \(y_1=0\) and \(y_2=0\). 
We have \(\zeta(D_{\pm})=\{\alpha_1\}\), that is, the stabilizer of each color is the Borel subgroup itself. 
The order of the function \(f:([x_1:y_1],[x_2:y_2])\mapsto \frac{x_1}{y_1}-\frac{x_2}{y_2} = \frac{x_1y_2-x_2y_1}{y_1y_2}\) 
on \(D_{\pm}\) is \(-1\) for \(i=1\), \(2\). 
Hence \(\rho(D_{\pm})\) is the morphism defined by \(\rho(D_{\pm})(\alpha_1)=1\). 
\end{exa}

\begin{defn}
The \emph{combinatorial data} associated to \(G/H\) is the triple \((M,\Sigma,(\mathcal{D},\rho,\zeta))\) where \(M\subset X^*(B)\) is the weight lattice, \(\Sigma\subset M\) is the set of spherical roots, and \(\mathcal{D}\) is the set of colors, thought of as an abstract set equipped with two maps \(\rho:\mathcal{D}\to N=\Hom(M,\bbZ)\) and \(\zeta:\mathcal{D}\to \mathcal{P}(S)\). 
\end{defn}

In the above definition, stating that \(\mathcal{D}\) is an abstract set means that the description of elements of \(\mathcal{D}\) as \(B\)-orbits in \(G/H\) can be totally forgotten. 
In other words, we may write \((\mathcal{D}_1,\rho_1,\zeta_1)=(\mathcal{D}_2,\rho_2,\zeta_2)\) if there exists a bijection \(f:\mathcal{D}_1\to \mathcal{D}_2\) such that \(\rho_1=\rho_2\circ f\) and \(\zeta_1=\zeta_2\circ f\). 

\begin{prop}{\cite{Losev_2009}}
The combinatorial data \((M,\Sigma,(\mathcal{D},\rho,\zeta))\) fully encodes the spherical subgroup \(H\) up to conjugacy. 
\end{prop}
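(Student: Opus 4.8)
The plan is to reconstruct \(H\) up to conjugacy from the triple \((M,\Sigma,(\mathcal{D},\rho,\zeta))\) in two stages: first recover a canonical spherically closed overgroup of \(H\) that admits a wonderful compactification, then pin down \(H\) inside it using the lattice \(M\). Since this is a uniqueness statement, I would phrase the argument as: any two spherical subgroups \(H_1,H_2\) sharing the same combinatorial data are conjugate, rather than attempting a direct reconstruction of \(H\) from scratch.

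First I would isolate the purely finite part of the data. The spherical roots \(\Sigma\), together with \(\mathcal{D}\), the maps \(\rho,\zeta\), and the set \(S^p\subset S\) of simple roots whose minimal parabolic stabilizes the open \(B\)-orbit (all readable off the given data), assemble—after passing to the Cartan pairing \(c(D,\sigma)=\langle\sigma,\rho(D)\rangle\) and forgetting the lattice—into a finite combinatorial gadget, a \emph{spherical system}. I would check that this spherical system is precisely the one attached to the \emph{spherical closure} \(\bar H\) of \(H\), namely the subgroup between \(H\) and \(N_G(H)\) acting trivially on \(\mathcal{D}\); in particular the data of \(G/H\) determines that of \(G/\bar H\). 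The gain is that \(\bar H\) is spherically closed, so by Knop's wonderful embedding theorem \(G/\bar H\) admits a \emph{wonderful} embedding \(X\): a smooth projective \(G\)-variety with a unique closed orbit and \(\rank(G/\bar H)\) smooth boundary divisors crossing normally, whose entire \(G\)-orbit combinatorics is governed by \(\Sigma\) and \(\mathcal{D}\).

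The hard part will be the rigidity of wonderful varieties: proving that the spherical system determines \(X\), hence \(\bar H\), up to \(G\)-isomorphism. Here I would invoke the invariant Hilbert scheme of Alexeev and Brion. The idea is to degenerate every wonderful variety with the prescribed system to a fixed \emph{horospherical} (essentially toric) model, by a flat degeneration of its total coordinate ring, to realize the family of all such degenerations as an open piece of a suitable invariant Hilbert scheme, and then to analyze the geometry of that scheme. The crux is to show it is smooth and carries an action of the adjoint torus with the horospherical model as its unique fixed point, so that the scheme is connected—in fact an affine space—which forces all wonderful varieties sharing the given system into a single isomorphism class. This deformation-theoretic collapse of an a priori positive-dimensional moduli to a point is the genuine obstacle; the surrounding steps are essentially bookkeeping.

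Finally I would descend from \(\bar H\) back to \(H\). The spherical subgroups with spherical closure \(\bar H\) are controlled by the finite abelian group \(\bar H/H\) sitting inside \(N_G(H)/H\), and the way these groups sit is recorded exactly by the weight lattice \(M\) inside the character lattice pinned down in the wonderful stage. I would verify that \(M\) singles out a unique such \(H\) up to conjugacy: two candidates with the same \(M\) differ by an automorphism of \(G/\bar H\) preserving all the combinatorial data, and the wonderful rigidity shows that such an automorphism is induced by \(G\). Combining the three stages gives that \(H_1\) and \(H_2\) are conjugate, which is the assertion.
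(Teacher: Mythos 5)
A preliminary remark: the paper does not prove this proposition at all — it is imported verbatim from Losev \cite{Losev_2009} as a black box, so there is no internal argument to compare yours against. Judged on its own terms, your plan is a recognizable and essentially sound outline of the \emph{second} known route to Losev's uniqueness theorem, the one coming from the Luna program: reduce to the spherical closure \(\bar H\), invoke Knop's theorem that spherically closed subgroups admit wonderful embeddings, prove rigidity of wonderful varieties with a fixed spherical system via the Alexeev–Brion invariant Hilbert scheme (this rigidity, including the smoothness computation you correctly flag as the crux, is Cupit-Foutou's theorem), and then descend to \(H\) by Luna's theory of augmentations. Losev's own proof is genuinely different: it is a direct invariant-theoretic argument in the spirit of his proof of the Knop conjecture, and does not pass through moduli of affine spherical varieties. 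Your route buys more — it proves the Luna conjecture (classification of wonderful varieties by spherical systems) along the way — at the cost of importing the hardest deformation-theoretic input; Losev's argument is leaner if one wants only uniqueness.

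Three concrete slips, none fatal to the strategy but all worth fixing. First, \(\bar H/H\) is \emph{not} a finite abelian group in general: \(N_G(H)/H\) is diagonalizable and its identity component (a torus, possibly of positive dimension) acts trivially on the finite set of colors, hence lies in \(\bar H/H\); the horospherical case \(H\subsetneq N_G(H)=Q\) already shows \(\bar H/H\) can be a torus. The descent still works because a subgroup of a diagonalizable group is cut out by the characters vanishing on it, but the words "finite abelian" must go. Second, in the descent stage the lattice \(M\) alone does not single out \(H\) inside \(\bar H\): for colors moved by a simple root which is itself a spherical root (Luna's type \(a\)), the functional \(\rho(D)\in \Hom(M,\bbZ)\) is not determined by the Cartan pairing and is genuinely extra data in Luna's augmentations. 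Since \(\rho\) is part of the given triple the conclusion survives, but "the lattice \(M\) singles out a unique such \(H\)" should read "\((M,\rho)\) singles out". Third, two of your asserted implications need care: that the data of \(G/H\) determines that of \(G/\bar H\) is true but requires the explicit recipe (doubling of certain spherical roots, identification of colors with equal invariants), precisely where the \(\sigma\) versus \(2\sigma\) subtleties hide; and "unique torus fixed point, so the scheme is connected" is backwards as stated — one needs smoothness plus a Bia{\l}ynicki-Birula-type attraction argument on each irreducible component (each of which contains a horospherical fixed point) to conclude the component is an affine space contracting to it.
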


\begin{exa}
Let \(\alpha_i\) be a simple root, and \(Q_{\{\alpha_i\}}\) the maximal parabolic subgroup of \(G\) containing \(B^-\) and with \(\varpi_i\in X^*(Q_{\{\alpha_i\}})\) as introduced in the notations section. 
By the Bruhat decomposition, there are two \(B\)-orbits in \(G/Q_{\{\alpha_i\}}\): an open orbit, and a codimension one orbit \(D_{\alpha_i}\). 
In particular, \(Q_{\{\alpha_i\}}\) is a spherical subgroup,  \(\mathcal{D}(G/Q_{\{\alpha_i\}})=\{D_{\alpha_i}\}\) and \(\zeta(D_{\alpha_i})=P_{\{\alpha_i\}}\). 

More generally, for \(I\subset \mathcal{P}(S)\), let \(Q_I\) denote the associated parabolic subgroup containing \(B^-\) and for \(i\in I\), let \(\pi_i:G/Q_I\to G/Q_{\{\alpha_i\}}\) denote the projection induced by the inclusion \(Q_I\subset Q_{\{\alpha_i\}}\). 
By the Bruhat decomposition again, the parabolic subgroup \(Q_I\) is a spherical subgroup, \(\mathcal{D}(G/Q_I)=\{\pi_i^{-1}(D_{\alpha_i})\mid i\in I\}\simeq I\) and \(\zeta(\pi_i^{-1}(D_{\alpha_i}))=\{\alpha_i\}\). 

Recall from Example~\ref{exa_rk_0} that \(M(G/Q_I)=\{0\}\). 
As a consequence, \(\Sigma=\emptyset\) and \(\rho:\mathcal{D}(G/Q_I)\to \{0\}\) is the zero map. 
We have fully described the combinatorial data for spherical homogeneous spaces of rank \(0\). 
\end{exa}

\subsection{Parabolic induction of spherical homogeneous spaces}

\begin{defn}
If there exists a \emph{proper} parabolic subgroup \(Q\) of \(G\), a connected reductive group \(G_0\), a spherical subgroup \(H_0\subset G_0\) and an epimorphism \(\pi:Q\to G_0\) such that \(G/H\) is the quotient \(\left(G\times G_0/H_0\right) /Q\) under the action \(q\cdot (g,x)=(gq^{-1},\pi(q)\cdot x)\), then we say that \(G/H\) is obtained by \emph{parabolic induction}l from \(G_0/H_0\). 
\end{defn}

\begin{rem}
\label{induction_from_Lie}
Note that \(H\subset Q\), and \(Q^u\subset H\). In fact, this characterizes spherical homogeneous spaces that are obtained by parabolic induction, and this can be read off from the Lie algebras of \(G\) and \(H\) alone. 
\end{rem}

We place ourselves in the setting of the previous definition and assume furthermore that \(Q\) contains the Borel subgroup \(B^-\) opposite to \(B\).
Hence \(Q=Q_I\) for some subset \(I\) of simple roots. 
Let \(B_0\) be the image of \(Q\cap B\) by \(\pi\). 
It is a Borel subgroup of \(G_0\). 
The projection from \(Q\cap B\) to \(B_0\) induces an inclusion \(X^*(B_0)\subset X^*(Q\cap B)=X^*(B)\). 

The following result allows to recover the combinatorial data of the spherical homogeneous space \(G/H\), obtained by parabolic induction, from the combinatorial data of \(G_0/H_0\) and the data of \(I\) alone.

\begin{prop}
\label{prop_parabolic_induction}
The combinatorial data \((M,\Sigma,(\mathcal{D},\rho,\zeta))\) and \((M_0,\Sigma_0,(\mathcal{D}_0,\rho_0,\zeta_0))\) of \(G/H\) and \(G_0/H_0\) satisfy the following relations:

\[\arraycolsep=12pt
\begin{array}{lll}
M=M_0 & \forall \alpha_i\in I, \rho(\alpha_i)=\alpha_i^{\vee}|_{M} & \rho|_{\mathcal{D}_0}=\rho_0
\\  
\Sigma=\Sigma_0 & \forall \alpha_i\in I, \zeta(\alpha_i)=\{\alpha_i\} & \zeta|_{\mathcal{D}_0}=\zeta_0  
\\ 
\mathcal{D}\simeq \mathcal{D}_0\sqcup I &  & 
\\
\end{array}
\]
\end{prop}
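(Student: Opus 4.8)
The backbone of the argument is the fiber bundle picture. The plan is to first record that the base point stabilizer is \(H=\pi^{-1}(H_0)\): indeed \(g\in G\) fixes the class of \((e,eH_0)\) in \(\left(G\times G_0/H_0\right)/Q\) exactly when \(g\in Q\) and \(\pi(g)\in H_0\), which also recovers the inclusions \(Q^u\subset H\subset Q\) of Remark~\ref{induction_from_Lie} since \(\pi(Q^u)\subset H_0\). Thus \(G/H\cong G\times_Q(G_0/H_0)\) is a locally trivial fibration over \(G/Q\) with fiber \(F:=Q/H\cong G_0/H_0\), on which \(B\cap Q\) acts through \(\pi\) as the Borel subgroup \(B_0\). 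Since \(Q=Q_I\supset B^-\), the product \(BQ\) is dense, so \(B\cdot eQ\) is the open \(B\)-orbit of \(G/Q\); choosing (after conjugating \(H_0\) inside \(G_0\) if necessary) the base point of the open \(B_0\)-orbit of \(G_0/H_0\) in \(F\), the open \(B\)-orbit of \(G/H\) is the preimage of \(B\cdot eQ\) meeting this open fiber orbit, and in particular \(BH\) is open in \(G\).

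First I would settle \(M=M_0\) together with the matching of weights. A left-\(B\)-semi-invariant rational function on \(G/H\) restricts, along the fiber \(F\) over \(eQ\), to a \(B\cap Q\)-semi-invariant rational function on \(F\cong G_0/H_0\), hence via \(\pi\) to a \(B_0\)-semi-invariant function with the same weight under the identification \(X^*(B_0)\subset X^*(B\cap Q)=X^*(B)\). Conversely, because \(M(G/Q)=\{0\}\) by Example~\ref{exa_rk_0}, the base direction contributes no new eigenvalues, and every \(B_0\)-eigenfunction on \(F\) extends to a \(B\)-eigenfunction of the same weight on the open orbit \(B\times_{B\cap Q}F^{\circ}\). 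This gives a weight-preserving bijection between the two sets of eigenvalues, i.e. \(M=M_0\) as sublattices of \(X^*(B)\).

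Next I would analyze the colors through the projection \(p:G/H\to G/Q\), using the dichotomy according to whether a color dominates \(G/Q\). A color that dominates \(G/Q\) meets the generic fiber in a codimension-one \(B_0\)-orbit, i.e. a color of \(G_0/H_0\); this yields the summand \(\mathcal{D}_0\), and since both the valuation (read on the \(M=M_0\)-eigenfunctions restricted to the fiber) and the stabilizer are computed inside the fiber, one gets \(\rho|_{\mathcal{D}_0}=\rho_0\) and \(\zeta|_{\mathcal{D}_0}=\zeta_0\). A color that does not dominate \(G/Q\) is the \(p\)-preimage of a \(B\)-stable prime divisor of \(G/Q\), and these are exactly the colors of \(G/Q_I\), indexed by \(I\) with \(\zeta=\{\alpha_i\}\) as computed above for parabolic quotients; this produces the summand \(I\) with \(\zeta(\alpha_i)=\{\alpha_i\}\), and altogether \(\mathcal{D}\simeq\mathcal{D}_0\sqcup I\).

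The two remaining points are the spherical roots and the values \(\rho(\alpha_i)\), and the latter is where I expect the real work. For \(\Sigma=\Sigma_0\): since \(M=M_0\), it suffices to match valuation cones, and by Proposition~\ref{prop_valuation_cone} the defining inequalities come from products of \(B\)-eigenvectors inside simple submodules of \(\bbC[G]^{(H)}\); these are induced from the corresponding simple submodules of \(\bbC[G_0]^{(H_0)}\) along \(\pi\) (the \(G\)-invariant valuations only probe the fiber direction, the simple roots in \(I\) producing colors rather than spherical roots, consistently with Proposition~\ref{prop_spherical_root_as_sum_of_positive_roots}), so \(\mathcal{V}=\mathcal{V}_0\) and \(\Sigma=\Sigma_0\). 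For \(\rho(\alpha_i)=\alpha_i^{\vee}|_M\), I would reduce to the rank-one situation by factoring \(p\) through \(G/Q_{\{\alpha_i\}}\): the order of vanishing of a weight-\(\lambda\) eigenfunction along the \(i\)-th base color is controlled entirely by the \(\alpha_i\)-root \(\SL_2\), and an explicit computation on \(\SL_2/B\cong\bbP^1\) (in the spirit of Example~\ref{P^1xP^1}) gives vanishing order \(\langle\alpha_i^{\vee},\lambda\rangle\). The main obstacle is precisely this last transversal computation: one must check that the pulled-back eigenfunction restricts nontrivially to a generic \(\SL_2\)-curve transverse to the color and that its order there is the Schubert/Chevalley multiplicity \(\langle\alpha_i^{\vee},\lambda\rangle\), which is the one genuinely non-formal input of the proof.
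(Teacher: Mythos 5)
Your proposal is correct in outline, but it takes a genuinely different route from the paper: the paper's proof is essentially a citation, taking all statements except the one about \(\zeta\) directly from \cite[Proposition~20.4]{Timashev_2011}, and the only argument actually written out there is the description of the two families of colors (those induced from colors \(D_0\) of \(G_0/H_0\), versus the pullbacks of the closed \(B\)-orbits in \(G/Q_{\alpha_i}\), whose stabilizer is exactly \(P_{\alpha_i}\)) needed to justify the formula for \(\zeta\). You instead reprove the whole statement from the fiber-bundle picture \(G/H\cong G\times_Q(G_0/H_0)\): the identification \(H=\pi^{-1}(H_0)\), the restriction/extension argument for \(M=M_0\) over the open cell \(B\times_{B\cap Q}F\), the dichotomy of colors according to dominance over \(G/Q\) (which reproduces exactly the paper's two families and its \(\zeta\) computation), and the rank-one reduction giving \(\rho(\alpha_i)=\alpha_i^{\vee}|_M\) via the order-of-vanishing computation \(\langle\alpha_i^{\vee},\lambda\rangle\) are all sound, and are in substance the content of the result you are blind to. What your route buys is a self-contained argument that makes transparent which piece of combinatorial data lives in the fiber and which in the base, and it correctly isolates the one non-formal input (the Chevalley-type transversal computation); what the paper's route buys is brevity, at the cost of delegating everything but \(\zeta\) to the literature. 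Two caveats. First, your step \(\Sigma=\Sigma_0\) is the thinnest: the claim that the simple submodules of \(\bbC[G]^{(H)}\) entering Proposition~\ref{prop_valuation_cone} are all induced along \(\pi\) from \(\bbC[G_0]^{(H_0)}\) needs justification, since a priori \(\bbC[G]^{(H)}\) also contains eigenvectors attached to the base direction, and one must check these produce no new tail-cone relations; a cleaner fix is to argue that \(G\)-invariant valuations of \(G/H\) restrict bijectively to \(G_0\)-invariant valuations of the generic fiber, which is precisely the kind of verification the citation dispenses with. Second, when you make your color description precise, note that the divisor the paper writes as \((G\times D_0)/Q\) is slightly abusive (\(D_0\) is not \(\pi(Q)\)-stable), and your formulation via the \(B\)-spread of \(D_0\) over the open cell, taken to its closure, is the accurate version to use when computing \(\rho|_{\mathcal{D}_0}\) and \(\zeta|_{\mathcal{D}_0}\) inside the fiber.
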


\begin{proof}
All statements but that about \(\zeta\) are directly contained in \cite[Proposition~20.4]{Timashev_2011}. 
Let us just recall a bit how the colors can be described to justify the expression of \(\zeta\). 

For each \(\alpha_i\in I\), the associated \(B\)-stable prime divisor in \(G/H\) is the pullback of the closed \(B\)-orbit in \(G/Q_{\alpha_i}\) under the projection \(G/H\to G/Q_{\alpha_i}\) induced by the inclusion \(H\subset Q_I\subset Q_{\alpha_i}\). 
The stabilizer of this divisor is thus exactly \(P_{\alpha_i}\). 

Given a \(B_0\)-stable prime divisor \(D_0\) of \(G_0/H_0\), the corresponding \(B\)-stable prime divisor is \(\frac{G\times D_0}{Q}\) of \(G/H\). 
It follows that \(\zeta(D)=\zeta_0(D)\). 
Note that this equality makes sense since the set of simple roots of \(G_0\) with respect to \(B_0\) is a subset of the set of simple roots of \(G\) with respect to \(B\) under the inclusion \(\pi^*\). 
\end{proof}

\begin{defn}
A spherical homogeneous space \(G/H\) obtained by parabolic induction from a torus (\(G_0=\bbG_m^k\), \(H_0=\{\id\}\)) is called a \emph{horospherical homogeneous space}. 
\end{defn}

\begin{rem}
\label{remark_horospherical_subgroups}
It follows from the definition and Example~\ref{example_torus_subgroups} that a horospherical homogeneous space is defined by the data of a parabolic subgroup \(Q\) and of a finite set \(\{\chi_1,\ldots,\chi_k\}\subset X^*(Q)\) of \(\bbZ\)-linearly independent characters of \(Q\). 
Note also that \(Q=N_G(H)\) and that one recovers the horospherical subgroup \(H\) as the intersection of kernels \(H=\bigcap_{i=1}^k\ker(\chi_i)\). 
\end{rem}

\begin{exa}
Let \(I\subset S\) and let \(\{\chi_1,\ldots,\chi_k\}\) be a \(\bbZ\)-linearly independent family in \(X^*(Q_I)\). 
Let \(H=\bigcap_{i=1}^k \ker (\chi_i:Q_I\to \bbG_m)\) be a horospherical subgroup. 
Then by Proposition~\ref{prop_parabolic_induction},
\(M\) is the lattice generated by the \(\chi_i\), 
\(\Sigma=\emptyset\), 
\(\mathcal{D}\simeq I\), 
\(\zeta(\alpha_i)=\{\alpha_i\}\) and \(\rho(\alpha_i)=\alpha_i^{\vee}|_M\). 
\end{exa}

By Example~\ref{example_sl2}, apart from the torus and its normalizer, all spherical subgroups of \(\SL_2\) are horospherical. 

\subsection{Equivariant automorphism group}

\begin{prop}{\cite[Section~5]{Brion_Pauer_1987}}
\label{prop_equivariant_automorphisms}
The group \(\Aut^G(G/H)=N_G(H)/H\) of \(G\)-equivariant automorphisms of \(G/H\) is diagonalizable. 
Its neutral component is a torus of dimension \(d=\dim(\mathcal{V}\cap(-\mathcal{V}))\), the dimension of the linear part of the valuation cone. 
\end{prop}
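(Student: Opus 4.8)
The plan is to realise \(A:=N_G(H)/H\) as a subgroup of a torus through its action on \(B\)-eigenfunctions, and then to match \(\operatorname{Lie}(A)\) with the lineality space \(\mathcal{V}\cap(-\mathcal{V})\). First I would recall that \(A\) acts on \(G/H\) by right translations, commuting with the left \(G\)-action, and that \(nH\mapsto(gH\mapsto gnH)\) identifies \(A\) with \(\Aut^G(G/H)\) (injectivity is immediate, surjectivity is standard since a \(G\)-automorphism is determined by the image of \(eH\)). Because \(A\) commutes with \(G\) it preserves the unique open \(B\)-orbit \(\Omega\) and acts on \(\bbC(G/H)\) preserving, for each \(\lambda\in M\), the line \(\bbC f_\lambda\) of \(B\)-eigenfunctions of weight \(\lambda\), which is one-dimensional by sphericity. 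Hence \(a\cdot f_\lambda=c_\lambda(a)f_\lambda\) for a character \(c_\lambda\colon A\to\bbC^*\), and \(\lambda\mapsto c_\lambda\) assembles into a homomorphism \(\Psi\colon A\to\Hom(M,\bbC^*)\). To see \(\Psi\) is injective, I would use the local structure theorem: an \(a\) fixing all \(f_\lambda\) acts trivially on the torus factor of \(\Omega\cong P^u\times Z/(Z\cap H)\), since the \(f_\lambda\) restrict to the characters of \(Z/(Z\cap H)\) and these separate points; one then checks \(a|_\Omega=\id\), so \(a=\id\) by density of \(\Omega\). Thus \(A\) is a closed subgroup of the torus \(\Hom(M,\bbC^*)\), so it is diagonalizable and \(A^\circ\) is a subtorus.

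It remains to compute \(\dim A^\circ\). Dualising \(\Psi\) gives a homomorphism \(M\to X^*(A)\); let \(K\) be its kernel. Then \(X^*(A^\circ)\) is \(M\) modulo the saturation of \(K\), so \(\dim A^\circ=\rank M-\rank K\), and under \(\operatorname{Lie}(\Hom(M,\bbC^*))=N\otimes\bbC\) we have \(\operatorname{Lie}(A^\circ)=K^{\perp}\) inside \(N\otimes\bbQ\). Since \(\mathcal{V}\cap(-\mathcal{V})=\{x:\langle\sigma,x\rangle=0\ \text{for all}\ \sigma\in\Sigma\}=\Sigma^{\perp}\), the whole statement reduces to the identity \(K^{\perp}=\Sigma^{\perp}\), equivalently \(\operatorname{span}_\bbQ K=\operatorname{span}_\bbQ\Sigma\) in \(M\otimes\bbQ\).

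I would prove \(K^\perp=\Sigma^\perp\) through the correspondence between cocharacters of \(A^\circ\) and \(G\)-invariant valuations. A cocharacter \(\gamma\colon\bbG_m\to A^\circ\), with image \(\xi\in N\), yields a \(\bbG_m\)-action on \(G/H\) by \(G\)-equivariant automorphisms acting on \(f_\lambda\) by \(t^{\langle\lambda,\xi\rangle}\). Fixing a complete \(G\)-model of \(G/H\), the limits of a generic point as \(t\to0\) and \(t\to\infty\) produce two \(G\)-invariant valuations with \(\rho\)-images \(\xi\) and \(-\xi\); both lie in \(\mathcal{V}\), so \(\xi\in\mathcal{V}\cap(-\mathcal{V})\) and \(\operatorname{Lie}(A^\circ)\subseteq\Sigma^\perp\). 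For the reverse inclusion, I would start from a rational \(\xi\in\mathcal{V}\cap(-\mathcal{V})\): both \(\pm\xi\) then arise from \(G\)-invariant valuations, and the reversibility of these two valuations is what permits integrating the infinitesimal central translation of \(Z/(Z\cap H)\) on \(\Omega\) into a genuine \(\bbG_m\)-action extending regularly over all of \(G/H\), i.e.\ a one-parameter subgroup of \(A^\circ\) with image \(\xi\).

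The main obstacle is exactly this last integration step: promoting a lineality-space valuation, which a priori only encodes a rational central translation on the open orbit, to an honest biregular \(\bbG_m\)-action on \(G/H\) free of indeterminacy. The content is that \(\xi\) and \(-\xi\) both lying in \(\mathcal{V}\) prevents the candidate action from contracting or creating any \(G\)-stable divisor, so that the action and its inverse are both regular; verifying this regularity on every simple toroidal \(G\)-model — equivalently, controlling how such an automorphism moves the \(G\)-invariant valuations — is where the real work lies, the diagonalizability and the inclusion \(\operatorname{Lie}(A^\circ)\subseteq\Sigma^\perp\) being comparatively formal.
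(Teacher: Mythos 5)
The paper itself offers no proof of this proposition — it is quoted directly from \cite[Section~5]{Brion_Pauer_1987} — so your attempt must be measured against the cited argument, whose overall architecture you have correctly reproduced: embed \(A=N_G(H)/H\) into \(\Hom(M,\bbC^*)\) via its action on the \(B\)-eigenlines, then match cocharacters of \(A^\circ\) with the lineality space of \(\mathcal{V}\). However, both halves of your proof have a genuine gap. The first is in the injectivity of \(\Psi\). Fixing every \(f_\lambda\) only forces \(a\) to act trivially on the quotient \(\Omega\to Z/(Z\cap H)\), i.e.\ transversally to the \(P^u\)-fibration of the open cell from Proposition~\ref{local_structure_theorem}; it says nothing about motion \emph{along} the \(P^u\)-fibers. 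Writing \(a\) as right translation by \(n\in N_G(H)\) and using that \(a\) preserves the open \(B\)-orbit, one finds that \(\ker\Psi\) consists of the classes \(nH\) with \(nH\subseteq P^uH\), and ruling these out is precisely the content of diagonalizability — ``one then checks \(a|_\Omega=\id\)'' begs the question. The gap is fixable with tools already in play: for a finite-order \(a\in\ker\Psi\) one gets \(a(x_0)=p_0x_0\) with \(p_0\in P^u\) and \(p_0^m\in P^u\cap H=\{e\}\) (by \(P\cap H=L\cap H\)), so \(p_0\) is a unipotent element of finite order, hence trivial in characteristic zero; and if \(\ker\Psi\) were positive-dimensional it would contain \(\bbG_a\) or \(\bbG_m\) acting freely on \(G/H\) and fixing all \(f_\lambda\), and degenerating a generic orbit in a complete embedding then yields a nontrivial \(G\)-invariant valuation \(v\) with \(\rho(v)=0\), contradicting the injectivity of \(\rho\) on invariant valuations (Proposition~\ref{prop_valuation_cone}). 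You supply none of this.

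The second gap you flag yourself: the inclusion \(\Sigma^\perp\subseteq\operatorname{Lie}(A^\circ)\), i.e.\ integrating a rational central element \(\xi\in\mathcal{V}\cap(-\mathcal{V})\) to a one-parameter group of equivariant automorphisms, is left as an announced ``obstacle'' rather than proved, so as written you only obtain \(\dim A^\circ\leq\dim(\mathcal{V}\cap(-\mathcal{V}))\) (and that modulo the first gap). Moreover, your proposed mechanism — checking regularity of a candidate birational \(\bbG_m\)-action on toroidal models — is not how the cited source resolves it. In \cite{Brion_Pauer_1987} the integration is algebraic, not geometric: one lets \(t\in\bbG_m\) act on each simple \(G\)-submodule of \(\bbC[G]^{(H)}\) of \(B\)-weight \(\lambda\) by \(t^{\langle\lambda,\xi\rangle}\), and the condition \(\pm\xi\in\mathcal{V}\) — through the second bullet of Proposition~\ref{prop_valuation_cone}, which says exactly that \(\langle\sigma,\xi\rangle=0\) for all ``tail'' weights \(\sigma\) of products \(f_1f_2f^{-1}\) — is what makes this scaling multiplicative, hence an algebra automorphism commuting with \(G\), which then descends to the desired one-parameter subgroup of \(A^\circ\) with cocharacter \(\xi\). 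So your high-level plan is the right one and matches Brion–Pauer, but the two steps where the actual content of the proposition lives (no unipotent equivariant automorphisms; integration of central valuations) are respectively incorrect as sketched and missing.
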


\begin{rem}
Note that a spherical homogeneous space \(G/H\) is thus also spherical under the action of the connected reductive group \(G\times \Aut^{G,0}(G/H)\), and under the action of its image in \(\Aut(G/H)\). 
\end{rem}

\begin{cor}{\cite[Corollaire~5.4]{Brion_Pauer_1987}}
\label{cor_horospherical_by_valuation_cone}
A spherical homogeneous space \(G/H\) is horospherical if and only if the valuation cone is \(\mathcal{V}=N\otimes \bbQ\), equivalently, \(\Sigma=\emptyset\). 
\end{cor}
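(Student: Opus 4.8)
The plan is to prove the three conditions equivalent by disposing first of the trivial equivalence and the easy implication, then concentrating on the one substantial direction. The equivalence \(\mathcal V = N\otimes\bbQ \Leftrightarrow \Sigma=\emptyset\) is immediate from the definition of \(\Sigma\): the spherical roots are by definition the primitive inner normals to the facets of \(\mathcal V\), so \(\mathcal V\) is the whole space exactly when it has no facet, i.e. \(\Sigma=\emptyset\). For the implication horospherical \(\Rightarrow \Sigma=\emptyset\), I would invoke parabolic induction: by definition a horospherical \(G/H\) is induced from a torus \(G_0=\bbG_m^k\) (with \(H_0\) trivial), and Proposition~\ref{prop_parabolic_induction} gives \(\Sigma=\Sigma_0\). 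Since \(G_0\) is a torus it has no roots, so Proposition~\ref{prop_spherical_root_as_sum_of_positive_roots} forces \(\Sigma_0=\emptyset\), whence \(\Sigma=\emptyset\).

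The real content is the converse, \(\mathcal V = N\otimes\bbQ \Rightarrow G/H\) horospherical. Here is the route I would take. Since \(\mathcal V\) is linear we have \(\mathcal V\cap(-\mathcal V)=N\otimes\bbQ\), so Proposition~\ref{prop_equivariant_automorphisms} gives that \(A:=\Aut^{G,0}(G/H)\) is a torus of dimension \(\dim N=\rank(G/H)=:r\). Set \(Q:=H\cdot N_G(H)^0\), a subgroup with \(H\trianglelefteq Q\) and \(Q/H=A\); the group \(A\) acts freely on \(G/H\) by \(G\)-equivariant automorphisms, with geometric quotient \(G/Q\), so \(G/Q\) is spherical and \(\dim(G/Q)=\dim(G/H)-r\). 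I would then show that \(G/H\) and \(G/Q\) have the \emph{same} adapted parabolic \(P\). The key point is that \(A\), acting by \(G\)-equivariant (hence \(B\)-equivariant) automorphisms, permutes the finitely many colors of \(G/H\); being connected it fixes each one setwise. Thus every color of \(G/H\) is \(A\)-stable and, \(A\) acting freely with connected fibers, descends to a color of \(G/Q\) with the same image under \(\zeta\); conversely the preimage of any color of \(G/Q\) is a single color of \(G/H\). Hence the union \(I=\bigcup_D\zeta(D)\) is the same for both spaces, so the adapted parabolics coincide.

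Two applications of Corollary~\ref{cor_local_structure_theorem} then finish the argument. For \(G/H\) it gives \(\dim(G/H)=r+\dim(G/P)\), so \(\dim(G/Q)=\dim(G/P)\); for \(G/Q\) it gives \(\dim(G/Q)=\rank(G/Q)+\dim(G/P)\). Comparing the two yields \(\rank(G/Q)=0\), so by the converse part of Example~\ref{exa_rk_0} the group \(Q\) is parabolic. Now \(Q/H=A\) is a torus of dimension \(r\), so choosing an isomorphism \(Q/H\cong\bbG_m^r\) exhibits \(H=\bigcap_{i=1}^r\ker\chi_i\) for \(\bbZ\)-independent characters \(\chi_i\in X^*(Q)\); by Remark~\ref{remark_horospherical_subgroups} this is exactly the statement that \(G/H\) is horospherical (and incidentally identifies \(Q\) with \(N_G(H)\)).

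I expect the main obstacle to be the converse direction, and within it the identification of the adapted parabolics of \(G/H\) and \(G/Q\). One must check carefully that the colors genuinely descend to colors under the free \(A\)-quotient, namely that their images remain prime, \(B\)-stable and of codimension one, and that passing to \(Q\) neither creates nor destroys stabilizers, so that the set \(I\) is truly unchanged. Once the adapted parabolic is shown to be the same, the dimension bookkeeping through Corollary~\ref{cor_local_structure_theorem} and the conclusion via Example~\ref{exa_rk_0} and Remark~\ref{remark_horospherical_subgroups} are formal.
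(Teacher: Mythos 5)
The paper contains no proof of this statement to compare against: it is imported verbatim from Brion--Pauer \cite{Brion_Pauer_1987}, Corollaire~5.4. Your reconstruction is correct, and it is essentially the classical argument. The trivial equivalence and the easy direction are fine as written (for the latter, \(\Sigma=\Sigma_0\) by Proposition~\ref{prop_parabolic_induction} and a torus has no positive roots, so Proposition~\ref{prop_spherical_root_as_sum_of_positive_roots} forces \(\Sigma_0=\emptyset\)). For the converse, the point you rightly single out as delicate --- that \(G/H\) and \(G/Q\) have the same adapted parabolic, where \(Q=H\cdot N_G(H)^0\) and \(A=Q/H=\Aut^{G,0}(G/H)\) is a torus of dimension \(r=\rank(G/H)\) by Proposition~\ref{prop_equivariant_automorphisms} --- does hold, and your sketch closes up: \(A\) permutes the finitely many colors, and since \(A\cdot\overline{D}\) is irreducible and contained in the finite union of color closures it lies in a single one, so each color is \(A\)-stable; the \(A\)-action being free, \(\pi(D)=D/A\) is a \(B\)-stable prime divisor of the spherical homogeneous space \(G/Q\), hence a color; and comparing stabilizers both ways (\(gD=D\Rightarrow g\pi(D)=\pi(D)\), while conversely the stabilizer of \(\pi(D)\), a connected parabolic, permutes hence stabilizes each irreducible component of \(\pi^{-1}(\pi(D))\), in particular \(D\)) gives \(\zeta(D)=\zeta(\pi(D))\) exactly, so the unions \(I=\bigcup_D\zeta(D)\) agree and the adapted parabolics \(P_I\) coincide. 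Your stronger claim that the preimage of a color of \(G/Q\) is a single color is also true (over a color, \(\pi\) restricts to an open map with irreducible fibers isomorphic to \(A\) onto an irreducible base, so the preimage is irreducible), though as you note only equality of the unions is needed. The endgame is sound: \(\rank(G/Q)=0\) by two applications of Corollary~\ref{cor_local_structure_theorem}, \(Q\) parabolic by the converse in Example~\ref{exa_rk_0}, \(Q^u\subset H\) because the image of \(Q^u\) in the torus \(Q/H\) is unipotent hence trivial, and \(H=\bigcap_i\ker\chi_i\) for a basis \((\chi_i)\) of \(X^*(Q/H)\subset X^*(Q)\), which is Remark~\ref{remark_horospherical_subgroups}; your parenthetical \(Q=N_G(H)\) also follows, since \(N_G(H)\) normalizes \(Q\) while a parabolic is self-normalizing. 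The only pedantic caveat: when \(Q=G\) (e.g.\ \(G\) a torus acting on itself), the paper's definition of parabolic induction requires a \emph{proper} parabolic, so in that degenerate case the conclusion must be read with the torus granted as horospherical by convention.
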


Combining this characterization with Proposition~\ref{prop_spherical_root_as_sum_of_positive_roots}, we can get a description of the spherical roots when there are few options available. 

\begin{cor}
\label{cor_Sigma_SL_2}
Assume that \(G=\SL_2\times \bbG_m^n\) for some \(n\in \bbZ_{\geq 0}\), and that \(G/H\) is \emph{not} horospherical. 
Then \(\Sigma\) is the singleton formed by the primitive positive multiple of \(\alpha_1\) in \(M\). 
\end{cor}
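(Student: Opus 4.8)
The plan is to combine the structural constraint on spherical roots from Proposition~\ref{prop_spherical_root_as_sum_of_positive_roots} with the characterization of horosphericity in Corollary~\ref{cor_horospherical_by_valuation_cone}, exploiting that the root system of \(G = \SL_2 \times \bbG_m^n\) is extremely small.

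First I would observe that the torus factor \(\bbG_m^n\) contributes no roots, so the root system of \(G\) coincides with that of \(\SL_2\): the unique positive root is \(\alpha_1\). By Proposition~\ref{prop_spherical_root_as_sum_of_positive_roots}, every element of \(\Sigma\) is, up to a positive multiple, either a positive root or the sum of two strongly orthogonal positive roots. The second alternative requires two \emph{distinct} positive roots (a root is never orthogonal to itself), which is impossible here since \(\alpha_1\) is the only positive root. Hence every element of \(\Sigma\) lies on the ray \(\bbQ_{>0}\alpha_1\).

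Next I would use that, by definition, \(\Sigma\) consists of \emph{primitive} elements of \(M\). Any ray in \(M \otimes \bbQ\) contains at most one primitive element of \(M\), so the previous step forces \(\lvert\Sigma\rvert \leq 1\). Since \(G/H\) is assumed not horospherical, Corollary~\ref{cor_horospherical_by_valuation_cone} gives \(\Sigma \neq \emptyset\). Therefore \(\Sigma\) is a singleton whose unique element, being a primitive element of \(M\) on the ray \(\bbQ_{>0}\alpha_1\), is exactly the primitive positive multiple of \(\alpha_1\) in \(M\); in particular \(M \cap \bbQ_{>0}\alpha_1\) is nonempty, so this element is well defined.

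The argument is essentially immediate once the two cited results are in hand. The only point requiring a moment's care, and the sole potential obstacle, is ruling out the ``sum of two strongly orthogonal positive roots'' case; this is dispatched by noting that strong orthogonality presupposes distinctness of the two roots, which cannot occur when there is a single positive root.
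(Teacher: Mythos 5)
Your proof is correct and follows essentially the same route as the paper, which presents this corollary as an immediate combination of Proposition~\ref{prop_spherical_root_as_sum_of_positive_roots} (only one positive root, so every spherical root is a positive multiple of \(\alpha_1\), and primitivity in \(M\) leaves at most one candidate) with Corollary~\ref{cor_horospherical_by_valuation_cone} (non-horospherical forces \(\Sigma\neq\emptyset\)). Your explicit remark ruling out the strongly-orthogonal-pair case is a sound way to make the paper's implicit step precise, and is harmless in any event since such a sum would still be a multiple of \(\alpha_1\).
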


\subsection{Fano embeddings and \(G/H\)-reflexive polytopes}

We recall the combinatorial caracterization of Fano Gorenstein spherical embeddings of \(G/H\) obtained from \cite{Brion_1989,Brion_1997} in \cite{Gagliardi_Hofscheier_2015_fano}, adding in the locally factorial condition. 

Let \(\kappa\) be the sum of all roots of the adapted parabolic \(P\).
For \(D\in \mathcal{D}\), set \(m_D:=1\) if \(\zeta(D)\cap \left(\Sigma\cup \frac{1}{2}\Sigma\right)\neq \emptyset\), and \(m_D:=\langle \alpha^{\vee}, \kappa \rangle\) for \(\alpha\in \zeta(D)\) otherwise. 

\begin{defn}
\label{defn_reflexive}
A polytope \(\Omega\subset N\otimes \bbQ\) with set of vertices \(V(\Omega)\) is called \emph{locally factorial \(G/H\)-reflexive} if: 
\begin{enumerate}
    \item \(0\in \Int(\Omega)\),
    \item \(\forall D\in \mathcal{D}, \frac{\rho(D)}{m_D}\in \Omega\),
    \item \(V(\Omega)\subset \left((N\cap\mathcal{V}) \cup \{\frac{\rho(D)}{m_D}, D\in \mathcal{D}\}\right)\),
    \item for every facet \(F\) of \(\Omega\) such that \(\cone(F)\cap \Int(\mathcal{V})\neq \emptyset\), let \(\mathcal{D}_F=\{D\in \mathcal{D}\mid \frac{\rho(D)}{m_D} \in F\}\), then 
    \begin{enumerate}
        \item \(\rho:\mathcal{D}_F\to \cone(F)\) is injective,
        \item \(V(F)=\{\frac{\rho(D)}{m_D}\mid D\in \mathcal{D}_F\}\cup\mathcal{C}_F\), and \(\mathcal{C}_F\cup \rho(\mathcal{D}_F)\) forms a basis of \(N\) .
    \end{enumerate}
\end{enumerate}
\end{defn}

\begin{rem}
In \cite[Definition~2.10]{Gagliardi_Hofscheier_2015_fano}, with their notation,  the colored cone associated to a supported cone spanned by a facet \(F\) of the polytope should read \((\cone(F),\{D_i \mid \frac{\rho(D_i)}{m_i}\})\) instead of \((\cone(F),\rho^{-1}(F))\) (compare \cite[Remarque~3.3]{Pasquier_2008}). 
\end{rem}

\begin{prop}{\cite[Theorem~1.9]{Gagliardi_Hofscheier_2015_fano}}
The set of isomorphism classes of locally factorial Fano spherical embeddings of \(G/H\) is in bijection with the set of locally factorial \(G/H\)-reflexive polytopes. 
\end{prop}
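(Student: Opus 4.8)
The plan is to obtain the bijection as a refinement of the Luna--Vust classification of spherical embeddings, using Brion's formula for the anticanonical class to identify Gorenstein Fano embeddings with a class of polytopes, and then reading off local factoriality as the extra combinatorial condition~(4); this is essentially \cite[Theorem~1.9]{Gagliardi_Hofscheier_2015_fano}, so I would mostly reassemble its ingredients while paying attention to the normalizing coefficients $m_D$.

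First I would recall that isomorphism classes of (normal) spherical embeddings of $G/H$ are in bijection with colored fans supported on $N\otimes\bbQ$, the complete ones corresponding to complete colored fans. In such a fan each maximal colored cone $(\mathcal{C},\mathcal{F})$ records a combination of a strictly convex cone $\mathcal{C}\subset N\otimes\bbQ$ and a subset $\mathcal{F}\subset\mathcal{D}$ of colors; the $G$-stable prime divisors of the embedding correspond to the rays of $\mathcal{C}$ meeting $\Int(\mathcal{V})$, with primitive generators lying in $N\cap\mathcal{V}$, whereas the colors $D\in\mathcal{F}$ contribute the lattice points $\rho(D)\in N$. This already makes clear which data a polytope must encode: its vertices should be the $G$-stable primitive generators (points of $N\cap\mathcal{V}$) together with points attached to colors, and its facets should correspond to the maximal colored cones, recovered as $\cone(F)$.

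Next comes the Fano condition. By Brion's formula one has $-K_X=\sum_E E+\sum_{D\in\mathcal{D}}m_D D$, the sum being over $G$-stable prime divisors $E$ and over colors, with $m_D$ exactly the coefficient fixed before Definition~\ref{defn_reflexive}. The divisor $-K_X$ is ample and Cartier, that is $X$ is Gorenstein Fano, precisely when the $B$-stable piecewise linear function it defines on the fan is strictly convex and takes integral linear values on each maximal cone. Dualizing this support function produces a polytope $\Omega$ whose cones over facets reconstitute the fan, and a short computation with the coefficients shows that each $G$-stable generator sits on $\partial\Omega$ while the color $D$ contributes the boundary point $\rho(D)/m_D$ (the division by $m_D$ being forced by the coefficient of $D$ in $-K_X$). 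In this dictionary, conditions~(1)--(3) of Definition~\ref{defn_reflexive} translate respectively the facts that $X$ is complete with $-K_X$ ample (so $0\in\Int(\Omega)$), that the colors give genuine boundary points $\rho(D)/m_D\in\Omega$, and that no spurious vertices occur beyond the $G$-stable generators and the scaled colors. This is the content of the Gorenstein reflexive correspondence.

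Finally I would add local factoriality through the standard combinatorial criterion for spherical varieties (see \cite{Brion_1997,Timashev_2011}): $X$ is locally factorial if and only if every Weil divisor is Cartier, which holds exactly when, for every maximal colored cone $(\mathcal{C},\mathcal{F})$, the primitive ray generators of $\mathcal{C}$ together with the images $\rho(D)$ for $D\in\mathcal{F}$ form a $\bbZ$-basis of $N$; in particular $\rho$ must be injective on $\mathcal{F}$ and $\mathcal{C}$ must be simplicial. Transporting this to $\Omega$, a maximal cone is $\cone(F)$ for a facet $F$ meeting $\Int(\mathcal{V})$, its colors are $\mathcal{D}_F$ and its non-color primitive generators are $\mathcal{C}_F$, so the criterion becomes precisely~(4a) (injectivity of $\rho$ on $\mathcal{D}_F$) together with~(4b) (the set $\mathcal{C}_F\cup\rho(\mathcal{D}_F)$ is a basis of $N$). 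I expect the main obstacle to be the bookkeeping of the two different normalizations: the vertices of $F$ are the \emph{scaled} points $\rho(D)/m_D$, whereas the basis condition in~(4b) involves the \emph{unscaled} images $\rho(D)$ (the actual divisorial valuations), and one must check that a ray of $\cone(F)$ carries the correct primitive lattice generator in each case, exactly the point addressed in the Remark following Definition~\ref{defn_reflexive}. One should also verify that facets $F$ with $\cone(F)\cap\Int(\mathcal{V})=\emptyset$ impose no factoriality constraint, as they lie along the boundary governed by colors alone.
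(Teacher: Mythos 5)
Your reconstruction is correct and follows essentially the same route as the source: the paper does not prove this statement but cites it directly from Gagliardi--Hofscheier, whose argument is exactly the combination you assemble (Luna--Vust colored fans, Brion's anticanonical formula $-K_X=\sum_E E+\sum_D m_D D$ giving the Gorenstein Fano/reflexive dictionary with the scaled points $\rho(D)/m_D$, and the basis criterion for local factoriality yielding condition~(4)). Your closing caveat about the scaled versus unscaled color points is precisely the normalization issue the paper flags in the remark following Definition~\ref{defn_reflexive}, so nothing is missing.
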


\begin{exa}
\label{LFF_SL_2_surfaces}
For \(\SL_2\)-varieties, the possible normal equivariant embeddings are well known, and it is easy to check which among them are locally factorial Fano. 
In the non-horospherical case, the only examples are \(\bbP^1\times \bbP^1\) and \(\bbP^2\), which are smooth and Fano. 
In the horospherical case, apart from the rank zero \(\bbP^2\), one gets two embeddings for each choice of horospherical subgroup \(H=\ker(a\varpi_1:B^-\to \bbG_m)\) for \(a\in \bbZ_{>0}\): the weighted projective plane \(\bbP(1,1,a)\) and the \(\bbP^1\)-bundle  \(\bbP_{\bbP^1}(\mathcal{O}\oplus \mathcal{O}(a))\). 
The only examples that are Fano and locally factorial are for \(a=1\): \(\bbP^2=\bbP(1,1,1)\) and  \(\Bl_{\text{point}}\bbP^2\simeq \bbP_{\bbP^1}(\mathcal{O}\oplus \mathcal{O}(1))\).  

This last statement can be recovered quickly from Gagliardi and Hofscheier's result: consider the case \(H=\ker(a\varpi_1)\). 
One has \(M=a\varpi_1\bbZ\). 
We let \(\xi\) denote the generator of \(N\) such that \(\xi(a\varpi_1)=1\). 
Let \(\Omega\) be a locally factorial \(G/H\)-reflexive polytope, then \(\Omega=[d\xi,e\xi] \subset N\otimes \bbQ\) and from the definition, we have the following conditions: 
\begin{enumerate}
    \item \(d<0<e\)
    \item \(\frac{a}{2}\leq e\)
    \item \(d\in \bbZ\) and \(e\in \bbZ\cup\{\frac{a}{2}\}\)
    \item \begin{enumerate}
        \item no condition
        \item \(d=-1\) and \(e=1\neq \frac{a}{2}\) or \(a=1=2e\)
    \end{enumerate}
\end{enumerate}
To exclude \(a\geq 2\) we first focus on the last condition: it excludes \(a=2\) and imposes \(e=1\) if \(a>2\). But if \(e=1\) we have by the second condition \(a=1\) or \(2\), thus \(a=2\), a contradiction. 
One easily checks that the conditions are satisfied if \(a=1\). 
\end{exa}

\begin{exa}
\label{exa_smooth_Fano_polytope}
In the toric case \(G=\bbG_m^n\), \(H=\{1\}\), we recover the definition of a \emph{smooth Fano polytope}: a polytope in \(M\otimes \bbQ\) with vertices in \(M\), which contains the origin as an interior point, and such that the collection of vertices of any facet is a basis of \(M\). 
\end{exa}

\subsection{Geometry of the associated Fano manifold}

For each of the locally factorial Fano  embeddings obtained, one can read off from the locally factorial \(G/H\)-reflexive polytope some aspects of its geometry, such as a description of its Picard group, Fano index, moment polytope, anticanonical degree and K-stability. This is possible thanks to the following combinatorial translations of these data for spherical varieties. 

Let \(X\) be a locally factorial Fano spherical embedding of \(G/H\), and let \(\Delta\) be the corresponding locally factorial \(G/H\)-reflexive polytope. 
Then it follows from \cite{Gagliardi_Hofscheier_2015_fano} that \(G\)-stable prime divisors in \(X\) correspond to vertices of \(\Delta\) which are in the valuation cone, and are not of the form \(\frac{\rho(D)}{m_D}\). 
Let \(\mathcal{I}\) denote this set of divisors, and let us denote by \(D_v\) the \(G\)-stable prime divisor associated to the vertex \(v\). 

From \cite{Brion_1989}, since \(X\) is locally factorial, the Picard group of \(X\) is generated by \(\mathcal{D}\cup \mathcal{I}\), and the relations are given by 
\[ 
\sum_{D\in \mathcal{D}} \langle\rho(D),\lambda\rangle D + \sum_{D_v\in \mathcal{I}} \langle v,\lambda\rangle D_v = 0 
\]
for \(\lambda\in M\). 

From \cite{Brion_1997}, we know a \(B\)-stable anticanonical divisor: it is given by 
\[ -K_X = \sum_{D\in \mathcal{D}} m_D D + \sum_{D_v\in \mathcal{I}} D_v \]
with the same notation \(m_D\) as in the previous section. 
Using these two results, we can easily recover the Fano index of \(X\) from its associated combinatorial data. 
Note that these results obviously predate Gagliardi and Hofscheier's characterization of Fano spherical embeddings and are used as essential ingredients in \cite{Gagliardi_Hofscheier_2015_fano}.

By \cite{Brion_1989} combined with \cite{Brion_1997}, we further know the anticanonical moment polytope \(\Delta^+(X,K_X^{-1})\) of \(X\) and the anticanonical degree \((K_X^{-1})^{\dim(X)}\) of \(X\). 
Namely, the moment polytope is given by 
\[ \Delta^+(X,K_X^{-1}) = \kappa + \{ m\in M\otimes \bbR \mid \forall D \quad \frac{\rho(D)}{m_D}(m)+1 \geq 0 \} \]
where \(\kappa\) still denotes the sum of all roots of the adapted parabolic, \(D\) runs over all \(B\)-stable prime divisors in \(X\), and a divisor is identified with its associated divisorial valuation.  
This is the translate by \(\kappa\) of the polytope dual to the locally factorial \(G/H\)-reflexive polytope \(\Omega\). 

Let \(R_{P^u}\) denote the set of positive roots of the unipotent radical of \(P\). 
The anticanonical degree is given by 
\[ \degree(X):=(K_X^{-1})^{(\dim(X))} = (\dim(X))! \int_{-\kappa+\Delta^+} \prod_{\alpha\in R_{P^u}}\frac{\left\{ \kappa+p, \alpha \right\} }{\left\{ \varrho, \alpha \right\}} \mathop{d\lambda}(p) \]
where \(\mathop{d\lambda}\) denotes the Lebesgue measure on \(M\otimes \bbR\) normalized by \(M\), \(\left\{,\right\}\) denote the Killing form and \(\varrho\) denotes the half sum of all positive roots of \(G\). 

Finally, it follows from \cite{Delcroix_2020} that \((X, K_X^{-1})\) is K-semistable if and only if the point 
\[ \int_{-\kappa + \Delta^+} p \prod_{\alpha\in R_{P^u}}\frac{\left\{ \kappa+p, \alpha \right\} }{\left\{ \varrho, \alpha \right\}} \mathop{d\lambda}(p) \in M\otimes \bbR \]
is in the closed convex cone generated by \(\Sigma\). 
Furthermore, it is K-stable, hence \(X\) admits a Kähler-Einstein metric, if and only if the above point is in the relative interior of the cone generated by \(\Sigma\). 

To make the above quantities easily computable, we will give an explicit description of the function  
\[ f: M\otimes \bbR \to \bbR, p \mapsto \prod_{\alpha\in R_{P^u}}\frac{\left\{ \kappa+p, \alpha \right\} }{\left\{ \varrho, \alpha \right\}} \]
when describing the combinatorial data of spherical homogeneous spaces. 

\section{Spherical homogeneous spaces of dimension four or less}
\label{section_homogeneous_spaces}

In this section, we classify all possible spherical homogeneous spaces of dimension four or less, regardless of whether or not they admit Fano embeddings. 

\subsection{Classification assumptions and first consequences}

For the purpose of the classification, we will make the following assumptions on \(G\) and \(G/H\). 

\begin{assumption}
\label{assumptions}
We assume that \(G=G^{sc}\times C\) where \(G^{sc}\) is a semisimple, simply connected group and \(C\simeq \bbG_m^n\) is a torus. 
We assume that the action of \(G^{sc}\) on \(G/H\) has finite kernel and that the action of \(C\) on \(G/H\) is faithful. 
\end{assumption}

We derive some first consequences of these assumptions for later use. 

\begin{rem}
\label{rem_rank_lower_bound}
From Proposition~\ref{local_structure_theorem}, we obtain that the rank of \(G/H\) is at least \(\dim(C)\) if Assumption~\ref{assumptions} is satisfied. 
\end{rem}

\begin{prop}
\label{horospherical_by_torus_dimension}
Under Assumption~\ref{assumptions}, assume furthermore that \(\dim(C)=\rank(G/H)\). 
Then \(G/H\) is horospherical. 
Conversely, if \(G/H\) is horospherical, then there exists a torus \(\tilde{C}\) such that \(\dim(C\times \tilde{C})=\dim(G/H)-\dim(G/P)\), and \(G\times C\times \tilde{C}\) acts transitively on \(G/H\) satisfying the above assumptions.  
\end{prop}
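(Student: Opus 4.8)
The plan is to treat the two implications separately, reducing both to the criterion that \(G/H\) is horospherical if and only if \(\Sigma=\emptyset\) (Corollary~\ref{cor_horospherical_by_valuation_cone}).

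For the direct implication, I would first record that faithfulness of the central torus \(C\) forces \(C\cap H=\{1\}\): a central element \(c\) acts trivially on \(G/H\) exactly when \(g^{-1}cg=c\in H\). Being central and connected, \(C\) lies in the connected center \(Z\) of any Levi \(L\) of the adapted parabolic \(P\), so \(C\cap(Z\cap H)=\{1\}\), and the local structure theorem (Proposition~\ref{local_structure_theorem}) exhibits \(M=X^*(Z/(Z\cap H))\) together with a closed immersion \(C\hookrightarrow Z/(Z\cap H)\); dualizing gives a surjection \(M\twoheadrightarrow X^*(C)\) of diagonalizable groups' character lattices. Under the hypothesis \(\dim(C)=\rank(G/H)\), the free \(\bbZ\)-modules \(M\) and \(X^*(C)\) have the same rank, so this surjection has trivial kernel. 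Now by Proposition~\ref{prop_spherical_root_as_sum_of_positive_roots} each spherical root is, up to a rational multiple, a positive root or a sum of two strongly orthogonal positive roots of \(G=G^{sc}\times C\); as all roots of \(G\) are trivial on the central factor \(C\), every element of \(\Sigma\) lies in \(\ker(M\to X^*(C))=\{0\}\). Since spherical roots are primitive and nonzero, this forces \(\Sigma=\emptyset\), hence \(G/H\) is horospherical.

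For the converse I would enlarge the torus factor by the equivariant automorphism group. Horosphericality gives \(\mathcal V=N\otimes\bbQ\), so by Proposition~\ref{prop_equivariant_automorphisms} the torus \(A:=N_G(H)/H\) has dimension \(\dim(\mathcal V\cap(-\mathcal V))=\rank(G/H)\), which by Corollary~\ref{cor_local_structure_theorem} equals \(\dim(G/H)-\dim(G/P)\). The torus \(A\) acts on \(G/H\) on the right through \(gH\cdot\overline q=gqH\), commuting with the left \(G\)-action. Again using \(C\cap H=\{1\}\), the central factor embeds as a subtorus \(C\hookrightarrow A\), \(c\mapsto cH\); the key structural point is that for central \(c\) the left action of \(c\) coincides with the right action of its image in \(A\), so one must not use all of \(A\). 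Splitting the short exact sequence of diagonalizable groups \(1\to C\to A\to A/C\to 1\) (possible since it induces a surjection of free character lattices), I obtain a complementary subtorus \(\tilde C\subset A\) with \(A\cong C\times\tilde C\), whence \(\dim(C\times\tilde C)=\dim A=\dim(G/H)-\dim(G/P)\). It then remains to check that \(G^{sc}\times(C\times\tilde C)\), acting by the left \(G\)-action and the right \(\tilde C\subset A\)-action, satisfies Assumption~\ref{assumptions}: transitivity is immediate since \(G\) already acts transitively, \(G^{sc}\) still acts with finite kernel because its action is unchanged, and faithfulness of \(C\times\tilde C\) follows since if \((c,t)\) acts trivially then a representative of \(ct\) lies in \(H\), i.e. \(\overline c\cdot t=1\) in \(A\) with \(\overline c\in C\) and \(t\in\tilde C\) meeting only at the identity, forcing \(\overline c=t=1\) and hence \(c=t=1\).

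I do not expect a serious obstacle: the direct implication is a rank count once the surjection \(M\twoheadrightarrow X^*(C)\) and the triviality of spherical roots on \(C\) are in place, and the converse is a direct construction. The only genuinely delicate bookkeeping is the overlap between the left \(C\)-action and the right \(A\)-action, which is precisely why the complement \(\tilde C\) of \(C\) in \(A\) (rather than \(A\) itself) must be used to enlarge the torus factor while keeping a faithful action and a product group structure of the required shape.
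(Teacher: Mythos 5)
Your proof is correct, and your forward implication takes a genuinely different route from the paper's. The paper argues in two lines through the equivariant automorphism group: faithfulness of the central torus gives an embedding \(C\hookrightarrow \Aut^G(G/H)\), so Proposition~\ref{prop_equivariant_automorphisms} forces \(\dim(\mathcal{V}\cap(-\mathcal{V}))\geq \dim(C)=\rank(G/H)\), whence \(\mathcal{V}=N\otimes\bbQ\) and horosphericality by Corollary~\ref{cor_horospherical_by_valuation_cone}. You instead extract from the local structure theorem the restriction surjection \(M=X^*(Z/(Z\cap H))\twoheadrightarrow X^*(C)\), note it is injective by the rank count, and kill \(\Sigma\) using Proposition~\ref{prop_spherical_root_as_sum_of_positive_roots} together with the vanishing of all roots of \(G=G^{sc}\times C\) on the central factor. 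Both are valid: the paper's version is shorter and reuses a statement needed anyway for the converse, while yours stays at the level of lattices and makes explicit \emph{why} the hypothesis \(\dim(C)=\rank(G/H)\) pins the weight lattice against the central torus, which is closer in spirit to the normal-form manipulations used later (e.g.\ in Corollary~\ref{cor_all_H_SL2}). For the converse, the paper merely says to apply the same two results in reverse order; your write-up supplies exactly the details this hides, in particular the genuinely delicate point that the left action of \(C\) and the right action of \(N_G(H)/H\) overlap on \(C\), so one must split off a complement \(\tilde{C}\) of \(C\) inside the automorphism torus rather than adjoin all of it. Two cosmetic remarks: Proposition~\ref{prop_equivariant_automorphisms} only asserts that the \emph{neutral component} of \(N_G(H)/H\) is a torus, so you should formally work with that component (in the horospherical case \(N_G(H)=Q\) is parabolic and \(Q/H\) is in fact already a torus, cf.\ Remark~\ref{remark_horospherical_subgroups}); and your reading of the acting group as \(G^{sc}\times(C\times\tilde{C})\) is indeed the intended meaning of the statement's \(G\times C\times \tilde{C}\).
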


\begin{proof}
Assume that \(\dim(C)=\rank(G/H)\). 
Since \(C\) acts faithfully on \(G/H\), we have \(\dim(\Aut^G(G/H))\geq \dim(C)=\rank(G/H)\).  
By Proposition~\ref{prop_equivariant_automorphisms}, we deduce that \(\mathcal{V}\cap -\mathcal{V} = N\otimes \bbQ\), and thus by Corollary~\ref{cor_horospherical_by_valuation_cone}, that \(G/H\) is horospherical.

The converse follows from applying Corollary~\ref{cor_horospherical_by_valuation_cone} and Proposition~\ref{prop_equivariant_automorphisms} in the reverse order. 
\end{proof}

\begin{rem}
\label{rem_rank_group}
For a \(G\)-variety \(X\), any orbit of \(G\) in \(X\) is locally closed.  
If \(G=T\) is a torus and the action is faithful, then there is an orbit isomorphic to \(T\) as a variety. 
In particular, the dimension of \(T\) must be less than that of \(X\), and if \(X\) is normal and \(\dim(T)=\dim(X)\), then \(X\) is a toric variety. 
In particular, under Assumption~\ref{assumptions}, we have 
\(\rank(G)=\dim(C)+\rank(G^{sc})\leq \dim(G/H)\), and equality implies that \(X\) is toric under the action of a maximal torus of \(G\). 
\end{rem}

\subsection{Idea of the classification}

Let \(G=G^{sc}\times C\) be a connected complex reductive group and let \(G/H\) be a spherical homogeneous space satisfying Assumption~\ref{assumptions}. 
There are two ingredients in the classification of spherical homogeneous spaces of low dimension: 
\begin{enumerate}
    \item the consequences of the local structure theorem in Corollary~\ref{cor_local_structure_theorem}, which together with Assumption~\ref{assumptions} allows to put strong restrictions on the possible groups \(G\)
    \item the classification of Lie subalgebras of low rank complex semisimple groups \cite{Douglas_Repka_2016,Douglas_Repka_2016_so4} . 
\end{enumerate}
In order to use Corollary~\ref{cor_local_structure_theorem} to its full extent, we first recall the classification of projective homogeneous spaces of dimension less than four in Table~\ref{table:PHS}, which exhaust the possibilities for \(G/P\) where \(P\) is the adapted parabolic subgroup. 
The notation \(W\) denotes the variety of full flags in \(\bbC^3\), and \(Q^n\) denotes the quadric of dimension \(n\).

\begin{table}
\begin{equation*}
\begin{array}{ccccc}
\toprule
G & X=G/P & \dim & \pic & \degree \\
\midrule
\SL_2 & \bbP^1 & 1 & 1 & 2 \\
\midrule
\SL_3 & \bbP^2 & 2 & 1 & 9  \\
\SL_2^2  & \bbP^1\times \bbP^1  & 2 &2 & 8 \\
\midrule
\SL_3  & W & 3 & 2 & 48 \\
\Sp_4   & Q^3 & 3 & 1 & 54 \\
\Sp_4   & \bbP^3 & 3 & 1 & 64 \\
\SL_2^3  & \bbP^1\times \bbP^1\times \bbP^1 & 3 & 3 & 48\\
\SL_3\times \SL_2  & \bbP^2\times \bbP^1 & 3 & 2 & 54\\
\SL_4  & \bbP^3 & 3 & 1 & 64\\
\midrule
\SL_3\times \SL_2   & W\times \bbP^1 & 4 & 3 & 384 \\
\Sp_4\times \SL_2  & Q^3\times \bbP^1 &4 & 2 & 432 \\
\Sp_4\times \SL_2  & \bbP^3\times \bbP^1 & 4 & 2 & 512 \\
\SL_4  & Q^4 & 4 & 1 & 512 \\
\SL_2^4  & \bbP^1\times \bbP^1\times\bbP^1\times \bbP^1 & 4 & 4 & 384 \\
\SL_3\times \SL_2^2  & \bbP^2\times \bbP^1 \times \bbP^1 & 4 & 3 & 432\\
\SL_3^2  & \bbP^2\times \bbP^2 & 4 & 2 & 486 \\
\SL_4\times \SL_2  & \bbP^3\times \bbP^1 & 4 & 2 & 512 \\
\SL_5  & \bbP^4 & 4 & 1 & 625 \\
\bottomrule 
\end{array}
\end{equation*}
\caption{Projective homogeneous spaces of dimension \(\leq 4\)}
\label{table:PHS}
\end{table}

As a first consequence of Corollary~\ref{cor_local_structure_theorem}, we know that if \(\dim(G/H)\leq 4\), then \(G^{sc}\) is one of the groups appearing in the first column of Table~\ref{table:PHS}. 
More precisely, and more generally, we have the following consequence on spherical varieties of arbitrary dimension, but with rank close to the dimension. 

\begin{prop}
Assume that \(\rank(G/H)=\dim(G/H)\). 
Then \(P=G=C\) and \(H\) is trivial.
\end{prop}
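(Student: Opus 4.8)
The plan is to read off everything from Corollary~\ref{cor_local_structure_theorem}. That corollary supplies the dimension formula \(\dim(G/H) = \rank(G/H) + \dim(G/P)\), where \(P\) is the adapted parabolic subgroup of \(G\). Under the hypothesis \(\rank(G/H) = \dim(G/H)\) this forces \(\dim(G/P) = 0\); since \(G/P\) is then a projective variety of dimension zero, it is a point and \(P = G\). So the first step is immediate: the adapted parabolic is all of \(G\).

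Next I would exploit the second half of the same corollary: if \(P\) contains a simple factor of \(G\), that factor acts trivially on \(G/H\). Since now \(P = G\), it contains every simple factor \(G_i\) of \(G^{sc} = G_1 \times \cdots \times G_k\), so each \(G_i\) acts trivially on \(G/H\). The kernel of the \(G^{sc}\)-action is therefore a normal subgroup containing every \(G_i\), hence equals \(G^{sc}\); in other words \(G^{sc}\) acts trivially. But Assumption~\ref{assumptions} says this action has finite kernel, so \(G^{sc}\) is finite, and being connected and semisimple it must be trivial. Thus \(G = C\).

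Finally, with \(G = C\) a torus, the Borel subgroup is \(C\) itself and its only parabolic is \(C\), so indeed \(P = G = C\), consistently with the first step. It remains to identify \(H\): the torus \(C\) acts on \(C/H\) by translation, and since \(C\) is abelian, \(H\) is normal and the kernel of this action is exactly \(H\). Faithfulness of the \(C\)-action (again Assumption~\ref{assumptions}) then forces \(H = \{\id\}\), as desired.

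The argument is short, and the only point requiring care is the passage from ``each simple factor acts trivially'' to ``\(G^{sc}\) is trivial'': one must observe that triviality of every factor means the whole \(G^{sc}\)-action is trivial, so its kernel is all of \(G^{sc}\), which can be finite only when \(G^{sc}\) itself is trivial. No genuine obstacle arises here, as the substance is entirely packaged in Corollary~\ref{cor_local_structure_theorem}.
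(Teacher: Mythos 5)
Your proof is correct and follows exactly the route the paper intends: the dimension formula of Corollary~\ref{cor_local_structure_theorem} forces \(\dim(G/P)=0\), hence \(P=G\), the second half of that corollary together with the finite-kernel part of Assumption~\ref{assumptions} kills \(G^{sc}\), and faithfulness of the \(C\)-action (with \(C\) abelian, so \(H\) lies in the kernel) gives \(H=\{\id\}\). The paper omits the proof precisely because it is this immediate consequence, and you have supplied all the small verifications correctly.
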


\begin{prop}
Assume that \(\rank(G/H)+1=\dim(G/H)\). 
Then \(G^{sc}=\SL_2\) and the adapted parabolic subgroup is equal to the Borel subgroup \(P=B\). 
\end{prop}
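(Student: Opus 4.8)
The plan is to reduce everything to the dimension formula of Corollary~\ref{cor_local_structure_theorem} together with the classification of low-dimensional projective homogeneous spaces. First, the hypothesis \(\rank(G/H)+1=\dim(G/H)\) combined with the identity \(\dim(G/H)=\rank(G/H)+\dim(G/P)\) gives immediately \(\dim(G/P)=1\), where \(P\) is the adapted parabolic. Since the central torus \(C\) is the radical of \(G=G^{sc}\times C\) and is therefore contained in every parabolic subgroup, we may write \(P=P'\times C\) with \(P'=P\cap G^{sc}\) a proper parabolic of \(G^{sc}\), so that \(G/P\cong G^{sc}/P'\) is a projective homogeneous space of \(G^{sc}\) of dimension one.

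Next I would use that a projective homogeneous space of dimension one must be \(\bbP^1\), being a smooth rational curve homogeneous under a connected group. Decomposing \(G^{sc}=\prod_i G_i\) into simple simply connected factors, the parabolic splits as \(P'=\prod_i P_i\) with \(P_i\subset G_i\) parabolic, whence \(1=\dim(G^{sc}/P')=\sum_i\dim(G_i/P_i)\). Thus exactly one factor, say \(G_1\), satisfies \(\dim(G_1/P_1)=1\), and every other factor has \(P_i=G_i\). This is where Assumption~\ref{assumptions} enters crucially: a factor with \(P_i=G_i\) is a simple factor of \(G\) contained in \(P\), so by the second part of Corollary~\ref{cor_local_structure_theorem} it acts trivially on \(G/H\); since the action of \(G^{sc}\) has \emph{finite kernel}, no positive-dimensional factor can act trivially, and hence no such factor exists. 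Therefore \(G^{sc}=G_1\) is simple with \(\dim(G^{sc}/P_1)=1\).

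Finally I would identify \(G^{sc}\). Since \(G_1/P_1\cong\bbP^1\) has dimension one and \(G_1\) is simple and simply connected, \(G_1\) must be \(\SL_2\): for any other simple group the minimal dimension of a nontrivial flag variety exceeds one (equivalently, by Table~\ref{table:PHS}, the only group admitting a one-dimensional projective homogeneous space is \(\SL_2\)). For \(G^{sc}=\SL_2\) the only proper parabolic is the Borel, so \(P_1=B_{\SL_2}\) and consequently \(P=B_{\SL_2}\times C=B\).

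The argument is short; the one point requiring genuine care—rather than a real obstacle—is the use of the finite-kernel hypothesis to upgrade ``\(G^{sc}\) contains an \(\SL_2\) factor'' to ``\(G^{sc}\) equals \(\SL_2\)'', ruling out the extraneous simple factors that would otherwise be permitted to sit inside the adapted parabolic.
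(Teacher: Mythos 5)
Your proof is correct and follows the paper's intended route: the paper states this proposition as a direct consequence of Corollary~\ref{cor_local_structure_theorem} (which yields \(\dim(G/P)=1\)), the classification of projective homogeneous spaces in Table~\ref{table:PHS} (where \(\bbP^1\) under \(\SL_2\) is the only one-dimensional entry), and Assumption~\ref{assumptions}, whose finite-kernel clause rules out simple factors of \(G^{sc}\) contained in \(P\) exactly as you use it via the second part of the corollary. Your spelling out of the splitting \(P=P'\times C\) and of the product decomposition of \(P'\) is just the expected fleshing-out of these ingredients, not a different argument.
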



\begin{prop}
Assume that 
\(\rank(G/H)+2=\dim(G/H)\). 
Then \(G^{sc}\in \{\SL_3,\SL_2^2\}\). 
Moreover, if \(G^{sc}=\SL_3\) then the adapted parabolic \(P\) is the maximal parabolic subgroup (there is only one choice up to outer automorphism), and if \(G^{sc}=\SL_2^2\), then \(P=B\). 
\end{prop}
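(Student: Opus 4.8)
The plan is to read everything off from the dimension formula in Corollary~\ref{cor_local_structure_theorem}. The hypothesis \(\rank(G/H)+2=\dim(G/H)\) translates immediately into \(\dim(G/P)=2\), so the problem reduces to determining which two-dimensional projective homogeneous spaces can occur as \(G/P\) under Assumption~\ref{assumptions}.

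First I would reduce to the semisimple factor. Since \(C\) is central it is contained in every parabolic subgroup, so \(P=P^{sc}\times C\) with \(P^{sc}:=P\cap G^{sc}\) a parabolic subgroup of \(G^{sc}\), and \(G/P\cong G^{sc}/P^{sc}\). Writing \(G^{sc}=\prod_i G_i\) as a product of simply connected simple factors, one has \(P^{sc}=\prod_i P_i\) and \(G/P\cong\prod_i G_i/P_i\), so that \(\sum_i\dim(G_i/P_i)=2\).

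Next I would invoke the second assertion of Corollary~\ref{cor_local_structure_theorem} together with Assumption~\ref{assumptions}: if some \(P_i\) were the whole factor \(G_i\), then \(G_i\) would be contained in \(P\) and hence act trivially on \(G/H\), contradicting that \(G^{sc}\) acts with finite kernel. Thus every factor contributes a genuine proper parabolic and \(\dim(G_i/P_i)\geq 1\). Since these dimensions sum to \(2\), there are at most two simple factors, and I split into two cases: either a single factor with \(\dim(G_1/P_1)=2\), or two factors each with \(\dim(G_i/P_i)=1\).

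The remaining point is to identify which simple groups admit a flag variety of dimension exactly \(1\) or \(2\); this is the only delicate step, everything else being bookkeeping. Since a nonmaximal proper parabolic gives a nontrivial fibration over the flag variety of a maximal parabolic, the minimal dimension of \(G_i/P_i\) is attained at a maximal parabolic, where it equals the number of positive roots whose root space does not lie in the Levi factor. Inspecting this quantity over the simple types (equivalently, reading off the minimal entries of Table~\ref{table:PHS}), the value \(1\) occurs only for \(\SL_2\) with \(P_i=B\), giving \(\bbP^1\), and the value \(2\) occurs only for \(\SL_3\) with \(P_i\) a maximal parabolic, giving \(\bbP^2\). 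This yields exactly the two asserted conclusions: \(G^{sc}=\SL_3\) with \(P\) a maximal parabolic, unique up to the outer automorphism of \(\SL_3\) that exchanges its two maximal parabolics, and \(G^{sc}=\SL_2^2\) with \(P=B\).
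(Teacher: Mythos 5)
Your argument is correct and is essentially the paper's own (implicit) proof: the paper deduces this proposition directly from Corollary~\ref{cor_local_structure_theorem} (giving \(\dim(G/P)=2\) and excluding simple factors contained in \(P\) via the finite-kernel hypothesis of Assumption~\ref{assumptions}) together with the classification of two-dimensional projective homogeneous spaces recorded in Table~\ref{table:PHS}, namely \(\bbP^2=\SL_3/P_{\max}\) and \(\bbP^1\times\bbP^1=\SL_2^2/B\). The only difference is cosmetic: where the paper cites the table, you verify by hand, via the product decomposition of parabolics and the root-count for maximal parabolics, that the flag varieties of dimension \(1\) and \(2\) are exactly \(\bbP^1\) and \(\bbP^2\), which is a sound substitute.
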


\begin{prop}
\label{rank+3=dim}
Assume that 
\(\rank(G/H)+3=\dim(G/H)\). 
Then 
\[ G^{sc}\in \{\SL_4,\Sp_4,\SL_3\times \SL_2,\SL_2^3,\SL_3\}. \]
\end{prop}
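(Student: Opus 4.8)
The plan is to reduce the statement to the dimension equation of Corollary~\ref{cor_local_structure_theorem} and then to read off the finitely many possibilities. First I would apply that corollary: the hypothesis \(\rank(G/H)+3=\dim(G/H)\) together with \(\dim(G/H)=\rank(G/H)+\dim(G/P)\) forces \(\dim(G/P)=3\). Thus the adapted parabolic \(P\) gives a three-dimensional projective homogeneous space \(G/P\), which is one of the spaces recorded in Table~\ref{table:PHS}.

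Next I would exploit the product structure \(G=G^{sc}\times C\). Since \(C\) is a torus, its only parabolic subgroup is \(C\) itself, so \(P=P'\times C\) for a parabolic \(P'\subset G^{sc}\), whence \(G/P\simeq G^{sc}/P'\) is a projective homogeneous space of the semisimple group \(G^{sc}\). Writing \(G^{sc}=\prod_i G_i\) as a product of simple simply connected factors, we get \(P'=\prod_i P_i\) and \(G/P\simeq \prod_i G_i/P_i\), so that \(\dim(G/P)=\sum_i \dim(G_i/P_i)=3\).

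The crucial observation is that no factor \(G_i\) can be contained in \(P\). Indeed, the simple factors of \(G\) are exactly the \(G_i\), and by the second assertion of Corollary~\ref{cor_local_structure_theorem}, if \(P\supseteq G_i\) (equivalently \(P_i=G_i\)) then \(G_i\) acts trivially on \(G/H\); as \(G_i\) is positive-dimensional this would contradict the hypothesis that \(G^{sc}\) acts with finite kernel. Hence each \(P_i\) is a proper parabolic and each summand \(\dim(G_i/P_i)\) is at least one.

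Finally I would solve the additive problem \(3=\sum_i \dim(G_i/P_i)\) with positive summands. The simple groups admitting a projective homogeneous space of dimension at most three are, by the minimal dimensions \(\dim\bbP^{n-1}=n-1\) for \(\SL_n\), \(\dim\bbP^{2n-1}=2n-1\) for \(\Sp_{2n}\), \(\dim Q^{n-2}=n-2\) for \(\SO_n\), and the strictly larger minimal dimensions of all remaining types, exactly \(\SL_2\) (dimension \(1\)), \(\SL_3\) (dimension \(2\)), and \(\SL_4\simeq \Spin_6\), \(\Sp_4\simeq \Spin_5\) (dimension \(3\)). Distributing the three available dimensions yields the cases \(3=3\) (one factor, giving \(G^{sc}\in\{\SL_4,\Sp_4,\SL_3\}\), the last through the full flag \(W\)), \(3=2+1\) (giving \(G^{sc}=\SL_3\times\SL_2\)), and \(3=1+1+1\) (giving \(G^{sc}=\SL_2^3\)), which together produce the asserted list. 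I expect the only real obstacle to be the bookkeeping guaranteeing that no other, larger or exotic, simple group can contribute a homogeneous space of dimension \(\leq 3\); this is precisely what the minimal-dimension computation, equivalently the exhaustiveness of Table~\ref{table:PHS} in dimension three, settles.
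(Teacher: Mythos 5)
Your proof is correct and follows exactly the route the paper intends (and leaves implicit): Corollary~\ref{cor_local_structure_theorem} forces \(\dim(G/P)=3\) and rules out simple factors inside \(P\) via the finite-kernel part of Assumption~\ref{assumptions}, after which the list drops out of the dimension-three entries of Table~\ref{table:PHS}. Your re-derivation of those entries from minimal flag-variety dimensions is just an inlined verification of the table (with the small slip that \(n-2\) is not the minimal flag dimension for \(\SO_6\), which you already correct via \(\Spin_6\simeq\SL_4\)), so there is nothing substantive to add.
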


One could go on further using the classification of projective homogeneous spaces of higher dimensions, but as the difference between the rank and the dimension grows larger, the number of possibilities grows very fast.  

Under the assumption that \(\dim(G/H)\leq 4\), a spherical homogeneous space which satisfies \(\rank(G/H)+3=\dim(G/H)\) is of rank zero or one. 
Since the classification of rank one spherical homogeneous spaces of arbitrary dimensions is well known \cite{Akhiezer_1983}, we can use it to treat this case. 
In the following subsections, we deal with the homogeneous spaces under groups with \(G^{sc}\in \{\SL_2,\SL_2^2,\SL_3\}\), then with the remaining rank one cases. 

\subsection{Spherical homogeneous spaces with \(G^{sc}=\SL_2\)}

We wish to classify spherical subgroups of \(\SL_2\times \bbG_m^n\) up to conjugation, with \(n\leq 3\) (see Remark~\ref{rem_rank_group}). 
Since it is not significantly different to treat the general case, we allow \(n>3\) as well. 

\begin{rem}
This is essentially treated in the case \(n=1\) in \cite{Nguyen_van_der_Put_Top_2008}, where all algebraic subgroups of \(\GL_2\) up to conjugation are classified.
We note however that there is a minor mistake in the case \(H=\gamma(D_{\infty})\) in the notations of that paper, as the group \(D_{\infty}\) itself does not appear in the classification. 
\end{rem}

\begin{prop}
\label{prop_Gsc=SL2}
Let \(H\) be an algebraic subgroup of \(\SL_2\times \bbG_m^n\) whose projection to the \(\SL_2\) factor is of positive dimension. 
Then \(H\) is of one of the following forms:
\begin{enumerate}
    \item \(H=\SL_2\times H_2\) where \(H_2\) is an algebraic subgroup of \(\bbG_m^n\),
    \item \(H\subset B\) is a horospherical subgroup, 
    \item \(H\) is an algebraic subgroup of \(T\), 
    \item \(H=\left\langle T_1\times K_2, \left(\begin{bmatrix} 0& -1\\ 1&0\end{bmatrix},c\right)\right\rangle\) where \(K_2\) is an algebraic subgroup of \(\bbG_m^n\), \(c\) is a primitive \(2^l\)-th root of unity in \(\bbG_m^n\) for some \(l\geq 0\), \(c^2\in K_2\) and \(c\notin K_2\) unless \(l=0\). 
\end{enumerate}
\end{prop}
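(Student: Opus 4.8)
The plan is to organize everything around the image $H_1:=p_1(H)$ of $H$ under the projection $p_1\colon \SL_2\times\bbG_m^n\to\SL_2$. By hypothesis $H_1$ is a positive-dimensional algebraic subgroup of $\SL_2$, so after conjugating $H$ by some $(g,1)\in\SL_2\times\{1\}$ I may assume, by the classification in Example~\ref{example_sl2}, that $H_1$ is exactly one of $\SL_2$, the maximal torus $T_1$, its normalizer $N_{\SL_2}(T_1)$, or a subgroup $\ker\chi\subseteq B_1$ (this last family includes $B_1$, the unipotent radical $U$, and the groups $\mu_a\ltimes U$). These four possibilities will match the four forms (1)--(4). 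Two of them are immediate. If $H_1=\SL_2$, then $[H^\circ,H^\circ]$ projects onto $\SL_2$ (perfectness) and into the commutator subgroup of the abelian group $\bbG_m^n$, hence $\SL_2\times\{1\}=[H^\circ,H^\circ]\subseteq H$; writing $(g,t)=(g,1)\cdot(1,t)$ then forces $(1,t)\in H$ for every $(g,t)\in H$, so $H=\SL_2\times p_2(H)$, which is form (1). If $H_1=T_1$, then $H\subseteq p_1^{-1}(T_1)=T_1\times\bbG_m^n=T$, form (3).

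For $H_1=\ker\chi$ I want to show $H$ is horospherical. Here $H$ lies in the Borel subgroup $B=B_1\times\bbG_m^n$ of $G$, with unipotent radical $U_G:=U\times\{1\}$, and $U\subseteq H_1$. The crucial point is that $H^\circ$ is not a torus: its image $p_1(H^\circ)=(\ker\chi)^\circ$ contains $U$, which is unipotent and nontrivial. Hence $R_u(H^\circ)\ne\{1\}$; since the unipotent elements of $B$ are exactly those of $U_G$, this gives $\{1\}\ne R_u(H^\circ)\subseteq H\cap U_G$, and as $U_G\cong\bbG_a$ admits no proper nontrivial algebraic subgroup, $U_G\subseteq H$. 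Using $B=U_G\rtimes T$ and $U_G\trianglelefteq B$, every $h\in H$ factors as $h=u\,t$ with $u\in U_G\subseteq H$ and $t\in H\cap T$, so $H=U_G\rtimes(H\cap T)$; writing the diagonalizable group $H\cap T$ as a finite intersection of character kernels and extending these characters trivially across $U_G$ presents $H$ as $\bigcap_i\ker(\chi_i\colon B\to\bbG_m)$, i.e.\ a horospherical subgroup (Remark~\ref{remark_horospherical_subgroups}), form (2).

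The substantial case, and the one I expect to carry the real difficulty, is $H_1=N_{\SL_2}(T_1)$, which should yield form (4). Since $N_{\SL_2}(T_1)^\circ=T_1$, the identity component is a subtorus $H^\circ\subseteq T=T_1\times\bbG_m^n$. As $H_1$ strictly contains $T_1$, the group $H$ contains an element lying over $w=\left[\begin{smallmatrix}0&-1\\1&0\end{smallmatrix}\right]$, and conjugation by it acts on $T$ through the involution $\iota\colon(t,s)\mapsto(t^{-1},s)$, under which $H^\circ$ is stable. Passing to character lattices, the saturated sublattice $L\subseteq X^*(T)=\bbZ\oplus\bbZ^n$ of characters vanishing on $H^\circ$ is $\iota^*$-stable with $\iota^*=\diag(-1,I_n)$, and meets $\bbZ\oplus 0=X^*(T_1)$ trivially because $p_1(H^\circ)=T_1$; for $(a,b)\in L$ this forces $(2a,0)=(a,b)-\iota^*(a,b)\in L\cap(\bbZ\oplus 0)=0$, so $a=0$ and $L\subseteq 0\oplus\bbZ^n$. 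Therefore $T_1\times\{1\}\subseteq H^\circ$, which upgrades to $H\cap T=T_1\times K_2$ with $K_2:=H\cap(\{1\}\times\bbG_m^n)$ an algebraic subgroup of $\bbG_m^n$, and then $H=(T_1\times K_2)\sqcup(T_1\times K_2)(w,c)$ for some $c\in\bbG_m^n$; closure of $H$ under multiplication is equivalent to $(w,c)^2=(-I,c^2)\in T_1\times K_2$, i.e.\ to $c^2\in K_2$.

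It remains to normalize $c$, and this is where I expect the most care to be needed. Since $c$ is only defined modulo $K_2$ and $c^2\in K_2$, its class $\bar c$ is $2$-torsion in $\bbG_m^n/K_2$, and I must produce a representative that is a $2$-power root of unity. My plan is to pass to the algebraic subgroup $K_2''$ of index at most two cut out as the preimage of $\langle\bar c\rangle$: if $\bar c=1$ take $c=1$ (so $l=0$), and otherwise $K_2=\ker\phi$ for a character $\phi\colon K_2''\to\mu_2$ which, being trivial on the subtorus $(K_2'')^\circ$, factors through the finite component group of $K_2''$. Choosing $a\in K_2''$ with $\phi(a)=-1$ and replacing it by its $2$-primary component $a_2$ gives an element of $2$-power order with $\phi(a_2)=-1$, hence $a_2\notin K_2$ and $a_2^2\in K_2$, and since $a_2\equiv c\pmod{K_2}$ I may replace $c$ by $a_2$ to reach exactly form (4). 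The main obstacles are thus both inside this normalizer case: getting $T_1\times\{1\}\subseteq H^\circ$ from $\iota$-stability via the lattice computation, and extracting a $2$-power-order representative of $c$ from the structure of the finite part of $K_2''$.
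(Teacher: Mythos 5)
Your proof is correct and has the same skeleton as the paper's --- both reduce to the classification of positive-dimensional subgroups of \(\SL_2\) from Example~\ref{example_sl2} and run a case analysis on \(H_1=p_1(H)\) --- but the machinery inside each case is genuinely different. Where you argue at the group level, the paper works at the Lie algebra level via Remark~\ref{rem_quasi_algebraic} (Jordan--Chevalley components of elements of \(\mathfrak{h}\) stay in \(\mathfrak{h}\)): for \(H_1=\SL_2\) it gets \(e_\pm\in\mathfrak{h}\), hence \(e_0=[e_+,e_-]\in\mathfrak{h}\) and \(\SL_2\times\{1\}\subseteq H\), where you use perfectness of \(\SL_2\) and \([H^\circ,H^\circ]\); for \(H_1\subseteq B_1\) it gets \(e_+\in\mathfrak{h}\), hence \(B^u\times\{1\}\subseteq H\), where you use \(R_u(H^\circ)\neq\{1\}\) together with the fact that \(\bbG_a\) has no proper nontrivial closed subgroups. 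Both are sound; the Lie-algebra route is shorter, yours is more self-contained, and you additionally spell out the presentation \(H=U_G\rtimes(H\cap T)=\bigcap_i\ker(\chi_i\colon B\to\bbG_m)\) that the paper leaves implicit (to match Remark~\ref{remark_horospherical_subgroups} exactly, take the \(\chi_i\) to be a \(\bbZ\)-basis of the lattice of characters vanishing on \(H\cap T\)). In the normalizer case the paper has a slicker device worth knowing: for each \(a\in\bbC^*\) choose \((t,b),(n,c)\in H\) lying over \(t=\diag(a,a^{-1})\) and \(n=w\); the commutator \((t,b)(n,c)(t,b)^{-1}(n,c)^{-1}=(\diag(a^2,a^{-2}),1)\) lies in \(H\), and surjectivity of squaring on \(\bbC^*\) gives \(T_1\times\{1\}\subseteq H\) in one line, replacing your (also correct, but longer) \(\iota\)-stable saturated-lattice computation.

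The one step needing repair is your normalization of \(c\): ``replacing \(a\) by its \(2\)-primary component'' is not literally meaningful, because an element of \(K_2''\) in the coset \(cK_2\) may have infinite order even though its class in the component group \(F=K_2''/(K_2'')^\circ\) is torsion. You must take the \(2\)-part of \(\bar a\) inside \(F\) (its \(\phi\)-value is still \(-1\), since the odd part maps to \(1\) in \(\mu_2\)) and then produce a finite-order lift in \(\bbG_m^n\): either invoke the splitting \(K_2''\cong(K_2'')^\circ\times F\) of diagonalizable groups over \(\bbC\), or use divisibility of the torus \((K_2'')^\circ\) --- if \(b\in K_2''\) has \(b^{2^e}\in(K_2'')^\circ\), pick \(s\in(K_2'')^\circ\) with \(s^{2^e}=b^{2^e}\) and replace \(b\) by \(bs^{-1}\), which has order a power of \(2\) and unchanged \(\phi\)-value. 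This is a one-line fix, and you flagged the issue yourself. It is worth noting that the paper is no more careful at this exact spot: its claim that \(c\notin K_2\) ``is of finite order since \(K_2\) is algebraic'' is false for the initially chosen representative (take \(n=2\), \(K_2=\bbG_m\times\{1\}\), \(c=(t,-1)\) with \(t\) not a root of unity: then \(c^2\in K_2\), \(c\notin K_2\), and \(c\) has infinite order), and becomes true only after the same adjustment of \(c\) modulo \(K_2\) that you carry out.
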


Since we will work a bit at the Lie algebra level, we recall the following fact on Lie algebras of algebraic subgroups \cite{Chevalley_Tuan_1945,Chevalley_1943} (actually, it is a property of the slightly larger class of quasi-algebraic Lie subalgebras).
\begin{rem}
\label{rem_quasi_algebraic}
Let \(H\) be an algebraic subgroup of \(G\). 
For any element in its Lie algebra \(h\in \mathfrak{h}\) with Jordan-Chevalley decomposition \(h=t+u\) (where \(t\) is a semisimple element of \(\mathfrak{g}\) and \(u\) is a nilpotent element of \(\mathfrak{g}\)), we have \(t\in \mathfrak{h}\) and \(u\in \mathfrak{h}\).  
\end{rem}

\begin{proof}
We recalled in Example~\ref{example_sl2} the classification of spherical subgroups of \(\SL_2\) up to conjugation, that is, all positive dimensional algebraic subgroups of \(\SL_2\). 
As a consequence an algebraic subgroup \(H\) of \(\SL_2\times \bbG_m^n\) is spherical if and only if its image \(H_1\) under the projection to \(\SL_2\) is positive dimensional.  

We first work at the Lie algebra level. Recall that \(\mathfrak{sl}_2\) is generated as a complex vector space by one semisimple element \(e_0\) and two nilpotent elements \(e_+\) and \(e_-\) satisfying the relation \(e_0=[e_+,e_-]\). 
We identify these elements with elements of \(\mathfrak{g}\), and we may assume that \(e_+\in \mathfrak{b}\). 

Assume that \(H_1=\SL_2\). 
Then by Remark~\ref{rem_quasi_algebraic}, \(e_+\) and \(e_-\) are in \(\mathfrak{h}\), thus \(e_0=[e_+,e_-]\) as well. 
Hence \(\SL_2\times\{1\}\subset H\subset \SL_2\times \bbG_m^n\), that is, \(H=\SL_2\times H_2\) for some algebraic subgroup of \(\bbG_m^n\).  

Assume now that \(H_1\) is a Borel subgroup of \(\SL_2\). 
Then by Remark~\ref{rem_quasi_algebraic}, \(e_+\in \mathfrak{h}\), thus \(B^u\subset H\subset B\), and \(H\) is a horospherical subgroup. 

If \(H_1=T_1\), then \(H\) is a subgroup of the torus \(T=T_1\times \bbG_m^n\). 

Finally we deal with the case \(H_1=N_{\SL_2}(T_1)\). 
Fix for the moment \(a\in \bbC^*\). 
Set 
\[n=\begin{bmatrix}0&-1\\1&0\end{bmatrix} \qquad \text{and} \qquad  t=\begin{bmatrix}a&0\\0&a^{-1}\end{bmatrix}.\] 
As \(H_1=N_{\SL_2}(T_1)\), there exists \(b\) and \(c\in \bbG_m^n\) such that \((t,b)\in H\) and \((n,c)\in H\). 
We then have the following commutator in \(H\):
\[ (t,b)(n,c)(t,b)^{-1}(n,c)^{-1} = (tnt^{-1}n^{-1},1) = \left(\begin{bmatrix}a^2&0\\0&a^{-2}\end{bmatrix},1\right). \] 
Since \(a\mapsto a^2\) is surjective on \(\bbC^*\), we conclude that \(T_1\times \{1\}\subset H\subset N_{\SL_2}(T_1)\times \bbG_m^n\). 

Consider the kernel \(K\) of the surjective morphism \(H\to N_{\SL_2}(T_1)/T_1 \simeq \bbZ/2\bbZ\). 
It is of the form \(T_1\times K_2\) for some algebraic subgroup \(K_2\) of \(\bbG_m^n\). 
The group \(H\) is fully determined by one preimage \(\left(n,c\right)\) of the non-trivial element in \(\bbZ/2\bbZ\): \(H=\langle T_1\times K_2 , \left(n,c\right) \rangle\) is the subgroup generated by \(T_1\times K_2\) and this preimage.  
Note that \(c\) is the square root of an element of \(K_2\). If it is an element of \(K_2\), then actually \(H=N_{\SL_2}(T_1)\times K_2\) and we may take \(c=1\). If \(c\) is not an element of \(K_2\), then it is of finite order since \(K_2\) is algebraic. In that case, to avoid redundancy, we may as well assume that \(c\) is a primitive \(2^l\)-th root of unity in \(\bbG_m^n\) for some \(l> 0\). 
\end{proof}

\begin{cor}
\label{cor_all_H_SL2}
Under Assumption~\ref{assumptions}, if \(G=\SL_2\times \bbG_m^n\) with \(n\geq 1\), then up to equivariant isomorphism, there exists a character \(\chi_1\) and thus a decomposition \(\bbG_m^n=\bbG_m\oplus \ker\chi_1|_{\bbG_m^n} = \bbG_m\times \bbG_m^{n-1}\) such that either:
\begin{enumerate}
    \item \label{cor_all_H_SL2:T} \(H= \ker((a_1\varpi_1 + \chi_1)|_{T_1\times \bbG_m})\times \{1_{\bbG_m^{n-1}}\}\) for some \(a \bbZ_{\geq 0}\)
    \item \label{cor_all_H_SL2:N} \(H=\left\langle T_1\times \{1_{\bbG_m}\}, \left(\begin{bmatrix} 0& -1\\ 1&0\end{bmatrix},c\right)\right\rangle \times \{1_{\bbG_m^{n-1}}\}\) where \(c\in \{1,-1\}\) 
    \item \label{cor_all_H_SL2:B} \(H= \ker((a_1\varpi_1 + \chi_1)|_{B_1\times \bbG_m})\times \{1_{\bbG_m^{n-1}}\}\) for some \(a\in \bbZ_{\geq 0}\)
\end{enumerate}
\end{cor}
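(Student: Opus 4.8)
The plan is to read off the four possibilities for $H$ from Proposition~\ref{prop_Gsc=SL2}, discard the one forbidden by the finite-kernel hypothesis in Assumption~\ref{assumptions}, and then use the faithfulness of the $C$-action to normalise each surviving case to the stated form by an automorphism of the torus factor $\bbG_m^n$.

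First I would record two consequences of the setup. Since $H$ is spherical, its projection $H_1$ to $\SL_2$ is positive dimensional (as observed at the beginning of the proof of Proposition~\ref{prop_Gsc=SL2}), so that proposition applies and leaves the four listed shapes for $H$. Its case~(1), $H=\SL_2\times H_2$, contains the subgroup $\SL_2\times\{1\}$, which is normal in $G$ and hence contained in the kernel of the $\SL_2$-action on $G/H$; this kernel is then positive dimensional, contradicting Assumption~\ref{assumptions}. Thus only cases (2), (3), (4) of Proposition~\ref{prop_Gsc=SL2} remain. Next, because $C=\{1\}\times\bbG_m^n$ is central in $G$, an element $c\in C$ acts trivially on $G/H$ if and only if $c\in gHg^{-1}$ for all $g$, i.e.\ if and only if $c\in H$; so the $C$-action is faithful precisely when $H\cap(\{1\}\times\bbG_m^n)=\{1\}$. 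This single identity is what I would exploit in each case.

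For Proposition~\ref{prop_Gsc=SL2}(3), where $H\subset T=T_1\times\bbG_m^n$, sphericity forces the image of $H$ in $\SL_2$ to be all of $T_1$, while faithfulness gives $H\cap(\{1\}\times\bbG_m^n)=\{1\}$; hence $H\to T_1$ is an isomorphism and $H$ is the graph of a homomorphism $T_1\to\bbG_m^n$, that is, a one-dimensional subtorus projecting isomorphically onto $T_1$. Its image in $\bbG_m^n$ is generated by a primitive (or zero) cocharacter, which can be completed to a $\bbZ$-basis; applying the corresponding automorphism of $\bbG_m^n$ and fixing the sign via the identification $T_1\simeq\bbG_m$ (using the Weyl involution $\varpi_1\mapsto-\varpi_1$ if needed) puts $H$ in the form $\ker\big((a_1\varpi_1+\chi_1)|_{T_1\times\bbG_m}\big)\times\{1_{\bbG_m^{n-1}}\}$ with $a_1\geq0$, where $\chi_1$ is the projection onto the new first factor; this is case~\ref{cor_all_H_SL2:T}. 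Case~(2) of Proposition~\ref{prop_Gsc=SL2} is handled identically after passing to $H/B^u$: here $B^u=B_1^u\times\{1\}\subset H$, the quotient $H/B^u$ sits in $T_1\times\bbG_m^n$ and, by the same two observations, is the graph of a homomorphism, so the same normalisation yields $\ker\big((a_1\varpi_1+\chi_1)|_{B_1\times\bbG_m}\big)\times\{1_{\bbG_m^{n-1}}\}$, which is case~\ref{cor_all_H_SL2:B}.

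For Proposition~\ref{prop_Gsc=SL2}(4), write $H=\langle T_1\times K_2,(w,c)\rangle$ with $w=\begin{bmatrix}0&-1\\1&0\end{bmatrix}$. The only elements of $H$ with trivial $\SL_2$-component come from $T_1\times K_2$ with torus parameter equal to $1$, namely $\{1\}\times K_2$, since the other coset $(T_1\times K_2)(w,c)$ has $\SL_2$-component of the form $\diag(t,t^{-1})w$, which is never the identity. Faithfulness therefore forces $K_2=\{1\}$, whence the condition $c^2\in K_2$ becomes $c^2=1$, so $c$ is a $2$-torsion element of $\bbG_m^n$. Since $\GL_n(\bbZ)\to\GL_n(\bbZ/2)$ is surjective and $\GL_n(\bbZ/2)$ acts transitively on the nonzero vectors of $(\bbZ/2)^n$, an automorphism of $\bbG_m^n$ sends $c$ either to $(1,\dots,1)$ or to $(-1,1,\dots,1)$, giving case~\ref{cor_all_H_SL2:N} with $c\in\{1,-1\}$. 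The main technical point throughout is exactly this normalisation under $\Aut(\bbG_m^n)=\GL_n(\bbZ)$: one must check that the relevant one-dimensional subtorus is a direct factor (primitivity of its cocharacter), that nonzero $2$-torsion forms a single $\GL_n(\bbZ)$-orbit, and keep track of signs so that $a_1\geq0$. Once these elementary lattice facts are in place the identifications are immediate, so the only genuine subtlety is bookkeeping rather than any deep obstruction.
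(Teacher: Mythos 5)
Your handling of cases (1), (3) and (4) of Proposition~\ref{prop_Gsc=SL2} is correct and essentially the paper's argument: the identity ``the \(C\)-action is faithful iff \(H\cap(\{1\}\times\bbG_m^n)=\{1\}\)'' is exactly what the paper uses implicitly; your graph-of-a-homomorphism normalisation in case (3) is the cocharacter-side dual of the paper's change of \(\bbZ\)-basis of characters, both resting on \(\GL_n(\bbZ)\)-transitivity on primitive vectors; and in case (4) you are in fact more explicit than the paper, which asserts ``\(l=0\) or \(1\)'' without spelling out that faithfulness kills \(K_2\) (your normalisation of the \(2\)-torsion point \(c\) via the surjection \(\GL_n(\bbZ)\to\GL_n(\bbZ/2)\) is equivalent to the paper's choice of a character basis adapted to \(c\)). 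One small point in case (3): sphericity only forces \(H_1\) to be positive dimensional, and you should note (as the paper does not quite either) that this is what rules out \(H\) finite, i.e.\ it is sphericity, not faithfulness alone, that bounds the number of independent characters cutting out \(H\) inside \(T\).

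The genuine gap is in case (2), the horospherical case. You claim \(H/B^u\) surjects onto \(T_1\) ``by the same two observations'', but the sphericity observation does not transfer: any \(H\) containing \(B^u\) automatically has positive-dimensional \(\SL_2\)-projection \(H_1\supseteq B_1^u\), so sphericity imposes nothing on the image of \(H/B^u\) in \(T_1\), which can be finite or trivial (by Example~\ref{example_sl2}, \(H_1\) may be \(\ker(a\varpi_1|_{B_1})\) with \(a\geq 1\), not a Borel subgroup). Concretely, \(H=B_1^u\times\{1_{\bbG_m^n}\}=\ker(\varpi_1|_B)\cap\bigcap_{i=1}^n\ker(\chi_i|_B)\) is spherical, horospherical, and satisfies Assumption~\ref{assumptions} (trivial \(\SL_2\)-kernel, \(C\cap H=\{1\}\)), yet its \(\SL_2\)-projection is \(B_1^u\), so it is not conjugate, even after an automorphism of \(G\), to any subgroup of the forms \ref{cor_all_H_SL2:T}--\ref{cor_all_H_SL2:B}. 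So your step fails as justified, and in fact the literal statement needs the standing hypothesis, present in the parallel Propositions~\ref{prop_H_SL_22_PI}, \ref{prop_H_SL_3_PI} and \ref{prop_rk_one}, that the image of \(G\) in \(\Aut(G/H)\) contains \(\Aut^{G,0}(G/H)\). Under that hypothesis the surjectivity you want is provable: if the image of \(H/B^u\) in \(T_1\) were finite, faithfulness would force \(H/B^u\) itself to be finite, so \(G/H\) would be horospherical of rank \(n+1\) and, by Proposition~\ref{prop_equivariant_automorphisms}, \(\Aut^{G,0}(G/H)\) would be a torus of dimension \(n+1\), strictly larger than the (at most \(n\)-dimensional) part of it realised by left translations of \(G\) --- equivalently, by Proposition~\ref{horospherical_by_torus_dimension} such subgroups only occur before enlarging the torus factor, which is how the paper intends them to be absorbed. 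To be fair, the paper's own proof is equally terse here (``the horospherical case is dealt with in the same way''), but your write-up asserts a justification that is false as stated; you should import the saturation hypothesis explicitly and run the dimension count above, rather than appeal to sphericity.
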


\begin{rem}
\label{rem_SL2_non-uniqueness_of_chi}
It should be noted that the character \(\chi_1\) and thus the decomposition \(\bbG_m^n= \bbG_m\times \bbG_m^{n-1}\) are not uniquely determined up to equivariant isomorphism. 
As follows from the proof, if \(a_1>0\), then one may replace \(\chi_1\) with \(\chi_1+\sum_{i=2}^na_i\chi_i\) for any \((n-1)\)-tuple of integers \((a_i)\), where \((\chi_2,\ldots,\chi_n)\) is a basis of characters for the factor \(\bbG_m^{n-1}\). 
If \(a=0\) or \(c=+1\), then we may actually replace \(\chi_1\) by any primitive character of \(X^*(\bbG_m^n)\): the spherical homogeneous space is the product of an \(\SL_2\)-spherical homogeneous space with a factor, such that the factors of the group act only on the respective factors of the homogeneous space. 
\end{rem}

\begin{proof}
We examine successively all cases of Proposition~\ref{prop_Gsc=SL2}. 

If \(H\) satisfies the first case in Proposition~\ref{prop_Gsc=SL2}, then Assumption~\ref{assumptions} is not satisfied. 

In the fourth case, Assumption~\ref{assumptions} is satisfied if and only if \(l=0\) or \(l=1\). 
If \(l=1\), choose a primitive character \(\lambda_1\) of \(\bbG_m^n\) such that \(\lambda_1(c)=-1\), then complete it into a \(\bbZ\)-basis \((\lambda_1,\lambda_2,\ldots,\lambda_n)\) of \(X^*(\bbG_m^n)\) such that \(\lambda_i(c)=1\) for \(i\geq 2\). Then under the automorphism \(\SL_2\times \bbG_m^n \to \SL_2\times \bbG_m^n\) given by \((\text{id}_{\SL_2},\lambda_1,\ldots,\lambda_n)\), we arrive to item (\ref{cor_all_H_SL2:N}) of the statement. 

In the third case of Proposition~\ref{prop_Gsc=SL2}, \(H\) writes \(H=\bigcap_{i=1}^k \ker(\lambda_i)\) for a family of \(\bbZ\)-linearly independent characters \(\lambda_i\in X^*(T)\). 
For the action of \(\bbG_m^n\) on \(G/H\) to be faithful, the family \(\{\chi_i\}_i\subset X^*(\bbG_m^n)\) of projections of the \(\lambda_i\in X^*(T)=X^*(T_1)\oplus X^*(\bbG_m^n)\) must be a \(\bbZ\)-basis of \(X^*(\bbG_m^n)\). 
Hence the \(\lambda_i\) write as \(\lambda_i=a_i\varpi_1\oplus \chi_i\) for some \(a_i\in \bbZ\). 
However, we may write \(H=\bigcap_{i=1}^k \ker(\lambda_i')\) for any other \(\bbZ\)-basis \(\{\lambda_i'\}\) of the \(\bbZ\)-module generated by the \(\lambda_i\). 
Since the action of \(\GL_{n}(\bbZ)\) is transitive on primitive vectors in \(\bbZ^{n}\simeq X^*(\bbG_m^n)\), we may find a such a \(\bbZ\)-basis \(\{\lambda_i'\}\) with \(\lambda_1=a\varpi_1 + \chi_1'\) and \(\lambda_i=\chi'_i\) for \(i\geq 2\), for some \(a\in \bbN\) and some \(\bbZ\)-basis \(\{\chi_i'\}\) of \(X^*(C)\). 
The automorphism \(\SL_2\times \bbG_m^n \to \SL_2\times \bbG_m^n\) given by \((\text{id}_{\SL_2},\chi_1',\ldots,\chi_n')\) takes us to item~(\ref{cor_all_H_SL2:T}) of the statement. 

The horospherical case is dealt with in the same way and leads to item~(\ref{cor_all_H_SL2:B}).  
\end{proof}

\subsection{Spherical homogeneous spaces with \(G^{sc}=\SL_2^2\)}

\begin{prop}
\label{prop_H_SL22_notPI}
Let \(G=\SL_2^2\times \bbG_m^n\) and \(G/H\) satisfy Assumption~\ref{assumptions}, with \(\dim(G/H)\leq 4\).  
Then we are in one of the following situations (up to \(G\)-equivariant isomorphism):
\begin{enumerate}
    \item \label{prop_H_SL22_notPI:n=0} \(n=0\) and \(H\) is one of the following: 
    \begin{enumerate}
        \item \label{prop_H_SL22_notPI:n=0:SL2} \(\diag(\SL_2)\)
        \item \label{prop_H_SL22_notPI:n=0:PGL2} \(N(\diag(\SL_2))\)
        \item \label{prop_H_SL22_notPI:n=0:diagB} \(\diag(B_1)\)
        \item  \label{prop_H_SL22_notPI:n=0:NdiagB} \(N(\diag(B_1))\) 
        \item \label{prop_H_SL22_notPI:n=0:TxT} \(T_1\times T_2\)
        \item \label{prop_H_SL22_notPI:n=0:NxT} \(N(T_1)\times T_2\) 
        \item \label{prop_H_SL22_notPI:n=0:diagN} \(\diag N(T_1)\)
        \item \label{prop_H_SL22_notPI:n=0:NxN} \(N(T_1)\times N(T_2)\) 
    \end{enumerate} 
    \item \label{prop_H_SL22_notPI:n=1} \(n=1\) and \(H\) is one of the following: 
    \begin{enumerate}
        \item \label{prop_H_SL22_notPI:n=1:SL2xC} \(\diag(\SL_2)\times\{1\}\)
        \item \label{prop_H_SL22_notPI:n=1:GL2} \(\langle \diag(\SL_2),\left(I_2, -I_2, -1\right)\rangle\)
        \item \label{prop_H_SL22_notPI:n=1:PGL2xC} \(N(\diag(\SL_2))\times\{1\}\)
    \end{enumerate}
    \item \(G/H\) is obtained by parabolic induction.
\end{enumerate}
\end{prop}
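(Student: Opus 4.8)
The plan is to reduce the classification to one of subgroups of $\SL_2^2$, and then to read off the parabolic inductions from the Lie algebra. First I would record that faithfulness of the $C$-action means exactly $H\cap C=\{\id\}$: a central element of $C$ acts trivially on $G/H$ precisely when it lies in $H$. Hence the projection $p\colon H\to\SL_2^2$ is injective; writing $\bar H:=p(H)$, the subgroup $H$ is the graph of a homomorphism $\psi\colon\bar H\to C$. The finite-kernel hypothesis on $\SL_2^2$ translates into the requirement that $\bar H$ contains neither $\SL_2$ factor, and by Remark~\ref{rem_rank_group} we have $\rank(G)=n+2\leq\dim(G/H)\leq 4$, so $n\leq 2$ from the outset.

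Next I would determine the adapted parabolic $P$. Each simple factor of $\SL_2^2$ has rank one, so a standard parabolic of $G$ either contains one such factor or equals $B$; by Corollary~\ref{cor_local_structure_theorem} the first option forces that factor to act trivially on $G/H$, against the finite-kernel hypothesis. Therefore $P=B$, whence $\dim(G/P)=2$ and $\rank(G/H)=\dim(G/H)-2$. Combined with $\rank(G/H)\geq\dim(C)=n$ (Remark~\ref{rem_rank_lower_bound}) and $\dim(G/H)\leq 4$, this yields the two-sided bound $n+2\leq\dim\bar H\leq 4$. For $n=2$ it forces $\dim\bar H=4$; the only $4$-dimensional connected subgroup of $\SL_2^2$ containing no $\SL_2$ factor is a conjugate of $B_1\times B_2$, whose Lie algebra contains the single-factor root vector of a Borel, so by Remark~\ref{induction_from_Lie} we are in case~(3). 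This settles $n=2$.

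For $n\in\{0,1\}$ the core of the argument is the classification up to conjugacy of the connected groups $\bar H^\circ$, equivalently of the subalgebras of $\mathfrak{sl}_2\oplus\mathfrak{sl}_2$. I would invoke \cite{Douglas_Repka_2016}, restricting immediately to quasi-algebraic subalgebras via Remark~\ref{rem_quasi_algebraic}, so that each Jordan component of an element again lies in the subalgebra. By Remark~\ref{induction_from_Lie}, $G/H$ is a parabolic induction exactly when $\bar{\mathfrak h}$ contains a nonzero nilpotent supported on a single factor, these being the Lie algebras of the unipotent radicals of the proper parabolics of $\SL_2^2$. Keeping only the subalgebras whose nonzero nilpotents have both components nonzero (or which are toral), and imposing $n+2\leq\dim\bar H\leq 4$ together with the no-$\SL_2$-factor condition, a short case analysis leaves exactly three connected possibilities: $\diag(\SL_2)$ of dimension $3$, $\diag(B_1)$ of dimension $2$, and $T_1\times T_2$ of dimension $2$.

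Finally I would reconstruct $H$ from each $\bar H^\circ$. Since the relevant connected group is either $\diag(\SL_2)$ (which has no nontrivial character) or occurs in the case $n=0$ (where $C$ is trivial), the character $\psi$ vanishes on $H^\circ$, so $H^\circ=\bar H^\circ\times\{\id\}$; the remaining freedom is a finite extension governed by $N_G(H^\circ)/H^\circ$. Computing the normalizers --- $N_{\SL_2^2}(\diag\SL_2)=\langle\diag\SL_2,(I_2,-I_2)\rangle$, $N_{\SL_2^2}(\diag B_1)=\langle\diag B_1,(I_2,-I_2)\rangle$, and $N_{\SL_2^2}(T_1\times T_2)=N(T_1)\times N(T_2)$ --- and enumerating the admissible finite subgroups subject to $H\cap C=\{\id\}$ produces the lists~(1) and~(2); in particular, for $n=1$ the only freedom is the value $z\in\{1,-1\}\subset\bbG_m$ of $\psi$ on the generator $(I_2,-I_2,z)$, giving items~(a),~(b),~(c). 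I expect the main obstacle to lie precisely in this last step: enumerating the component groups and torus-twists without redundancy and up to $G$-equivariant isomorphism --- the delicate point being whether to allow the outer automorphism exchanging the two $\SL_2$ factors, which identifies $N(T_1)\times T_2$ with $T_1\times N(T_2)$. A secondary care point is to ensure that the appeal to \cite{Douglas_Repka_2016} together with quasi-algebraicity really excludes all tilted toral and mixed toral--nilpotent subalgebras that fail to be Lie algebras of algebraic groups.
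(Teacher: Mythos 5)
Your proposal is correct, and its skeleton coincides with the paper's: the local structure theorem forces the adapted parabolic to be \(B\) (hence \(\rank(G/H)=\dim(G/H)-2\) and \(n\leq 2\)); the connected possibilities for the projection to \(\SL_2^2\) are read off from Douglas--Repka filtered by quasi-algebraicity (Remark~\ref{rem_quasi_algebraic}) and the Lie-algebra criterion for parabolic induction (Remark~\ref{induction_from_Lie}); and the finite extensions are then enumerated inside normalizers. Two local variations are worth recording. First, your reduction ``\(H\cap C=\{\id\}\), so \(H\) is the graph of a character \(\psi:\bar H\to C\)'' is a cleaner packaging than the paper's argument, which instead projects \(H\) to \(\SL_2^2\) and lifts back using quasi-algebraicity (e.g.\ deducing \(\diag(\SL_2)\times\{1\}\subset H\) from \((e_\pm,e_\pm,0)\in\mathfrak h\)); your version gets the same conclusion from perfectness of \(\diag(\SL_2)\), and it automatically produces the twist \(\langle\diag(\SL_2),(I_2,-I_2,-1)\rangle\) as the choice \(z=-1\). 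Second, for \(n=2\) and for \(n=1\) with \(\dim(G/H)=3\), the paper invokes Proposition~\ref{horospherical_by_torus_dimension} (equivariant automorphism group forces horosphericity), whereas you force \(\bar H^\circ\) to be conjugate to \(B_1\times B_2\) by pure dimension count and a Goursat-type analysis of four-dimensional subalgebras; both work, and yours is slightly more self-contained at that point. Three small corrections: the relevant classification is the companion paper \cite{Douglas_Repka_2016_so4} on \(\mathfrak{sl}_2\oplus\mathfrak{sl}_2\), not \cite{Douglas_Repka_2016} (which treats \(\mathfrak{sl}_3\)); the outer swap of the two \(\SL_2\) factors that worries you is indeed used by the paper (its lists are up to this swap, as made explicit in the proof of Proposition~\ref{prop_H_SL_22_PI}), so your resolution is the intended one; and note that the paper's item \(\diag N(T_1)\) is abusive notation for \(\langle T_1\times T_2,(n,n)\rangle\) with \(n\) a Weyl representative --- exactly the diagonal \(\bbZ/2\)-extension your enumeration of subgroups of \(N(T_1)\times N(T_2)/(T_1\times T_2)\cong(\bbZ/2)^2\) produces --- rather than the one-dimensional group \(\{(g,g):g\in N(T_1)\}\).
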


\begin{proof}
By Assumption~\ref{assumptions} and Corollary~\ref{cor_local_structure_theorem}, the adapted parabolic subgroup is \(P=B_1\times B_2\times \bbG_m^n\). 
Then \(\dim(G/P)=2\) and by Corollary~\ref{cor_local_structure_theorem} again, \(\rank(G/H)=\dim(G/H)-2\). 
Furthermore, by \(\dim(G/H)\leq 4\), Remark~\ref{rem_rank_lower_bound} and Remark~\ref{rem_rank_group}, \(n\leq \rank(G/H)= \dim(G/H)-2\leq 2\).

We first deal with the case \(n=0\). 
The Lie subalgebras, up to conjugation, of \(\mathfrak{sl}_2\oplus\mathfrak{sl}_2\), are classified in  \cite{Douglas_Repka_2016_so4}. 
There are several restrictions to take into account here. 
First, the Lie algebra of a spherical subgroup of \(\SL_2^2\) must be of dimension at least two, and more precisely, the projection to each summand \(\mathfrak{sl}_2\) must have positive dimension. 
Second, by Remark~\ref{rem_quasi_algebraic}, the subalgebras \(K_2^3\) and \(K_2^5\) in Douglas and Repka's notations cannot be the Lie algebras of spherical subgroups.
Third, by Assumption~\ref{assumptions}, the Lie algebra of \(H\) cannot contain one of the \(\mathfrak{sl}_2\) summand. 

Finally, using Remark~\ref{induction_from_Lie}, we deduce from Douglas and Repka's classification that the possible Lie algebras of spherical subgroups of \(\SL_2^2\) that are \emph{not} obtained by parabolic induction are the following: 
\(\mathfrak{t}_1\oplus\mathfrak{t}_2\), \(\diag(\mathfrak{sl}_2)\) and \(\diag(\mathfrak{b}_1)\). 
These yield item~(\ref{prop_H_SL22_notPI:n=0}) in the statement. 

Assume now that \(n=1\). 
We then have \(4\geq \dim(G/H)\geq 3\). 

If \(\dim(G/H)=3\), then \(\rank(G/H)=1\). 
On the other hand, since the \(\bbG_m\) factor acts faithfully, we have \(\dim(\Aut^G(G/H))\geq 1\), and by Proposition~\ref{horospherical_by_torus_dimension}, \(G/H\) is horospherical. 
In particular, it is obtained by parabolic induction. 

Assume now that \(\dim(G/H)=4\), thus \(H\) is a spherical subgroup of \(\SL_2^2\times \bbG_m\) of dimension \(3\). 
Let \(H_1\) be the projection of \(H\) to \(\SL_2^2\). 
It is a spherical subgroup of \(\SL_2^2\).  
In view of the description in the previous case, there are few situations to consider: 
\begin{itemize}
    \item If \(H_1\) is a parabolic induction, then by Remark~\ref{rem_quasi_algebraic}, \(H\) is a parabolic induction as well. 
    \item If \(\mathfrak{h}_1=\mathfrak{t}_1\oplus \mathfrak{t}_2\), then \(H\subset N(T_1)\times N(T_2)\times \bbG_m\) and, by dimension, \(T_1\times T_2\times \bbG_m\subset H\). But then the action of \(\bbG_m\) on \(G/H\) is not faithful.  
    \item Similarly, if \(\mathfrak{h}_1=\diag(\mathfrak{b}_1)\) then \(H\subset N(\diag(B_1))\times \bbG_m\) but by dimension, we then have \(\diag(B_1)\times \bbG_m\subset H\). Again, in this case, the action of \(\bbG_m\) on \(G/H\) is not faithful. 
    \item Finally, we deal with the case \(\mathfrak{h}_1=\diag(\mathfrak{sl}_2)\). 
    Denote, as in the proof of Proposition~\ref{prop_Gsc=SL2}, by \((e_+,e_-,e_0)\) a basis of \(\mathfrak{sl}_2\) such that \(e_{\pm}\) are nilpotent, \(e_0\) is semisimple and \([e_+,e_-]=e_0\). 
    Since \((e_{+},e_{+})\) \((e_-,e_-)\in \mathfrak{h}_1\), by Remark~\ref{rem_quasi_algebraic}, we actually have \((e_+,e_+,0)\) and \((e_-,e_-,0)\in \mathfrak{h}\), and their brackets as well. 
    Thus \(\diag(\SL_2)\times \{1\} \subset H\). 
    Under the assumption of faithful action of the \(\bbG_m\) factor, there are only three possibilities for \(H\) which are the three in item~(\ref{prop_H_SL22_notPI:n=1}) of the statement.
\end{itemize}

Finally, if \(n=2\) then \(\dim(G/H)=4\) and \(\rank(G/H)=2\). 
By Proposition~\ref{horospherical_by_torus_dimension} again \(G/H\) is horospherical, hence obtained by parabolic induction. 
\end{proof}

\begin{prop}
\label{prop_H_SL_22_PI}
Let \(G=\SL_2^2\times \bbG_m^n\) and let \(G/H\) satisfy Assumption~\ref{assumptions}, with \(\dim(G/H)\leq 4\).  
Assume that \(G/H\) is obtained by parabolic induction, and that the image of \(G\) in \(\Aut(G/H)\) contains \(\Aut^{G,0}(G/H)\).
Then up to \(G\)-equivariant isomorphism, we have the following possibilities: 
\begin{enumerate}
    \item \label{prop_H_SL_22_PI:B} \(n=0\) and \(H=B^-\)
    \item \label{prop_H_SL_22_PI:BxT} \(n=0\) and \(H=B_1^-\times T_2\) 
    \item \label{prop_H_SL_22_PI:BxN} \(n=0\) and \(H=B_1^-\times N(T_2)\)
    \item \label{prop_H_SL_22_PI:rk1horo} \(n=1\) and \(H=\ker((a_1\varpi_1+a_2\varpi_2+\chi_1)|_{B^-})\) for some \(a_1\), \(a_2\in \bbZ\) with \(a_1\geq\vert a_2\rvert\). 
    \item \label{prop_H_SL_22_PI:PIT} \(n=1\) and \(H=\ker((a_1\varpi_1+a_2\varpi_2+\chi_1|)_{T_1\times B_2^-\times \bbG_m})\) with \(a_1\in \bbZ_{\geq 0}\) and \(a_2\in \bbZ\)
    \item \label{prop_H_SL_22_PI:PIN} \(n=1\) and \(H=\left\langle T_1\times\ker((a_2\varpi_2+\chi_1)|_{B_2^-\times \bbG_m}), \left(\begin{bmatrix}0&-1\\1&0\end{bmatrix},\begin{bmatrix}\xi&0\\0&\xi^{-1}\end{bmatrix},\varepsilon\right)\right\rangle\) where \(a_2\in \bbZ_{\geq 0}\), \(\varepsilon=\pm 1\), \(\xi\) is a primitive \(2^l\)-th root of \(1\), and either \(2^l\) divides \(a_2\) if \(\varepsilon=-1\), or \(\varepsilon=1\), \(2^{l}\) divides \(2a_2\) and \(2^l\) does not divide \(a_2\) unless \(l=0\).
    \item \label{prop_H_SL_22_PI:rk2horo} \(n=2\) and \(H=\ker((a_1\varpi_1+a_2\varpi_2+\chi_1)|_{B^-})\cap \ker((b_2\varpi_2+\chi_2)|_{B^-})\) where either \(a_2=b_2=0\) and \(0\leq a_1\), or \(0<a_1\leq a_2\wedge b_2\), and \(0\leq a_2 <b_2\). 
\end{enumerate}
\end{prop}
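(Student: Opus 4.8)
The plan is to recognize parabolic induction through Remark~\ref{induction_from_Lie}, namely the existence of a proper parabolic $Q$ with $Q^u \subset H \subset Q$, and to reduce the classification of $H$ to the spherical subgroups of groups with semisimple part $\SL_2$, which are already listed in Corollary~\ref{cor_all_H_SL2}. First I would import from the proof of Proposition~\ref{prop_H_SL22_notPI} that the adapted parabolic is $P = B_1 \times B_2 \times \bbG_m^n$, so that $\rank(G/H) = \dim(G/H) - 2 \le 2$ and $n \le \rank(G/H)$. After conjugating so that $Q \supset B^-$, the proper parabolics of $G$ are $B^-$ and, up to the automorphism of $G$ exchanging the two $\SL_2$-factors, $Q_{\{\alpha_2\}} = \SL_2 \times B_2^- \times \bbG_m^n$; these split the proof into a horospherical case and an $\SL_2$-induction case.

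A common preliminary observation fixes the number $n$ of torus factors. Under Assumption~\ref{assumptions} the action of $C = \bbG_m^n$ is faithful and, $C$ being central in $G$, its left and right actions on $G/H$ coincide, so the image of $C$ is a subtorus of $\Aut^{G}(G/H) = N_G(H)/H$ of dimension exactly $n$. Hence, by Proposition~\ref{prop_equivariant_automorphisms}, the hypothesis that the image of $G$ contains $\Aut^{G,0}(G/H)$ is equivalent to $n = \dim(\mathcal V \cap -\mathcal V)$, the dimension of the linear part of the valuation cone. When $Q = B^-$ the homogeneous space is horospherical, so $\mathcal V = N \otimes \bbQ$ by Corollary~\ref{cor_horospherical_by_valuation_cone} and $n = \rank(G/H) = k$; writing $H = \bigcap_{i=1}^{k} \ker\chi_i$ as in Remark~\ref{remark_horospherical_subgroups} with $\chi_i \in X^*(B^-) = \bbZ\varpi_1 \oplus \bbZ\varpi_2 \oplus X^*(\bbG_m^n)$, the projections of the $\chi_i$ to $X^*(\bbG_m^n)$ form a basis, and I would normalize the lattice they generate using the automorphisms available under Assumption~\ref{assumptions} — the $\GL_n(\bbZ)$-action on $X^*(\bbG_m^n)$, the factor exchange, and the Weyl-group sign changes compatible with $B^-$ — to obtain items~(\ref{prop_H_SL_22_PI:B}), (\ref{prop_H_SL_22_PI:rk1horo}) and~(\ref{prop_H_SL_22_PI:rk2horo}) according to $n \in \{0,1,2\}$.

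When $Q = Q_{\{\alpha_2\}}$ the induction keeps the first $\SL_2$-factor: with $\pi : Q \to G_0$ the Levi quotient, $G_0 = \SL_2 \times \bbG_m^{n+1}$ whose torus is $T_2 \times \bbG_m^n$, and $H_0 = \pi(H)$ is a spherical subgroup of $G_0$ governed by Corollary~\ref{cor_all_H_SL2}. Discarding the horospherical type (already produced above), $H_0$ is of torus type or of normalizer type, so there is exactly one spherical root, coming from the kept factor, and $\dim(\mathcal V \cap -\mathcal V) = \rank(G/H) - 1$. The constraint $n = \dim(\mathcal V \cap -\mathcal V)$ then forces the Levi torus $T_2$ of the induced-away factor not to remain free: either it is absorbed into $H_0$, giving the two $n = 0$ possibilities~(\ref{prop_H_SL_22_PI:BxT}) and~(\ref{prop_H_SL_22_PI:BxN}), or it is tied to a single external $\bbG_m$ through a defining character with primitive $\chi_1$-component, giving the two $n = 1$ possibilities~(\ref{prop_H_SL_22_PI:PIT}) and~(\ref{prop_H_SL_22_PI:PIN}). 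In each subcase I would pull $H_0$ back through $\pi$, reinsert $Q^u = U_2^-$, and normalize the character data as in Corollary~\ref{cor_all_H_SL2}.

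The main obstacle is the normalizer subcase~(\ref{prop_H_SL_22_PI:PIN}). There $H_0$ is generated by a torus together with a lift of the nontrivial Weyl element of the first factor coupled to a root of unity living in $T_2 \times \bbG_m$; unlike the pure-torus normalization of Corollary~\ref{cor_all_H_SL2}, the automorphisms of $G$ cannot mix $T_2$ with the external $\bbG_m$, so this root of unity only reduces to a pair $(\xi, \varepsilon)$ with $\xi$ a primitive $2^l$-th root of unity in $T_2$ and $\varepsilon = \pm 1$. Imposing that the square of the generator, $\left(\begin{bmatrix}0 & -1\\ 1 & 0\end{bmatrix}, \begin{bmatrix}\xi & 0\\ 0 & \xi^{-1}\end{bmatrix}, \varepsilon\right)^2 = \left(-I_2, \begin{bmatrix}\xi^2 & 0\\ 0 & \xi^{-2}\end{bmatrix}, \varepsilon^2\right)$, lie in the torus part $T_1 \times \ker((a_2\varpi_2 + \chi_1)|_{B_2^- \times \bbG_m})$ yields exactly the divisibility relations between $2^l$, $a_2$ and $\varepsilon$ recorded in the statement. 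Keeping this discrete bookkeeping consistent, and checking that the horospherical list and the $\SL_2$-induction list do not overlap, is where the care lies; everything else reduces to the lattice normalizations already carried out in Corollary~\ref{cor_all_H_SL2}.
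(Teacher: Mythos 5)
Your overall route coincides with the paper's: the same dichotomy between induction from \(B^-\) (horospherical, split by \(n\in\{0,1,2\}\) and normalized by \(\GL_n(\bbZ)\), factor exchange and global sign) and induction from the maximal parabolic \(Q_{\{\alpha_2\}}\) (fiber governed by Proposition~\ref{prop_Gsc=SL2}, excluding the horospherical-type and full-\(\SL_2\)-type fibers, the latter by Assumption~\ref{assumptions}), with the hypothesis on \(\Aut^{G,0}(G/H)\) pinning down \(n\). Your preliminary reformulation of that hypothesis as \(n=\dim(\mathcal V\cap-\mathcal V)\) is correct --- the image of the central torus \(C\) is an \(n\)-dimensional subtorus of \(\Aut^{G,0}(G/H)\), and the hypothesis forces the reverse inclusion --- and it is a clean uniform packaging of what the paper argues case by case (e.g.\ ``\(\dim\Aut^{G,0}(G/H)\geq 1\) hence \(n=1\)'').

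There is, however, a concrete inaccuracy in the subcase you yourself flag as the main obstacle, item~(\ref{prop_H_SL_22_PI:PIN}). The condition that the square of the generator lie in the torus part reads \(\xi^{2a_2}=1\), i.e.\ \(2^l\mid 2a_2\), for \emph{both} values of \(\varepsilon\); it does not yield ``exactly'' the stated relations. It misses \(2^l\mid a_2\) when \(\varepsilon=-1\), and misses \(2^l\nmid a_2\) (for \(l>0\)) when \(\varepsilon=1\). These refinements come not from the group law but from the normalization \(c\notin K_2\) unless \(l=0\) in Proposition~\ref{prop_Gsc=SL2}: with \(K_2=\ker((a_2\varpi_2+\chi_1)|_{T_2\times\bbG_m})\) and \(c=(\diag(\xi,\xi^{-1}),\varepsilon)\), one has \(c\in K_2\) if and only if \(\xi^{a_2}\varepsilon=1\); if \(\varepsilon=1\) then \(c\notin K_2\) is precisely \(2^l\nmid a_2\), while if \(\varepsilon=-1\) then \(\xi^{2a_2}=1\) gives \(\xi^{a_2}=\pm 1\) and \(c\notin K_2\) excludes \(\xi^{a_2}=-1\), whence \(2^l\mid a_2\). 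Without this, your list double-counts: for \(\varepsilon=-1\) with \(2^{l-1}\) dividing \(a_2\) but \(2^l\) not, the element \(c\) lies in \(K_2\) and the resulting \(H\) is literally the subgroup obtained for \(\varepsilon=1\), \(l=0\). Since the missing condition is part of the parametrization you cite, the gap is repairable, but the mechanism is the choice of representative of \(c\) modulo \(K_2\), not the square condition. A smaller wording issue: in the horospherical case there are no ``Weyl-group sign changes'' preserving \(B^-\) (its normalizer is \(B^-\) itself); the only available sign operations are the global \(\lambda\mapsto-\lambda\) and the factor exchange, which is exactly why item~(\ref{prop_H_SL_22_PI:rk1horo}) reads \(a_1\geq\lvert a_2\rvert\) rather than \(a_1\geq a_2\geq 0\), whereas in item~(\ref{prop_H_SL_22_PI:PIT}) the flip of \(a_1\) alone \emph{is} available, by conjugating by a Weyl element of the first factor, which lies in \(Q_{\{\alpha_2\}}\).
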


\begin{rem}
\label{rem_SL_22_non-uniqueness_of_chi}
As in Remark~\ref{rem_SL2_non-uniqueness_of_chi}, we note that in case \ref{prop_H_SL_22_PI:rk2horo} of Proposition~\ref{prop_H_SL_22_PI}, if \(a_2=b_2=0\), we may replace \(a_1\varpi_1+\chi_1\) with \(a_1\varpi_1+\chi_1+\lambda\chi_2\) for any \(\lambda\in \bbZ\) without changing the subgroup \(H\). 
This is in fact directly explained by Remark~\ref{rem_SL2_non-uniqueness_of_chi}, since in this case, the homogeneous space is a product of \(\bbP^1\) with a  horospherical homogeneous space under \(\SL_2\times \bbG_m^2\). 
If furthermore \(a_1=0\), then the homogeneous space is a product of \(\bbP^1\times \bbP^1\) with a torus and obviously any basis \((\chi_1,\chi_2)\) may be chosen. 

In case \ref{prop_H_SL_22_PI:rk1horo} of Proposition~\ref{prop_H_SL_22_PI}, the conditions on \(a_i\) still allow some \(G\)-equivariant isomorphisms as well: in case of equality \(a_1=a_2\), one may exchange the two \(\SL_2\) factors. 
\end{rem}

\begin{proof}
There are two possibilities for the proper parabolic subgroup involved in the parabolic induction: a Borel subgroup or a maximal parabolic subgroup (up to exterior automorphism there is only one). 

We start with the first case. 
Since any quotient of a Borel subgroup is a torus, we only obtain horospherical subgroups.  
In view of our assumption on \(G\), \(G/H\) and \(\Aut^{G,0}(G/H)\), we have the following possibilities. 

If \(n=0\) then \(H=B^-\). 

If \(n=1\) then \(H\) is the kernel of a character \(a_1\varpi_1+a_2\varpi_2 + a_3 \chi_1\) of \(B^-\) for some integers \(a_1\), \(a_2\) and \(a_3\). 
By Assumption~\ref{assumptions}, we have \(a_3=\pm 1\), and up to \(G\)-equivariant isomorphism, we can further assume that \(a_3=1\) and \(a_1\geq \lvert a_2\rvert\). 

If \(n=2\) then \(H\) is the intersection of the kernels of two characters \(a_1\varpi_1+a_2\varpi_2+\chi_1\) and \(b_1\varpi_1+b_2\varpi_2+\chi_2\) of \(B^-\) for some integers \(a_1\), \(a_2\), \(b_1\), \(b_2\), and by Assumption~\ref{assumptions}, \((\chi_1,\chi_2)\) is a basis of \(X^*(\bbG_m^2)\). 
We may replace \((\chi_1,\chi_2)\) with another basis of \(X^*(\bbG_m^2)\), which corresponds to replacing \(\begin{bmatrix}a_1 & b_1\\a_2 & b_2\end{bmatrix}\) with an element of its \(\GL_2(\bbZ)\)-orbit on the right. We may thus assume that this matrix is in Hermite normal form. 
We may also exchange the two \(\SL_2\) factors or reorder the two characters. 
We may thus assume either that \(a_2=b_2=b_1=0\) and \(0\leq a_1\), or that \(b_1=0\), \(0<a_1\leq a_2\wedge b_2\), and \(0\leq a_2 <b_2\). 

We now consider the case of a parabolic induction with respect to a maximal parabolic subgroup. 
Then the basis of the parabolic induction is \(\bbP^1\) and the fiber \(G_0/H_0\) is a spherical homogeneous space of dimension less than three, with \(G_0\) a quotient of \(\SL_2\times\bbG_m^n\). 
We reason according to the rank. 
With Assumption~\ref{assumptions}, the only possibilities for the rank are one and two. Indeed, the adapted parabolic must be the Borel subgroup since it cannot contain a simple factor (which would act trivially) hence the rank is at most two. 
For the same reasons, \(G/H\) cannot be horospherical: if it were, then one of the \(\SL_2\) factor would be in \(H\), thus act trivially since this factor is a normal subgroup of \(G\). 
In particular it cannot have rank zero. 

Assume that the rank is one. 
Since \(G/H\) is not horospherical, we must have \(n=0\) and \(\Aut^{G,0}(G/H)=\{1\}\). 
The fiber \(G_0/H_0\) of the parabolic induction must thus be a spherical homogeneous space \(G_0/H_0\) of rank one and dimension at least two and we can assume \(G_0=\SL_2\). 
This means that \(H=T_1\times B_2\), or \(H=N_{\SL_2}(T_1)\times B_2\). 

Assume finally that the rank is two (hence \(n=0\) or \(1\)). 
Since \(G/H\) is not horospherical, the fiber \(G_0/H_0\) must be a spherical homogeneous space of dimension three under \(G_0=\SL_2\times \bbG_m^{1+n}\) (one could of course take a quotient but it is more convenient to consider the largest possible group here). 
In particular, \(\dim (H_0)=n+1\), and by Proposition~\ref{prop_Gsc=SL2}, we know the possible \(H_0\). 
Furthermore, we note that \(H_0\) cannot contain the \(\SL_2\)-factor or a maximal unipotent subgroup of this \(\SL_2\)-factor, otherwise either the faithfulness assumptions would be violated, or we would actually recover a horospherical example of the previous case obtained by parabolic induction with respect to a Borel subgroup. 
In view of the remaining cases in Proposition~\ref{prop_Gsc=SL2}, we have \(\dim \Aut^{G,0}(G/H) \geq 1\) hence \(n=1\) since we assume that the image of \(G\) in \(\Aut(G/H)\) contains \(\Aut^{G,0}(G/H)\).  

In view of Assumption~\ref{assumptions}, we finally obtain that either \(H_0=\ker((a_1\varpi_1+a_2\varpi_2+\chi_1)|_{T1\times T_2\times \bbG_m})\) with \(a_1\in \bbZ_{\geq 0}\) and \(a_2\in \bbZ\), or \(H_0=\left\langle T_1\times K_2, \left(\begin{bmatrix} 0& -1\\ 1&0\end{bmatrix},c\right)\right\rangle\) where \(K_2=\ker((a_2\varpi_2+\chi_1)|_{T_2\times \bbG_m})\) for some \(a_2\in \bbZ_{\geq 0}\), and \(c=(d,\varepsilon)\in T_2\times \bbG_m\) where \(d\) is a primitive \(2^l\)-th root of unity, and either \(\varepsilon=1\), \(2^{l-1}\) divides \(a_2\) but \(2^l\) does not, or \(\varepsilon=-1\) and \(2^{l}\) divides \(a_2\). 
\end{proof}

\subsection{Spherical homogeneous spaces with \(G^{sc}=\SL_3\)}

\begin{prop}
\label{prop_S(GL2xGL1)}
Let \(G=\SL_3\times \bbG_m^n\) and \(G/H\) satisfy Assumption~\ref{assumptions}, with \(\dim(G/H)\leq 4\).  
Then either \(n=0\) and \(H=S(\GL_2\times\GL_1)\subset \SL_3\) or \(G/H\) is obtained by parabolic induction. 
\end{prop}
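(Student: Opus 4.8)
The plan is to study \(G/H\) through the projection \(H_1\) of \(H\) to the \(\SL_3\) factor, reducing the statement to the fact that, among spherical subgroups of \(\SL_3\) of large enough dimension, only \(S(\GL_2\times\GL_1)\) fails to produce a parabolic induction. First I would record that \(H_1\) is a spherical subgroup of \(\SL_3\): the projection \(G\to \SL_3\) induces a surjection \(G/H\to \SL_3/H_1\), and the image of the open \(B\)-orbit is a dense, hence open, \(B_1\)-orbit. The same surjection gives \(\dim(\SL_3/H_1)\leq \dim(G/H)\leq 4\), so that \(\dim H_1\geq 4\).

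Writing \(\mathfrak g=\mathfrak{sl}_3\oplus\mathfrak c\) with \(\mathfrak c\) the Lie algebra of \(\bbG_m^n\), I would then prove a lifting principle: if \(H_1\) is obtained by parabolic induction in \(\SL_3\), then so is \(G/H\) in \(G\). Indeed, by Remark~\ref{induction_from_Lie} this means \(\mathfrak q^u\subseteq\mathfrak h_1\subseteq\mathfrak q\) for some proper parabolic \(\mathfrak q\subsetneq\mathfrak{sl}_3\); then \(\tilde{\mathfrak q}:=\mathfrak q\oplus\mathfrak c\) is a proper parabolic of \(\mathfrak g\) with \(\tilde{\mathfrak q}^u=\mathfrak q^u\). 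Given \(x\in\mathfrak q^u\subseteq\mathfrak h_1\), choose a lift \((x,v)\in\mathfrak h\); since \(x\) is nilpotent and \(v\in\mathfrak c\) is semisimple and central, the nilpotent part \((x,0)\) lies in \(\mathfrak h\) by Remark~\ref{rem_quasi_algebraic}. Hence \(\mathfrak q^u\times\{0\}\subseteq\mathfrak h\subseteq\tilde{\mathfrak q}\), and \(G/H\) is a parabolic induction by Remark~\ref{induction_from_Lie}.

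The core step is then to enumerate the spherical subgroups \(H_1\subset\SL_3\) with \(\dim H_1\geq 4\), using the classification of subalgebras of \(\mathfrak{sl}_3\) \cite{Douglas_Repka_2016} together with Remark~\ref{rem_quasi_algebraic}. Up to conjugacy these are: \(\SL_3\) itself; the reductive subgroup \(S(\GL_2\times\GL_1)\); and a finite list of non-reductive subgroups, namely the two maximal parabolics, the Borel, the stabilizers \(\SL_2\ltimes\bbC^2\) of a nonzero vector or covector, and a few further non-reductive subgroups each containing the unipotent radical of a proper parabolic. The subgroup \(\SL_3\) is excluded by Assumption~\ref{assumptions}, since it would act on \(G/H\) with infinite kernel. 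Every non-reductive entry contains the unipotent radical of a proper parabolic and is therefore a parabolic induction by Remark~\ref{induction_from_Lie}; the only reductive possibility \(S(\GL_2\times\GL_1)\), being reductive of dimension \(4\), contains no such unipotent radical and is \emph{not} a parabolic induction. Combining this with the lifting principle: if \(G/H\) is not obtained by parabolic induction, then \(H_1\) is not either, so \(H_1=S(\GL_2\times\GL_1)\).

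It remains to see that this forces \(n=0\) and \(H=S(\GL_2\times\GL_1)\). Since \(\dim(\SL_3/S(\GL_2\times\GL_1))=4\), the surjection \(G/H\to\SL_3/H_1\) together with \(\dim(G/H)\leq 4\) gives \(\dim(G/H)=4\) and \(\dim H=n+4\). The kernel of \(H\to H_1\) is \(H\cap\bbG_m^n\), of dimension \((n+4)-4=n\), hence equals \(\bbG_m^n\); being central in \(G\), this subgroup acts trivially on \(G/H\), contradicting the faithfulness in Assumption~\ref{assumptions} unless \(n=0\). Then \(H=H_1=S(\GL_2\times\GL_1)\), as claimed. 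I expect the enumeration of spherical subgroups of \(\SL_3\) and the verification of their parabolic-induction status to be the only laborious part; everything else is dimension bookkeeping together with the Jordan-decomposition lifting of Remark~\ref{rem_quasi_algebraic}.
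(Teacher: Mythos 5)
Your proof is correct, and it reorganizes the argument along a genuinely more uniform route than the paper's. The paper first bounds \(n\leq 2\) via the local structure theorem (Corollary~\ref{cor_local_structure_theorem}), dispatches \(n=2\) and the three-dimensional \(n=1\) case through the horosphericity criterion of Proposition~\ref{horospherical_by_torus_dimension} (torus dimension equal to the rank forces horospherical, hence parabolic induction), and only invokes the projection \(H_1\subset\SL_3\) in the residual four-dimensional \(n=1\) case; for \(n=0\) it reads the Douglas--Repka list directly. You instead treat all \(n\) simultaneously: sphericality and the dimension bound for \(H_1\), your Jordan-decomposition lifting principle (which is exactly the mechanism the paper compresses into ``by Remark~\ref{rem_quasi_algebraic}'' in its \(n=1\) case, and which is legitimate because Remark~\ref{induction_from_Lie} makes parabolic induction a condition readable from Lie algebras alone), and the final count showing \(H\cap\bbG_m^n=\bbG_m^n\), contradicting faithfulness in Assumption~\ref{assumptions} unless \(n=0\). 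This replaces all rank and valuation-cone considerations and even makes the bound \(n\leq 2\) unnecessary; what the paper's route buys is integration with the framework it reuses throughout, while yours buys a shorter, self-contained case analysis. Two small points deserve a line each in a polished write-up: the Douglas--Repka classification is of \emph{subalgebras}, so in the branch \(\mathfrak{h}_1\simeq\mathfrak{sl}_2\oplus\bbC\) you should note, as the paper does, that \(S(\GL_2\times\GL_1)\) is self-normalizing in \(\SL_3\), so that \(H_1\) (and finally \(H\) when \(n=0\)) is the group \(S(\GL_2\times\GL_1)\) itself rather than some intermediate group with the same Lie algebra; and your exclusion of \(H_1=\SL_3\) tacitly uses that the nilpotent-part lifting places \(\SL_3\times\{1\}\) inside \(H\) (your lifting argument applied to the nilpotent generators of \(\mathfrak{sl}_3\)), whence the \(\SL_3\)-action has infinite kernel. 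Neither is a structural gap.
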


\begin{proof}
Depending on the adapted parabolic subgroup \(P\), we have \(\dim(G/P)=2\) or \(3\). 
As a consequence, \(n\leq 2\), and if \(n=2\), by Proposition~\ref{horospherical_by_torus_dimension}, \(G/H\) is horospherical, hence obtained by parabolic induction. 

Lie subalgebras of \(\mathfrak{sl}_3\) up to conjugation have been classified in  \cite{Douglas_Repka_2016}. 
From examining the list of Lie subalgebras of dimension at least four, keeping in mind Remark~\ref{rem_quasi_algebraic}, we see that either \(G/H\) is obtained by parabolic induction, or \(\mathfrak{h}\simeq \mathfrak{sl}_2\oplus\bbC\) embedded as a Lie subalgebra of block-diagonal matrices in \(\mathfrak{sl}_3\) with a block of size two and a block of size one. 
Since the image of this subalgebra in \(\SL_3\) by the exponential map is \( S(\GL_2\times\GL_1)\) and this group is self-normalizing, we obtain the statement for \(n=0\). 

It remains to show that if \(n=1\), \(G/H\) is obtained by parabolic induction. 
Assume that \(H\) is a spherical subgroup of \(\SL_3\times \bbG_m\). 
If \(\dim(G/H)=3\) then \(1=n\leq \rank(G/H)\leq 3-\dim(G/P)\leq 1\) hence by Proposition~\ref{horospherical_by_torus_dimension}, \(G/H\) is horospherical. 
Assume now that \(\dim(G/H)=4\), hence \(\dim(H)=5\). 
Let \(H_1\) be the projection of \(H\) to the \(\SL_3\) factor. 
In view of the case \(n=0\), either \(H_1\) is a parabolic induction, in which case \(H\) is as well by Remark~\ref{rem_quasi_algebraic}, or \(H_1=S(\GL_2\times \GL_1)\) (up to conjugation). 
In the latter case, \(H\) is a \(5\)-dimensional subgroup of \(S(\GL_2\times \GL_1)\times \bbG_m\), hence equal to \(S(\GL_2\times \GL_1)\times \bbG_m\). 
This subgroup however, does not satisfy Assumption~\ref{assumptions}. 
\end{proof}

\begin{prop}
\label{prop_H_SL_3_PI}
Let \(G=\SL_3\times \bbG_m^n\) and let \(G/H\) satisfy Assumption~\ref{assumptions}, with \(\dim(G/H)\leq 4\).  
Assume that \(G/H\) is obtained by parabolic induction, and that the image of \(G\) in \(\Aut(G/H)\) contains \(\Aut^{G,0}(G/H)\). 
Then up to \(G\)-equivariant isomorphism, we are in one of the following cases:
\begin{enumerate}
    \item \label{prop_H_SL_3_PI:B} \(n=0\) and \(H=B^-\) 
    \item \label{prop_H_SL_3_PI:Q} \(n=0\) and \(H=Q\)
    \item \label{prop_H_SL_3_PI:horosym} \(n=0\) and \(H = \langle Q^u, T \rangle \)
    \item \label{prop_H_SL_3_PI:Nhorosym} \(n=0\) and \(H = N(\langle Q^u, T \rangle) \) 
    \item \label{prop_H_SL_3_PI:rk1horoQ} \(n=1\) and \(H=\ker(a_1\varpi_1+\chi_1)|_Q\) for some \(a_1\in \bbZ_{\geq 0}\)
    \item \label{prop_H_SL_3_PI:rk1horoB} \(n=1\) and \(H=\ker(a_1\varpi_1+a_2\varpi_2+\chi_1)|_{B^-}\) for some \(a_1\) and \(a_2\in \bbZ\) with \(a_1\geq\lvert a_2\rvert\)
    \item \label{prop_H_SL_3_PI:rk2horo} \(n=2\) and \(H=\ker((a_1\varpi_1+\chi_1)|_Q)\cap \ker(\chi_2|_Q)\) for some \(a_1\in \bbZ_{\geq 0}\),
\end{enumerate}
\end{prop}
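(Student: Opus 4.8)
The plan is to follow the structure of the proof of Proposition~\ref{prop_H_SL_22_PI}, organising the argument first by the type of proper parabolic subgroup governing the parabolic induction and then by the rank. Since $N_G(H)$ contains the central torus factor and $\bbG_m^n$ has no proper parabolic, the inducing parabolic is of the form $Q'\times \bbG_m^n$ with $Q'$ a proper parabolic of $\SL_3$; up to the outer automorphism of $\SL_3$ (which identifies the two maximal parabolics), either $Q'$ is a Borel subgroup, which we take to be $B^-$, or $Q'$ is the maximal parabolic, which we denote $Q$ as in the statement. As recalled in the proof of Proposition~\ref{prop_S(GL2xGL1)}, the adapted parabolic $P$ satisfies $\dim(G/P)\in\{2,3\}$, so Corollary~\ref{cor_local_structure_theorem} together with $\dim(G/H)\leq 4$ gives $\rank(G/H)\leq 2$. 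The decisive tool for pinning down $n$ will be Proposition~\ref{prop_equivariant_automorphisms}: for horospherical $G/H$ the torus $\Aut^{G,0}(G/H)$ has dimension $\rank(G/H)$, and since the only continuous contribution to the image of $G=\SL_3\times \bbG_m^n$ in $\Aut^{G,0}(G/H)$ comes from the central factor $\bbG_m^n$ (the centre of $\SL_3$ being finite), the hypothesis that this image contains $\Aut^{G,0}(G/H)$ becomes, together with faithfulness and Remark~\ref{rem_rank_lower_bound}, equivalent to $n=\rank(G/H)$. For the non-horospherical spaces that arise here the valuation cone is strictly convex, so $\Aut^{G,0}(G/H)$ is trivial and faithfulness forces $n=0$.

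I would next dispose of the Borel case. Any quotient of a Borel subgroup is a torus, so the fibre is a torus, $G/H$ is horospherical, and its adapted parabolic is the full Borel, whence $\rank(G/H)=\dim(G/H)-3\leq 1$. By Remark~\ref{remark_horospherical_subgroups}, $H=\bigcap_i\ker(\chi_i)$ for a basis of a sublattice $M\subset X^*(B^-)=X^*(T)$ with $\rank M=\rank(G/H)$. If $\rank M=0$ then $H=B^-$ and $\Aut^{G,0}(G/H)$ is trivial, so $n=0$: case~(\ref{prop_H_SL_3_PI:B}). If $\rank M=1$ then $n=1$; writing a generator of $M$ as $a_1\varpi_1+a_2\varpi_2+a_3\chi_1$, faithfulness forces $a_3=\pm 1$, and the combined freedom of the generator's sign, the automorphism $\chi_1\mapsto -\chi_1$, and the outer automorphism $\varpi_1\leftrightarrow\varpi_2$ normalises this to $a_3=1$ and $a_1\geq\lvert a_2\rvert$: case~(\ref{prop_H_SL_3_PI:rk1horoB}).

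I would then treat induction from the maximal parabolic $Q$, whose Levi $L\cong S(\GL_2\times\GL_1)$ has semisimple part $\SL_2$, distinguishing the horospherical and non-horospherical subcases. In the horospherical subcase, Remark~\ref{remark_horospherical_subgroups} gives $H=\bigcap_i\ker(\chi_i)$ with $M\subset X^*(Q)=\bbZ\varpi_1\oplus X^*(\bbG_m^n)$ and $n=\rank M$ as above: rank $0$ gives $H=Q$ with $n=0$ (case~(\ref{prop_H_SL_3_PI:Q})); rank $1$ gives $n=1$ and, after normalising by the outer automorphism, $H=\ker((a_1\varpi_1+\chi_1)|_Q)$ with $a_1\geq 0$ (case~(\ref{prop_H_SL_3_PI:rk1horoQ})); rank $2$ gives $n=2$ and, after putting the matrix of the two defining characters into Hermite normal form, case~(\ref{prop_H_SL_3_PI:rk2horo}). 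In the non-horospherical subcase, the fibre must itself be non-horospherical, of dimension $\dim(G/H)-\dim(G/Q)\leq 2$ over a group with semisimple part $\SL_2$; since $n=0$ here, $G_0=L$ and by Proposition~\ref{prop_Gsc=SL2} the only options are the rank-one spaces $\SL_2/T$ and $\SL_2/N(T)$ (Examples~\ref{P^1xP^1} and~\ref{P^2}), which induce up via $H=\pi^{-1}(H_0)$ to $\langle Q^u,T\rangle$ and $N(\langle Q^u,T\rangle)$: cases~(\ref{prop_H_SL_3_PI:horosym}) and~(\ref{prop_H_SL_3_PI:Nhorosym}).

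The conceptual steps are light, since the rank is tightly controlled by the dimension bound and the list of parabolics of $\SL_3$ is short. I expect the main obstacle to be the bookkeeping of $G$-equivariant isomorphisms: one must identify exactly which transformations of the integer parameters are realised by $\Aut(\SL_3\times\bbG_m^n)=(\mathrm{Inn}(\SL_3)\rtimes\mathrm{Out})\times\GL_n(\bbZ)$ together with the freedom in the choice of lattice generators, and then verify that the stated inequalities are precisely the resulting fundamental domains; for instance, that $a_1\geq\lvert a_2\rvert$ in case~(\ref{prop_H_SL_3_PI:rk1horoB}) is a fundamental domain for the Klein four-group generated by the swap $\varpi_1\leftrightarrow\varpi_2$ and the simultaneous sign change $(a_1,a_2)\mapsto(-a_1,-a_2)$ coming from replacing the generator of $M$ by its opposite and flipping $\chi_1$. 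One must also check that the cases do not overlap and that the two faithfulness hypotheses have been used to exclude every remaining configuration, exactly as in the $\SL_2^2$ analogue.
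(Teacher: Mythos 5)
Your proposal is correct and follows essentially the same route as the paper's proof: the same split between induction from \(B^-\) and from the maximal parabolic \(Q\), the same rank/dimension count together with \(\Aut^{G,0}(G/H)\) (via Propositions~\ref{prop_equivariant_automorphisms} and~\ref{horospherical_by_torus_dimension}) forcing \(n=\rank(G/H)\) in the horospherical cases and \(n=0\) in the non-horospherical ones, and the same \(\GL_n(\bbZ)\)/outer-automorphism normalizations of the defining characters, with the non-horospherical fiber over \(G/Q\) identified through the classification of rank-one spaces under quotients of the Levi. The only difference is organizational — you state the horospherical/non-horospherical dichotomy and the fundamental-domain bookkeeping more explicitly than the paper does — which does not change the argument.
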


\begin{rem}
\label{rem_SL3_non-uniqueness_of_chi}
As in Remark~\ref{rem_SL2_non-uniqueness_of_chi}, we note that, in case \ref{prop_H_SL_3_PI:rk2horo} of Proposition~\ref{prop_H_SL_3_PI}, we may replace \(\chi_1\) with \(\chi_1+\lambda \chi_2\) for any \(\lambda\in \bbZ\) without changing the subgroup \(H\). 
Again, if \(a_1=0\), the homogeneous space is the product of \(\bbP^2\) with a torus \(\bbG_m^2\). Then the basis \((\chi_1,\chi_2)\) of \(X^*(\bbG_m^2)\) can be chosen arbitrarily. 
\end{rem}

\begin{proof}
Up to \(G\)-equivariant isomorphism, there are two possibilities for the parabolic subgroup involved in the parabolic induction: \(B^-\) and \(Q\).  

Let us first deal with the case when it is \(B^-\). 
Since any reductive quotient of \(B^-\) is a torus, \(G/H\) is horospherical. 
Furthermore, since \(G/B^-\) is of dimension three, the rank of \(G/H\) is zero or one. 
If the rank is zero, then \(H=B^-\). 
If the rank is one, then \(n=1\) since \(\Aut^{G,0}(G/H)\) contains a one dimensional torus, and \(H\) is the kernel of a single character of \(B^-\), which may be written as \(a_1\varpi_1+a_2\varpi_2+c_1\chi_1\) for some integers \(a_1\), \(a_2\), \(c_1\). 
Assumption~\ref{assumptions} show that \(c_1=\pm 1\), and up to \(G\)-equivariant isomorphism, we can reorder the fundamental weights. 
Hence we may assume \(a_1\geq \lvert a_2\rvert\) and \(c_1=1\). 

Let us now deal with parabolic inductions with respect to the parabolic subgroup \(Q\). 
Since \(G/Q\simeq \bbP^2\), the rank may be zero (then \(H=Q\)), one, or two. 

If the rank is two, then since the fiber \(G_0/H_0\) of the projection \(G/H\to G/Q\) is of dimension two, it is a torus. 
Hence \(G/H\) is horospherical, and under our assumptions, \(n=2\). 
The subgroup \(H\) is then defined as the intersection of the kernels of two \(\bbZ\)-independent characters \(a\varpi_1+\chi_1\) and \(b\varpi_1+\chi_2\) of \(Q\), under Assumption~\ref{assumptions}. 
Since the action of \(\GL_2(\bbZ)\) is transitive on primitive vectors in \(\bbZ^2\), up to \(G\)-equivariant isomorphism, we may assume that \(a\in \bbN\) and \(b=0\). 

Assume now that \(\rank(G/H)=1\). 
If \(n=1\), then by Proposition~\ref{horospherical_by_torus_dimension}, \(G/H\) is horospherical, of dimension three. 
The subgroup \(H\) writes as the kernel of a character \(a_1\varpi_1 + \chi_1\) for some integer \(a_1\) and up to \(G\)-equivariant isomorphism, we may assume that \(a_1\in \bbN\). 

Finally, if \(\rank(G/H)=1\) and \(n=0\), then the fiber \(G_0/H_0\) under \(G/H\to G/Q\) is a rank one, non horospherical homogeneous space of dimension two under the action of a quotient of \(\SL_2\times \bbG_m\). 
In view of their classification, we deduce that we can choose \(G_0=\PSL_2\) and \(H_0\) a maximal torus of \(\PSL_2\), or its normalizer. 
This yields the description of the remaining items~(\ref{prop_H_SL_3_PI:horosym}) and (\ref{prop_H_SL_3_PI:Nhorosym}) in the statement. 
\end{proof}

\subsection{Remaining rank one spherical homogeneous spaces of dimension 4}

We now deal with the remaining rank one spherical homogeneous spaces of dimension 4. 
We need some specific conventions. 
For \(G=\Sp_4\times \bbG_m\) we let \(\alpha_1\) be the short simple root, and \(\alpha_2\) be the long simple root, so that \(G/Q_{\{\alpha_1\}}\simeq \bbP^3\) and \(G/Q_{\{\alpha_2\}}\simeq Q^3\). 
For \(G=\SL_4\times \bbG_m\) we let \(\alpha_1\) be a simple root such that \(G/Q_{\{\alpha_1\}} \simeq \bbP^3\). 
When there is only one choice up to \(G\)-equivariant isomorphism, we denote by \(Q_i\) a maximal parabolic subgroup of \(G_i\).

\begin{prop}
\label{prop_rk_one} 
Let \(G\) and \(G/H\) satisfy Assumption~\ref{assumptions}, with \(\dim(G/H)\leq 4\). 
Assume furthermore that the image of \(G\) in \(\Aut(G/H)\) contains \(\Aut^{G,0}(G/H)\), and that \(G^{sc}\notin\{\SL_2, \SL_2^2, \SL_3\}\). 
Then up to \(G\)-equivariant isomorphism, we are in one of the following cases:
\begin{enumerate}
    \item \label{prop_rk_one:Sp4_Q4} \(G=\Sp_4\), \(H=\SL_2\times\Sp_2\)
    \item \label{prop_rk_one:Sp4_P4} \(G=\Sp_4\),  \(H=N_{\Sp_4}(\SL_2\times\Sp_2)\)
    \item \label{prop_rk_one:SL23_P1xQ3} \(G=\SL_2^3\),  \(H=B_1^-\times \diag(\SL_2)\)
    \item \label{prop_rk_one:SL23_P1xP3} \(G=\SL_2^3\),  \(H=B_1^-\times N_{\SL_2^2}(\diag(\SL_2))\)
    \item \label{prop_rk_one:SL23_P14} \(G=\SL_2^3\),  \(H=B_1^-\times B_2^-\times T_3\)
    \item \label{prop_rk_one:SL23_P12xP2} \(G=\SL_2^3\),  \(H=B_1^-\times B_2^-\times N_{\SL_2}(T_3)\)
    \item \label{prop_rk_one:SL3xSL2_P2xP12} \(G=\SL_3\times \SL_2\),  \(H=Q_1\times T_2\)
    \item \label{prop_rk_one:SL3xSL2_P22} \(G=\SL_3\times \SL_2\),  \(H=Q_1\times N_{\SL_2}(T_2)\)
    \item \label{prop_rk_one:SL23_horo} \(G=\SL_2^3\times \bbG_m\),  \(H=\ker(a_1\varpi_1+a_2\varpi_2+a_3\varpi_3+\chi_1)|_{B^-}\) with \(a_1\geq \lvert a_2\rvert \geq \lvert a_3\rvert\)
    \item \label{prop_rk_one:SL3xSL2_horo} \(G=\SL_3\times \SL_2\times \bbG_m\),  \(H=\ker(a_1\varpi_1+a_3\varpi_3+\chi_1)|_{Q_{\{\alpha_1\}}}\) with \(a_1\geq 0\)
    \item \label{prop_rk_one:Sp4_horoshort} \(G=\Sp_4\times \bbG_m\),  \(H=\ker(a_1\varpi_1+\chi_1)|_{Q_{\{\alpha_1\}}}\) with \(a_1\geq 0\) 
    \item \label{prop_rk_one:Sp4_horolong} \(G=\Sp_4\times \bbG_m\),  \(H=\ker(a_2\varpi_2+\chi_1)|_{Q_{\{\alpha_2\}}}\) with \(a_2\geq 0\) 
    \item \label{prop_rk_one:SL4_horo} \(G=\SL_4\times \bbG_m\),  \(H=\ker(a_1\varpi_1+\chi_1)|_{Q_{\{\alpha_1\}}}\) with \(a_1\geq 0\)
\end{enumerate}
\end{prop}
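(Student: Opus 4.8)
The plan is to combine the dimension--rank bookkeeping of Corollary~\ref{cor_local_structure_theorem} with Akhiezer's classification of rank one spherical homogeneous spaces \cite{Akhiezer_1983}, treating the cases $n=\dim(C)=0$ and $n=1$ separately after a preliminary reduction. First I would pin down $G^{sc}$ and $n$. By Corollary~\ref{cor_local_structure_theorem} the adapted parabolic $P$ contains no simple factor of $G$ (such a factor would act trivially, against the finite-kernel part of Assumption~\ref{assumptions}), so $G/P=G^{sc}/P^{sc}$ is a projective homogeneous space of dimension $\dim(G/P)\leq\dim(G/H)\leq 4$ on which every simple factor acts non-trivially; hence $G^{sc}$ occurs in Table~\ref{table:PHS}. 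For the groups listed there other than $\SL_2,\SL_2^2,\SL_3$ one has $\dim(G/P)\geq 3$, so $\rank(G/H)=\dim(G/H)-\dim(G/P)\leq 1$ by Corollary~\ref{cor_local_structure_theorem}. If $\rank(G/H)=0$ then $H$ is parabolic by Example~\ref{exa_rk_0} and $G/H$ already appears in Table~\ref{table:PHS}; the remaining content is thus $\rank(G/H)=1$, which forces $\dim(G/H)=4$ and $\dim(G/P)=3$. Proposition~\ref{rank+3=dim}, together with the hypothesis $G^{sc}\notin\{\SL_2,\SL_2^2,\SL_3\}$, then leaves exactly $G^{sc}\in\{\SL_4,\Sp_4,\SL_3\times\SL_2,\SL_2^3\}$, while Remark~\ref{rem_rank_lower_bound} gives $n\leq\rank(G/H)=1$.

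Next I would treat $n=1$. Here $\dim(C)=\rank(G/H)=1$, so $G/H$ is horospherical by Proposition~\ref{horospherical_by_torus_dimension}, hence by Remark~\ref{remark_horospherical_subgroups} determined by a parabolic $Q=N_G(H)$ together with a single primitive character. Writing $Q=Q_I$, the adapted parabolic $P_I$ is opposite to $Q$, so the constraint $\dim(G/P)=3$ reads $\dim(G/Q)=3$; discarding the parabolics that contain a full simple factor (which would again force a trivial action), this leaves for each $G^{sc}$ a short explicit list of admissible $Q$: the Borel for $\SL_2^3$ (base $\bbP^1\times\bbP^1\times\bbP^1$), $Q_{\{\alpha_1\}}$ for $\SL_3\times\SL_2$ (base $\bbP^2\times\bbP^1$), the two maximal parabolics of $\Sp_4$ (bases $\bbP^3$ and $Q^3$), and $Q_{\{\alpha_1\}}$ for $\SL_4$ (base $\bbP^3$). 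Writing the defining character as a combination of the fundamental weights extending to $Q$ plus the $\bbG_m$-character (whose coefficient is $\pm1$ by the faithfulness in Assumption~\ref{assumptions}), and normalizing by the residual $G$-equivariant isomorphisms (permutations of isomorphic factors, outer automorphisms, sign changes), yields exactly items~(\ref{prop_rk_one:SL23_horo})--(\ref{prop_rk_one:SL4_horo}).

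Finally I would treat $n=0$, where $G=G^{sc}$ is semisimple. Since $\Aut^{G,0}(G/H)$ centralizes the $G$-action, it cannot be a positive-dimensional torus contained in the image of the semisimple group $G$; the hypothesis that this image contains $\Aut^{G,0}(G/H)$ therefore forces $\Aut^{G,0}(G/H)=\{1\}$, and by Proposition~\ref{prop_equivariant_automorphisms} and Corollary~\ref{cor_horospherical_by_valuation_cone} the space $G/H$ is \emph{not} horospherical. I would then split according to whether $G/H$ is a parabolic induction. If it is, the base $G/Q$ has dimension at most $2$ while the rank one fibre has dimension at least $2$; as every proper parabolic of $\Sp_4$ and of $\SL_4$ yields a base of dimension at least $3$, parabolic induction can occur only for $\SL_2^3$ (over $\bbP^1$ or $\bbP^1\times\bbP^1$) and for $\SL_3\times\SL_2$ (over $\bbP^2$), with fibre a non-horospherical rank one space of $\SL_2^2$ or $\SL_2$ already classified in the previous subsections, namely $\SL_2^2/\diag(\SL_2)$, $\SL_2/T$ or $\SL_2/N(T)$, and their normalizers; Proposition~\ref{prop_parabolic_induction} then produces items~(\ref{prop_rk_one:SL23_P1xQ3})--(\ref{prop_rk_one:SL3xSL2_P22}). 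If $G/H$ is not a parabolic induction, I would read off from Akhiezer's list \cite{Akhiezer_1983} the primitive non-horospherical rank one spaces of dimension four: only $\Sp_4/(\SL_2\times\Sp_2)$ survives, giving items~(\ref{prop_rk_one:Sp4_Q4}) and~(\ref{prop_rk_one:Sp4_P4}), whereas $\SL_4$ contributes nothing. In every case the pairing of $H$ with $N_G(H)$ accounts for the finite group $\Aut^G(G/H)=N_G(H)/H$ and yields the two items attached to each underlying homogeneous space.

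The main obstacle I expect is this last step: extracting from Akhiezer's classification exactly the rank one, dimension four examples for these four groups, verifying that $\SL_4$ genuinely yields nothing and that no primitive example besides $\Sp_4/(\SL_2\times\Sp_2)$ arises, and bookkeeping the normalizers $N_G(H)$ correctly so that each geometric space is recorded with its right finite equivariant automorphism group. By contrast the horospherical enumeration for $n=1$ is routine once the admissible parabolics are fixed, the only delicate point being the normalization of the defining character modulo the residual equivariant isomorphisms.
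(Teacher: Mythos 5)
Your proposal is correct and takes essentially the same route as the paper: both rest on Akhiezer's classification of rank one spherical homogeneous spaces via parabolic induction from a primitive list, on the local structure theorem forcing \(\rank(G/H)=1\) and \(\dim(G/P)=3\) for the groups at hand, and on enumerating the horospherical cases by a parabolic \(Q\) with \(\dim(G/Q)=3\) together with a character normalized modulo factor permutations, outer automorphisms and sign changes. Your only reorganization---splitting by \(n=\dim(C)\in\{0,1\}\) and using the finiteness of the center of a semisimple group to exclude horospherical spaces when \(n=0\)---is equivalent to the paper's appeal to \(\Aut^{G,0}(G/H)\) via Proposition~\ref{prop_equivariant_automorphisms} and Proposition~\ref{horospherical_by_torus_dimension}.
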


\begin{proof}
There is a general classification result for rank one spherical homogeneous spaces by Akhiezer \cite{Akhiezer_1983}, which states that any such homogeneous space is obtained by parabolic induction from a homogeneous space \(G_0/H_0\) in a list of primitive cases described explicitly in \cite{Akhiezer_1983}. 
It is easy to extract the elements of this list of dimension less than four: they are \(\bbG_m\), \(\SL_2/\GL_1\), \(\SL_2/N_{\SL_2}(\GL_1)\), \(\SL_2^2/\diag\SL_2\), \(\SL_2^2/N_{\SL_2^2}(\diag\SL_2)\), \(\SL_3/\GL_2\), \(\Sp_4/\SL_2\times \Sp_2\) and \(\Sp_4/N_{\Sp_4}(\SL_2\times \Sp_2)\). 

The last three are of dimension 4, so cannot be involved in a parabolic induction. 
Furthermore, \(\SL_3/\GL_2\) already appeared in the previous section since in this case \(G^{sc}=\SL_3\). 
We now study the possible parabolic inductions for each of the other elements of the list. 
Note first that if \(\dim(G/H)\leq 3\) then \(G^{sc}\in \{\SL_2, \SL_2^2, \SL_3\}\) so the corresponding homogeneous spaces were already considered in previous sections. 

In the case \(G_0/H_0= \SL_2^2/\diag\SL_2\), or \(\SL_2^2/N_{\SL_2^2}(\diag \SL_2)\), the fiber of the parabolic induction has dimension three, hence the basis is \(\bbP^1\). 
But then one must have \(G=\SL_2^3\), the parabolic induction is with respect to a maximal parabolic subgroup \(Q\), containing two of the \(\SL_2\)-factors, and the morphism \(Q\to G_0\) must kill the action of the Borel subgroup of the remaining factor. The parabolic induction is thus actually a product of \(\bbP^1\) with an embedding of \(\SL_2^2/\diag\SL_2\), or \(\SL_2^2/N_{\SL_2^2}(\diag\SL_2)\). 

We now deal with the case when the fiber is \(\SL_2/\GL_1\) or \(\SL_2/N_{\SL_2}(\GL_1)\). 
The basis is then of dimension \(2\) so it is \(\bbP^1\times \bbP^1\) or \(\bbP^2\). 
Once again, if it is \(\bbP^1\times \bbP^1\), then the group \(G\) must be \(\SL_2^3\), and the parabolic induction is actually a product \(\bbP^1\times \bbP^1\times G_0/H_0\). 
If the basis is \(\bbP^2\), then \(G\) is either \(\SL_3\) or \(\SL_3\times \SL_2\), but in the latter case again, the parabolic induction is a product. 
If \(G=\SL_3\), then \(G/H\) already appeared in the previous section. 

Finally, we deal with the case when the fiber is \(\bbG_m\), equivalently, we consider horospherical homogeneous spaces of rank one. 
Recall from Remark~\ref{remark_horospherical_subgroups} that such a homogeneous space \(G/H\) is fully determined by the data of a parabolic subgroup \(Q\) of \(G\) and a character \(\lambda\in X^*(Q)\), by \(H=\ker(\lambda:Q\to \bbG_m)\). 
The possible simply connected semisimple groups \(G^{sc}\) acting were determined in Proposition~\ref{rank+3=dim}, using the fact that \(\dim(G^{sc}/Q)=3\).
We already dealt with the case \(G^{sc}=\SL_3\) in the previous section.
By considering the action of the equivariant automorphism group, we have \(G=G^{sc}\times \bbG_m\). 

Assume that \(G=\SL_2^3\times \bbG_m\). 
In that case, the parabolic subgroup \(Q\) is \(B^-\), so that \(G/Q=(\bbP^1)^3\). 
The character \(\lambda\) writes \(a_1\varpi_1+a_2\varpi_2+a_3\varpi_3+c\chi\) where the \(\varpi_i\) are fundamental weights for each factor, and by Assumption~\ref{assumptions}, \(c=\pm 1\). 
Up to rearranging the factors and replacing \(\chi\) with its opposite, we may assume that \(a_1\geq \lvert a_2\rvert \geq \lvert a_3\rvert\)  and \(c=1\). 

Assume that \(G=\SL_3\times \SL_2\times \bbG_m\). 
Then the parabolic subgroup is the product of a maximal parabolic subgroup \(Q_1\) of \(\SL_3\) with a Borel subgroup \(B_2^-\) of \(\SL_2\) (and with the \(\bbG_m\)-factor), so that \(G/Q=\bbP^2\times \bbP^1\). 
Then \(\lambda=a_1\varpi_1+a_3\varpi_3+\chi\) and we may only assume \(a_1\geq 0\) by changing \(\chi\) to \(-\chi\). 

Assume that \(G=\SL_4\times \bbG_m\). Then \(Q=Q_{\{\alpha_1\}}\), so that \(G/Q\simeq \bbP^3\), and \(\lambda=a_1\varpi_1+\chi\) for some \(a_1\geq 0\). 

Finally, assume that \(G=\Sp_4\times \bbG_m\). 
Then there are two possible choices for the parabolic subgroup \(Q\), which in any case is a maximal parabolic subgroup. 
For \(G/Q\) we have either \(\bbP^3\) or \(Q^3\). 
If \(\varpi\) denotes the fundamental weight of \(\Sp_4\) such that \(\varpi\in X^*(Q)\), then \(\lambda=a\varpi+\chi\) for some \(a\geq 0\). 
\end{proof}

\section{Conventions, and dimension two}

\label{section_dimension_2}

To make the conventions in the following sections precise, and warm up, we recall the spherical actions in dimension two (in dimension one, it is obvious that there are only the rank zero homogeneous structure on \(\bbP^1\) and the toric structure on \(\bbP^1\)).

In the following sections, we will work our way to the classification by considering each group successively. 
Then for each corresponding spherical homogeneous space \(G/H\), we will determine the combinatorial data (\(M\), \(\Sigma\), \((\mathcal{D},\rho,\zeta)\)), then the possible locally factorial \(G/H\)-reflexive polytopes. 
Concerning the combinatorial data, we will give a basis for the lattice \(M\), in terms of the basis of \(X^*(T)\) given in our conventions by the \(\varpi_i\) and \(\chi_j\). 
Given this basis, we will identify \(M\) with \(\bbZ^r\), the dual \(N\) with \(\bbZ^r\) using the dual \(\bbZ\)-basis, and \(M\otimes \bbR\) with \(\bbR^r\), equipped with coordinates \((x_1,\ldots, x_r)\). 
This will allow to describe the color map \(\rho:\mathcal{D}\to N\), and the Duistermaat-Heckman polynomial \(f:M\otimes \bbR \to \bbR\) concisely.  
Finally, the set of colors will be denoted abstractly as a subset of \(\{\clubsuit, \varheartsuit, \vardiamondsuit, \spadesuit\}\) to emphasize that only the abstract set and maps matter. 

We will next identify the possible locally factorial \(G/H\)-reflexive polytopes (up to equivalences corresponding to \(G\)-equivariant isomorphisms), and will draw these. 
In the pictures, we will draw the polytopes themselves, highlight the lattice, and indicate the images of the colors. 
Under each polytope, we will indicate an identifier \textbf{dim-rank-number}, and some geometrical data associated to the corresponding embedding will be summarized in the appendix in a big table. 

We first illustrate these conventions with the dimension two case. 
Assume that \(G/H\) is of dimension two, then the rank is at most two, and the group acting is \(\bbG_m^2\) if the rank is two, \(\SL_2\times \bbG_m^n\) if the rank is one, with \(n\in \{0,1\}\), and \(\SL_3\) or \(\SL_2\times \SL_2\) if the rank is zero, in which case, we have the rational homogeneous surfaces \(\bbP^2\) and \(\bbP^1\times \bbP^1\). 
We will not say more about the rank zero case. 
For the other cases, we have essentially gathered the information needed in Section~\ref{section_reminder}, we illustrate in the remainder of the section our conventions and summarize the classification. 

\begin{prop}
Let \(G=\bbG_m^2\) and \(H=\{1\}\), then the combinatorial data of \(G/H\) is as follows. 
\[\arraycolsep=12pt
\begin{array}{lll}
M=\langle \chi_1,\chi_2\rangle  & \Sigma = \emptyset &  \mathcal{D} =\emptyset  \\
\midrule
\kappa = 0 & f=1 &
\end{array}
\]
\end{prop}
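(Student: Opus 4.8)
The proof is almost immediate from the definitions and earlier results, so the plan is to verify each of the four stated combinatorial invariants directly.

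First I would note that for the torus $G = \bbG_m^2$ with trivial subgroup $H = \{1\}$, the Borel subgroup is $B = G$ itself, as already observed in Example~\ref{example_torus_subgroups}. Hence every $B$-semi-invariant rational function is simply a $G$-eigenvector, and by the earlier remark that any character of $G$ defines such a function on $G$, the weight lattice is $M = X^*(G) = \langle \chi_1, \chi_2\rangle$. This handles the first entry and fixes the identification $M \cong \bbZ^2$ with basis $(\chi_1,\chi_2)$.

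Next, since $G$ is a torus, $G/H = G$ is horospherical (it is obtained by the trivial parabolic induction from itself, with $Q = G$ and $H_0 = \{1\}$). By Corollary~\ref{cor_horospherical_by_valuation_cone}, the horospherical condition is equivalent to $\Sigma = \emptyset$, giving the second entry. For the colors: the adapted parabolic $P$ is the stabilizer of the open $B$-orbit, but here $B = G$ acts transitively on $G/H = G$, so $P = G$ and there are no codimension-one $B$-orbits; thus $\mathcal{D} = \emptyset$. Equivalently, since $S = \emptyset$ (the torus is semisimple-rank zero) there are no simple roots to index colors.

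For the last row, $\kappa$ is by definition the sum of all roots of the adapted parabolic $P = G$; as $G$ is a torus it has no roots, so $\kappa = 0$. Finally, the Duistermaat–Heckman polynomial is the product $f(p) = \prod_{\alpha\in R_{P^u}} \frac{\{\kappa+p,\alpha\}}{\{\varrho,\alpha\}}$ over the positive roots of the unipotent radical of $P$; since $P = G$ is its own Levi, $R_{P^u} = \emptyset$ and the empty product gives $f \equiv 1$. There is no real obstacle here: every assertion reduces to the fact that a torus has no roots and acts transitively on itself, so the only thing to be careful about is correctly invoking the horospherical characterization for $\Sigma = \emptyset$ rather than recomputing the valuation cone by hand.
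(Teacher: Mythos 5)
Your proof is correct and is exactly the definitional verification the paper intends (it states this proposition without proof, relying on Example~\ref{example_torus_subgroups} and the subsequent example identifying \(M=X^*(G)\) for a torus): \(B=G\) acts transitively, so \(\mathcal{D}=\emptyset\), \(P=G\) has no roots giving \(\kappa=0\) and \(f\equiv 1\) as an empty product, and \(\Sigma=\emptyset\) via the horospherical characterization. No gaps.
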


The corresponding polytopes, up to equivalences, are given by the five smooth Fano polygons. 

\begin{center}
\begin{tikzpicture}[scale=0.6]
\draw[dotted] (-2,2) grid (2,-2);
\draw (0,0) node{+};
\draw[thick] (-1,1) -- (0,-1) -- (1,0) -- cycle;
\draw (0,-1) node[below]{\textbf{2-2-1}};
\end{tikzpicture}
\begin{tikzpicture}[scale=0.6]
\draw[dotted] (-2,2) grid (2,-2);
\draw (0,0) node{+};
\draw[thick] (-1,1) -- (0,-1) -- (1,-1) -- (1,0) -- cycle;
\draw (0,-1) node[below]{\textbf{2-2-2}};
\end{tikzpicture}
\begin{tikzpicture}[scale=0.6]
\draw[dotted] (-2,2) grid (2,-2);
\draw (0,0) node{+};
\draw[thick] (0,1) -- (-1,0) -- (0,-1) -- (1,0) -- cycle;
\draw (0,-1) node[below]{\textbf{2-2-3}};
\end{tikzpicture}
\begin{tikzpicture}[scale=0.6]
\draw[dotted] (-2,2) grid (2,-2);
\draw (0,0) node{+};
\draw[thick] (0,1) -- (-1,1) -- (-1,0) -- (0,-1) -- (1,0) -- cycle;
\draw (0,-1) node[below]{\textbf{2-2-4}};
\end{tikzpicture}
\begin{tikzpicture}[scale=0.6]
\draw[dotted] (-2,2) grid (2,-2);
\draw (0,0) node{+};
\draw[thick] (0,1) -- (-1,1) -- (-1,0) -- (0,-1) -- (1,-1) -- (1,0) -- cycle;
\draw (0,-1) node[below]{\textbf{2-2-5}};
\end{tikzpicture}
\end{center}

\begin{prop}
Let \(G=\SL_2\) and \(H=T\), then the combinatorial data of \(G/H\) is as follows. 
\[\arraycolsep=12pt
\begin{array}{lll}
M=\langle \alpha_1\rangle  & \zeta(\clubsuit) = \zeta(\varheartsuit) = \{\alpha_1\}  \\
\Sigma = \{\alpha_1\} & \rho(\clubsuit) = \rho(\varheartsuit) = 1 \\
\mathcal{D} =\{\clubsuit,\varheartsuit\} &  \\
\midrule
\kappa = \alpha_1 & f=2(1+x_1) & m_{\clubsuit} = m_{\varheartsuit} =1 \\
\end{array}
\]
\end{prop}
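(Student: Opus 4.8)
The plan is to compute each piece of the combinatorial data for $G=\SL_2$, $H=T$ directly, reusing the explicit examples already worked out in the excerpt. Indeed, this homogeneous space is exactly the one studied in Example~\ref{P^1xP^1} (via the model $\bbP^1\times\bbP^1$), in Example~\ref{exa_valuation_cone_SL2} (valuation cone and spherical root), and in the color computation immediately following the definition of $\zeta$. So the proof is essentially a matter of collecting and presenting these results in the required format, after fixing the identification $M\simeq\bbZ$ induced by the chosen basis $\alpha_1$ of $M$.

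First I would recall from Example~\ref{P^1xP^1} that $M=\alpha_1\bbZ$, so $\{\alpha_1\}$ is a basis and we identify $M\cong\bbZ$ and $N\cong\bbZ$ via the dual basis; the coordinate on $N\otimes\bbR$ is $x_1$, dual to $\alpha_1$. Next, from Example~\ref{exa_valuation_cone_SL2}, the spherical root is $\Sigma=\{\alpha_1\}$ (the primitive positive multiple of $\alpha_1$ in $M$, consistent with Corollary~\ref{cor_Sigma_SL_2}). For the colors, I would quote the computation following the color map definition: there are two colors $D_+,D_-$ with $\zeta(D_\pm)=\{\alpha_1\}$ and $\rho(D_\pm)(\alpha_1)=1$; relabelling $D_+,D_-$ as $\clubsuit,\varheartsuit$ gives $\zeta(\clubsuit)=\zeta(\varheartsuit)=\{\alpha_1\}$ and $\rho(\clubsuit)=\rho(\varheartsuit)=1$ in the coordinate on $N$.

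Then I would compute the remaining auxiliary data. Since the adapted parabolic $P$ satisfies $I=\bigcup_{D}\zeta(D)=\{\alpha_1\}$, we get $P=B$, so the set of roots of $P$ is just $\{\alpha_1\}$, giving $\kappa=\alpha_1$. For $m_D$: since $\zeta(D)\cap(\Sigma\cup\tfrac12\Sigma)=\{\alpha_1\}\neq\emptyset$ for each color, the rule in the paragraph before Definition~\ref{defn_reflexive} gives $m_{\clubsuit}=m_{\varheartsuit}=1$. Finally, for the Duistermaat–Heckman polynomial $f$, the unipotent radical $P^u=B^u$ contributes the single positive root $\alpha_1\in R_{P^u}$, so $f(p)=\dfrac{\{\kappa+p,\alpha_1\}}{\{\varrho,\alpha_1\}}$; writing $p=x_1\alpha_1$ and $\kappa=\alpha_1$, and using $\varrho=\varpi_1=\tfrac12\alpha_1$ for $\SL_2$, a short Killing-form computation yields $f=2(1+x_1)$.

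The only genuinely delicate point is the normalization of $f$, which requires getting the Killing-form pairings right: one must use $\{\alpha_1,\alpha_1\}$ and $\{\varpi_1,\alpha_1\}$ consistently and recall that $\varrho=\varpi_1$ and $\langle\alpha_1^\vee,\alpha_1\rangle=2$ for $\SL_2$, so that the affine function in $x_1$ has slope $2$ and value $2$ at $x_1=0$. Everything else is a direct transcription of the worked examples, so I expect no real obstacle beyond bookkeeping of this normalization and the identification $M\cong\bbZ$.
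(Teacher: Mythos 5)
Your proposal is correct and follows essentially the same route as the paper, which states this proposition as a summary of its worked examples: $M=\alpha_1\bbZ$ from Example~\ref{P^1xP^1}, $\Sigma=\{\alpha_1\}$ from Example~\ref{exa_valuation_cone_SL2}, and the two colors $D_\pm$ with $\zeta(D_\pm)=\{\alpha_1\}$, $\rho(D_\pm)(\alpha_1)=1$ from the example following the definition of colors. Your computation of the remaining data is also right: $P=B$ gives $\kappa=\alpha_1$, the rule $\zeta(D)\cap(\Sigma\cup\tfrac12\Sigma)\neq\emptyset$ gives $m_{\clubsuit}=m_{\varheartsuit}=1$, and with $R_{P^u}=\{\alpha_1\}$, $\varrho=\varpi_1=\tfrac12\alpha_1$ and $p=x_1\alpha_1$ the single factor $\{\kappa+p,\alpha_1\}/\{\varrho,\alpha_1\}$ indeed equals $2(1+x_1)$.
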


There is a unique locally factorial \(G/H\)-reflexive polytope:
\begin{center}
\begin{tikzpicture}
\draw[dotted] (-2,0) -- (2,0);
\draw (0,0) node{+};
\foreach \i in {0,...,4}
{
\draw[dotted] (-2+\i,.3) -- (-2+\i,-.3);
}
\draw[thick] (-1,0) -- (1,0);
\draw (1,0) node[above=-4, color=teal]{\(\clubsuit\)};
\draw (1,0) node[below=-4, color=purple]{\(\varheartsuit\)};
\draw (0,0) node[below]{\textbf{2-1-1}};
\end{tikzpicture}
\end{center}

\begin{prop}
Let \(G=\SL_2\) and \(H=N(T)\), then the combinatorial data of \(G/H\) is as follows. 
\[\arraycolsep=12pt
\begin{array}{lll}
M=\langle 2\alpha_1\rangle  & \zeta(\clubsuit) = \{\alpha_1\}  \\
\Sigma = \{2\alpha_1\} & \rho(\clubsuit) = 2 \\
\mathcal{D} =\{\clubsuit\} &  \\
\midrule
\kappa = \alpha_1 & f=2(1+2x_1) & m_{\clubsuit} = 1 \\
\end{array}
\]
\end{prop}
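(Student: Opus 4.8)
The plan is to read off every entry of the table from the projective model $\bbP^2$ of Example~\ref{P^2} together with the general results of Section~\ref{section_reminder}, since several pieces are already essentially available. The weight lattice was computed in Example~\ref{P^2}: one has $M = 4\varpi_1\bbZ$, and since $\alpha_1 = 2\varpi_1$ this is exactly $M = \langle 2\alpha_1\rangle$. For the spherical roots I would first note that $\SL_2/N(T)$ is \emph{not} horospherical: as recalled just after Remark~\ref{remark_horospherical_subgroups} (using Example~\ref{example_sl2}), the only non-horospherical spherical subgroups of $\SL_2$ are $T$ and $N(T)$. Corollary~\ref{cor_Sigma_SL_2} then applies with $n=0$ and gives that $\Sigma$ is the singleton consisting of the primitive positive multiple of $\alpha_1$ inside $M = \langle 2\alpha_1\rangle$; this multiple is $2\alpha_1$, so $\Sigma = \{2\alpha_1\}$.

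The heart of the computation, and the step I expect to require the most care, is the determination of the colors. Working in the $\bbP^2$ model of Example~\ref{P^2}, the open $G$-orbit is the complement of the $G$-stable conic $C = \{y^2 - xz = 0\}$, so the colors are exactly the $B$-stable prime divisors of $\bbP^2$ other than $C$ that meet the open orbit. Since this $\bbP^2$ is the projectivization of the adjoint (equivalently $\mathrm{Sym}^2$) representation of $\SL_2$, which is irreducible of dimension three, it contains a unique $B$-stable line, namely $\{z=0\}$, as one reads off from the explicit formula for the action on $[x:y:z]$. This line is not contained in $C$ and meets the open orbit, so it is the unique color $\clubsuit$, giving $\mathcal{D} = \{\clubsuit\}$. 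Its $G$-stabilizer is $B = P_{\{\alpha_1\}}$, whence $\zeta(\clubsuit) = \{\alpha_1\}$. To obtain $\rho(\clubsuit)$ I would compute the order of vanishing along $\{z=0\}$ of the semi-invariant $f = (y^2-xz)/z^2$ of weight $-4\varpi_1$ from Example~\ref{P^2}: the numerator does not vanish generically on $\{z=0\}$ while the denominator vanishes to order two, so $\ord_{\{z=0\}}(f) = -2$. By the definition of $\rho$ this means $\rho(\clubsuit)$ sends the generator $4\varpi_1 = 2\alpha_1$ of $M$ to $2$, i.e. $\rho(\clubsuit) = 2$ in the chosen coordinate.

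It remains to record the auxiliary quantities, which are then routine. The adapted parabolic is $P = P_{\{\alpha_1\}} = B$, whose only root is the positive root $\alpha_1$, so $\kappa = \alpha_1$. For the color multiplicity, $\Sigma \cup \frac{1}{2}\Sigma = \{2\alpha_1, \alpha_1\}$ meets $\zeta(\clubsuit) = \{\alpha_1\}$, so by the definition of $m_D$ preceding Definition~\ref{defn_reflexive} we get $m_{\clubsuit} = 1$. Finally, for the Duistermaat--Heckman polynomial the unipotent radical of $P = B$ has the single root $\alpha_1$, so $f(p) = \{\kappa + p, \alpha_1\}/\{\varrho, \alpha_1\}$ with $\varrho = \frac{1}{2}\alpha_1$. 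Writing a point $p \in M\otimes\bbR = \bbR\alpha_1$ in the coordinate $x_1$ attached to the basis vector $2\alpha_1$, one has $p = 2x_1\alpha_1$ and $\kappa + p = (1 + 2x_1)\alpha_1$, whence $f = 2(1 + 2x_1)$ after cancelling $\{\alpha_1,\alpha_1\}$. This verifies every entry of the table.
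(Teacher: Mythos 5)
Your proposal is correct and follows essentially the route the paper intends: the paper states this proposition without a separate proof, relying on the \(\bbP^2\) model and the lattice computation of Example~\ref{P^2} together with the general results of Section~\ref{section_reminder}, and every entry of your table checks out (\(M=4\varpi_1\bbZ=2\alpha_1\bbZ\); \(\Sigma=\{2\alpha_1\}\) via Corollary~\ref{cor_Sigma_SL_2}; \(\rho(\clubsuit)=2\) from \(\ord_{\{z=0\}}\bigl((y^2-xz)/z^2\bigr)=-2\); \(\kappa=\alpha_1\); \(m_{\clubsuit}=1\) because \(\alpha_1\in\frac{1}{2}\Sigma\); and \(f=2(1+2x_1)\) from the single root \(\alpha_1\) of \(B^u\) with \(p=2x_1\alpha_1\)). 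Note that the paper, where it treats the analogous case \(H=N(T_1)\times\{1\}\) later, instead deduces the data by quotienting \(\SL_2/T\) by the order-two deck transformation exchanging the two colors; that argument gives \(\lvert\mathcal{D}\rvert=1\) for free, whereas your direct computation on \(\bbP^2\) buys an explicit geometric picture.

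One step in your color determination is incomplete as written: you show that \(\{z=0\}\) is the unique \(B\)-stable \emph{line} in \(\bbP^2\), but a color is only known a priori to be a \(B\)-stable prime divisor, which could in principle be a curve of higher degree. To conclude \(\mathcal{D}=\{\clubsuit\}\) you must exclude nonlinear \(B\)-stable prime divisors other than \(C\). This is a one-line fix with either of two arguments: (i) the \(B\)-semi-invariant polynomials in \(\bbC[x,y,z]\) are generated by \(z\) and \(y^2-xz\), so the only \(B\)-stable prime curves are \(\{z=0\}\) and \(C\); or (ii) one checks directly that \(\bbP^2\) decomposes into exactly four \(B\)-orbits (the open one, \(\{z=0\}\setminus\{[1:0:0]\}\), \(C\setminus\{[1:0:0]\}\), and the point \([1:0:0]\) where the line is tangent to the conic), so the complement of the open \(B\)-orbit inside the open \(G\)-orbit is the single irreducible divisor \(\{z=0\}\setminus\{[1:0:0]\}\). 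With either supplement the proof is complete.
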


There is a unique locally factorial \(G/H\)-reflexive polytope:
\begin{center}
\begin{tikzpicture}
\draw[dotted] (-2,0) -- (2,0);
\draw (0,0) node{+};
\foreach \i in {0,...,4}
{
\draw[dotted] (-2+\i,.3) -- (-2+\i,-.3);
}
\draw[thick] (-1,0) -- (2,0);
\draw (2,0) node[color=teal]{\(\clubsuit\)};
\draw (0,0) node[below]{\textbf{2-1-2}};
\end{tikzpicture}
\end{center}

There are finally three horospherical actions of \(\SL_2\times \bbG_m\), one is the product of the homogeneous \(\bbP^1\) under \(\SL_2\) with the toric \(\bbP^1\) under \(\bbG_m\), which we identify as \textbf{2-1-3}. 

\begin{prop}
Let \(G=\SL_2\times \bbG_m\) and \(H=\ker(\varpi_1+\chi_1)|_{B^-}\), then the combinatorial data of \(G/H\) is as follows. 
\[\arraycolsep=12pt
\begin{array}{lll}
M=\langle \varpi_1+\chi_1\rangle  & \zeta(\clubsuit) = \{\alpha_1\}  \\
\Sigma = \emptyset & \rho(\clubsuit) = 1 \\
\mathcal{D} =\{\clubsuit\} &  \\
\midrule
\kappa = \alpha_1 & f=2+x_1 & m_{\clubsuit} = 2 \\
\end{array}
\]
\end{prop}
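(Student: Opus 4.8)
The first thing I would do is recognize the homogeneous space as a rank-one horospherical one obtained by parabolic induction from a torus. Indeed, by Remark~\ref{remark_horospherical_subgroups}, a horospherical subgroup is exactly the kernel of a $\bbZ$-linearly independent family of characters of a parabolic, and here $H=\ker\bigl((\varpi_1+\chi_1)|_{B^-}\bigr)$ is the kernel of the single (hence $\bbZ$-independent) character $\varpi_1+\chi_1\in X^*(B^-)$. Note that $B^-=Q_{\{\alpha_1\}}$ is the relevant parabolic of $G=\SL_2\times\bbG_m$, that $\varpi_1$ extends to it, and that $\chi_1\in X^*(\bbG_m)\subset X^*(B^-)$, so $\varpi_1+\chi_1$ is genuinely a character of $B^-$. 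Applying Proposition~\ref{prop_parabolic_induction} (equivalently, the Example of a horospherical subgroup following Remark~\ref{remark_horospherical_subgroups}) with $I=\{\alpha_1\}$ and the single defining character $\varpi_1+\chi_1$, I can read off immediately that $M=\langle\varpi_1+\chi_1\rangle$, that $\Sigma=\emptyset$, that there is one color $\clubsuit$ with $\zeta(\clubsuit)=\{\alpha_1\}$, and that $\rho(\clubsuit)=\alpha_1^\vee|_M$.

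Next I would turn the abstract answer $\rho(\clubsuit)=\alpha_1^\vee|_M$ into the stated numerical value. Using the chosen basis $\varpi_1+\chi_1$ of $M$ and the dual basis of $N$, I compute $\langle\alpha_1^\vee,\varpi_1+\chi_1\rangle=\langle\alpha_1^\vee,\varpi_1\rangle+\langle\alpha_1^\vee,\chi_1\rangle=1+0=1$, where the second term vanishes because $\chi_1$ pairs trivially with the coroot of the $\SL_2$ factor; hence $\rho(\clubsuit)=1$. For $\kappa$ and $m_\clubsuit$, I first identify the adapted parabolic: since $I=\bigcup_{D}\zeta(D)=\{\alpha_1\}$, one has $P=P_{\{\alpha_1\}}=B$, whose only root is the positive root $\alpha_1$, so $\kappa=\alpha_1$. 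Then, because $\Sigma=\emptyset$, the set $\zeta(\clubsuit)\cap(\Sigma\cup\tfrac12\Sigma)$ is empty, and the definition of $m_D$ gives $m_\clubsuit=\langle\alpha_1^\vee,\kappa\rangle=\langle\alpha_1^\vee,\alpha_1\rangle=2$.

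Finally I would compute the Duistermaat--Heckman polynomial from its explicit formula. Here the unipotent radical of $P=B$ contributes the single root $R_{P^u}=\{\alpha_1\}$, so $f(p)=\{\kappa+p,\alpha_1\}/\{\varrho,\alpha_1\}$ with $\kappa=\alpha_1$ and $\varrho=\tfrac12\alpha_1$. Writing $p=x_1(\varpi_1+\chi_1)$ and using the normalizations $\{\varpi_1,\alpha_1\}=\tfrac12\{\alpha_1,\alpha_1\}$ (which follows from $\langle\alpha_1^\vee,\varpi_1\rangle=1$ and $\alpha_1^\vee=2\alpha_1/\{\alpha_1,\alpha_1\}$) together with $\{\chi_1,\alpha_1\}=0$, the numerator equals $\{\alpha_1,\alpha_1\}\bigl(1+\tfrac{x_1}{2}\bigr)$ and the denominator equals $\tfrac12\{\alpha_1,\alpha_1\}$, yielding $f=2+x_1$. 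The one step requiring genuine care is this last Killing-form computation: the main potential pitfall is bookkeeping the normalizations correctly (the factor $\tfrac12$ in $\{\varpi_1,\alpha_1\}$ and in $\varrho$, and the Killing-orthogonality of the central $\chi_1$ to the $\SL_2$ root), since a slip there would rescale $f$. Everything else reduces to directly quoting Proposition~\ref{prop_parabolic_induction} and the definitions of $\kappa$, $m_D$ and $f$.
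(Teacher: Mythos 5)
Your proposal is correct and follows essentially the same route as the paper: the paper proves the general version of this statement (the proposition for \(H=\ker(a_1\varpi_1+\chi_1)|_{B_1\times\bbG_m}\) in the horospherical subsection of the \(\SL_2\times\bbG_m^n\) section) by exactly the appeal to Proposition~\ref{prop_parabolic_induction} and the horospherical example following Remark~\ref{remark_horospherical_subgroups} that you make. Your explicit bookkeeping of \(\rho(\clubsuit)=\langle\alpha_1^\vee,\varpi_1+\chi_1\rangle=1\), of \(\kappa=\alpha_1\) and \(m_\clubsuit=\langle\alpha_1^\vee,\kappa\rangle=2\) (valid since \(\Sigma=\emptyset\)), and of \(f=2+x_1\) via \(R_{P^u}=\{\alpha_1\}\), \(\varrho=\tfrac12\alpha_1\) and \(\{\chi_1,\alpha_1\}=0\) simply spells out the details the paper leaves implicit, and all the computations check out.
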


The two possible locally factorial \(G/H\)-reflexive polytope are as follows. 
\begin{center}
\begin{tikzpicture}
\draw[dotted] (-2,0) -- (2,0);
\draw (0,0) node{+};
\foreach \i in {0,...,4}
{
\draw[dotted] (-2+\i,.3) -- (-2+\i,-.3);
}
\draw[thick] (-1,0) -- (1,0);
\draw (1/2,0) node[color=teal]{\(\clubsuit\)};
\draw (0,0) node[below]{\textbf{2-1-4}};
\end{tikzpicture}
\qquad
\begin{tikzpicture}
\draw[dotted] (-2,0) -- (2,0);
\draw (0,0) node{+};
\foreach \i in {0,...,4}
{
\draw[dotted] (-2+\i,.3) -- (-2+\i,-.3);
}
\draw[thick] (-1,0) -- (1/2,0);
\draw (1/2,0) node[color=teal]{\(\clubsuit\)};
\draw (0,0) node[below]{\textbf{2-1-5}};
\end{tikzpicture}
\end{center}

\begin{longtable}{CCCCCCC}
\text{identifier}&\pic&\degree&\text{KE?}&\text{description}&\text{group}&\text{type}\\
\toprule
\text{2-1-1}&2&8&y&\bbP^1\times\bbP^1&\SL_2&\text{symmetric}\\
\text{2-1-2}&1&9&y&\bbP^2&\SL_2&\text{symmetric}\\
\text{2-1-3}&2&8&y&\bbP^1\times\bbP^1&\SL_2\times\bbG_m&\text{horospherical}\\
\text{2-1-4}&2&8&n&\bbF_1&\SL_2\times\bbG_m&\text{horospherical}\\
\text{2-1-5}&1&9&y&\bbP^2&\SL_2\times\bbG_m&\text{horospherical}\\
\text{2-2-1} & 1 & 9 & y & \bbP^2 & \bbG_m^2 & \text{toric}\\
\text{2-2-2} & 2 & 8 & n & \bbF_1 & \bbG_m^2 & \text{toric}\\
\text{2-2-3} & 2 & 8 & y & \bbP^1\times\bbP^1 & \bbG_m^2 & \text{toric}\\
\text{2-2-4} & 3 & 7 & n & \Bl_{\text{2 pts}}\bbP^2 & \bbG_m^2 & \text{toric}\\
\text{2-2-5} & 4 & 6 & y & \Bl_{\text{3 pts}}\bbP^2 & \bbG_m^2 & \text{toric}\\
\end{longtable}

\section{Locally factorial Fano spherical \(\SL_2\times \bbG_m^n\)-manifolds of rank \(\leq 2\)}
\label{section_SL2}

As spherical fourfolds under \(\SL_2\times \bbG_m^n\) must be of rank three, we will not classify these in the present paper. 
We will however recover the full classification of (almost) faithful spherical actions of \(\SL_2\times \bbG_m^n\) on locally factorial Fano threefolds.

\subsection{Case when \(H_1=T_1\)}

\subsubsection{Combinatorial data}

\begin{prop}
\label{prop_combi_SL2:T}
Let \(G=\SL_2\times \bbG_m^n\) and \(H=\ker(a_1\varpi_1+\chi_1)|_{T_1\times \bbG_m}\times \{1_{\bbG_m^{n-1}}\}\) for some \(a_1\in \bbZ_{\geq 0}\). 
Then the combinatorial data of the spherical homogeneous space \(G/H\) is as follows. If \(a_1\) is even, then: 
\[\arraycolsep=12pt
\begin{array}{lll}
M=\langle \alpha_1,\chi_1,\ldots \chi_n\rangle  & \zeta(\clubsuit) = \zeta(\varheartsuit) = \{\alpha_1\}  \\
\Sigma = \{\alpha_1\} & \rho(\clubsuit) = (1,a_1/2) & \rho(\varheartsuit)=(1,-a_1/2) \\
\mathcal{D} =\{\clubsuit,\varheartsuit\} &  \\
\midrule
\kappa = \alpha_1 & f=2+2x_1 & m_{\clubsuit} = m_{\varheartsuit} = 1 \\
\end{array}
\]
If \(a_1\) is odd, then: 
\[\arraycolsep=12pt
\begin{array}{lll}
M=\langle \varpi_1+\chi_1, \varpi_1-\chi_1, \chi_2, \ldots, \chi_n \rangle &\rho(\varheartsuit)=(\frac{1-a_1}{2},\frac{a_1+1}{2})  &    \\
\Sigma = \{\alpha_1\} & \rho(\clubsuit) = (\frac{a_1+1}{2},\frac{1-a_1}{2}) &  \\
\mathcal{D} =\{\clubsuit,\varheartsuit\} & \zeta(\clubsuit) = \zeta(\varheartsuit) = \{\alpha_1\} \\
\midrule
m_{\clubsuit} = m_{\varheartsuit} = 1  & f=2+x_1+x_2 & \kappa = \alpha_1  \\
\end{array}
\]
\end{prop}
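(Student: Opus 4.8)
The plan is to read off each ingredient of the combinatorial data from the explicit model $G=\SL_2\times\bbG_m^n$, writing a generic element of the $\SL_2$-factor with bottom row $(c,d)$ and the coordinates on $\bbG_m^n$ as $(s_1,\dots,s_n)$; then $H$ is the one-parameter subgroup $\{(\diag(\tau,\tau^{-1}),\tau^{-a_1},1,\dots,1)\}$. Since $H\subset T$ surjects onto the maximal torus $T_1$ of $\SL_2$, the open $B$-orbit has stabilizer $P=B_1\times\bbG_m^n=P_{\{\alpha_1\}}$ (as for $\SL_2/T_1$), so $\dim(G/P)=1$ and Corollary~\ref{cor_local_structure_theorem} gives $\rank(G/H)=n+1$. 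The only root of $P$ is $\alpha_1$, whence $\kappa=\alpha_1$; and $R_{P^u}=\{\alpha_1\}$, so with $\varrho=\varpi_1$ one has $f(p)=\{\kappa+p,\alpha_1\}/\{\varrho,\alpha_1\}=2\bigl(1+\{p,\alpha_1\}/\{\alpha_1,\alpha_1\}\bigr)$. As each $\chi_i$ is Killing-orthogonal to $\alpha_1$ while $\{\varpi_1,\alpha_1\}=\tfrac12\{\alpha_1,\alpha_1\}$, expanding $p$ in the respective bases yields $f=2+2x_1$ for $a_1$ even and $f=2+x_1+x_2$ for $a_1$ odd.

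Next I would determine $M$. The field of rational functions on $G/H$ invariant under the left action of the unipotent radical $B_1^u$ is $\bbC(c,d,s_1,\dots,s_n)^H$, and its $T$-weights form $M$. A Laurent monomial $c^{j}d^{k}\prod_i s_i^{l_i}$ is a $T$-eigenvector of weight $(j+k)\varpi_1-\sum_i l_i\chi_i$, and it is right $H$-invariant exactly when $j-k=a_1l_1$. Writing $m=j+k$, the integrality of $j=(m+a_1l_1)/2$ and $k=(m-a_1l_1)/2$ forces $m\equiv a_1l_1\pmod 2$; equivalently this parity records whether $-I\in H$, which holds iff $a_1$ is even. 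Hence $M$ consists of the $m\varpi_1+\sum_i l_i\chi_i$ with $m-a_1l_1$ even, and choosing generators gives $M=\langle\alpha_1,\chi_1,\dots,\chi_n\rangle$ for $a_1$ even, while for $a_1$ odd one has $\chi_1\notin M$ but $\varpi_1\pm\chi_1\in M$, giving $M=\langle\varpi_1+\chi_1,\varpi_1-\chi_1,\chi_2,\dots,\chi_n\rangle$.

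For the spherical root and the colors: $H$ is a torus and so contains no nontrivial unipotent element, hence $G/H$ is not horospherical and Corollary~\ref{cor_Sigma_SL_2} applies, giving $\Sigma=\{\alpha_1\}$ once I check that $\varpi_1\notin M$ (so that $\alpha_1$ is the primitive positive multiple of $\alpha_1$ in $M$) in both parities. The projection $G/H\to G/T\simeq\SL_2/T_1$ is a torus fibration, so the two colors of $\SL_2/T_1$ (Examples~\ref{P^1xP^1} and \ref{exa_valuation_cone_SL2}) pull back to the only two colors $\clubsuit,\varheartsuit$ of $G/H$, represented by the divisors $\{d=0\}$ and $\{c=0\}$; both are stabilized by $P_{\{\alpha_1\}}$, so $\zeta(\clubsuit)=\zeta(\varheartsuit)=\{\alpha_1\}$, and since $\alpha_1\in\zeta(D)\cap\Sigma$ we get $m_\clubsuit=m_\varheartsuit=1$.

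The main computational step, and the one needing most care, is the color map $\rho(D)(\lambda)=\ord_D(f_\lambda)$, evaluated on the chosen bases of $M$ via $\ord_{\{c=0\}}(c)=\ord_{\{d=0\}}(d)=1$ and $\ord(s_i)=0$. For example, in the odd case the weight-$(\varpi_1+\chi_1)$ semi-invariant is, up to scalar, $c^{(1-a_1)/2}d^{(1+a_1)/2}s_1^{-1}$, whose orders along $\{c=0\}$ and $\{d=0\}$ are $\tfrac{1-a_1}{2}$ and $\tfrac{a_1+1}{2}$; carrying this out for every basis weight reproduces the two $\rho$-matrices, while in the even case one uses $f_{\alpha_1}=cd$, $f_{\chi_1}=c^{-a_1/2}d^{a_1/2}s_1^{-1}$ and $f_{\chi_i}=s_i^{-1}$. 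The delicate points here are fixing the weight sign conventions (pinned down by matching $\rho(D)(\alpha_1)=1$ against Example~\ref{exa_valuation_cone_SL2}) and the induced labeling of $\clubsuit$ versus $\varheartsuit$, together with the two integrality regimes forced by the parity of $a_1$; one must also confirm that $\{c=0\}$ and $\{d=0\}$ are reduced, $B$-stable divisors and that the torus fibre contributes no further colors, which holds because a torus has none.
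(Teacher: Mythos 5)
Your proposal is correct, but it takes a genuinely different route from the paper's proof. The paper first reduces to \(n=1\) (the general case being a product with a torus factor) and then builds an explicit projective model: the action of \(\SL_2\times\bbG_m\) on \(\bbP^1\times\bbP(1,1,a_1)\) with \(H\) the stabilizer of \(([1:0],[0:1:1])\). The colors are read off as the divisors \(\{y=0\}\) and \(\{v=0\}\), the lattice \(M\) is generated by two explicit \(B\)-semi-invariant rational functions of weights \(a_1\varpi_1+\chi_1\) and \(2\varpi_1\) (with \(\varpi_1\notin M\) forced by the central element \((-I_2,(-1)^{a_1})\) acting trivially --- the same parity mechanism as your observation that \(-I\in H\) iff \(a_1\) is even), and \(\rho\) comes from orders of vanishing along the explicit equations. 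You instead never leave the group: identifying \(\bbC(G/H)^{B^u}\) with \(\bbC(c,d,s_1,\ldots,s_n)^H\) reduces everything to Laurent monomials, the single congruence \(j-k=a_1l_1\) yields \(M\) and the even/odd dichotomy uniformly in \(n\) and in one stroke, and the very same monomials compute \(\rho\) along \(\{c=0\}\) and \(\{d=0\}\); both you and the paper then obtain \(\Sigma=\{\alpha_1\}\) from Corollary~\ref{cor_Sigma_SL_2} together with primitivity of \(\alpha_1\) in \(M\), and \(\kappa\), the \(m_D\) and \(f\) by the same routine computations. Your route is arguably cleaner and avoids the weighted projective space; what the paper's model buys is the explicit orbit structure (\(O, I_1, I_2, C_1, C_2\)) that is reused afterwards to identify the embeddings \textbf{3-2-7}, \textbf{3-2-8}, \textbf{3-2-10}, \textbf{3-2-11} geometrically as \(\bbP^1\times\bbP^2\) and its blowups. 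Two small points in your sketch deserve to be made fully precise, though neither is a gap: (i) the standard fact that every weight of \(M\) is realized by a monomial (write a \(T\times H\)-bi-eigenvector as a ratio of bi-eigen Laurent polynomials, all of whose monomials share the same biweight); and (ii) the exhaustion of the colors, where rather than appealing to the torus fibre it is quicker to verify directly that the complement of \(\{cd=0\}\) in \(G/H\) is a single \(B\)-orbit by a short transitivity computation, since your asserted base-point parabolic \(P=B\) also needs a conjugated base point (the point \(eH\) itself lies in \(\{c=0\}\), not in the open \(B\)-orbit).
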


\begin{proof}
Since \(G/H\) is a product of a \(\SL_2\times \bbG_m\) homogeneous space with the tori \((\bbC^*)^{n-1}\), it suffices to prove the result when \(n=1\). 
When \(a_1=0\), the homogeneous space is a product, the result is immediate. 
Assume now that \(a_1\geq 1\). 
Let \(\bbP(1,1,a_1)\) denote the weighted projective space, defined as the quotient of \(\bbC^3\) under the action of \(\bbG_m\) given by \(t\cdot (x,y,z)=(tx,ty,t^{a_1}z)\).  
Consider the action of \(\SL_2\times \bbG_m\) on \(\bbP^1\times \bbP(1,1,a_1)\) given by 
\begin{align*} 
\left( \begin{bmatrix} a & b \\ c & d \end{bmatrix}, s\right) & \cdot \left([u:v],[x:y:z]\right) = \\
& \left([au+bv:cu+dv],[ax+by:cx+dy:sz]\right) 
\end{align*}
Then the stabilizer of \(\left([1:0],[0:1:1]\right)\) is precisely \(H\). 

In fact, we can easily describe the five \(G\)-orbits: 
\begin{align*}
O & = \left\{ ([u:v],[x:y:z]) \mid [x:y]\neq [u:v], z\neq 0 \right\} \\
I_1 & = \left\{ ([x:y],[x:y:z]) \mid z\neq 0 \right\} \\
I_2 & = \left\{ ([u:v],[x:y:0]) \mid [x:y]\neq [u:v] \right\} \\
C_1 & = \left\{ ([u:v],[0:0:1]) \right\} \\
C_2 & = \left\{ ([x:y],[x:y:0]) \right\} 
\end{align*}
The open orbit is \(O\). 
The codimension one orbits are \(I_1\) and \(I_2\) and the codimension two orbits are \(C_1\) and \(C_2\). 
The closure of \(I_1\) is \(I_1\cup C_1\cup C_2\), it is a prime \(G\)-stable divisor whose equation is \(xv-yu=0\). 
The closure of \(I_2\) is \(I_2\cup C_2\), it is a prime \(G\)-stable divisor whose equation is \(z=0\).  

Consider now the action of the Borel subgroup \(B=\left\{\left(\begin{bmatrix} a & b \\ 0 & 1/a \end{bmatrix}, s\right)\right\}\). 
It acts with two codimension one orbits in \(O\): 
\[ D_1 = \left\{ ([u:1],[1:0:z]) \mid z\neq 0 \right\} \] 
and 
\[ D_2 = \left\{ ([1:0],[x:y:z]) \mid [1:0]\neq [x,y], z\neq 0 \right\}. \]
The equation for (the closure of) \(D_1\) is \(y=0\) and the equation for \(D_2\) is \(v=0\).  
As a consequence, we have \(\mathcal{D}=\{D_1,D_2\}\) and \(\zeta(D_i)=\{\alpha_1\}\) for \(i=1\) or \(2\). 

The rational functions 
\[ f_1 : ([u:v],[x:y:z]) \longmapsto \frac{y^m}{z} \]
and 
\[ f_2 : ([u:v],[x:y:z]) \longmapsto \frac{vy}{xv-yu} \]
are well-defined and \(B\)-semi-invariant with \(B\)-weight \(a_1\varpi_1+\chi\) and \(2\varpi_1\). 
To prove that these two weights generate \(M\subset \varpi_1\bbZ \oplus \chi\bbZ\), it suffices to check that \(\varpi_1\notin M\). 
It is the case indeed: since \((-I_2,(-1)^{a_1})\) is central and acts trivially on \(G/H\), the weight of any \(B\)-semi-invariant rational function must vanish on it, hence \(\varpi_1\notin M\). 

By Corollary~\ref{cor_Sigma_SL_2}, \(\Sigma=\{\alpha_1\}=\{2\varpi_1\}\). 
The description of images of colors by \(\rho\) follows readily from our explicit description of equations of the colors and rational \(B\)-semi-invariant functions whose weights generate \(M\).  
\end{proof}

\subsubsection{Polytopes}

We denote the coordinates on \(N\otimes \bbR\) by \((y_1,\ldots, y_{n+1})\), in the dual basis to the given \(\bbZ\)-basis of \(M\), which depends on the parity of \(a_1\).
We will give, for later use, a purely combinatorial characterization of locally factorial \(G/H\)-reflexive polytopes which is closer to the usual notions for toric varieties. 
Before that, let us thus recall some of the relevant definitions when considering polytopes of singular Fano toric varieties. 

\begin{defn}
A polytope \(\Omega\) with integral vertices in \(\bbZ^{n+1}\), containing the origin in its interior, is called: 
\begin{itemize}
    \item \emph{smooth Fano} if for any facet \(F\) of \(\Omega\), the vertices of \(F\) form a basis of \(\bbZ^{n+1}\), 
    \item \emph{terminal Fano} if the only integral points in \(\Omega\) are its vertices and the origin,
    \item \emph{canonical Fano} if the only interior integral point in \(\Omega\) is the origin. 
\end{itemize}
A facet of a canonical polytope is called \emph{non-smooth} if its vertices do not form a basis of \(\bbZ^{n+1}\). 
\end{defn}

\begin{prop}
\label{prop_LFRP_T_0}
Assume that \(a_1=0\). 
Then \(\Omega\) is a locally factorial \(G/H\)-reflexive polytope if and only if \(\Omega\) is a smooth Fano polytope such that \((1,0,\ldots,0)\) is a vertex, all other vertices are in the half-space \(H_-=\{y_1\leq 0\}\) and all the facets containing \((1,0,\ldots,0)\) are fully contained in the half-space \(H_+=\{y_1\geq 0\}\).  
Locally factorial Fano embeddings of \(G/H\) correspond uniquely to such polytopes up to the action of \(\GL_n(\bbZ)\) on the last \(n\) coordinates. 
\end{prop}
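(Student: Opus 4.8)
The plan is to specialize the data of Proposition~\ref{prop_combi_SL2:T} to $a_1=0$ and then to match, condition by condition, the four requirements of Definition~\ref{defn_reflexive} against the three geometric conditions in the statement, establishing both implications simultaneously. With $a_1=0$ the two colors satisfy $\rho(\clubsuit)=\rho(\varheartsuit)=w$ where $w:=(1,0,\ldots,0)$, one has $m_\clubsuit=m_\varheartsuit=1$, and since $\Sigma=\{\alpha_1\}$ the valuation cone is the half-space $\mathcal{V}=H_-=\{y_1\leq 0\}$ with $\Int(\mathcal{V})=\{y_1<0\}$. Thus the only admissible value of $\tfrac{\rho(D)}{m_D}$ is the single point $w$, and all the combinatorics is governed by the first coordinate $y_1$.

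First I would dispatch the three easy conditions. Condition~(1) is literally ``$0\in\Int(\Omega)$''; condition~(2) reads $w\in\Omega$; and condition~(3) says every vertex lies in $N\cap H_-$ or equals $w$. I would note that some vertex must have $y_1>0$ (otherwise $\Omega\subset H_-$ contradicts $0\in\Int(\Omega)$) and that $w$ is the only candidate allowed by~(3), so $w$ is automatically a vertex; this produces the conditions ``$w$ is a vertex'' and ``all other vertices lie in $H_-$''. The heart of the matter is condition~(4). Because both colors coincide at $w$, the injectivity requirement~(4a) fails for any facet $F$ with $w\in F$ and $\cone(F)\cap\Int(\mathcal{V})\neq\emptyset$; hence every facet containing $w$ must satisfy $\cone(F)\cap\Int(\mathcal{V})=\emptyset$, i.e. $F\subset H_+$, which is exactly the third stated condition. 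Conversely a facet $F$ with $w\notin F$ has all its vertices in $H_-$ and cannot lie in $\{y_1=0\}$ (its affine hull is a supporting hyperplane missing the origin), hence meets $\Int(\mathcal{V})$; for such $F$ one has $\mathcal{D}_F=\emptyset$, so~(4a) is vacuous and~(4b) reduces precisely to the statement that $V(F)$ is a $\bbZ$-basis of $N$, i.e. that $F$ is smooth.

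The hard part is that condition~(4) controls only the ``lower'' facets (those not containing $w$), whereas ``smooth Fano'' demands that \emph{every} facet be smooth, including the ``upper'' facets through $w$. I would bridge this gap with a ridge argument. An upper facet $F$ has $w$ as its unique vertex with $y_1>0$ and all remaining vertices on $\{y_1=0\}$, so $R:=F\cap\{y_1=0\}$ is a facet of $F$ and a ridge of $\Omega$. This ridge is shared with exactly one further facet $F'$, which cannot contain $w$ (otherwise $F'\supseteq\operatorname{conv}(\{w\}\cup R)=F$, forcing $F'=F$); hence $F'$ is a lower facet, smooth by~(4), so a simplex $\operatorname{conv}(R,u)$ with one extra vertex $u$ that must have $y_1<0$. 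In particular $R=\operatorname{conv}(v_1,\ldots,v_n)$ is an equatorial simplex and $F=\operatorname{conv}(w,v_1,\ldots,v_n)$ a simplex. Writing $v_i=(0,v_i')$ and $u=(u_1,u')$ and expanding the determinant of the $\bbZ$-basis $\{v_1,\ldots,v_n,u\}$ of $N$ along the first row gives $\det=\pm u_1\det(v_1',\ldots,v_n')=\pm1$, forcing $|u_1|=1$ and $\det(v_1',\ldots,v_n')=\pm1$; therefore $\{w,v_1,\ldots,v_n\}$ is a $\bbZ$-basis of $N$ and $F$ is smooth. This simultaneously shows each upper facet is a simplex and is smooth, so in the ``only if'' direction $\Omega$ is smooth Fano, while in the ``if'' direction the smooth Fano hypothesis makes condition~(4b) automatic for the lower facets, closing the equivalence.

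Finally, for the parametrization I would invoke the Gagliardi--Hofscheier bijection \cite{Gagliardi_Hofscheier_2015_fano} between locally factorial Fano embeddings and locally factorial $G/H$-reflexive polytopes, and account for the residual $\GL_n(\bbZ)$ by Remark~\ref{rem_SL2_non-uniqueness_of_chi}: when $a_1=0$ the character $\chi_1$ may be replaced by any primitive character of $X^*(\bbG_m^n)$, which dually amounts to the lattice automorphisms of $N$ fixing $w$ and preserving $\mathcal{V}$, namely the maps acting as an element of $\GL_n(\bbZ)$ on the last $n$ coordinates. The main obstacle throughout is precisely the mismatch between the valuation-cone-restricted smoothness in Definition~\ref{defn_reflexive} and the global smoothness of a Fano polytope, resolved by the determinant computation above.
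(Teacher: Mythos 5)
Your proposal is correct and follows essentially the same route as the paper's proof: the coinciding color images force every facet through \((1,0,\ldots,0)\) into \(H_+\) via the injectivity condition, the ridge \(F\cap\{y_1=0\}\) shared with an adjacent lower facet (smooth by condition~(4)) yields smoothness of the upper facets, and Remark~\ref{rem_SL2_non-uniqueness_of_chi} accounts for the \(\GL_n(\bbZ)\)-action. Your explicit determinant expansion simply spells out the step the paper asserts directly, namely that the equatorial vertices of the adjacent facet form a basis of \(\{0\}\times\bbZ^n\).
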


\begin{proof}
The statement essentially follows from the definition of locally factorial \(G/H\)-reflexive polytopes, Proposition~\ref{prop_combi_SL2:T} and our identification of the lattice \(N\) with \(\bbZ^{n+1}\). 
There are two particular things to note. 
First, both colors have the same image, so the corresponding points \(\frac{\rho(D_i)}{m_{D_i}}=(1,0,\ldots,0)\) cannot be in a facet that intersects the interior of the valuation cone. 
Second, if a facet contains \((1,0,\ldots,0)\) (necessarily as a vertex), then the other vertices are on the hyperplane \(H_-\cap H_+\) and form a face of codimension two, which must belong to another facet, contained in \(H_-\). 
Since the vertices of that second facet must form a basis of \(\bbZ^{n+1}\), the vertices in \(H_-\cap H_+\) form a basis of \(\{0\}\times \bbZ^n\). 
As a consequence, the original facet containing \((1,0,\ldots,0)\) is a smooth facet. 

The last sentence of the statement follows from Remark~\ref{rem_SL2_non-uniqueness_of_chi}. 
\end{proof}

\begin{prop}
\label{prop_LFRP_T_even>0}
Assume that \(a_1\geq 2\) is even. 
Then \(\Omega\) is a locally factorial \(G/H\)-reflexive polytope if and only if \(\Omega\) is a canonical Fano polytope such that \((1,\frac{a_1}{2},0\ldots,0)\) and \((1,-\frac{a_1}{2},0\ldots,0)\) are vertices, all other vertices are in the half-space \(H_-=\{y_1\leq 0\}\), any non-smooth facet of \(\Omega\) contains both points \((1,\pm\frac{a_1}{2},0\ldots,0)\) and is fully contained in the half-space \(H_+=\{y_1\geq 0\}\). 

Furthermore, any non-smooth facet of \(\Omega\) has \(n+1 \leq m \leq n+2\) vertices, and \(m+a_1-1\) integral points, and integral points which are not vertices are in the interior of the one-dimensional face \([(1,\frac{a_1}{2},0\ldots,0),(1,-\frac{a_1}{2},0\ldots,0)]\).  

Locally factorial Fano embeddings of \(G/H\) correspond uniquely to such polytopes up to the action of the subgroup of \(\GL_{n+1}(\bbZ)\) that fixes \((1,0,\ldots,0)\) and stabilizes both \(\bbZ^2\times \{0\}\) and the hyperplane \(H_-\cap H_+\). 
\end{prop}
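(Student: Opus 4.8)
The plan is to translate the four conditions of Definition~\ref{defn_reflexive} one at a time, using the combinatorial data computed in Proposition~\ref{prop_combi_SL2:T}. Since $\Sigma=\{\alpha_1\}$ and $\alpha_1$ is the first vector of the chosen basis of $M$, the valuation cone is the half-space $\mathcal V=H_-=\{y_1\le 0\}$, with $\Int(\mathcal V)=\{y_1<0\}$ and $\mathcal V\cap(-\mathcal V)=H_-\cap H_+=\{y_1=0\}$; the two colours have $m_{\clubsuit}=m_{\varheartsuit}=1$ and map to the lattice points $v_\pm:=(1,\pm\tfrac{a_1}{2},0,\dots,0)$, which are integral precisely because $a_1$ is even. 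Conditions (1)--(3) then read: $0\in\Int(\Omega)$; $v_\pm\in\Omega$; and every vertex of $\Omega$ lies in $N\cap\mathcal V$ or equals one of $v_\pm$. In particular all vertices are integral, so $\Omega$ is a lattice polytope and every vertex other than $v_\pm$ lies in $H_-$; moreover a short extremality argument (on $\Omega$ the functional $y_1$ attains its maximum $1$ exactly on $\{v_+,v_-\}$, and $v_\pm$ are extreme there) promotes ``$v_\pm\in\Omega$'' to ``$v_\pm$ are vertices''. This already yields the first three requirements of the statement.

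Next I would extract the facet information from condition (4). A facet $F$ satisfies $\cone(F)\cap\Int(\mathcal V)\neq\emptyset$ if and only if $F$ contains a point with $y_1<0$; call these the \emph{below-facets}. For a below-facet the injectivity in (4a) is automatic (as $v_+\neq v_-$), and (4b) forces $V(F)=\mathcal C_F\cup\rho(\mathcal D_F)$ to be a $\bbZ$-basis of $N$, so $F$ is a smooth (unimodular) simplex. Contrapositively, every non-smooth facet lies in $H_+=\{y_1\ge 0\}$. Canonicity of $\Omega$ then follows by covering: for each below-facet $F$ the pyramid $\mathrm{conv}(0,F)=\cone(F)\cap\Omega$ is a unimodular simplex, and these finitely many closed pyramids cover $\Omega\cap\{y_1<0\}$ (each such point lies on a ray through $0$ that exits $\Omega$ at a facet with $y_1<0$), hence cover the closure $\Omega\cap\mathcal V$. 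A lattice point $p\in\Int(\Omega)$ with $p_1\le 0$ therefore lies in some unimodular $\mathrm{conv}(0,F)$, whose only lattice points are $0$ and the vertices of $F\subset\partial\Omega$, forcing $p=0$; and $p_1>0$ is impossible since then $0<p_1<1$ could not be an integer. Thus $\Omega$ is canonical Fano.

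The delicate point is to show that every non-smooth facet contains \emph{both} $v_\pm$, together with the lattice-point count. Here condition (4) gives no direct information, since a facet $F\subset H_+$ has $\cone(F)\cap\Int(\mathcal V)=\emptyset$; the constraints must come indirectly. Writing $\bar w$ for the last $n$ coordinates of a point $w\in\{y_1=0\}$, the key local lemma is that whenever an $(n-1)$-simplex $G=\mathrm{conv}(w_1,\dots,w_n)\subset\{y_1=0\}$ is the ridge shared by a facet $\mathrm{conv}(v_{\pm},G)$ and a below-facet $F''=\mathrm{conv}(w_1,\dots,w_n,u)$, then expanding $\det(w_1,\dots,w_n,u)=\pm u_1\det(\bar w_1,\dots,\bar w_n)=\pm1$ along the $y_1$-row gives $|\det(\bar w_1,\dots,\bar w_n)|=1$, and the same determinant computes the volume of $\mathrm{conv}(0,v_{\pm},G)$, so the apex facet $\mathrm{conv}(v_{\pm},G)$ is forced to be unimodular. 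Consequently a non-smooth facet cannot have its $\{y_1=0\}$-ridge bounding any below-facet; since $[v_+,v_-]=\Omega\cap\{y_1=1\}$ is the edge where $y_1$ is maximal, I would argue that the only way to realise a non-smooth facet compatibly with the unimodular below-facets is for it to contain the entire edge $[v_+,v_-]$ (a facet carrying only one $v_\pm$ would leave that edge on separate facets and, by the lemma, be unimodular). Granting this, $F=\mathrm{conv}(v_+,v_-,w_1,\dots,w_k)$ has a unimodular base, whence $m=|V(F)|\in\{n+1,n+2\}$, and a direct inspection shows its only non-vertex lattice points are the $a_1-1$ interior points $(1,j,0,\dots,0)$ with $-\tfrac{a_1}{2}<j<\tfrac{a_1}{2}$ of $[v_+,v_-]$, giving $m+a_1-1$ lattice points in total.

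For the converse I would run these translations backwards: given a canonical Fano polytope with the listed properties, conditions (1)--(3) are immediate, and any facet meeting $\Int(\mathcal V)$ has its vertices in $H_-\cup\{v_\pm\}$ and is smooth, hence a unimodular simplex whose vertex set is a basis, so (4a)--(4b) hold. The uniqueness up to the stated subgroup of $\GL_{n+1}(\bbZ)$ is exactly the residual ambiguity in the choice of $\chi_1$ recorded in Remark~\ref{rem_SL2_non-uniqueness_of_chi}: replacing $\chi_1$ by $\chi_1+\sum_{i\ge 2}a_i\chi_i$ induces a lattice automorphism that fixes $v_\pm$ (hence $(1,0,\dots,0)$ and the plane $\bbZ^2\times\{0\}$) and stabilises the hyperplane $H_-\cap H_+$. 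I expect the main obstacle to be the third paragraph: because condition (4) constrains only the below-facets, excluding ``one-colour'' non-smooth facets and pinning down the exact lattice-point count and the bound $m\le n+2$ must be done through the interplay between canonicity, the unimodularity of below-facets sharing a ridge, and the edge structure at the top face $[v_+,v_-]$, rather than by a direct reading of the definition.
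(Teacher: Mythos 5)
Most of your translation is correct and runs parallel to the paper's (terse) proof: the dictionary between Definition~\ref{defn_reflexive} and the stated conditions, the extremality argument promoting \(v_\pm=(1,\pm\frac{a_1}{2},0,\dots,0)\) to vertices, the observation that \(m_\clubsuit=m_\varheartsuit=1\) and \(v_+\neq v_-\) make condition (4) turn every facet meeting \(\{y_1<0\}\) into a unimodular simplex, and the determinant expansion across a ridge in \(\{y_1=0\}\) are exactly the mechanisms the paper uses. Your covering of \(\Omega\cap\mathcal{V}\) by the unimodular pyramids \(\operatorname{conv}(0,F)\) to get canonicity is in fact more complete than the paper's sketch, and the uniqueness statement via Remark~\ref{rem_SL2_non-uniqueness_of_chi} is handled as in the paper.

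However, the gap you flagged in your third paragraph is genuine and your proposal does not close it. Your local lemma only treats the case where the facet adjacent to \(\operatorname{conv}(v_+,G)\) across the ridge \(G\subset\{y_1=0\}\) is a below-facet; a priori that neighbour could instead be a facet \(F'\supseteq\operatorname{conv}(v_-,G)\) contained in \(H_+\), in which case condition (4) applies to neither facet and the parenthetical ``a facet carrying only one \(v_\pm\) would leave that edge on separate facets'' is not an argument. This configuration must be excluded by convexity rather than by (4): let \(\ell\le 1\) and \(\ell'\le 1\) be the supporting affine functionals of the two facets, so \(\ell=\ell'=1\) on \(G\), \(\ell(v_+)=1\), \(\ell'(v_-)=1\), \(\ell(v_-)<1\), \(\ell'(v_+)<1\). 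Since \(y_1\), \(\ell-1\) and \(\ell'-1\) all vanish on \(\operatorname{aff}(G)\) and the latter two are independent modulo it, one can write \(y_1=a(\ell-1)+b(\ell'-1)\); evaluating at \(v_+\) and \(v_-\) gives \(a=1/(\ell(v_-)-1)<0\) and \(b=1/(\ell'(v_+)-1)<0\), whence \(y_1\ge 0\) on all of \(\Omega\), contradicting \(0\in\Int(\Omega)\). With one-colour non-smooth facets thus excluded, the assertions you then state without proof also need a small case split: for a non-smooth facet \(F\supseteq[v_+,v_-]\) the face \(F\cap\{y_1=0\}=\operatorname{conv}(w_1,\dots,w_k)\) has dimension \(n-1\) or \(n-2\); in the first case it is a ridge whose other facet cannot contain both \(v_\pm\) (it would contain \(F\)) hence is smooth, forcing \(k=n\) and the \(w_i\) to be a basis of \(\{0\}\times\bbZ^n\); in the second case \(F\) is a simplex and its ridge \(\operatorname{conv}(v_-,w_1,\dots,w_{n-1})\) lies on a one-colour facet, which is smooth, so the \(w_i\) are part of a basis. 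Only this yields \(m\in\{n+1,n+2\}\), and splitting the lattice points of \(F\) by their first coordinate (necessarily \(0\) or \(1\)) then gives the count \(m+a_1-1\) with the non-vertex points interior to \([v_+,v_-]\).
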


\begin{proof}
In this case, the colors have different images, so they can be involved in the polytope. 
If a facet has \((1,\frac{a_1}{2},0\ldots,0)\) as vertex but not \((1,-\frac{a_1}{2},0\ldots,0)\), then the same argument as in the previous proof shows that the facet is spanned by a basis of \(\bbZ^{n+1}\).
If a facet \(F\) has \((1,\frac{a_1}{2},0\ldots,0)\) and \((1,-\frac{a_1}{2},0\ldots,0)\) as vertices, then its other vertices are on \(H_-\cap H_+\) since the facet cannot be smooth. 
These other vertices lie in another facet of \(\Omega\), which cannot contain both \((1,\frac{m}{2},0\ldots,0)\) and \((1,-\frac{m}{2},0\ldots,0)\), so these vertices are part of a basis of \(\bbZ^{n+1}\). 
As a consequence, there cannot be any interior integral point in \(\Omega \cap H_-\cap H_+\). 
Finally, there cannot be any integral interior points in the facet \(F\), since it would have non integral coordinate \(x_0\). 
We deduce that \(\Omega\) is canonical. 
The other consequences are easily deduced from the above. 
The last sentence of the statement again follows from Remark~\ref{rem_SL2_non-uniqueness_of_chi}. 
\end{proof}

Adapting the arguments to the case when \(a_1\) is odd yields the following characterization. 

\begin{prop}
\label{prop_LFRP_T_odd}
Assume that \(a_1\) is odd. 
Then \(\Omega\) is a locally factorial \(G/H\)-reflexive polytope if and only if \(\Omega\) is a canonical Fano polytope such that \((\frac{a_1+1}{2},\frac{1-a_1}{2},0\ldots,0)\) and \((\frac{1-a_1}{2},\frac{a_1+1}{2},0\ldots,0)\) are vertices, all other vertices are in the half-space \(H_-=\{y_2\leq -y_1\}\), any non-smooth facet of \(\Omega\) contains both points \((\frac{a_1+1}{2},\frac{1-a_1}{2},0\ldots,0)\) and \((\frac{1-a_1}{2},\frac{a_1+1}{2},0\ldots,0)\) and is fully contained in the half-space \(H_+=\{y_2\geq -y_1\}\). 

Furthermore, any non-smooth facet of \(\Omega\) has \(n+1 \leq m \leq n+2\) vertices, and \(m+a_1-1\) integral points, and integral points which are not vertices are in the interior of the one-dimensional face \([(\frac{a_1+1}{2},\frac{1-a_1}{2},0\ldots,0),(\frac{1-a_1}{2},\frac{a_1+1}{2},0\ldots,0)]\).  

Locally factorial Fano embeddings of \(G/H\) correspond uniquely to such polytopes up to the action of the subgroup of \(\GL_{n+1}(\bbZ)\) that fixes \((1,1,0\ldots, 0)\), and stabilizes \(\bbZ^2\times \{0\}\) as well as the hyperplane \(H_+\cap H_-\). 
\end{prop}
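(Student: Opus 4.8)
The plan is to mirror the proof of Proposition~\ref{prop_LFRP_T_even>0}, the only change being that the role played there by the coordinate \(y_1\) is now played by the linear form \(y_1+y_2\). From Proposition~\ref{prop_combi_SL2:T} the spherical root is \(\alpha_1=(\varpi_1+\chi_1)+(\varpi_1-\chi_1)\), so in the chosen basis of \(M\) it evaluates on \(N\otimes\bbQ\) as \((y_1,\ldots,y_{n+1})\mapsto y_1+y_2\); hence \(\mathcal{V}=\{y_1+y_2\le 0\}=H_-\) and \(H_+\cap H_-=\{y_1+y_2=0\}\). Since \(m_{\clubsuit}=m_{\varheartsuit}=1\), the two color points are \(\rho(\clubsuit)=(\tfrac{a_1+1}{2},\tfrac{1-a_1}{2},0,\ldots,0)\) and \(\rho(\varheartsuit)=(\tfrac{1-a_1}{2},\tfrac{a_1+1}{2},0,\ldots,0)\): both lie on \(\{y_1+y_2=1\}\subset\Int(H_+)\), their difference is \(a_1(-1,1,0,\ldots,0)\), and the edge joining them has lattice length \(a_1\), carrying \(a_1-1\) interior lattice points. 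I would record at the outset that \(\rho(\clubsuit)\neq\rho(\varheartsuit)\), so condition (4)(a) of Definition~\ref{defn_reflexive} is automatic, and that when \(a_1=1\) the edge is unimodular and one lands in the smooth Fano situation with no non-smooth facets.

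For the forward implication, conditions (1)--(3) of Definition~\ref{defn_reflexive} give \(0\in\Int(\Omega)\), that the color points lie in \(\Omega\), and that every other vertex lies in \(H_-\); the color points are the only points of \(\Omega\) attaining the maximal value \(1\) of \(y_1+y_2\), hence they are vertices. The key observation, exactly as in the even case, is that a facet containing both color points cannot be smooth: the edge \([\rho(\clubsuit),\rho(\varheartsuit)]\) has lattice length \(a_1\ge 3\), whereas every edge of a unimodular simplex is primitive. By condition (4) such a facet cannot meet \(\Int(\mathcal{V})\), so it lies in \(H_+\) with its remaining vertices on \(\{y_1+y_2=0\}\). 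Conversely, a facet carrying at most one color point is always smooth: if it meets \(\Int(\mathcal{V})\) this is condition (4)(b), and otherwise its non-color vertices lie on \(\{y_1+y_2=0\}\), form a codimension-two face shared with a facet below that is smooth, and hence are part of a basis of \(N\); adding a single color point, which sits at lattice-height \(1\) over \(\{y_1+y_2=0\}\), completes a basis by the adjacent-facet argument of Proposition~\ref{prop_LFRP_T_0}. Thus non-smooth facets carry exactly two color points (zero is impossible, as that would force a supporting hyperplane through \(0\)). Canonicity then follows: lattice points have \(y_1+y_2\in\bbZ\), so none lie in the slab \(\{0<y_1+y_2<1\}\); each slice \(\{y_1+y_2=0\}\cap F\) is a unimodular simplex, so \(\{y_1+y_2=0\}\cap\Omega\) is tiled unimodularly and has \(0\) as its only interior lattice point; and \(\{y_1+y_2<0\}\) is governed by smooth facets. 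The same analysis yields \(n+1\le m\le n+2\) (the edge itself when \(n=1\), otherwise the two color vertices together with an \((n-1)\)-dimensional unimodular simplex) and the count \(m+a_1-1\), the non-vertex lattice points being the \(a_1-1\) interior points of the edge.

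For the reverse implication, I would verify Definition~\ref{defn_reflexive} directly: (1),(2),(3) are immediate from the stated structure, (4)(a) is automatic, and for (4)(b) any facet meeting \(\Int(\mathcal{V})\) is by hypothesis not among the non-smooth facets (which lie in \(H_+\)), hence smooth, so its vertices form a basis of \(N\); as the color points coincide with their \(\rho\)-images this is precisely (4)(b). The uniqueness clause follows from Remark~\ref{rem_SL2_non-uniqueness_of_chi}: the residual \(G\)-equivariant freedom is \(\chi_1\mapsto\chi_1+\sum_{i\ge 2}a_i\chi_i\), which acts on \(N\) as the subgroup of \(\GL_{n+1}(\bbZ)\) fixing \(\rho(\clubsuit)+\rho(\varheartsuit)=(1,1,0,\ldots,0)\) and stabilizing both \(\bbZ^2\times\{0\}\) and \(H_+\cap H_-\).

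I expect the main obstacle to be the canonicity step, specifically the verification that the slice \(\{y_1+y_2=0\}\cap\Omega\) contains no interior lattice point other than the origin; this relies on the unimodular tiling produced by the smoothness of the facets below, and is where care is needed that the non-lattice midpoint \((\tfrac12,\tfrac12,0,\ldots,0)\) of the color edge, a feature absent from the even case, plays no role. The argument only uses the integrality of \(y_1+y_2\) on \(N\) and the lattice length \(a_1\) of the edge, so the half-integral center is harmless.
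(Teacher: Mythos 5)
Your strategy is exactly the intended one: the paper proves the even case in detail and dismisses the odd case with the remark that the arguments adapt, and replacing the coordinate \(y_1\) by the linear form \(y_1+y_2\) (the spherical root \(\alpha_1\) written in the basis \((\varpi_1+\chi_1,\varpi_1-\chi_1,\chi_2,\ldots,\chi_n)\) of \(M\)) is the correct adaptation. Your identification of \(\mathcal{V}=\{y_1+y_2\le 0\}\), the color edge of lattice length \(a_1\) carrying \(a_1-1\) interior lattice points, the canonicity argument via integrality of \(y_1+y_2\), and the observation that the half-integral midpoint \((\tfrac12,\tfrac12,0,\ldots,0)\) is harmless, are all sound. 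There is, however, one false assertion that is load-bearing in your write-up: the claim that for \(a_1=1\) "one lands in the smooth Fano situation with no non-smooth facets". This contradicts Corollary~\ref{prop_LFRP_T_1}, which for \(a_1=1\) explicitly allows non-smooth facets, namely non-simplicial ones with \(n+2\) vertices (for \(n\ge 2\), a planar facet such as the convex hull of \((1,0,0)\), \((0,1,0)\), \((0,0,1)\), \((1,-1,1)\), with the two colors at height \(1\) and two vertices on \(\{y_1+y_2=0\}\), is non-smooth by definition, since \(n+2\) vertices cannot form a basis of \(N\)). In your structure the implication "non-smooth \(\Rightarrow\) contained in \(H_+\)" is obtained for \(a_1\ge 3\) from the lattice-length argument and for \(a_1=1\) only from this false remark, so the case \(a_1=1\) has a hole as written. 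The repair is already in your toolkit and is uniform in \(a_1\): since \(m_{\clubsuit}=m_{\varheartsuit}=1\), condition~(4)(b) of Definition~\ref{defn_reflexive} says that any facet whose cone meets \(\Int(\mathcal{V})\) has vertex set \(\rho(\mathcal{D}_F)\cup\mathcal{C}_F\), a basis of \(N\), hence is smooth; contrapositively every non-smooth facet lies in \(H_+\) with no case distinction, and your at-most-one-color argument then shows it contains both color points. The lattice-length observation is only needed to note that for \(a_1\ge 3\) a facet through both colors is automatically non-smooth, which the statement does not require.

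Two smaller points. First, in the vertex count you attribute \(m=n+1\) only to \(n=1\) ("the edge itself"), but for \(n\ge 2\) a non-smooth facet can also be a simplex with vertices the two colors and an \((n-2)\)-dimensional unimodular face on \(\{y_1+y_2=0\}\) (for \(a_1=3\), \(n=2\): the convex hull of \((2,-1,0)\), \((-1,2,0)\), \((0,0,1)\) in the plane \(y_1+y_2+y_3=1\)); the correct dichotomy is on \(\dim(F\cap\{y_1+y_2=0\})\in\{n-2,n-1\}\), not on \(n\), and it yields exactly the bound \(n+1\le m\le n+2\) and the count \(m+a_1-1\). Second, your adjacent-facet argument tacitly assumes that the facet sharing the codimension-two face on \(\{y_1+y_2=0\}\) meets \(\Int(\mathcal{V})\); this is true but deserves a sentence: if both facets through such a face lay in \(H_+\), then near a relative interior point of the face the two supporting half-spaces would force \(\Omega\subset\{y_1+y_2\ge 0\}\), so \(0\) would lie on the face \(\Omega\cap\{y_1+y_2=0\}\) rather than in \(\Int(\Omega)\). (Also, "the color points are the only points attaining the maximal value \(1\)" is imprecise, as the whole face \(\Omega\cap\{y_1+y_2=1\}\) is the color edge, though your conclusion that both colors are vertices is correct.) These are local repairs; with them your proof coincides with the adaptation the paper intends.
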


\begin{cor}
\label{prop_LFRP_T_1}
Assume that \(a_1=1\). 
Then \(\Omega\) is furthermore a terminal Fano polytope, and any non-smooth facet has \(n+2\) vertices. 
\end{cor}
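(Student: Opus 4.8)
The plan is to build on Proposition~\ref{prop_LFRP_T_odd} specialized to $a_1=1$, where the two color points become $s_1=(1,0,\ldots,0)$ and $s_2=(0,1,\ldots,0)$ (both vertices, since $m_\clubsuit=m_\varheartsuit=1$), and where the segment $[s_1,s_2]$ is \emph{primitive} because $s_1-s_2=(1,-1,0,\ldots,0)$ generates a saturated rank-one sublattice of $N$. First I would record the two facts that make $a_1=1$ special: by Proposition~\ref{prop_LFRP_T_odd} every non-smooth facet $F$ carries exactly $m+a_1-1=m$ integral points, the non-vertex ones being forced into the relative interior of $[s_1,s_2]$; and since that segment is primitive, its relative interior contains no lattice point. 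Hence every non-smooth facet is \emph{empty}, its only integral points being its vertices.

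For terminality I would show that no facet of $\Omega$ contains a non-vertex integral point. A smooth facet has $n+1$ vertices forming a $\bbZ$-basis of $N$, so it is a unimodular simplex and its only integral points are its vertices (an integral affine combination of a basis with nonnegative barycentric coordinates summing to one must be a vertex). A non-smooth facet is empty by the previous step. Thus every integral point of $\partial\Omega$ is a vertex; combined with the fact, already established in Proposition~\ref{prop_LFRP_T_odd}, that $\Omega$ is canonical (its unique interior integral point is the origin), this shows that the only integral points of $\Omega$ are its vertices and the origin, i.e. $\Omega$ is terminal.

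For the vertex count I would rule out the value $m=n+1$ left open by Proposition~\ref{prop_LFRP_T_odd}. Suppose a non-smooth facet $F$ were an $n$-simplex with vertices $s_1,s_2,w_1,\ldots,w_{n-1}$, the $w_i$ lying on $\partial\mathcal{V}=\{y_1+y_2=0\}$. Since $\Omega$ is locally factorial Fano it is Gorenstein, so its dual polytope is integral and each facet of $\Omega$ sits at lattice distance one from the origin; being empty (first step) and of dimension $\rank(G/H)-1$, which is at most one in the range treated in the paper (and at most two in general, where the argument still works), such a simplex is unimodular by Pick's formula, so its vertices form a basis of $N$ — contradicting that $F$ is non-smooth. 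Hence $m=n+2$. In the range actually used ($\rank(G/H)\le 2$, so facets are segments) this is even more transparent: a non-smooth facet must contain both color points $s_1,s_2$, hence equals $[s_1,s_2]$, which is unimodular, so no non-smooth facet exists and the count holds vacuously. The one delicate point, which I would flag as the main obstacle, is precisely the implication ``empty simplex facet $\Rightarrow$ smooth'': it relies on the Gorenstein height-one property together with the low dimension of the facets, since in facet dimension $\ge 3$ empty non-unimodular (Reeve-type) simplices exist and the conclusion can fail — so the bound on the dimension is genuinely used.
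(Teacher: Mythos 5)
Your terminality argument is essentially the paper's own: Proposition~\ref{prop_LFRP_T_odd} confines the non-vertex lattice points to the color segment, which for \(a_1=1\) is the primitive segment between \((1,0,\ldots,0)\) and \((0,1,0,\ldots,0)\); your explicit handling of the smooth facets just fills in a step the paper leaves implicit. That half is fine.

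The vertex-count half has a genuine gap. The corollary is stated for arbitrary \(n\) (and is invoked, e.g., in Example~\ref{exa_pic1_rk3} with \(n=2\)), and the paper proves it for all \(n\) by a short lattice-basis argument with no metric input at all: since \((1,0,\ldots,0)\) and \((0,1,0,\ldots,0)\) form a basis of \(\bbZ^2\times\{0\}\), and the remaining vertices of a simplex facet contained in \(H_+\) must lie on \(H_+\cap H_-\) where they belong to adjacent facets containing at most one color --- which are smooth by the arguments of Propositions~\ref{prop_LFRP_T_even>0} and~\ref{prop_LFRP_T_odd} --- any simplex facet contained in \(H_+\) is itself smooth, so a non-smooth facet cannot be a simplex. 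Your route (``empty simplex at lattice height one is unimodular'') only works when facets have dimension at most two, and you then assert that this dimension bound is \emph{genuinely used} and that the conclusion ``can fail'' in facet dimension \(\geq 3\). That assertion contradicts the very statement you are proving: Reeve simplices show that emptiness plus height one do not imply unimodularity in dimension \(\geq 3\), but this only means \emph{your method} breaks down there, not the corollary; what excludes Reeve-type facets in all dimensions is precisely the rigid configuration forced on a non-smooth facet (the two color points forming a basis of \(\bbZ^2\times\{0\}\), all other vertices on the hyperplane and shared with smooth neighbouring facets), and that mechanism is absent from your proof.

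A second, smaller problem is that your height-one input is asserted rather than proved. Definition~\ref{defn_reflexive} imposes the basis condition only on facets whose cone meets \(\Int(\mathcal{V})\), and says nothing about the lattice distance of the remaining facets --- which are exactly the non-smooth ones you need it for. Moreover ``locally factorial Fano, hence Gorenstein, hence every facet at lattice distance one'' is false as a general principle for \(G/H\)-reflexive polytopes: facets through points \(\frac{\rho(D)}{m_D}\) with \(m_D\geq 2\) sit at fractional distance (the facet \(\{-1/2\}\) of the polytope \([-1/2,1]\) of \textbf{4-1-2} is at distance \(1/2\)). In the present case all vertices are lattice points, so distance one would follow from integrality of the dual polytope, but that must be extracted from Gagliardi--Hofscheier's characterization rather than from the word ``Gorenstein''; and note this hypothesis is load-bearing even for two-dimensional facets, since Pick's formula makes an empty triangle unimodular only in the lattice of its affine span, and a unimodular triangle at height two would still fail to be a smooth facet. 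Your rank-two shortcut is correct as far as it goes (a non-smooth segment facet would have to be \([s_1,s_2]\), which is unimodular, so none exists), but it does not prove the corollary as stated.
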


\begin{proof}
It follows from the proof in the general case that integral points which are not vertices or the origin can only lie on the dimension one face which is the segment between the images of the two colors. 
In the case \(a_1=1\), this is the segment between \((1,0\ldots,0)\) and \((0,1,0\ldots,0)\) which obviously does not contain any other integral points than its endpoints. 
Furthermore, since \((1,0\ldots,0)\) and \((0,1,0\ldots,0)\) form a basis of \(\bbZ^2\times \{0\}\), if a facet is contained in \(H_+\) and is a simplex, then it is actually smooth.  
\end{proof}

\begin{exa}
\label{exa_pic1_rk3}
Assume that \(n=2\), and that \(\Omega\) is a simplex. 
Then \(\Omega\) cannot be a locally factorial \(G/H\)-reflexive polytope if \(a_1\neq 1\). 
Indeed, note first that there cannot be two facets fully contained in a half-space (otherwise by vertex count, the simplex would not contain the origin in its interior). 
This directly prevents \(a_1=0\), as the color must be in at least three facets, all contained in the same half space. 
If \(a_1\geq 2\), this implies that at least one of the two facets containing the two colors must be smooth. 
But it is impossible since the facets contains the edge between the two colors, which contains an integral interior point. 

Now focus on the case \(a_1=1\). 
Since all the facets of a simplex are simplices, all the facets are smooth. 
Let us denote a third vertex of \(\Omega\) by \((x_1,y_1,1)\). 
The simplex being smooth, we know that the last vertex is \((-1-x_1,-1-y_1,-1)\). 
Up to the action of matrices of the form 
\[\begin{bmatrix} 1&0&a\\ 0&1&-a \\ 0&0&1 \end{bmatrix}\]
we may assume that \(y_1=0\). 
Finally, the condition that the vertices other than the colors are in \(H_-\) implies that \(-2\leq x_1 \leq 0\). 
The two cases \(x_1=0\) and \(x_1=2\) are related by the matrix 
\[\begin{bmatrix} 0&1&1 \\ 1&0&-1 \\ 0&0&-1 \end{bmatrix}\]
hence define the same embedding. 
In conclusion, we have found two locally factorial \(G/H\)-reflexive for \(a_1=1\), corresponding to two different locally factorial, Fano, Picard rank one embeddings.  
\end{exa}

\subsubsection{Rank 2 locally factorial Fano embeddings}

We now determine the polytopes when \(n=1\), that is, when \(\rank(G/H)=2\) and \(\dim(G/H)=3\). 

\textbf{When \(a_1=0\)}
by Proposition~\ref{prop_LFRP_T_0}, it suffices to go through the list of smooth Fano polygons up to \(\GL_2(\bbZ)\)-action (recalled in Section~\ref{section_dimension_2}), and for each class, determine which representative do satisfy the hypothesis. 
Assume that a representative \(\Omega_0\) has \((1,0)\) as a vertex, then the elements of \(\GL_2(\bbZ)\) that fix this vertex are of the form \(\begin{bmatrix} 1 & a \\ 0 & \pm 1 \end{bmatrix}\) for some \(a\in \bbZ\). 
Thanks to the last sentence of Proposition~\ref{prop_LFRP_T_0}, we can work up to reflection with respect to the \(y_1\)-axis, that is, we may assume \(\pm 1 = 1\). 
Note that since all facets containing \((1,0)\) must lie in \(H_+\), and the origin is in the interior, \((0,1)\) and \((0,-1)\) are vertices. 
Their images by \(\begin{bmatrix} 1 & a \\ 0 & \pm 1 \end{bmatrix}\) are \((a,1)\) and \((-a,-1)\), hence if \(a\neq 0\), one of these is in \(H_+\). 
As a consequence, for every pair of a smooth Fano polytope and a vertex, there is at most one representative which satisfy the hypotheses. 

Going through the list of smooth Fano polytopes and the list of their vertices yields the following list of three. 

\begin{center}
\begin{tikzpicture}[scale=0.6]
\draw[dotted] (-2,2) grid (2,-2);
\draw (0,0) node{+};
\draw (1,0) node[below=-3, color=teal]{\(\clubsuit\)};
\draw (1,0) node[above=-3, color=purple]{\(\varheartsuit\)};
\draw[thick] (1,0) -- (0,1) -- (-1,0) -- (0, -1) -- cycle;
\draw (0,-1) node[below]{\textbf{3-2-1}};
\end{tikzpicture}
\begin{tikzpicture}[scale=0.6]
\draw[dotted] (-2,2) grid (2,-2);
\draw (0,0) node{+};
\draw (1,0) node[below=-3, color=teal]{\(\clubsuit\)};
\draw (1,0) node[above=-3, color=purple]{\(\varheartsuit\)};
\draw[thick] (1,0) -- (0,1) -- (-1,1) -- (0, -1) -- cycle;
\draw (0,-1) node[below]{\textbf{3-2-2}};
\end{tikzpicture}
\begin{tikzpicture}[scale=0.6]
\draw[dotted] (-2,2) grid (2,-2);
\draw (0,0) node{+};
\draw (1,0) node[below=-3, color=teal]{\(\clubsuit\)};
\draw (1,0) node[above=-3, color=purple]{\(\varheartsuit\)};
\draw[thick] (1,0) -- (0,1) -- (-1,1) -- (-1,0) -- (0, -1) -- cycle;
\draw (0,-1) node[below]{\textbf{3-2-3}};
\end{tikzpicture}
\end{center}

\begin{exa}
The embedding \textbf{3-2-1} is \((\bbP^1)^3\) as the product of the embedding \textbf{2-1-1} with the toric \(\bbP^1\), while the embedding \textbf{3-2-3} is the blowup of \((\bbP^1)^3\) along, for example, the closed orbit \(\diag \bbP^1\times \{0\}\), if \(0\) denotes a torus fixed point in \(\bbP^1\). 
\end{exa}

\textbf{When \(a_1=1\)}, using Proposition~\ref{prop_LFRP_T_1}, it suffices again to go through the list of smooth Fano polygons up to \(\GL_2(\bbZ)\)-action, and for each class, determine which representative do satisfy the hypothesis. 
Indeed, in dimension two, smooth and terminal Fano polygons coincide. 
We obtain eight polytopes. 

\begin{center}
\begin{tikzpicture}[scale=0.6]
\draw[dotted] (-2,2) grid (2,-2);
\draw (0,0) node{+};
\draw[thick] (1,0) -- (0,1) -- (-1,-1) -- cycle;
\draw (1,0) node[color=teal]{\(\clubsuit\)};
\draw (0,1) node[color=purple]{\(\varheartsuit\)};
\draw (0,-1) node[below]{\textbf{3-2-4}};
\end{tikzpicture}
\begin{tikzpicture}[scale=0.6]
\draw[dotted] (-2,2) grid (2,-2);
\draw (0,0) node{+};
\draw[thick] (1,0) -- (0,1) -- (-1,0) -- (0,-1) -- cycle;
\draw (1,0) node[color=teal]{\(\clubsuit\)};
\draw (0,1) node[color=purple]{\(\varheartsuit\)};
\draw (0,-1) node[below]{\textbf{3-2-5}};
\end{tikzpicture}
\begin{tikzpicture}[scale=0.6]
\draw[dotted] (-2,2) grid (2,-2);
\draw (0,0) node{+};
\draw (0,0) node{+};
\draw[thick] (1,0) -- (0,1) -- (-1,0) -- (-1,-1) -- cycle;
\draw (1,0) node[color=teal]{\(\clubsuit\)};
\draw (0,1) node[color=purple]{\(\varheartsuit\)};
\draw (0,-1) node[below]{\textbf{3-2-6}};
\end{tikzpicture}
\begin{tikzpicture}[scale=0.6]
\draw[dotted] (-2,2) grid (2,-2);
\draw (0,0) node{+};
\draw[thick] (1,0) -- (0,1) -- (-1,0) -- (1,-1) -- cycle;
\draw (1,0) node[color=teal]{\(\clubsuit\)};
\draw (0,1) node[color=purple]{\(\varheartsuit\)};
\draw (0,-1) node[below]{\textbf{3-2-7}};
\end{tikzpicture}
\begin{tikzpicture}[scale=0.6]
\draw[dotted] (-2,2) grid (2,-2);
\draw (0,0) node{+};
\draw[thick] (1,0) -- (0,1) -- (-1,1) -- (-1,0) -- (0,-1) -- cycle;
\draw (1,0) node[color=teal]{\(\clubsuit\)};
\draw (0,1) node[color=purple]{\(\varheartsuit\)};
\draw (0,-1) node[below]{\textbf{3-2-8}};
\end{tikzpicture}
\begin{tikzpicture}[scale=0.6]
\draw[dotted] (-2,2) grid (2,-2);
\draw (0,0) node{+};
\draw[thick] (1,0) -- (0,1) -- (-1,0) -- (-1,-1) -- (0,-1) -- cycle;
\draw (1,0) node[color=teal]{\(\clubsuit\)};
\draw (0,1) node[color=purple]{\(\varheartsuit\)};
\draw (0,-1) node[below]{\textbf{3-2-9}};
\end{tikzpicture}
\begin{tikzpicture}[scale=0.6]
\draw[dotted] (-2,2) grid (2,-2);
\draw (0,0) node{+};
\draw[thick] (1,0) -- (0,1) -- (-1,1) -- (-1,0) -- (1,-1) -- cycle;
\draw (1,0) node[color=teal]{\(\clubsuit\)};
\draw (0,1) node[color=purple]{\(\varheartsuit\)};
\draw (0,-1) node[below]{\textbf{3-2-10}};
\end{tikzpicture}
\begin{tikzpicture}[scale=0.6]
\draw[dotted] (-2,2) grid (2,-2);
\draw (0,0) node{+};
\draw[thick] (1,0) -- (0,1) -- (-1,1) -- (-1,0) -- (0,-1) -- (1,-1) -- cycle;
\draw (1,0) node[color=teal]{\(\clubsuit\)};
\draw (0,1) node[color=purple]{\(\varheartsuit\)};
\draw (0,-1) node[below]{\textbf{3-2-11}};
\end{tikzpicture}
\end{center}

\begin{exa}
The embedding \textbf{3-2-7} is the embedding \(\bbP^1\times \bbP(1,1,1)=\bbP^1\times \bbP^2\) used in the proof to determine the combinatorial data. 
With the notations of the proof, \textbf{3-2-10} is the blowup of \(\bbP^1\times \bbP^2\) along the curve \(C_2\) (which is isomorphic to \(\bbP^1\)), and \textbf{3-2-8} is the blowup of \(\bbP^1\times \bbP^2\) along the curve \(C_1\) (which is isomorphic to \(\bbP^1\) as well). 
The embedding \textbf{3-2-11} is the blowup along both curves. 
\end{exa}

\begin{exa}
The embedding \textbf{3-2-5} is the variety \(W\) of complete flags in \(\bbC^3\), equipped with the action of \(\SL_2\times\bbG_m\) induced by the action of \(\GL_2\) on the first two coordinates of \(\bbC^3\). 
There are three orbits of codimension two under this action: the orbit \(C_1\) of flags with line generated by \((0,0,1)\), the orbit \(C_2\) of flags with the plane stabilized by \(\GL_2\), and the orbit \(C_3\) of flags of the form \(\langle (x,y,0)\rangle \subset \langle (x,y,0),(0,0,1)\rangle\).  

The embedding \textbf{3-2-8} can alternatively be recovered as the blowup of \(W\) along \(C_1\) or \(C_2\), and \text{3-2-11} as the blowup of \(W\) along both orbits. 
The embedding \textbf{3-2-9} on the other hand, is obtained by blowing up \(C_3\). 
\end{exa}

\textbf{When \(a_1=2\)}, using Proposition~\ref{prop_LFRP_T_even>0}, it suffices to go through the list of canonical Fano polygons up to \(\GL_2(\bbZ)\)-action, and for each class, determine which representative do satisfy the hypothesis. 
Canonical Fano polygons in dimension two coincide with reflexive polygons, there are 16 such polytopes up to \(\GL_2(\bbZ)\)-action. 
Among these, only three have exactly one non-smooth facet with one integral interior point. 
We obtain the following possibilities for locally factorial \(G/H\)-reflexive polytopes. 

\begin{center}
\begin{tikzpicture}[scale=0.6]
\draw[dotted] (-2,2) grid (2,-2);
\draw (0,0) node{+};
\draw (1,-1) -- (1,1) -- (-1,0) -- cycle;
\draw (1,1) node[color=teal]{\(\clubsuit\)};
\draw (1,-1) node[color=purple]{\(\varheartsuit\)};
\draw (0,-1) node[below]{\textbf{3-2-12}};
\end{tikzpicture}
\begin{tikzpicture}[scale=0.6]
\draw[dotted] (-2,2) grid (2,-2);
\draw (0,0) node{+};
\draw[thick] (1,-1) -- (1,1) -- (0,1) -- (-1,0) -- cycle;
\draw (1,1) node[color=teal]{\(\clubsuit\)};
\draw (1,-1) node[color=purple]{\(\varheartsuit\)};
\draw (0,-1) node[below]{\textbf{3-2-13}};
\end{tikzpicture}
\begin{tikzpicture}[scale=0.6]
\draw[dotted] (-2,2) grid (2,-2);
\draw (0,0) node{+};
\draw[thick] (1,-1) -- (1,1) -- (0,1) -- (-1,0) -- (0,-1) -- cycle;
\draw (1,1) node[color=teal]{\(\clubsuit\)};
\draw (1,-1) node[color=purple]{\(\varheartsuit\)};
\draw (0,-1) node[below]{\textbf{3-2-14}};
\end{tikzpicture}
\end{center}

\begin{exa}
The embedding \textbf{3-2-12} is \(\bbP^3\), equipped with the following action: 
\[ \SL_2\times \bbG_m \times\bbC^4 \ni \left(\begin{bmatrix}a&b\\c&d\end{bmatrix}, z, \begin{pmatrix}x_0\\x_1\\x_2\\x_3\end{pmatrix}\right) \mapsto \begin{pmatrix}z(ax_0+bx_1)\\z(cx_0+dx_1)\\ax_2+bx_3\\cx_2+dx_3\end{pmatrix} \]
Indeed, the stabilizer of \((1,0,0,1)\) under this action is precisely the subgroup of \(T\) defined by \(za^2=1\). 

The embedding \textbf{3-2-13} is obtained from \(\bbP^3\) by blowing up one of the two closed orbits, and the embedding \textbf{3-2-14} by blowing up both closed orbits. 
\end{exa}

\textbf{When \(a_1\geq 3\)}, by Proposition~\ref{prop_LFRP_T_even>0}, Proposition~\ref{prop_LFRP_T_odd}, and the list of reflexive Fano polygons, there are no locally factorial \(G/H\)-reflexive embeddings. 

\subsection{Case when \(H_1=N(T_1)\)}

In this section, we will use the slightly abusive notation 
\(\diag(N(T_1))\times  \{1_{\bbG_m^{n-1}}\}=\langle T_1\times \{1\}, \left(\begin{bmatrix} 0&-1\\ 1&0\end{bmatrix}, -1 \right)\rangle \times \{1_{\bbC^{n-1}}\}\) for conciseness. 

\subsubsection{Combinatorial data}

\begin{cor}
For \(H=N(T_1)\times \{1\}\), the combinatorial data is:
\[\arraycolsep=12pt
\begin{array}{lll}
M=\langle 2\alpha_1,\chi_1, \ldots, \chi_n \rangle & \zeta(\clubsuit) = \{\alpha_1\} &    \\
\Sigma = \{2\alpha_1\} & \rho(\clubsuit) = (2,0,\ldots,0) &  \\
\mathcal{D} =\{\clubsuit\} &  \\
\midrule
m_{\clubsuit} = 1  & f=2+4x_1 & \kappa = \alpha_1  \\
\end{array}
\]
For \(H=\diag(N(T_1))\times  \{1_{\bbG_m^{n-1}}\}\), the combinatorial data is as follows: 
\[\arraycolsep=12pt
\begin{array}{lll}
M=\langle \alpha_1+\chi_1,\alpha_1-\chi_1, \chi_2 \ldots, \chi_n \rangle & \zeta(\clubsuit) = \{\alpha_1\} &    \\
\Sigma = \{2\alpha_1\} & \rho(\clubsuit) = (1,1,\ldots,0) &  \\
\mathcal{D} =\{\clubsuit\} &  \\
\midrule
m_{\clubsuit} = 1  & f=2+2x_1+2x_2 & \kappa = \alpha_1  \\
\end{array}
\]
\end{cor}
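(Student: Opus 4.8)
The plan is to treat the two subgroups separately, reducing each to combinatorial data already established. For \(H=N(T_1)\times\{1\}\), the point is that \(G/H=(\SL_2/N(T_1))\times \bbG_m^n\) is a direct product, the \(\bbG_m^n\)-factor of \(G\) acting only on the torus factor. I would therefore take the combinatorial data of \(\SL_2/N(T_1)\) recalled in Section~\ref{section_dimension_2} (namely \(M=\langle 2\alpha_1\rangle\), \(\Sigma=\{2\alpha_1\}\), a single color \(\clubsuit\) with \(\zeta(\clubsuit)=\{\alpha_1\}\), \(\rho(\clubsuit)=2\), \(\kappa=\alpha_1\), \(m_{\clubsuit}=1\)) and assemble the product datum: the lattice gains the directions \(\chi_1,\dots,\chi_n\), the torus contributes no color and no spherical root, \(\rho(\clubsuit)=(2,0,\dots,0)\), and \(\kappa\) is unchanged. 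The value \(f=2+4x_1\) then follows from the general formula \(f(p)=\{\kappa+p,\alpha_1\}/\{\varrho,\alpha_1\}\) with \(R_{P^u}=\{\alpha_1\}\), \(\kappa=\alpha_1\) and \(\varrho=\tfrac12\alpha_1\), since only the \(2\alpha_1\)-coordinate \(x_1\) pairs nontrivially with \(\alpha_1\); and \(m_{\clubsuit}=1\) because \(\zeta(\clubsuit)=\{\alpha_1\}\) meets \(\tfrac12\Sigma=\{\alpha_1\}\).

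For \(H=\diag(N(T_1))\times\{1_{\bbG_m^{n-1}}\}\) I would first split off the inert \(\bbG_m^{n-1}\)-factor and work inside \(\SL_2\times\bbG_m\). Write \(w=\begin{bmatrix}0&-1\\1&0\end{bmatrix}\) and \(H'=T_1\times\{1\}\). Then \(H'\) is the identity component of \(H\), the class of \((w,-1)\) generates \(H/H'\cong\bbZ/2\bbZ\), and \((w,-1)\in N_G(H')\) with \((w,-1)^2=(-I_2,1)\in H'\); hence \((w,-1)\) defines an involution of \(G/H'\) (an element of \(N_G(H')/H'\), cf. Proposition~\ref{prop_equivariant_automorphisms}) and \(G/H=(G/H')/\langle(w,-1)\rangle\). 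Now \(G/H'=(\SL_2/T_1)\times\bbG_m\) is exactly the \(a_1=0\) case of Proposition~\ref{prop_combi_SL2:T}, with \(M'=\langle\alpha_1,\chi_1\rangle\), \(\Sigma'=\{\alpha_1\}\), and two colors \(\clubsuit,\varheartsuit\) of image \((1,0)\) and \(\zeta=\{\alpha_1\}\).

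The key computation is the weight lattice. Since \(\bbC(G/H)=\bbC(G/H')^{\langle(w,-1)\rangle}\) and, by sphericity, each left-\(B\)-weight space of \(\bbC(G/H')\) is one-dimensional, the involution acts on \(f_\lambda\) by a sign \(c(\lambda)\), and \(\lambda\mapsto c(\lambda)\) is a homomorphism \(M'\to\{\pm1\}\) with \(M=\ker c\). I would evaluate \(c\) on the model of Proposition~\ref{prop_combi_SL2:T}: right translation by \(w\) is the swap of the two \(\bbP^1\)-factors of \(\SL_2/T_1\subset\bbP^1\times\bbP^1\) (Example~\ref{P^1xP^1}), which sends \(f_{\alpha_1}\mapsto -f_{\alpha_1}\), while right translation by \(-1\in\bbG_m\) sends \(f_{\chi_1}\mapsto-f_{\chi_1}\); thus \(c(\alpha_1)=c(\chi_1)=-1\) and \(M=\{a\alpha_1+b\chi_1:a\equiv b \bmod 2\}=\langle\alpha_1+\chi_1,\alpha_1-\chi_1\rangle\). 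The same swap interchanges \(\clubsuit\) and \(\varheartsuit\), so they descend to a single color of \(G/H\); restricting the functional \(\rho_{G/H'}(\clubsuit)\), which sends \(\alpha_1\mapsto 1\) and \(\chi_1\mapsto 0\), to \(M\) and reading it in the dual basis of \((\alpha_1+\chi_1,\alpha_1-\chi_1)\) gives \(\rho(\clubsuit)=(1,1,0,\dots,0)\). Finally \(\Sigma=\{2\alpha_1\}\) is the primitive positive multiple of \(\alpha_1\) in \(M\) (Corollary~\ref{cor_Sigma_SL_2}, \(G/H\) not being horospherical), \(\zeta(\clubsuit)=\{\alpha_1\}\) and \(\kappa=\alpha_1\) come from the adapted parabolic \(P=B_1\times\bbG_m^n\), \(m_{\clubsuit}=1\) since \(\zeta(\clubsuit)\cap\tfrac12\Sigma\ni\alpha_1\), and \(f=2+2x_1+2x_2\) follows from \(f(p)=\{\kappa+p,\alpha_1\}/\{\varrho,\alpha_1\}\) once one notes that the \(\alpha_1\)-coordinate of \(p\) equals \(x_1+x_2\) in the basis \((\alpha_1+\chi_1,\alpha_1-\chi_1)\).

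The step I expect to be most delicate is the transport of \(\rho\) through the degree-two quotient \(\pi\colon G/H'\to G/H\): I must check that \(\pi\) is unramified in codimension one along the colors, so that no ramification factor enters \(\rho(\clubsuit)\). This holds precisely because the involution swaps \(D_+\) and \(D_-\) rather than fixing either, so that \(\pi^{-1}(\clubsuit_{G/H})=D_+\cup D_-\) with trivial ramification index; I would spell this point out carefully, as it is exactly what fixes the normalization \(\rho(\clubsuit)=(1,1,0,\dots,0)\) rather than \((2,0,\dots,0)\).
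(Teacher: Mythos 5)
Your proposal is correct and follows essentially the same route as the paper: the paper likewise obtains both homogeneous spaces as quotients of the \(a_1=0\) case of Proposition~\ref{prop_combi_SL2:T} (by \((w,1)\) resp.\ \((w,-1)\)), noting that the two colors are exchanged and that \(m_\clubsuit=1\) persists because \(\alpha_1\in\tfrac12\Sigma\). Your extra details — the sign character \(c\colon M'\to\{\pm1\}\) cutting out the sublattice, and the unramifiedness of the degree-two cover along the swapped colors, which fixes \(\rho(\clubsuit)=(1,1,0,\dots,0)\) — are exactly the verifications the paper declares straightforward, and your product shortcut for \(H=N(T_1)\times\{1\}\) is an equivalent minor variant.
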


\begin{proof}
The spherical homogeneous spaces are obtained by quotienting the previous case with \(a_1=0\). 
It is straightforward to deduce the spherical lattice, and the set of colors (the two initial colors are exchanged by the action we quotient from). 
The spherical root is changed as the lattice changes, but since we have \(\alpha_1\in \frac{1}{2}\Sigma\), we still have \(m_{D_1}=1\). 
\end{proof}

\subsubsection{Polytopes}

We still denote by \((y_1,\ldots,y_{n+1})\) the coordinates on \(N\otimes \bbR\), in the basis dual to the \(\bbZ\)-basis of \(M\) given in the combinatorial data.  
We first consider the case when \(H=N(T_1)\times \{1_{\bbG_m^n}\}\), 

\begin{prop}
\label{prop_LFRP_N_product}
A polytope \(\Omega\) is a locally factorial \(G/H\)-reflexive polytope if and only if \(\Omega\) is a polytope such that \((2,0,\ldots,0)\) is a vertex, all other vertices are primitive elements in the half-space \(H_-=\{y_1\leq 0\}\), all the facets containing \((2,0,\ldots,0)\) are fully contained in the half-space \(H_+=\{y_1\geq 0\}\), and for any facet, the collection of primitive multiples of its vertices forms a basis of \(\bbZ^{n+1}\). 
Locally factorial Fano embeddings of \(G/H\) correspond uniquely to such polytopes up to the action of \(\GL_n(\bbZ)\) on the last \(n\) coordinates. 
\end{prop}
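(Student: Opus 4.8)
The plan is to follow the template of the proof of Proposition~\ref{prop_LFRP_T_0}, the essential new feature being that there is now a single color whose image $\rho(\clubsuit)/m_\clubsuit=(2,0,\ldots,0)$ (recall $m_\clubsuit=1$) is \emph{not} primitive. First I would record the shape of the valuation cone: since $\Sigma=\{2\alpha_1\}$ and $2\alpha_1$ is the first vector of the chosen basis of $M$, the pairing $\langle 2\alpha_1,\cdot\rangle$ is the first coordinate $y_1$ on $N\otimes\bbR$, so $\mathcal{V}=H_-=\{y_1\le 0\}$ and $\Int(\mathcal{V})=\{y_1<0\}$, while the color image sits at $(2,0,\ldots,0)$, strictly inside $H_+$, with primitive multiple $(1,0,\ldots,0)$. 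With this in hand, conditions (1)--(3) of Definition~\ref{defn_reflexive} translate immediately: (1) is $0\in\Int(\Omega)$; (2) says $(2,0,\ldots,0)\in\Omega$; and (3) says every vertex lies in $(\bbZ^{n+1}\cap H_-)\cup\{(2,0,\ldots,0)\}$.

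The heart of the argument is the analysis of condition (4), and this is where non-primitivity enters. First I would show that the color cannot be a vertex of any facet $F$ with $\cone(F)\cap\Int(\mathcal{V})\ne\emptyset$: for such a facet, (4b) would force $(2,0,\ldots,0)$ to belong to a $\bbZ$-basis of $N$, which is impossible since it is not primitive. Hence every facet containing the color lies in $H_+$. Conversely a facet contained in $H_+$ but not containing the color would have all its vertices on $\{y_1=0\}$ (the only vertex with $y_1>0$ being the color), hence would be contained in the hyperplane $\{y_1=0\}\ni 0$, contradicting $0\in\Int(\Omega)$; so the facets lying in $H_+$ are exactly those containing the color. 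Every other facet meets $\Int(\mathcal{V})$ and avoids the color, so $\mathcal{D}_F=\emptyset$, $\mathcal{C}_F=V(F)$, and (4b) makes it a unimodular simplex.

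It then remains to upgrade this to the uniform statements in the proposition (all non-color vertices primitive, and every facet having primitive multiples of its vertices forming a basis). For primitivity I would argue that any non-color vertex $v$ lies in at least one facet meeting $\Int(\mathcal{V})$: if all facets through $v$ were in $H_+$, the tangent cone of $\Omega$ at $v$ would lie in $\{y_1\ge 0\}$ and force $\Omega\subset\{y_1\ge 0\}$, contradicting $0\in\Int(\Omega)$; being a vertex of a unimodular simplex, $v$ is then primitive. For a facet $F$ through the color, its remaining vertices lie on $\{y_1=0\}$ and span a codimension-two face $G=F\cap\{y_1=0\}$ of $\Omega$; $G$ is shared with a facet $F'$ meeting $\Int(\mathcal{V})$, which is a unimodular simplex. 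The vertex of $F'$ lying in $\Int(\mathcal{V})$ must then have first coordinate exactly $-1$ (since $y_1$ is surjective on $N$ and the other $n$ basis vertices of $F'$, namely those of $G$, have $y_1=0$), so the vertices of $G$ form a $\bbZ$-basis of $\{y_1=0\}\cap\bbZ^{n+1}$; together with the primitive multiple $(1,0,\ldots,0)$ of the color they form a basis of $N$, as claimed. The converse direction simply reverses these implications: the explicit conditions give (1)--(3) at once, and since the facets meeting $\Int(\mathcal{V})$ are precisely those not containing the color, their unimodularity supplies (4a)--(4b) with $\mathcal{D}_F=\emptyset$. Finally the $\GL_n(\bbZ)$-ambiguity on the last $n$ coordinates is the non-uniqueness of $\chi_1,\ldots,\chi_n$ recorded in Remark~\ref{rem_SL2_non-uniqueness_of_chi}.

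The main obstacle I anticipate is precisely the codimension-two propagation step: Definition~\ref{defn_reflexive} only constrains facets that meet $\Int(\mathcal{V})$, yet the proposition asserts a basis condition for \emph{every} facet, including those through the non-primitive color. Making the sharing of $G$ between $F$ and an interior-meeting simplex $F'$ rigorous --- in particular that such an $F'\supset G$ on the $\{y_1<0\}$ side always exists (via the two-dimensional local cone of $\Omega$ along $G$), and that the interior vertex of $F'$ has $y_1=-1$ --- requires some care with the face lattice of $\Omega$, and is the step where the non-primitivity of the color is genuinely used.
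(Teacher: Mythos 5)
Your proposal is correct and follows essentially the same route as the paper's proof: you exclude the non-primitive color image \((2,0,\ldots,0)\) from any facet whose cone meets \(\Int(\mathcal{V})\), and you handle color facets by exactly the paper's codimension-two propagation --- the vertices on \(H_+\cap H_-\) lie in a second, unimodular facet, hence form a basis of \(\{0\}\times\bbZ^n\) which is completed by the primitive multiple \((1,0,\ldots,0)\), with the \(\GL_n(\bbZ)\)-ambiguity coming from Remark~\ref{rem_SL2_non-uniqueness_of_chi}. Your write-up is more detailed than the paper's (e.g.\ the explicit primitivity argument for non-color vertices and the \(y_1=-1\) computation for the interior vertex of \(F'\)), but these are elaborations of the same argument rather than a different approach.
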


Note that if we remove the vertex \((2,0,\ldots,0)\) to replace it with \((1,0,\ldots,0)\), we obtain a smooth Fano polytope which satisfies the properties in Proposition~\ref{prop_LFRP_T_0}. 
It is not obvious that the reverse construction always works, although we will check that it is the case if \(n=1\). 

\begin{proof}
The statement essentially follows from the definition of locally factorial \(G/H\)-reflexive polytopes, Proposition~\ref{prop_combi_SL2:T} and our identification of the lattice \(N\) with \(\bbZ^{n+1}\), as in the previous cases. 
There are two particular things to note. 
First, the image of the color \(\clubsuit\) is not primitive, and \(m_{\clubsuit}=1\), so the corresponding points \(\frac{\rho(\clubsuit)}{m_{\clubsuit}}=(2,0,\ldots,0)\) cannot be in a facet that intersects the interior of the valuation cone. 
Second, if a facet contains \((2,0,\ldots,0)\) (necessarily as a vertex), then the other vertices are on the hyperplane \(H_-\cap H_+\) and form a face of codimension two, which must belong to another facet, contained in \(H_-\). 
Since the vertices of that second facet must form a basis of \(\bbZ^{n+1}\), the vertices in \(H_-\cap H_+\) form a basis of \(\{0\}\times \bbZ^n\). 
This basis, completed by \((1,0,\ldots,0)=\frac{1}{2}(2,0,\ldots,0)\) forms a basis of \(\bbZ^{n+1}\). 

The last sentence of the statement follows from Remark~\ref{rem_SL2_non-uniqueness_of_chi}. 
\end{proof}

We now turn to the case when \(H=\diag(N(T_1))\times \{1_{\bbG_m^{n-1}}\}\). 

\begin{prop}
\label{prop_LFRP_N_diag}
A polytope \(\Omega\) is a locally factorial \(G/H\)-reflexive polytope if and only if \(\Omega\) is a simplicial canonical Fano polytope such that \((1,1,0\ldots,0)\) is a vertex, all other vertices are in the half-space \(H_-=\{y_2\leq y_1\}\), any non-smooth facet of \(\Omega\) contains \((1,1,0\ldots,0)\) and is fully contained in the half-space \(H_+=\{y_2\geq y_1\}\). 
Furthermore, any non-smooth facet contains at most one integral point which is not a vertex. 
Locally factorial Fano embeddings of \(G/H\) correspond uniquely to such polytopes up to the action of the subgroup of \(\GL_{n+1}(\bbZ)\) that fixes \((1,1,0\ldots, 0)\), and stabilizes both \(\bbZ^2\times \{0\}\) and the hyperplane \(H_+\cap H_-\). 
\end{prop}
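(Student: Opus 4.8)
The plan is to unfold Definition~\ref{defn_reflexive} using the combinatorial data above, following the template of the proofs of Propositions~\ref{prop_LFRP_N_product} and \ref{prop_LFRP_T_even>0}. The two inputs are that there is a single color $\clubsuit$ with $\frac{\rho(\clubsuit)}{m_{\clubsuit}}=(1,1,0,\ldots,0)$ a primitive lattice point, and that the spherical root is $2\alpha_1=(1,1,0,\ldots,0)$ in the chosen basis of $M$. Thus the valuation cone $\mathcal{V}=\{x:\langle 2\alpha_1,x\rangle\leq 0\}$ is the half-space $H_-$, with bounding hyperplane $H_+\cap H_-$, and since $\langle 2\alpha_1,(1,1,0,\ldots,0)\rangle=2>0$ the color lies strictly in $H_+$.

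First I would pin down the vertices. Condition (1) of Definition~\ref{defn_reflexive} is exactly $0\in\Int(\Omega)$. For conditions (2)--(3): the color is the only vertex permitted outside $\mathcal{V}$, and it lies strictly in $H_+$; were it not a vertex, every vertex would lie in $\mathcal{V}=H_-$, forcing $\Omega\subseteq H_-$ and contradicting $(1,1,0,\ldots,0)\in\Omega$. Hence $(1,1,0,\ldots,0)$ is a vertex and all other vertices lie in $H_-$.

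Next comes the facet analysis. For a facet $F$ with $\cone(F)\cap\Int(\mathcal{V})\neq\emptyset$, condition (4) together with $m_{\clubsuit}=1$ and the primitivity of $\rho(\clubsuit)$ forces the vertices of $F$ to form a $\bbZ$-basis of $N$, whether or not $F$ contains the color; such an $F$ is a unimodular simplex. A facet $F$ with $\cone(F)\subseteq H_+$ must contain the color, for otherwise all its vertices would lie on $H_+\cap H_-$ and $\operatorname{aff}(F)$ would pass through the origin, which is impossible for a facet of a polytope with $0$ in its interior; since every facet meeting $\Int(\mathcal{V})$ is smooth, the non-smooth facets are exactly these $H_+$-facets, each containing the color and contained in $H_+$. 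To obtain simpliciality I would argue, as in Proposition~\ref{prop_LFRP_T_even>0}, that the non-color vertices of such an $F$ lie on $H_+\cap H_-$ and are shared with an adjacent facet meeting $\Int(\mathcal{V})$, hence are part of a $\bbZ$-basis; being linearly independent and contained in the $n$-dimensional subspace $H_+\cap H_-$ there are at most $n$ of them, while $F$ being a facet forces exactly $n$. So $F$ is a simplex with apex the color. With a single color no facet can carry the second color that produced the $(n+2)$-vertex facets of the two-color case, which is exactly why $\Omega$ is simplicial here.

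Finally I would count lattice points and reverse the argument. Writing a non-smooth facet as the convex hull of $(1,1,0,\ldots,0)$ and a basis-extendable set $v_1,\ldots,v_n\subseteq H_+\cap H_-$, a lattice point of $F$ has $\langle 2\alpha_1,\cdot\rangle\in\{0,1,2\}$; the values $0$ and $2$ return the unimodular base and the apex, while $\langle 2\alpha_1,\cdot\rangle=1$ leaves only the midpoints $\frac{1}{2}((1,1,0,\ldots,0)+v_j)$, and extending $\{v_1,\ldots,v_n\}$ to a basis shows the congruence $(1,1,0,\ldots,0)\equiv v_j \pmod 2$ can hold for at most one $j$. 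This yields the final assertion, and since these midpoints lie on $\partial\Omega$ while the smooth facets carry no extra lattice points, the integrality of the facet normals inherent to a $G/H$-reflexive polytope gives that the only interior lattice point is the origin, so $\Omega$ is canonical. The converse is obtained by verifying each clause of Definition~\ref{defn_reflexive} directly from these properties, and the uniqueness up to the stated subgroup of $\GL_{n+1}(\bbZ)$ follows from Remark~\ref{rem_SL2_non-uniqueness_of_chi}, the residual isomorphisms being exactly the lattice automorphisms fixing the color $(1,1,0,\ldots,0)$ and stabilizing both $\bbZ^2\times\{0\}$ and $H_+\cap H_-$. I expect the main obstacle to be the uniform bookkeeping of simpliciality and of the single-interior-point count across all facets and all $n$, since the adjacency argument and the $\bmod\,2$ computation must be run carefully for every non-smooth facet.
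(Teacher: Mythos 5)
Your proposal follows the same route as the paper's proof: conditions (2)--(3) of Definition~\ref{defn_reflexive} force \((1,1,0,\ldots,0)\) to be a vertex with all other vertices in the valuation cone (which you correctly identify as \(\{y_1+y_2\leq 0\}\); the printed \(H_-=\{y_2\leq y_1\}\) is a typo, as comparison with Proposition~\ref{prop_LFRP_T_odd} and the rank-two polytopes such as \textbf{3-2-20} shows); facets meeting \(\Int(\mathcal{V})\) are unimodular simplices by condition (4), using \(m_{\clubsuit}=1\) and primitivity of \(\rho(\clubsuit)\); the remaining facets lie in \(H_+\), contain the color, and their other vertices, shared with an adjacent smooth facet, form a basis of the rank-\(n\) lattice \(H_+\cap H_-\cap N=\langle (1,-1,0,\ldots,0),\{(0,0)\}\times\bbZ^{n-1}\rangle\), whence simpliciality. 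This is exactly the paper's argument (which delegates the basis step to "the same argument as in the previous proofs"), and your midpoint/mod-\(2\) count of the at most one non-vertex lattice point, as well as the appeal to Remark~\ref{rem_SL2_non-uniqueness_of_chi} for uniqueness, are the intended details.

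There is one step whose justification fails as stated: the appeal to "the integrality of the facet normals inherent to a \(G/H\)-reflexive polytope" in the canonicity argument. Definition~\ref{defn_reflexive} imposes no condition at all on facets whose cone misses \(\Int(\mathcal{V})\), and since the vertex set \(\{c,v_1,\ldots,v_n\}\) of a non-smooth facet only spans the index-two sublattice \(\bbZ c\oplus (H_+\cap H_-\cap N)\) of \(N\), its supporting functional is a priori only half-integral; concretely, for \(n=2\) the base \(v_1=(1,-1,1)\), \(v_2=(0,0,1)\) is a legitimate basis of the boundary lattice, yet the facet \(\mathrm{conv}\bigl((1,1,0),v_1,v_2\bigr)\) has normal \((\tfrac12,\tfrac12,1)\notin M\). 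Fortunately your own level computation repairs this in two lines: the functional \(u_F\) takes values in \(\tfrac12\bbZ\) on \(N\), so a nonzero interior lattice point \(p\) whose ray from the origin exits through a non-smooth facet \(F\) satisfies \(u_F(p)=\tfrac12\), hence \(2p\) is a lattice point of \(F\); by your classification \(2p\in\{c,\,v_i,\,\tfrac12(c+v_j)\}\), and each case is absurd (\(c\) and the \(v_i\) are primitive, so \(p=c/2\) or \(v_i/2\) is not integral, while \(2p=\tfrac12(c+v_j)\) gives \(\langle 2\alpha_1,p\rangle=\tfrac12\notin\bbZ\)); rays exiting through smooth facets are excluded by genuine integrality of their normals. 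With this substitution your proof is complete and coincides with the paper's.
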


\begin{proof}
It suffices to describe the non-smooth facets, when they exist. 
If \(F\) is such a facet, then it follows directly from the definition that it must be fully contained in \(H_+\), and that \((1,1,0,\ldots, 0)\) is a vertex. 
By the same argument as in the previous proofs, the other vertices of \(F\) form a basis of \(\langle (1,-1,0\ldots, 0), (0,0)\times\bbZ^{n-1}\rangle\), hence the facet is a simplex and contains at most one integral point in its interior. 
The last sentence of the statement always follows from Remark~\ref{rem_SL2_non-uniqueness_of_chi}. 
\end{proof}

\begin{exa}
As in Example~\ref{exa_pic1_rk3}, there are no simplices that are locally factorial \(G/H\)-reflexive for \(H=N(T_1)\times \{1_{\bbG_m^n}\}\). 
In the case \(H=\diag(N(T_1))\times \{1_{\bbG_m^{n-1}}\}\), let us just give an example showing that there are such simplices: when \(n=2\), consider the simplex with vertices \((1,1,0)\), \((0,-1,0)\), \((0,0,1)\), \((-1,0,-1)\). 
It is a smooth simplex which obviously satisfies all the conditions, hence provides an example of a Fano, locally factorial, Picard rank one embedding of \(G/H\). 
The determination of all locally factorial \(G/H\)-reflexive simplices requires a finer analysis that we leave for subsequent work. 
Note that already in the rank two case that follows, there are two possible locally factorial \(G/H\)-reflexive simplices (3.18 and 3.20). 
\end{exa}

\subsubsection{Rank 2 locally factorial Fano embeddings}

We first deal with the case when \(H=N(T_1)\times \{1\}\subset \SL_2\times \bbG_m=G\). 
Using Proposition~\ref{prop_LFRP_N_product}, we note that, if \(\Omega\) is a locally factorial \(G/H\)-reflexive polytope, then by replacing the vertex \((2,0,\ldots, 0)\) with \((1,0,\ldots, 0)\), we obtain a bona fide smooth Fano polytope \(\Omega'\), such that all the facets containing \((1,0,\ldots,0)\) are fully contained in \(\{x_0\geq 0\}\).

We can thus run through the list of smooth Fano polygons to obtain the list of polytopes. 
We obtain three possible embeddings. 

\begin{center}
\begin{tikzpicture}[scale=0.6]
\draw[dotted] (-2,2) grid (2,-2);
\draw[thick] (2,0) -- (0,1) -- (-1,0) -- (0,-1) -- cycle;
\draw (2,0) node[color=teal]{\(\clubsuit\)};
\draw (0,-1) node[below]{\textbf{3-2-15}};
\end{tikzpicture}
\begin{tikzpicture}[scale=0.6]
\draw[dotted] (-2,2) grid (2,-2);
\draw[thick] (2,0) -- (0,1) -- (-1,1) -- (0,-1) -- cycle;
\draw (2,0) node[color=teal]{\(\clubsuit\)};
\draw (0,-1) node[below]{\textbf{3-2-16}};
\end{tikzpicture}
\begin{tikzpicture}[scale=0.6]
\draw[dotted] (-2,2) grid (2,-2);
\draw[thick] (2,0) -- (0,1) -- (-1,1) -- (-1,0) -- (0,-1) -- cycle;
\draw (2,0) node[color=teal]{\(\clubsuit\)};
\draw (0,-1) node[below]{\textbf{3-2-17}};
\end{tikzpicture}
\end{center}

\begin{exa}
The embedding \textbf{3-2-15} is the product of embeddings \(\bbP^2\times \bbP^1\), and \textbf{3-2-17} is its blowup along one closed orbit, a product of a quadric in \(\bbP^2\) with a torus fixed point in \(\bbP^1\). 
\end{exa}

We now deal with the case when \(H=\diag(N(T_1))\subset \SL_2\times \bbG_m=G\).
By Proposition~\ref{prop_LFRP_N_diag}, it suffices to run through the list of 16 reflexive polygons up to \(\GL_2(\bbZ)\)-action, and determine which representatives satisfy the properties. 
We obtain:

\begin{center}
\begin{tikzpicture}[scale=0.6]
\draw[dotted] (-2,2) grid (2,-2);
\draw[thick] (1,1) -- (-1,0) -- (0,-1) -- cycle;
\draw (1,1) node[color=teal]{\(\clubsuit\)};
\draw (0,-1) node[below]{\textbf{3-2-18}};
\end{tikzpicture}
\begin{tikzpicture}[scale=0.6]
\draw[dotted] (-2,2) grid (2,-2);
\draw[thick] (1,1) -- (-1,0) -- (-1,-1) -- (0,-1) -- cycle;
\draw (1,1) node[color=teal]{\(\clubsuit\)};
\draw (0,-1) node[below]{\textbf{3-2-19}};
\end{tikzpicture}
\begin{tikzpicture}[scale=0.6]
\draw[dotted] (-2,2) grid (2,-2);
\draw[thick] (1,1) -- (-1,1) -- (0,-1) -- cycle;
\draw (1,1) node[color=teal]{\(\clubsuit\)};
\draw (0,-1) node[below]{\textbf{3-2-20}};
\end{tikzpicture}
\begin{tikzpicture}[scale=0.6]
\draw[dotted] (-2,2) grid (2,-2);
\draw[thick] (1,1) -- (-1,1) -- (-1,0) -- (0,-1) -- cycle;
\draw (1,1) node[color=teal]{\(\clubsuit\)};
\draw (0,-1) node[below]{\textbf{3-2-21}};
\end{tikzpicture}
\begin{tikzpicture}[scale=0.6]
\draw[dotted] (-2,2) grid (2,-2);
\draw[thick] (1,1) -- (-1,1) -- (-1,0) -- (1,-1) -- cycle;
\draw (1,1) node[color=teal]{\(\clubsuit\)};
\draw (0,-1) node[below]{\textbf{3-2-22}};
\end{tikzpicture}
\begin{tikzpicture}[scale=0.6]
\draw[dotted] (-2,2) grid (2,-2);
\draw[thick] (1,1) -- (-1,1) -- (-1,0) -- (0,-1) -- (1,-1) -- cycle;
\draw (1,1) node[color=teal]{\(\clubsuit\)};
\draw (0,-1) node[below]{\textbf{3-2-23}};
\end{tikzpicture}
\end{center}

\begin{exa}
The embedding \textbf{3-2-18} is the quadric \(Q^3\), as explained in details in \cite{infinite} for example. 
The embedding \textbf{3-2-19} is obtained by blowing up a one dimensional subquadric, while \textbf{3-2-23} is obtained by blowing up a zero dimensional subquadric. Since a zero dimensional quadric consists of two distinct points, there is also an intermediate embedding given by blowing up only one of these two points, this is \textbf{3-2-21}. 
\end{exa}

\begin{exa}
The embedding \textbf{3-2-20} is \(\bbP^3\). 
To describe the action, one may consider the action of \(\SL_2\) on \(\bbC^3\) induced by the exceptional morphism \(\SL_2\to \SO_3\). 
The action of \(\SO_3\) preserves a non-degenerate quadratic form on \(\bbC^3\), preserving in particular its level sets. 
Throwing in the action of \(\bbG_m\) on \(\bbC^3\) by dilation, we obtain an action of \(\SL_2\times \bbG_m\) on \(\bbC_3\) with an open dense orbit formed by the union of non-zero level sets, and a closed orbit formed by the affine quadric defined by the quadratic form. 
Compactifying into \(\bbP^3\) yields  \textbf{3-2-20}. 

There are two closed orbits that may be blown up, yielding the embeddings  \textbf{3-2-21} and \textbf{3-2-22}.
\end{exa}

\subsection{Horospherical case (when \(H_1=B_1\))}
\label{section_SL_2_horospherical}

\subsubsection{Combinatorial data}

\begin{prop}
Let \(H=\ker (a_1\varpi_1+\chi_1)|_{B_1\times \bbG_m} \times \{1_{\bbG_m^{n-1}}\}\) for some \(a_1\in \bbZ_{\geq 0}\). 
Then the combinatorial data of the spherical homogeneous space \(G/H\) is as follows:
\[\arraycolsep=12pt
\begin{array}{lll}
M=\langle a_1\varpi_1+\chi_1, \chi_2, \ldots, \chi_n\rangle  &  \rho(\clubsuit)=(a_1,0,\ldots,0) 
\\  
\Sigma=\emptyset & \zeta(\clubsuit)=\{\alpha_1\} 
\\ 
\mathcal{D} = \{\clubsuit\} &  
\\
\midrule
\kappa = \alpha_1 & f = 2+a_1x_1 & m_{\clubsuit}=2
\end{array}
\]
\end{prop}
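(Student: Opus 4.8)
The plan is to recognize \(G/H\) as a horospherical homogeneous space, read off \((M,\Sigma,(\mathcal{D},\rho,\zeta))\) from the general horospherical induction formula, and then evaluate the numerical invariants \(\kappa\), \(m_{\clubsuit}\) and \(f\) directly from their definitions. First I would check that \(G/H\) is indeed horospherical: since every character of \(B_1\times\bbG_m\) is trivial on the unipotent radical \(B_1^u\), we have \(B_1^u\times\{1\}\subset H\subset B_1\times\bbG_m^n=:Q\), so by Remark~\ref{induction_from_Lie} the space is obtained by parabolic induction from a torus, with \(Q=N_G(H)\). By Remark~\ref{remark_horospherical_subgroups} it is then encoded by \(Q\) together with the \(\bbZ\)-independent family \(\{a_1\varpi_1+\chi_1,\chi_2,\ldots,\chi_n\}\subset X^*(Q)\) cutting out \(H\). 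Applying the horospherical induction formula (the Example following Remark~\ref{remark_horospherical_subgroups}, itself an instance of Proposition~\ref{prop_parabolic_induction}) immediately yields \(M=\langle a_1\varpi_1+\chi_1,\chi_2,\ldots,\chi_n\rangle\), \(\Sigma=\emptyset\), a single color \(\clubsuit\) attached to the simple root \(\alpha_1\) with \(\zeta(\clubsuit)=\{\alpha_1\}\), and \(\rho(\clubsuit)=\alpha_1^\vee|_M\).

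Next I would make \(\rho(\clubsuit)\) explicit in the chosen coordinates. Since \(\langle\alpha_1^\vee,\varpi_1\rangle=1\) while \(\alpha_1^\vee\) pairs trivially with each torus character \(\chi_j\), evaluating \(\alpha_1^\vee\) against the basis \((a_1\varpi_1+\chi_1,\chi_2,\ldots,\chi_n)\) of \(M\) gives \(\rho(\clubsuit)=(a_1,0,\ldots,0)\). The adapted parabolic is \(P=Q=B_1\times\bbG_m^n\), whose only root is \(\alpha_1\), so \(\kappa=\alpha_1\). For the multiplicity, \(\Sigma\cup\tfrac{1}{2}\Sigma=\emptyset\), hence by definition \(m_{\clubsuit}=\langle\alpha_1^\vee,\kappa\rangle=\langle\alpha_1^\vee,\alpha_1\rangle=2\). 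Finally, the unipotent radical of \(P\) contributes the single positive root \(\alpha_1\), so with \(\varrho=\varpi_1\) the Killing-form ratio collapses to a coroot pairing, \(f(p)=\{\kappa+p,\alpha_1\}/\{\varrho,\alpha_1\}=\langle\kappa+p,\alpha_1^\vee\rangle=2+a_1x_1\).

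The main point requiring care is the orientation convention: the horospherical induction Example is stated for a parabolic \(Q_I\) containing \(B^-\), whereas here \(H\) sits inside the Borel \(B_1\), so I would need to track the sign in \(\rho(\clubsuit)=\alpha_1^\vee|_M\) (and hence which side of the valuation cone the color lands on) to confirm it comes out as the stated \(+(a_1,0,\ldots,0)\) rather than its negative. If this abstract bookkeeping feels fragile, an alternative that sidesteps it is to compute from an explicit projective model, as in the proof of Proposition~\ref{prop_combi_SL2:T}: for \(n=1\) one may take \(\bbP_{\bbP^1}(\mathcal{O}\oplus\mathcal{O}(a_1))\) (or \(\bbP(1,1,a_1)\)) with the evident \(\SL_2\times\bbG_m\)-action, read off the \(B\)-stable divisor equation to pin down \(\zeta\) and the sign of \(\rho\), and then pass to general \(n\) via the product structure \(G/H\simeq\bigl((\SL_2\times\bbG_m)/\ker(a_1\varpi_1+\chi_1)\bigr)\times\bbG_m^{n-1}\), which merely appends the orthogonal directions \(\chi_2,\ldots,\chi_n\) to \(M\) and leaves \(\Sigma\), \(\mathcal{D}\), \(\zeta\) and the \(\alpha_1\)-component of \(\rho\) unchanged.
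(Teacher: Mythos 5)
Your proposal is correct and takes essentially the same route as the paper, whose entire proof is to invoke the definition of \(H\) together with Proposition~\ref{prop_parabolic_induction}; your explicit evaluations of \(\rho(\clubsuit)=\alpha_1^\vee|_M=(a_1,0,\ldots,0)\), \(\kappa=\alpha_1\), \(m_{\clubsuit}=\langle\alpha_1^\vee,\alpha_1\rangle=2\) and \(f=2+a_1x_1\) are exactly the routine computations the paper leaves implicit. The \(B\) versus \(B^-\) orientation issue you flag is real but harmless: conjugating \(H\subset B_1\) by the longest Weyl element sends \(\varpi_1\mapsto-\varpi_1\), and replacing the generator \(-a_1\varpi_1+\chi_1\) of \(M\) by its negative (equivalently \(\chi_1\mapsto-\chi_1\), an automorphism of the torus factor) returns the data in exactly the stated form, so the sign \(+(a_1,0,\ldots,0)\) survives.
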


\begin{proof}
This follows straightforwardly from the definition of \(H\) and Proposition~\ref{prop_parabolic_induction}.
\end{proof}

\subsubsection{Locally factorial \(G/H\)-reflexive polytopes}

\begin{prop}
Let \(\Omega\) be a locally factorial \(G/H\)-reflexive polytope, where \(H=\ker (a_1\varpi_1+\chi_1)|_{B_1\times \bbG_m} \times \{1_{\bbG_m^{n-1}}\}\) for some \(a_1\in \bbZ_{\geq 0}\). 
Then there are two possibilities: 
\begin{enumerate}
    \item \(a_1\in \{0,1\}\), \(\Omega\) is a smooth Fano polytope and \((a_1/2,0,\ldots,0) \in \Int(\Omega)\), or 
    \item \(a_1=1\), the vertices of \(\Omega\) are in \(\bbZ^n\cup \{(1/2,0,\ldots,0)\}\), \(0\in \Int(\Omega)\), and for any facet \(F\) of \(\Omega\), the collection of primitive positive multiples of vertices of \(F\) forms a basis of \(N\).     
\end{enumerate}
\end{prop}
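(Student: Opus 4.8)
The plan is to exploit the horospherical structure. Since $\Sigma=\emptyset$, Corollary~\ref{cor_horospherical_by_valuation_cone} gives $\mathcal{V}=N\otimes\bbQ$, so $\Int(\mathcal{V})$ is the whole space and condition~(4) of Definition~\ref{defn_reflexive} applies to \emph{every} facet $F$ of $\Omega$ (each $\cone(F)$ being full-dimensional because $0\in\Int(\Omega)$). Also $N\cap\mathcal{V}=N$, so by condition~(3) every vertex of $\Omega$ lies in $N$ or equals the single color image. From the preceding proposition, the unique color $\clubsuit$ satisfies $\frac{\rho(\clubsuit)}{m_{\clubsuit}}=(a_1/2,0,\ldots,0)$ and $\rho(\clubsuit)=(a_1,0,\ldots,0)$. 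The whole argument then hinges on one dichotomy: whether or not the point $(a_1/2,0,\ldots,0)$ is a vertex of $\Omega$.

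First I would treat the case where $(a_1/2,0,\ldots,0)$ is \emph{not} a vertex. I would show it must then lie in the interior: if it lay on a facet $F$, then $\clubsuit\in\mathcal{D}_F$, and condition~(4)(b) would force $\frac{\rho(\clubsuit)}{m_{\clubsuit}}\in V(F)$, contradicting that it is not a vertex. With the color image interior, every facet has $\mathcal{D}_F=\emptyset$, so condition~(4)(b) reduces to ``the vertices of $F$ form a basis of $N$'', i.e. $\Omega$ is a smooth Fano polytope (all its vertices being lattice points by condition~(3)). Since smooth Fano polytopes are reflexive, the origin is their only interior lattice point; but the lattice point $(1,0,\ldots,0)$ lies on the segment joining the two interior points $0$ and $(a_1/2,0,\ldots,0)$ as soon as $a_1\geq 2$, hence would lie in $\Int(\Omega)$, a contradiction. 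This forces $a_1\in\{0,1\}$ and yields possibility~(1).

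Next I would treat the case where $(a_1/2,0,\ldots,0)$ \emph{is} a vertex. Choosing a facet $F$ through it, condition~(4)(b) asks $\mathcal{C}_F\cup\{\rho(\clubsuit)\}$ to be a basis of $N$; since $\rho(\clubsuit)=(a_1,0,\ldots,0)$ is primitive only when $a_1=1$, this immediately gives $a_1=1$. It then remains to rewrite condition~(4) in the stated combinatorial form. Here I would observe that any lattice vertex of $\Omega$ is automatically primitive (otherwise it would sit in some $\mathcal{C}_F$ and violate the basis condition), and that the primitive positive multiple of the single non-lattice vertex $(1/2,0,\ldots,0)$ is exactly $\rho(\clubsuit)=(1,0,\ldots,0)$. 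Condition~(4)(b) for each facet then says precisely that the primitive positive multiples of its vertices form a basis of $N$, which is possibility~(2). Combining the two cases, and noting that $a_1\geq 2$ is excluded in both, completes the classification; the converse implications are obtained by reversing each step.

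The main obstacle I anticipate is the bookkeeping around condition~(4)(b): one must carefully rule out the color image lying on the boundary without being a vertex, and correctly match the ``basis of $N$'' condition (which involves $\rho(\clubsuit)$, not $\frac{\rho(\clubsuit)}{m_{\clubsuit}}$) with the primitivity of the vertices. The reflexivity of smooth Fano polytopes, giving the origin as the unique interior lattice point, is the clean input that excludes $a_1\geq 2$ in the first case, and its analogue (primitivity forced by the basis condition) does so in the second.
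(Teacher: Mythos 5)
Your proof is correct and takes essentially the same route as the paper's: both hinge on the horospherical property making condition (4) of Definition~\ref{defn_reflexive} apply to \emph{every} facet, deduce that \(\Omega\) contains no points of \(N\) besides its vertices and the origin (your ``smooth Fano, hence reflexive, hence unique interior lattice point'' step is the same fact) to exclude \(a_1\geq 3\), and invoke the non-primitivity of \(\rho(\clubsuit)=(2,0,\ldots,0)\) to kill \(a_1=2\). Your explicit dichotomy on whether \((a_1/2,0,\ldots,0)\) is a vertex is precisely the case split that the paper's statement encodes and its terse proof leaves implicit, so you have merely written out the same argument in more detail.
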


\begin{proof}
Since \(G/H\) is horospherical, the interior of any facet meets the valuation cone. 
It follows that the polytope cannot contain points of \(N\) except its vertices and the origin. 
In particular, since \(\frac{\rho(\clubsuit)}{m_{\clubsuit}}\in \Omega\), we must have \(a_1\leq 2\). 
Furthermore, if \(a_1=2\), then \(\frac{\rho(\alpha_1)}{2}\) is a vertex, but then \(\rho(\alpha_1)\) is not a primitive element of \(N\), so cannot be part of a basis of \(N\), a contradiction. 
\end{proof}

We determine the possible polytopes in rank two, that is, when \(\dim(G/H)=3\). 

\textbf{When} \(a_1=0\), we get all, and only get, the products of \(\bbP^1\) with one of the five smooth Fano toric surfaces, with the product action. 
We number these five examples \textbf{3-2-24} to \textbf{3-2-28}. 

\textbf{When} \(a_1=1\) \textbf{and} \((1/2,0,\ldots,0)\) \textbf{is not a vertex}, we have to determine where \((1/2,0,\ldots,0)\) can lie in the interior of the polytope. 
It is an element of the lattice \(\frac{1}{2}\bbZ^n\). 
It suffices to go through the list of smooth Fano polygons up to \(\GL_2(\bbZ)\) action. 
Once the element of \(\frac{1}{2}\bbZ^2\) corresponding to \(\frac{\rho(\clubsuit)}{m_{\clubsuit}}\) is fixed, the \(\GL_2(\bbZ)\) action reduces to the action of matrices of the form \(\begin{bmatrix} 1 & k \\ 0 & \pm 1 \end{bmatrix}\) for \(k\in \bbZ\). 
Thanks to Remark~\ref{rem_SL2_non-uniqueness_of_chi}, we can work up to this action. 
From the list of Fano polygons, we obtain the following nine possibilities. 

\begin{center}
\begin{tikzpicture}[scale=0.6]
\draw[dotted] (-2,2) grid (2,-2);
\draw (0,0) node{+};
\draw (0,1) -- (-1,1) -- (-1,0) -- (0,-1) -- (1,-1) -- (1,0) -- cycle;
\draw (.5,0) node[color=teal]{\(\clubsuit\)};
\draw (0,-1) node[below]{\textbf{3-2-29}};
\end{tikzpicture}
\begin{tikzpicture}[scale=0.6]
\draw[dotted] (-2,2) grid (2,-2);
\draw (0,0) node{+};
\draw (0,1) -- (-1,1) -- (-1,0) -- (0,-1) -- (1,0) -- cycle;
\draw (.5,0) node[color=teal]{\(\clubsuit\)};
\draw (0,-1) node[below]{\textbf{3-2-30}};
\end{tikzpicture}
\begin{tikzpicture}[scale=0.6]
\draw[dotted] (-2,2) grid (2,-2);
\draw (0,0) node{+};
\draw (0,1) -- (-1,0) -- (0,-1) -- (1,-1) -- (1,0) -- cycle;
\draw (.5,0) node[color=teal]{\(\clubsuit\)};
\draw (0,-1) node[below]{\textbf{3-2-31}};
\end{tikzpicture}
\begin{tikzpicture}[scale=0.6]
\draw[dotted] (-2,2) grid (2,-2);
\draw (0,0) node{+};
\draw (1,0) -- (1,1) -- (0,1) -- (-1,-1) -- (0,-1) -- cycle;
\draw (.5,0) node[color=teal]{\(\clubsuit\)};
\draw (0,-1) node[below]{\textbf{3-2-32}};
\end{tikzpicture}
\begin{tikzpicture}[scale=0.6]
\draw[dotted] (-2,2) grid (2,-2);
\draw (0,0) node{+};
\draw (-1,1) -- (0,-1) -- (1,-1) -- (1,0) -- cycle;
\draw (.5,0) node[color=teal]{\(\clubsuit\)};
\draw (0,-1) node[below]{\textbf{3-2-33}};
\end{tikzpicture}
\begin{tikzpicture}[scale=0.6]
\draw[dotted] (-2,2) grid (2,-2);
\draw (0,0) node{+};
\draw (-1,1) -- (-1,0) -- (0,-1) -- (1,0) -- cycle;
\draw (.5,0) node[color=teal]{\(\clubsuit\)};
\draw (0,-1) node[below]{\textbf{3-2-34}};
\end{tikzpicture}

\begin{tikzpicture}[scale=0.6]
\draw[dotted] (-2,2) grid (2,-2);
\draw (0,0) node{+};
\draw (1,0) -- (1,1) -- (-1,0) -- (0,-1) -- cycle;
\draw (.5,0) node[color=teal]{\(\clubsuit\)};
\draw (0,-1) node[below]{\textbf{3-2-35}};
\end{tikzpicture}
\begin{tikzpicture}[scale=0.6]
\draw[dotted] (-2,2) grid (2,-2);
\draw (0,0) node{+};
\draw (1,0) -- (0,1) -- (-1,0) -- (0,-1) -- cycle;
\draw (.5,0) node[color=teal]{\(\clubsuit\)};
\draw (0,-1) node[below]{\textbf{3-2-36}};
\end{tikzpicture}
\begin{tikzpicture}[scale=0.6]
\draw[dotted] (-2,2) grid (2,-2);
\draw (0,0) node{+};
\draw (1,0) -- (-1,1) -- (0,-1) -- cycle;
\draw (.5,0) node[color=teal]{\(\clubsuit\)};
\draw (0,-1) node[below]{\textbf{3-2-37}};
\end{tikzpicture}
\end{center}

\begin{exa}
The embedding \textbf{3-2-36} is the product of embeddings \(\bbP^1\times \bbF_1\). It may be blown up along one closed orbit to obtain either the embedding \textbf{3-2-31} or the embedding \textbf{3-2-30}. 
Blowing up wisely two closed orbits yields \textbf{3-2-29}.
\end{exa}

Finally, we deal with the case \textbf{when \((1/2,0,\ldots,0)\) is a vertex} of the polytope. 
In this case, we will show how to directly find the polytopes instead of relying on a known classification result. 
A variant of the method used can essentially be used in all rank two cases, and we will give less details in later occurrences. 

By Remark~\ref{rem_SL2_non-uniqueness_of_chi}, we may assume that \((0,1)\) is a vertex of a facet of \(\Omega\) that contains \((1/2,0,\ldots,0)\). 
An immediate consequence is that the whole polytope \(\Omega\) lies under the line \(y=1-2x\) if \((x,y)\) denote the coordinates in \(\bbQ^2\). 
We can be more precise by observing that the other vertex adjacent to \(\frac{\rho(\alpha_1)}{2}\) must be of the form \((l,-1)\) for some \(l\in \bbZ\). 
It lies strictly below the line \(y=1-2x\), hence \(l\leq 0\), and the whole polytope \(\Omega\) actually lies in the convex set defined by the two linear inequalities \(y\leq 1-2x\) and \(y\geq -1+2x\). 

\begin{multicols}{2}
We now consider the next vertex \((x_3,y_3)\) (in counter-clockwise order) adjacent to \((0,1)\). 
In order to form a basis with \((0,1)\), we must have \(x_3=-1\). 
In view of the restriction on \(\Omega\), its coordinate \(y_3\), which is integral, must satisfy \(-3\leq y\leq 2\). 
We thus have six possible choices for \((x_3,y_3)\). 

\begin{tikzpicture}[scale=0.6]
\draw[dotted] (-2,3) grid (1,-4);
\draw (0,0) node{+};
\draw[thick] (1/2,0) -- (0,1);
\draw[dashed] (1/2,0) -- (-1,3) -- (1,3) -- (1,-4) -- (-3/2,-4) -- cycle;
\fill[gray, opacity=.5] (1/2,0) -- (-1,3) -- (1,3) -- (1,-4) -- (-3/2,-4) -- cycle;
\draw (1/2,0) node[color=teal]{\(\clubsuit\)};
\draw (0,1) node{\(\bullet\)};
\draw[blue] (-1,2) node{\(\bullet\)};
\draw[blue] (-1,1) node{\(\bullet\)};
\draw[blue] (-1,0) node{\(\bullet\)};
\draw[blue] (-1,-1) node{\(\bullet\)};
\draw[blue] (-1,-2) node{\(\bullet\)};
\end{tikzpicture}
\end{multicols}

Assume that \((x_3,y_3)=(-1,2)\). 
Then consider the next adjacent vertex \((x_4,y_4)\). 
In view of the added known linear inequality \(y\leq 1-x\) defining \(\Omega\), with supporting facet the hull of \((0,1)\) and \((-1,2)\), and since \((-1,2)\) and \((x_4,y_4)\) must form a basis of \(\bbZ^2\), the only possibilities are \((x_4,y_4)=(-1,1)\) and \((x_4,y_4)=(0,-1)\). 
If \((x_4,y_4)=(0,-1)\), then it must also be the other vertex adjacent to \(\frac{\rho(\alpha_1)}{2}\), and we have exhausted the vertices of \(\Omega\). 
If \((x_4,y_4)=(-1,1)\), then there is no choice in adding a final integral vertex to obtain a locally factorial \(G/H\)-reflexive polytope, and that vertex is \((0,-1)\). 

\begin{center}
\begin{tikzpicture}[scale=0.6]
\draw[dotted] (-2,3) grid (1,-4);
\draw (0,0) node{+};
\draw[thick] (1/2,0) -- (0,1) -- (-1,2);
\draw[dashed] (1/2,0) -- (0,1) -- (-2,3) -- (1,3) -- (1,-4) -- (-3/2,-4) -- cycle;
\fill[gray, opacity=.5] (1/2,0) -- (0,1) -- (-2,3) -- (1,3) -- (1,-4) -- (-3/2,-4) -- cycle;
\draw (1/2,0) node[color=teal]{\(\clubsuit\)};
\draw (-1,2) node{\(\bullet\)};
\draw[blue] (-1,1) node{\(\bullet\)};
\draw[blue] (0,-1) node{\(\bullet\)};
\end{tikzpicture}
\begin{tikzpicture}[scale=0.6]
\draw[dotted] (-2,3) grid (1,-4);
\draw (0,0) node{+};
\draw[thick] (1/2,0) -- (0,1) -- (-1,2) -- (0,-1) -- cycle;
\draw (1/2,0) node[color=teal]{\(\clubsuit\)};
\draw (0,-1) node[below]{\textbf{3-2-38}};
\end{tikzpicture}
\begin{tikzpicture}[scale=0.6]
\draw[dotted] (-2,3) grid (1,-4);
\draw (0,0) node{+};
\draw[thick] (1/2,0) -- (0,1) -- (-1,2) -- (-1,1) -- (0,-1) -- cycle;
\draw (1/2,0) node[color=teal]{\(\clubsuit\)};
\draw (0,-1) node[below]{\textbf{3-2-39}};
\end{tikzpicture}
\end{center}

Similarly, if \((x_3,y_3)=(-1,1)\), then the next vertex \((x_4,y_4)\) is either \((-1,0)\), in which case, once must add the fifth vertex \((0,-1)\) to get a locally factorial \(G/H\)-reflexive polytope, or it is \((0,-1)\) and one cannot add any vertex. 

\begin{center}
\begin{tikzpicture}[scale=0.6]
\draw[dotted] (-2,3) grid (1,-4);
\draw (0,0) node{+};
\draw[thick] (1/2,0) -- (0,1) -- (-1,1);
\draw[dashed] (1/2,0) -- (0,1) -- (-2,1) -- (-2,3) -- (1,3) -- (1,-4) -- (-3/2,-4) -- cycle;

\fill[gray, opacity=.5] (1/2,0) -- (0,1) -- (-2,1) -- (-2,3) -- (1,3) -- (1,-4) -- (-3/2,-4) -- cycle;
\draw (1/2,0) node[color=teal]{\(\clubsuit\)};
\draw (-1,1) node{\(\bullet\)};
\draw[blue] (-1,0) node{\(\bullet\)};
\draw[blue] (0,-1) node{\(\bullet\)};
\end{tikzpicture}
\begin{tikzpicture}[scale=0.6]
\draw[dotted] (-2,3) grid (1,-4);
\draw (0,0) node{+};
\draw[thick] (1/2,0) -- (0,1) -- (-1,1) -- (0,-1) -- cycle;
\draw (1/2,0) node[color=teal]{\(\clubsuit\)};
\draw (0,-1) node[below]{\textbf{3-2-40}};
\end{tikzpicture}
\begin{tikzpicture}[scale=0.6]
\draw[dotted] (-2,3) grid (1,-4);
\draw (0,0) node{+};
\draw[thick] (1/2,0) -- (0,1) -- (-1,1) -- (-1,0) -- (0,-1) -- cycle;
\draw (1/2,0) node[color=teal]{\(\clubsuit\)};
\draw (0,-1) node[below]{\textbf{3-2-41}};
\end{tikzpicture}
\end{center}

If \((x_3,y_3)=(-1,0)\), then the next vertex \((x_4,y_4)\) is either \((-1,-1)\) or it is \((0,-1)\) and one cannot add any vertex. 
If it is \((-1,-1)\), the next vertex \((x_5,y_5)\) is either \((0,-1)\) or \(\frac{\rho(\alpha_1)}{2}\), yielding two possible polytopes. 

\begin{center}
\begin{tikzpicture}[scale=0.6]
\draw[dotted] (-2,3) grid (1,-4);
\draw (0,0) node{+};
\draw[thick] (1/2,0) -- (0,1) -- (-1,0);
\draw[dashed] (1/2,0) -- (0,1) -- (-2,-1) -- (-2,3) -- (1,3) -- (1,-4) -- (-3/2,-4) -- cycle;

\fill[gray, opacity=.5] (1/2,0) -- (0,1) -- (-2,-1) -- (-2,3) -- (1,3) -- (1,-4) -- (-3/2,-4) -- cycle;
\draw (1/2,0) node[color=teal]{\(\clubsuit\)};
\draw (-1,0) node{\(\bullet\)};
\draw[blue] (-1,-1) node{\(\bullet\)};
\draw[blue] (0,-1) node{\(\bullet\)};
\end{tikzpicture}
\begin{tikzpicture}[scale=0.6]
\draw[dotted] (-2,3) grid (1,-4);
\draw (0,0) node{+};
\draw[thick] (1/2,0) -- (0,1) -- (-1,0) -- (0,-1) -- cycle;
\draw (1/2,0) node[color=teal]{\(\clubsuit\)};
\draw (0,-1) node[below]{\textbf{3-2-42}};
\end{tikzpicture}
\begin{tikzpicture}[scale=0.6]
\draw[dotted] (-2,3) grid (1,-4);
\draw (0,0) node{+};
\draw[thick] (1/2,0) -- (0,1) -- (-1,0) -- (-1,-1) -- (0,-1) -- cycle;
\draw (1/2,0) node[color=teal]{\(\clubsuit\)};
\draw (0,-1) node[below]{(\textbf{3-2-41})};
\end{tikzpicture}
\begin{tikzpicture}[scale=0.6]
\draw[dotted] (-2,3) grid (1,-4);
\draw (0,0) node{+};
\draw[thick] (1/2,0) -- (0,1) -- (-1,0) -- (-1,-1) -- cycle;
\draw (1/2,0) node[color=teal]{\(\clubsuit\)};
\draw (0,-1) node[below]{\textbf{3-2-43}};
\end{tikzpicture}
\end{center}

If \((x_3,y_3)=(-1,-1)\), then the next vertex \((x_4,y_4)\) is either \((-1,-2)\), \((0,-1)\) or \(\frac{\rho(\alpha_1)}{2}\). 
A quick analysis of the next vertices shows that there is only one corresponding polytope in each case. 

\begin{center}
\begin{tikzpicture}[scale=0.6]
\draw[dotted] (-2,3) grid (1,-4);
\draw (0,0) node{+};
\draw[thick] (1/2,0) -- (0,1) -- (-1,-1);
\draw[dashed] (1/2,0) -- (0,1) -- (-2,-3) -- (-2,3) -- (1,3) -- (1,-4) -- (-3/2,-4) -- cycle;

\fill[gray, opacity=.5] (1/2,0) -- (0,1) -- (-2,-3) -- (-2,3) -- (1,3) -- (1,-4) -- (-3/2,-4) -- cycle;
\draw (1/2,0) node[color=teal]{\(\clubsuit\)};
\draw (-1,-1) node{\(\bullet\)};
\draw[blue] (-1,-2) node{\(\bullet\)};
\draw[blue] (0,-1) node{\(\bullet\)};
\draw[blue] (1/2,0) node{\(\bullet\)};
\end{tikzpicture}
\begin{tikzpicture}[scale=0.6]
\draw[dotted] (-2,3) grid (1,-4);
\draw (0,0) node{+};
\draw[thick] (1/2,0) -- (0,1) -- (-1,-1) -- (-1,-2) -- (0,-1) -- cycle;
\draw (1/2,0) node[color=teal]{\(\clubsuit\)};
\draw (0,-2) node[below]{(\textbf{3-2-39})};
\end{tikzpicture}
\begin{tikzpicture}[scale=0.6]
\draw[dotted] (-2,3) grid (1,-4);
\draw (0,0) node{+};
\draw[thick] (1/2,0) -- (0,1) -- (-1,-1) -- (0,-1) -- cycle;
\draw (1/2,0) node[color=teal]{\(\clubsuit\)};
\draw (0,-1) node[below]{(\textbf{3-2-40})};
\end{tikzpicture}
\begin{tikzpicture}[scale=0.6]
\draw[dotted] (-2,3) grid (1,-4);
\draw (0,0) node{+};
\draw[thick] (1/2,0) -- (0,1) -- (-1,-1) -- cycle;
\draw (1/2,0) node[color=teal]{\(\clubsuit\)};
\draw (0,-1) node[below]{\textbf{3-2-44}};
\end{tikzpicture}
\end{center}

Finally, if \((x_3,y_3)=(-1,-2)\), there is no choice but to add the final fourth vertex \((0,-1)\). 

\begin{center}
\begin{tikzpicture}[scale=0.6]
\draw[dotted] (-2,3) grid (1,-4);
\draw (0,0) node{+};
\draw[thick] (1/2,0) -- (0,1) -- (-1,-2) -- (0,-1) -- cycle;
\draw (1/2,0) node[color=teal]{\(\clubsuit\)};
\draw (0,-2) node[below]{(\textbf{3-2-38})};
\end{tikzpicture}
\end{center}

In the above list of polytopes, we should work up to reflexion with respect to the \(x\)-axis, hence we obtain only seven different locally factorial Fano embeddings of \(G/H\). 

\begin{exa}
The embedding \textbf{3-2-42} is the product of embeddings \(\bbP^1\times \bbP^2\). 
The embedding \textbf{3-2-41} is its blowup along a closed orbit which is the product of a torus fixed point in \(\bbP^1\) with a fixed line in \(\bbP^2\). 
\end{exa}

\begin{exa}
The embedding \textbf{3-2-44} is \(\bbP^3\), equipped with the action 
\[ \left(\begin{bmatrix} a&b\\c&d\end{bmatrix},z\right) \cdot [x_0:x_1:x_2:x_3] = [ax_0+bx_1:cx_0+dx_1:z^{-1}x_2:x_3] \]
By blowing up closed orbits, one may for example recover the embeddings \textbf{3-2-40} and \textbf{3-2-43}. 
\end{exa}

The polytopes obtained in this section coincide with the list obtained by Pasquier in his PhD thesis \cite{Pasquier_2006}.

\section{Locally factorial Fano spherical \(\SL_2^2\times \bbG_m^n\)-fourfolds}
\label{section_SL22}

We assume throughout that \(G=\SL_2^2\times \bbG_m^n\). 

\subsection{Group compactifications}

\subsubsection{Combinatorial data}

\begin{prop}
The combinatorial data associated to the homogeneous space \(\SL_2^2/\diag \SL_2\) is as follows. 
\[\arraycolsep=12pt
\begin{array}{lll}
M=\langle \varpi_1+\varpi_2 \rangle  &  \rho(\clubsuit)=1 
\\  
\Sigma=\{\varpi_1+\varpi_2\} & \zeta(\clubsuit)=\{\alpha_1,\alpha_2\} 
\\ 
\mathcal{D} = \{\clubsuit\} &  
\\
\midrule
\kappa = \alpha_1+\alpha_2 & f = (2+x_1)^2 & m_{\clubsuit} = 2
\end{array}
\]
\end{prop}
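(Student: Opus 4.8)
The plan is to exploit the fact that \(\SL_2^2/\diag\SL_2\) is, as a \(G=\SL_2\times\SL_2\)-variety, the group \(\SL_2\) itself, with \(G\) acting by \((g_1,g_2)\cdot x=g_1xg_2^{-1}\); the stabilizer of the identity is exactly \(\diag\SL_2\), and all the combinatorial data will be read off from this concrete model. First I would determine the \(B=B_1\times B_2\)-orbits via the Bruhat decomposition \(\SL_2=B_1B_2\sqcup B_1wB_2\): there are exactly two, the open three-dimensional cell and its complement, the closed Borel \(B_{\SL_2}=\{x_{21}=0\}\), which is the unique codimension-one orbit. Hence \(\mathcal D=\{\clubsuit\}\) with \(\clubsuit=\{x_{21}=0\}\). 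The adapted parabolic \(P\) is the stabilizer of the open orbit, equivalently of its complement \(B_{\SL_2}\); a short computation shows \(\{(g_1,g_2):g_1B_{\SL_2}g_2^{-1}=B_{\SL_2}\}=B_1\times B_2=B\), so \(P=B\). This gives \(\zeta(\clubsuit)=\{\alpha_1,\alpha_2\}\) (as \(P_{\{\alpha_1,\alpha_2\}}=B\)), and since the roots of \(B\) are the positive roots, \(\kappa=\alpha_1+\alpha_2\). By Corollary~\ref{cor_local_structure_theorem}, \(\dim(G/H)=\rank(G/H)+\dim(G/P)\) with \(\dim(G/P)=2\), so \(\rank(G/H)=1\).

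For the weight lattice, I would compute directly that the matrix coefficient \(c\colon x\mapsto x_{21}\) is a \(B\)-eigenvector: using \(((b_1,b_2)\cdot c)(x)=c(b_1^{-1}xb_2)\) one finds the eigenvalue \((b_1,b_2)\mapsto st\), where \(s,t\) denote the torus parts of \(b_1,b_2\), that is the weight \(\varpi_1+\varpi_2\). Since \(\rank(G/H)=1\) and \(\varpi_1+\varpi_2\) is primitive in \(X^*(B)=\varpi_1\bbZ\oplus\varpi_2\bbZ\), the only rank-one sublattice containing it is \(\langle\varpi_1+\varpi_2\rangle\), so \(M=\langle\varpi_1+\varpi_2\rangle\). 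As \(c\) vanishes to order one along \(\clubsuit=\{c=0\}\), we get \(\rho(\clubsuit)(\varpi_1+\varpi_2)=1\), that is \(\rho(\clubsuit)=1\). To pin down \(\Sigma\), I would first note that \(N_G(\diag\SL_2)/\diag\SL_2\) is finite, so by Proposition~\ref{prop_equivariant_automorphisms} the linear part of \(\mathcal V\) is trivial and \(G/H\) is not horospherical (Corollary~\ref{cor_horospherical_by_valuation_cone}); being of rank one, \(\Sigma\) is then a single primitive element of \(M\). Proposition~\ref{prop_spherical_root_as_sum_of_positive_roots} is compatible with \(\varpi_1+\varpi_2\), since \(2(\varpi_1+\varpi_2)=\alpha_1+\alpha_2\) is a sum of two strongly orthogonal positive roots, and the requirement that a spherical root be a non-negative combination of simple roots fixes the sign; thus \(\Sigma=\{\varpi_1+\varpi_2\}\).

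Finally, for \(m_\clubsuit\) I would observe that \(\zeta(\clubsuit)=\{\alpha_1,\alpha_2\}\) meets neither \(\Sigma\) nor \(\tfrac12\Sigma=\{\tfrac12(\varpi_1+\varpi_2)\}\), so the defining formula gives \(m_\clubsuit=\langle\alpha_1^\vee,\kappa\rangle=\langle\alpha_1^\vee,\alpha_1+\alpha_2\rangle=2\). For the Duistermaat--Heckman factor, \(R_{P^u}=\{\alpha_1,\alpha_2\}\) and \(\varrho=\tfrac12(\alpha_1+\alpha_2)\), so writing \(p=x_1(\varpi_1+\varpi_2)\) and \(\kappa+p=(2+x_1)(\varpi_1+\varpi_2)\), each factor \(\{\kappa+p,\alpha_i\}/\{\varrho,\alpha_i\}\) simplifies to \(2+x_1\) (the Killing form annihilates the cross terms between the two \(\SL_2\)-factors), giving \(f=(2+x_1)^2\). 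The main points requiring care are conceptual rather than computational: because \(BH\) is not open for \(B=B_1\times B_2\), the local structure theorem cannot be applied verbatim with this Borel, which is why I prefer to obtain \(M\) from the explicit eigenfunction together with the rank and primitivity; and one must respect the paper's normalization of spherical roots as primitive vectors of \(M\), so that the geometrically natural \(\alpha_1+\alpha_2\) is replaced by \(\varpi_1+\varpi_2\), which is precisely what makes the color fall into the case \(m_\clubsuit=\langle\alpha^\vee,\kappa\rangle\) rather than \(m_\clubsuit=1\).
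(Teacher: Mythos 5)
Your proposal is correct and follows essentially the same route as the paper: both work with \(\SL_2\) itself (the paper presents it as the affine quadric \(\{ad-bc=1\}\subset \Mat_{2\times 2}\)) under the bi-multiplication action, identify the unique color \(\{x_{21}=0\}\) from the two \(B_1\times B_2\)-orbits, and read off \(M\), \(\rho\) and \(\Sigma\) from the eigenfunction \(x\mapsto x_{21}\) of weight \(\varpi_1+\varpi_2\); your derivation of the lattice from rank one plus primitivity of \(\varpi_1+\varpi_2\) in \(X^*(T)\) is a harmless variant of the paper's order-of-vanishing (indivisibility) argument, and your extra care about \(BH\) not being open (using only the corollary of the local structure theorem) is sound. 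One small correction to your closing remark: the primitive normalization of \(\Sigma\) is not what places the color in the case \(m_{\clubsuit}=\langle\alpha^{\vee},\kappa\rangle\) — even with the representative \(\alpha_1+\alpha_2\) one still has \(\zeta(\clubsuit)\cap\left(\Sigma\cup\frac{1}{2}\Sigma\right)=\emptyset\) (compare \(\PGL_2^2/\diag\PGL_2\), where \(\Sigma=\{\alpha_1+\alpha_2\}\) and the paper nevertheless gets \(m_{\clubsuit}=2\)), since \(m_D=1\) requires a \emph{simple} root of \(\zeta(D)\) to lie in \(\Sigma\cup\frac{1}{2}\Sigma\), which fails here under either normalization.
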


\begin{proof}
The rank of \(\SL_2^2\) is two, and the homogeneous space is obviously not horospherical, hence it must have rank one, and \(\Sigma\) is a singleton consisting of a generator of \(M\). 

Consider the standard action of \(\SL_2^2\) on the space \(\Mat_{2\times 2}\) of square two by two matrices \(\begin{bmatrix} a& b\\ c & d\end{bmatrix}\) by left and right multiplication, in other words, the irreducible representation with highest weight \(\varpi_1+\varpi_2\). 
Consider the affine quadric \(\{ad-bc=1\}\) in \(\Mat_{2\times 2}\), that is, the set \(\SL_2\). 
As homogeneous space, it coincides with \(\SL_2^2/\diag \SL_2\). 
Under the action of the Borel subgroup of pairs of upper triangular matrices, there are two orbits: the set of upper triangular matrices and its complement. 
The set of upper triangular matrices, defined by the equation \(c=0\), is thus the unique color in the homogeneous space. We have obtained \(\mathcal{D}\) and deduce easily \(\zeta\) and \(\kappa\).  

There is a natural \(B\)-semi-invariant rational function, given by \(\begin{bmatrix} a& b\\ c & d\end{bmatrix}\mapsto c\). 
It has \(B\)-weight \(\varpi_1+\varpi_2\), and obviously vanishes to the order 1 on the color. 
As a consequence, it is indivisible and we have our description of \(M\), \(\Sigma\) and \(\rho\). 
From \(\Sigma\), we see that \(m_{D_1}=\langle \alpha_1^{\vee},\kappa \rangle = 2\). 
\end{proof}

\begin{cor}
The combinatorial data associated to the homogeneous space \(\PGL_2^2/\diag \PGL_2\) is as follows. 
\[\arraycolsep=12pt
\begin{array}{lll}
M=\langle \alpha_1+\alpha_2 \rangle  &  \rho(\clubsuit)=2 
\\  
\Sigma=\{\alpha_1+\alpha_2\} & \zeta(\clubsuit)=\{\alpha_1,\alpha_2\} 
\\ 
\mathcal{D} = \{\clubsuit\} &  
\\
\midrule
\kappa = \alpha_1+\alpha_2 & f = 4(1+x_1)^2 & m_{\clubsuit} = 2
\end{array}
\]
\end{cor}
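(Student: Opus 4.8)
The plan is to deduce this corollary from the preceding proposition by exploiting the central isogeny $\SL_2^2 \to \PGL_2^2$. Concretely, I would realize $\PGL_2^2/\diag\PGL_2$ as the open subset $\{\det\neq 0\}$ of $\bbP(\Mat_{2\times 2})\cong \bbP^3$, with $\PGL_2^2$ acting by left and right multiplication; this action descends from $\Mat_{2\times 2}$ since it only rescales the determinant, and scalars act trivially on $\bbP^3$. The quotient map $\SL_2^2\to \PGL_2^2$ identifies $\PGL_2^2/\diag\PGL_2$ with the quotient of $\SL_2^2/\diag\SL_2=\SL_2$ by the free $\bbZ/2\bbZ$-action whose generator, induced by right multiplication by $(-I_2,I_2)$, is the deck transformation $g\mapsto -g$. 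This presents $\SL_2\to\PGL_2$ as an étale double cover and yields $\bbC(\PGL_2^2/\diag\PGL_2)=\bbC(\SL_2^2/\diag\SL_2)^{\bbZ/2\bbZ}$.

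First I would read off the colors together with $\zeta$ and $\kappa$. The Borel $B_1\times B_2$ still has exactly two orbits on $\PGL_2$, the image of the upper-triangular locus $\{c=0\}$ and its open complement, so there is a single color $\clubsuit$ with $\zeta(\clubsuit)=\{\alpha_1,\alpha_2\}$ and $\kappa=\alpha_1+\alpha_2$, exactly as in the proposition (these data are insensitive to the isogeny). Next I would compute $M$: by sphericity the $B$-eigenvectors in $\bbC(\SL_2^2/\diag\SL_2)$ are precisely the scalar multiples of the powers $c^n$, of weight $n(\varpi_1+\varpi_2)$, and since $c\mapsto -c$ under the deck transformation, $c^n$ descends to $\PGL_2$ if and only if $n$ is even. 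Hence $M=2(\varpi_1+\varpi_2)\bbZ=(\alpha_1+\alpha_2)\bbZ$, an index-two sublattice, and because the space is of rank one and not horospherical, $\Sigma$ is the singleton generated by $\alpha_1+\alpha_2$.

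It then remains to compute $\rho(\clubsuit)$ and $m_\clubsuit$. A generator of $M$ is realized by the genuine degree-zero rational function $c^2/(ad-bc)$ on $\PGL_2\subset\bbP^3$, of weight $2(\varpi_1+\varpi_2)=\alpha_1+\alpha_2$. Since the cover is étale, the order of vanishing of this function along $\clubsuit$ equals that of its pullback $c^2$ along $\{c=0\}$ upstairs, namely $2$, so $\rho(\clubsuit)=2$ in the chosen coordinate on $N$. For $m_\clubsuit$, neither $\alpha_1$ nor $\alpha_2$ lies in $\Sigma\cup\tfrac12\Sigma=\{\alpha_1+\alpha_2,\tfrac12(\alpha_1+\alpha_2)\}$, so the second branch of the definition applies and $m_\clubsuit=\langle\alpha_1^{\vee},\kappa\rangle=2$. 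Finally $f=4(1+x_1)^2$ follows from $(2+x_1)^2$ by the substitution implementing the coordinate change $\varpi_1+\varpi_2\mapsto \alpha_1+\alpha_2=2(\varpi_1+\varpi_2)$ on $M$.

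The main obstacle is the computation of $M$: one must verify that it is \emph{exactly} the index-two sublattice, neither larger nor incommensurable. This rests on the fact that the $B$-eigenspaces in $\bbC(\SL_2)$ are one-dimensional and exhausted by the $c^n$, so that passing to $\bbZ/2\bbZ$-invariants selects precisely the even powers; crucially, no new semi-invariants are created in the quotient because $\bbC(\PGL_2)$ \emph{is} the invariant subfield. Everything else ($\clubsuit$, $\zeta$, $\kappa$, $m_\clubsuit$) is inherited verbatim from the $\SL_2$ computation, while $\rho$ and $f$ undergo only the predictable rescaling.
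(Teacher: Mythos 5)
Your proposal is correct and follows the same route as the paper: the paper's proof likewise deduces the corollary by quotienting \(\SL_2^2/\diag\SL_2\) by the central element \((I_2,-I_2)\), noting the unique color is stable under this action, and mentions the projective model \(\bbP(\Mat_{2\times 2})\) as an alternative. You merely make explicit the details the paper leaves implicit (the invariant subfield argument pinning down \(M=(\alpha_1+\alpha_2)\bbZ\), the étale-cover computation of \(\rho(\clubsuit)=2\), and the case analysis giving \(m_{\clubsuit}=\langle\alpha_1^{\vee},\kappa\rangle=2\)), all of which are accurate.
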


\begin{proof}
Since it is obtained by quotienting the previous homogeneous space by the action of \((I_2,-I_2)\), and the unique color is necessarily stable under this action, we recover easily the combinatorial data. 
One could also consider the projective model \(\bbP(\Mat_{2\times 2})\).  
\end{proof}

\begin{cor}
The combinatorial data associated to the homogeneous space \(\GL_2^2/\diag \GL_2\) is as follows. 
\[\arraycolsep=12pt
\begin{array}{lll}
M=\langle \varpi_1+\varpi_2+\chi_1, \varpi_1+\varpi_2-\chi_1 \rangle  &  \rho(\clubsuit)=(1,1)
\\  
\Sigma=\{\varpi_1+\varpi_2\} & \zeta(\clubsuit)=\{\alpha_1,\alpha_2\} 
\\ 
\mathcal{D} = \{\clubsuit\} &  
\\
\midrule
\kappa = \alpha_1+\alpha_2 & f = (2+x_1+x_2)^2 & m_{\clubsuit} = 2
\end{array}
\]
\end{cor}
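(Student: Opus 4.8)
The plan is to realize $\GL_2^2/\diag\GL_2$ concretely as the variety $\GL_2$ under the group $G=\SL_2^2\times\bbG_m$, acting by $(A,B,t)\cdot X=tAXB^{-1}$ on an invertible matrix $X=\begin{bmatrix}a&b\\c&d\end{bmatrix}$. This extends the action used in the preceding proposition for $\SL_2^2/\diag\SL_2$ (the affine quadric $\{ad-bc=1\}$) by letting the extra factor $\bbG_m$ rescale the determinant, so that the orbit of $I_2$ now fills up all of $\GL_2$; the stabilizer of $I_2$ is $\langle\diag\SL_2,(-I_2,I_2,-1)\rangle$, which exhibits the abstract homogeneous space $\GL_2^2/\diag\GL_2$. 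This is the exact counterpart of the quotient performed in the $\PGL_2^2$ corollary, except that here the determinant torus \emph{enlarges} $M$ rather than the center shrinking it.

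The key point I would use is that the rescaling by $t$ lies inside the Borel $B=B_1\times B_2\times\bbG_m$, so it acts within the $B$-orbits of the affine-quadric picture and does not refine the stratification: there is still a single color $\clubsuit=\{c=0\}\cap\GL_2$, hence $\mathcal{D}=\{\clubsuit\}$, $\zeta(\clubsuit)=\{\alpha_1,\alpha_2\}$, the adapted parabolic is $B$, and $\kappa=\alpha_1+\alpha_2$, exactly as in the proposition. What changes is the weight lattice, which I would read off from two explicit $B$-semi-invariant rational functions: the entry $c$, now of weight $\varpi_1+\varpi_2-\chi_1$ (the $\SL_2^2$-weight $\varpi_1+\varpi_2$ twisted by the $\det$-scaling), and the entry $(X^{-1})_{21}=-c/\det X$, of weight $\varpi_1+\varpi_2+\chi_1$ since $\det X$ has weight $2\chi_1$. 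Both vanish to order one along $\clubsuit$, because $\det X$ is a unit on $\GL_2$, whence $\rho(\clubsuit)=(1,1)$ in the given basis.

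To finish I would argue that these two weights generate $M$ and not merely a finite-index sublattice. Since the adapted parabolic is $B$, Corollary~\ref{cor_local_structure_theorem} gives $\rank(G/H)=\dim(G/H)-\dim(G/B)=4-2=2$, so the two independent weights already span $M\otimes\bbQ$; saturation can be obtained either from the local structure theorem $M=X^*(Z/(Z\cap H))$ of Proposition~\ref{local_structure_theorem}, or by checking directly that the $B$-weight semigroup of $\bbC[\GL_2]$ is generated by the weight of $c$ together with $\det^{\pm1}$, giving $M=\langle\varpi_1+\varpi_2-\chi_1,\,2\chi_1\rangle=\langle\varpi_1+\varpi_2-\chi_1,\varpi_1+\varpi_2+\chi_1\rangle$. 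For $\Sigma$, I would determine the valuation cone as in Example~\ref{exa_valuation_cone_SL2}: the center of $\GL_2$ furnishes a one-parameter group of $G$-equivariant automorphisms, so by Proposition~\ref{prop_equivariant_automorphisms} the cone $\mathcal{V}$ has one-dimensional linear part and is a half-space, cut out by a single spherical root lying in the $\varpi_1+\varpi_2$ direction. Then $m_\clubsuit=2$ follows since $\zeta(\clubsuit)=\{\alpha_1,\alpha_2\}$ meets neither $\Sigma$ nor $\tfrac12\Sigma$, so $m_\clubsuit=\langle\alpha_1^\vee,\kappa\rangle=2$, and $f=(2+x_1+x_2)^2$ is the same computation as for $\SL_2^2/\diag\SL_2$: writing $p=x_1(\varpi_1+\varpi_2+\chi_1)+x_2(\varpi_1+\varpi_2-\chi_1)$ one gets $\kappa+p=(2+x_1+x_2)(\varpi_1+\varpi_2)+(x_1-x_2)\chi_1$, and $\chi_1$ is orthogonal to $\alpha_1,\alpha_2$.

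The main obstacle I anticipate is the precise determination of $M$ together with the spherical root. One must rule out any finer $B$-eigenfunction (of weight $\varpi_1+\varpi_2$, say), i.e. confirm that $M$ is saturated as claimed, and then identify the correct primitive multiple of $\varpi_1+\varpi_2$ lying in $M$ that generates the ray defining $\mathcal{V}$. This primitivity bookkeeping is exactly the delicate point distinguishing the three compactifications $\SL_2^2$, $\PGL_2^2$ and $\GL_2^2$: the spherical root is the same geometric direction in all three, but a different lattice vector in each, so the computation of the valuation cone must be carried out inside the new lattice rather than transported naively from the proposition.
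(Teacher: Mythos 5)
Your proposal is correct, but it takes a genuinely different route from the paper. The paper disposes of this corollary in one line: the subgroup \(H=\langle\diag\SL_2,(I_2,-I_2,-1)\rangle\) is sandwiched between \(\diag\SL_2\times\{1\}\) and \(N(\diag\SL_2)\times\{\pm1\}\), whose combinatorial data were computed in the two immediately preceding statements, and the data of the intermediate quotient are then uniquely determined: the color, \(\zeta\), \(\kappa\) and the spherical-root direction are insensitive to the central isogenies, while \(M\) is pinned down as the unique intermediate lattice, namely the weights in \(\langle\varpi_1+\varpi_2,\chi_1\rangle\) killing the extra generator \((I_2,-I_2,-1)\). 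You instead redo the direct computation on the model \(\GL_2\) with the action \((A,B,t)\cdot X=tAXB^{-1}\), parallel to the paper's proof for \(\SL_2^2/\diag\SL_2\): your stabilizer (which equals the paper's \(H\), since \((-I_2,I_2,-1)=(I_2,-I_2,-1)\cdot(-I_2,-I_2,1)\)), the single color \(\{c=0\}\), the eigenfunctions \(c\) and \(c/\det\), \(\rho(\clubsuit)=(1,1)\), \(m_\clubsuit=2\) and \(f\) all check out. What the squeeze buys the paper is brevity and automatic consistency with the neighbouring cases; what your computation buys is independence from them, plus explicit eigenfunctions and valuations.

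Three small points. First, under the paper's convention \((g\cdot f)(x)=f(g^{-1}x)\) the weight of \(\det\) is \(-2\chi_1\), not \(2\chi_1\); your final weights \(\varpi_1+\varpi_2\mp\chi_1\) are nevertheless the right ones. Second, for saturation of \(M\) the quickest argument is the one the paper uses elsewhere (e.g.\ in Example~\ref{P^1xP^1}): the central element \((I_2,-I_2,-1)\) lies in \(H\) and acts trivially, so every \(\lambda\in M\) must kill it; together with \(M\otimes\bbQ=\bbQ(\varpi_1+\varpi_2)\oplus\bbQ\chi_1\) this forces exactly the claimed index-two lattice, more cheaply than either of your two suggested routes. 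Third, your anticipated obstacle about the primitive multiple is real but harmless: by the paper's own definition of \(\Sigma\), the primitive element of \(M\) on the ray is \(2(\varpi_1+\varpi_2)\), so the statement's \(\Sigma=\{\varpi_1+\varpi_2\}\) is a mild normalization abuse; it affects neither \(m_\clubsuit\) (still \(\langle\alpha_1^\vee,\kappa\rangle=2\), since \(\zeta(\clubsuit)\) meets neither \(\Sigma\) nor \(\tfrac12\Sigma\) under either normalization) nor the cone generated by \(\Sigma\). To make your half-space argument airtight you should also say explicitly that \(G/H\) is not horospherical (\(H^{\circ}=\diag\SL_2\) is reductive, so contains no maximal unipotent subgroup of \(G\)); this caps \(\dim\Aut^{G,0}(G/H)\) at one via Corollary~\ref{cor_horospherical_by_valuation_cone}, so that \(\mathcal{V}\) is a genuine half-space rather than all of \(N\otimes\bbQ\), and fixing its sign requires a word (e.g.\ the Brion--Pauer computation transported from \(\SL_2^2/\diag\SL_2\), whose valuation cone is unchanged by the central quotient and the torus factor).
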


\begin{proof}
This is the only possibility between \(\diag \SL_2 \times \{1\}\) and \(N(\diag \SL_2) \times \{\pm 1\}\), whose combinatorial data we described above.  
\end{proof}

\subsubsection{Polytopes}

\textbf{In rank one} there is only one complete equivariant embedding for each, and it is smooth and Fano. We almost described them already: for \(\SL_2^2/\diag(\SL_2)\), it is the three dimensional quadric defined by \(ad-bc=e^2\) in \(\bbP^4\) with homogeneous coordinates \([a:b:c:d:e]\), where the affine chart \(e=1\) is identified with \(\Mat_{2\times 2}\) equipped with the standard action of \(\SL_2^2\). There are only two orbits, the closed orbit being the smooth quadric \(\{ad-bc=0\}\) in the hyperplane at infinity \(\{e=0\}\), equivariantly isomorphic to \(\bbP^1\times \bbP^1\). The corresponding polytope is: 
\begin{center}
\begin{tikzpicture}
\draw[dotted] (-2,0) -- (2,0);
\draw (0,0) node{+};
\foreach \i in {0,...,4}
{
\draw[dotted] (-2+\i,.3) -- (-2+\i,-.3);
}
\draw[thick] (-1,0) -- (1/2,0);
\draw (1/2,0) node[color=teal]{\(\clubsuit\)};
\draw (0,0) node[below]{\textbf{3-1-1}};
\end{tikzpicture}
\end{center}

For \(\SL_2^2/N(\diag\SL_2)\) this is \(\bbP(\Mat_{2\times 2})\) equipped with the standard action. There are only two orbits, the closed orbit being the projectivization of rank one matrices, also equivariantly isomorphic to \(\bbP^1\times \bbP^1\). The corresponding polytope is: 
\begin{center}
\begin{tikzpicture}
\draw[dotted] (-2,0) -- (2,0);
\draw (0,0) node{+};
\foreach \i in {0,...,4}
{
\draw[dotted] (-2+\i,.3) -- (-2+\i,-.3);
}
\draw[thick] (-1,0) -- (1,0);
\draw (1,0) node[color=teal]{\(\clubsuit\)};
\draw (0,0) node[below]{\textbf{3-1-2}};
\end{tikzpicture}
\end{center}

\textbf{In rank two, for \(H=\diag\SL_2\times \{1\}\)}, 
we are considering equivariant compactification of the group \(\SL_2\times \bbG_m\). 
We determine the locally factorial \(G/H\)-reflexive polytopes using the same method as in Section~\ref{section_SL_2_horospherical}. 
The point \(\frac{\rho(\clubsuit)}{m_{\clubsuit}}=(1/2,0)\) must be a vertex for the origin to be an interior point. 
Working up to reflection by Remark~\ref{rem_SL_22_non-uniqueness_of_chi}, we obtain seven locally factorial Fano embeddings, corresponding to the following seven polytopes. 
Another way to obtain these polytopes, and a more general result for Fano embeddings of rank two symmetric spaces, was obtained by Ruzzi in \cite{Ruzzi_2012}.  
In particular, it follows from Ruzzi's article that the last two are locally factorial and not smooth. 
\begin{center}
\begin{tikzpicture}[scale=0.6]
\draw[dotted] (-2,2) grid (2,-2);
\draw[thick] (1/2,0) -- (0,1) -- (-1,2) -- (-1,1) -- (0,-1) -- cycle;
\draw (1/2,0) node[color=teal]{\(\clubsuit\)};
\draw (0,-1) node[below]{\textbf{4-2-1}};
\end{tikzpicture}
\begin{tikzpicture}[scale=0.6]
\draw[dotted] (-2,2) grid (2,-2);
\draw[thick] (1/2,0) -- (0,1) -- (-1,2) -- (0,-1) -- cycle;
\draw (1/2,0) node[color=teal]{\(\clubsuit\)};
\draw (0,-1) node[below]{\textbf{4-2-2}};
\end{tikzpicture}
\begin{tikzpicture}[scale=0.6]
\draw[dotted] (-2,2) grid (2,-2);
\draw[thick] (1/2,0) -- (0,1) -- (-1,1) -- (-1,0) -- (0,-1) -- cycle;
\draw (1/2,0) node[color=teal]{\(\clubsuit\)};
\draw (0,-1) node[below]{\textbf{4-2-3}};
\end{tikzpicture}
\begin{tikzpicture}[scale=0.6]
\draw[dotted] (-2,2) grid (2,-2);
\draw[thick] (1/2,0) -- (0,1) -- (-1,1) -- (0,-1) -- cycle;
\draw (1/2,0) node[color=teal]{\(\clubsuit\)};
\draw (0,-1) node[below]{\textbf{4-2-4}};
\end{tikzpicture}
\begin{tikzpicture}[scale=0.6]
\draw[dotted] (-2,2) grid (2,-2);
\draw[thick] (1/2,0) -- (0,1) -- (-1,0) -- (0,-1) -- cycle;
\draw (1/2,0) node[color=teal]{\(\clubsuit\)};
\draw (0,-1) node[below]{\textbf{4-2-5}};
\end{tikzpicture}
\begin{tikzpicture}[scale=0.6]
\draw[dotted] (-2,2) grid (2,-2);
\draw[thick] (1/2,0) -- (0,1) -- (-1,-1) -- cycle;
\draw (1/2,0) node[color=teal]{\(\clubsuit\)};
\draw (0,-1) node[below]{\textbf{4-2-6}};
\end{tikzpicture}
\begin{tikzpicture}[scale=0.6]
\draw[dotted] (-2,2) grid (2,-2);
\draw[thick] (1/2,0) -- (-1,1) -- (-1,0) -- (0,-1) -- cycle;
\draw (1/2,0) node[color=teal]{\(\clubsuit\)};
\draw (0,-1) node[below]{\textbf{4-2-7}};
\end{tikzpicture}
\end{center}

\begin{exa}
The embedding \textbf{4-2-5} is the product of embeddings \(Q^3\times \bbP^1\). 
Blowing up a closed orbit yields \textbf{4-2-3}. 
\end{exa}

\textbf{In rank two, for \(H=N(\diag\SL_2)\times \{1\}\)}, 
we are considering equivariant compactification of the group \(\PGL_2\times \bbG_m\). 
We now determine the locally factorial \(G/H\)-reflexive polytopes. 
The point \(\frac{\rho(\clubsuit)}{m_{\clubsuit}}= (1,0)\) must be a vertex for the origin to be an interior point. 
Furthermore, since \(\rho(\clubsuit)\) is not primitive, any facet containing \((1,0\) must be fully contained in the half-space \(\{(y_1,y_2)\mid y_1\geq 0\}\). 
As in previous cases, we observe that a locally factorial \(G/H\)-reflexive polytope is also a smooth Fano polytope, so that we can find all of these by going through the list of smooth Fano polytopes up to \(\GL_2(\bbZ)\)-action. 
We obtain the following three polytopes, and corresponding embeddings.  
\begin{center}
\begin{tikzpicture}[scale=0.6]
\draw[dotted] (-2,2) grid (2,-2);
\draw[thick] (1,0) -- (0,1) -- (-1,0) -- (0,-1) -- cycle;
\draw (1,0) node[color=teal]{\(\clubsuit\)};
\draw (0,-1) node[below]{\textbf{4-2-8}};
\end{tikzpicture}
\begin{tikzpicture}[scale=0.6]
\draw[dotted] (-2,2) grid (2,-2);
\draw[thick] (1,0) -- (0,1) -- (-1,1) -- (0,-1) -- cycle;
\draw (1,0) node[color=teal]{\(\clubsuit\)};
\draw (0,-1) node[below]{\textbf{4-2-9}};
\end{tikzpicture}
\begin{tikzpicture}[scale=0.6]
\draw[dotted] (-2,2) grid (2,-2);
\draw[thick] (1,0) -- (0,1) -- (-1,1) -- (-1,0) -- (0,-1) -- cycle;
\draw (1,0) node[color=teal]{\(\clubsuit\)};
\draw (0,-1) node[below]{\textbf{4-2-10}};
\end{tikzpicture}
\end{center}

\begin{exa}
The embedding \textbf{4-2-8} is the product of embeddings \(\bbP^3\times \bbP^1\). Blowing up a closed orbit yields the embedding \textbf{4-2-10}. 
\end{exa}

\textbf{In rank two, for \(H=\diag(N(\diag\SL_2))\)}, 
we are considering equivariant compactification of the group \(\GL_2\). 
We now determine the locally factorial \(G/H\)-reflexive polytopes using the same method as in Section~\ref{section_SL_2_horospherical}. 
The point \(\frac{\rho(\clubsuit)}{m_{\clubsuit}}= (1,0)\) must be a vertex for the origin to be an interior point. 
Working up to reflection by Remark~\ref{rem_SL_22_non-uniqueness_of_chi}, we obtain eight locally factorial Fano embeddings, corresponding to the following eight polytopes. 

\begin{center}
\begin{tikzpicture}[scale=0.6]
\draw[dotted] (-2,2) grid (2,-2);
\draw[thick] (1/2,1/2) -- (-1,1) -- (-2,1) -- (-1,0) -- (1,-1) -- cycle;
\draw (1/2,1/2) node[color=teal]{\(\clubsuit\)};
\draw (0,-1) node[below]{\textbf{4-2-11}};
\end{tikzpicture}
\begin{tikzpicture}[scale=0.6]
\draw[dotted] (-2,2) grid (2,-2);
\draw[thick] (1/2,1/2) -- (-1,1) -- (-1,0) -- (1,-1) -- cycle;
\draw (1/2,1/2) node[color=teal]{\(\clubsuit\)};
\draw (0,-1) node[below]{\textbf{4-2-12}};
\end{tikzpicture}
\begin{tikzpicture}[scale=0.6]
\draw[dotted] (-2,2) grid (2,-2);
\draw[thick] (1/2,1/2) -- (-1,1) -- (-1,0) -- (0,-1) -- (1,-1) -- cycle;
\draw (1/2,1/2) node[color=teal]{\(\clubsuit\)};
\draw (0,-1) node[below]{\textbf{4-2-13}};
\end{tikzpicture}
\begin{tikzpicture}[scale=0.6]
\draw[dotted] (-2,2) grid (2,-2);
\draw[thick] (1/2,1/2) -- (-1,1) -- (1,-2) -- (1,-1) -- cycle;
\draw (1/2,1/2) node[color=teal]{\(\clubsuit\)};
\draw (0,-1) node[below left]{\textbf{4-2-14}};
\end{tikzpicture}
\begin{tikzpicture}[scale=0.6]
\draw[dotted] (-2,2) grid (2,-2);
\draw[thick] (1/2,1/2) -- (-1,0) -- (-1,-1) -- (0,-1) -- cycle;
\draw (1/2,1/2) node[color=teal]{\(\clubsuit\)};
\draw (0,-1) node[below]{\textbf{4-2-15}};
\end{tikzpicture}
\begin{tikzpicture}[scale=0.6]
\draw[dotted] (-2,2) grid (2,-2);
\draw[thick] (1/2,1/2) -- (-1,0) -- (0,-1) -- cycle;
\draw (1/2,1/2) node[color=teal]{\(\clubsuit\)};
\draw (0,-1) node[below]{\textbf{4-2-16}};
\end{tikzpicture}
\begin{tikzpicture}[scale=0.6]
\draw[dotted] (-2,2) grid (2,-2);
\draw[thick] (1/2,1/2) -- (-1,0) -- (0,-1) -- (1,-1) -- cycle;
\draw (1/2,1/2) node[color=teal]{\(\clubsuit\)};
\draw (0,-1) node[below]{\textbf{4-2-17}};
\end{tikzpicture}
\begin{tikzpicture}[scale=0.6]
\draw[dotted] (-2,2) grid (2,-2);
\draw[thick] (1/2,1/2) -- (-1,0) -- (1,-1) -- cycle;
\draw (1/2,1/2) node[color=teal]{\(\clubsuit\)};
\draw (0,-1) node[below]{\textbf{4-2-18}};
\end{tikzpicture}
\end{center}

\begin{exa}
The embedding \textbf{4-2-16} is the quadric \(Q^4\), as described in details in \cite{infinite}. The action factors through \(\SO_4\times \SO_2\). 
Blowing up a two dimensional subquadric yields the embedding \textbf{4-2-15}, while blowing up one or two points of a zero dimensional subquadric yield \textbf{4-2-17} and \textbf{4-2-13}. 
\end{exa}

\begin{exa}
The embedding \textbf{4-2-18} is \(\bbP^4\). 
It is easily seen as a group compactification: identify one affine chart \(\bbC^4\subset \bbP^4\) with the space of two by two matrices, equipped with the action of \(\GL_2\times \GL_2\) by equivalences. 
This linear action extends to \(\bbP^4\) and provides the equivariant group compactification. 
By blowing up smaller orbits, one can also recover the embeddings \textbf{4-2-12}, \textbf{4-2-17} and \textbf{4-2-13}.
\end{exa}

\subsection{Diagonal Borel case}

\begin{prop}
The combinatorial data for \(\PGL_2^2/\diag B_1\) is 
\[\arraycolsep=12pt
\begin{array}{lll}
M=\langle \alpha_1, \alpha_2 \rangle  &  \rho(\clubsuit)=(1,-1) & \zeta(\clubsuit)=\{\alpha_1\}
\\  
\Sigma=\{\alpha_1,\alpha_2\} & \rho(\varheartsuit)=(1,1) & \zeta(\clubsuit)=\{\alpha_1,\alpha_2\}
\\ 
\mathcal{D} = \{\clubsuit,\varheartsuit,\vardiamondsuit\} &  \rho(\vardiamondsuit)=(-1,1) & \zeta(\vardiamondsuit)=\{\alpha_2\}
\\
\midrule
\kappa = \alpha_1+\alpha_2 & f = 4(1+x_1)(1+x_2)  & m_{\clubsuit} = m_{\varheartsuit} = m_{\vardiamondsuit} = 1
\end{array}
\]
and the combinatorial data for \(\SL_2/\diag B_1\) is 
\[\arraycolsep=12pt
\begin{array}{lll}
M=\langle \varpi_1+\varpi_2, \varpi_1-\varpi_2 \rangle  &  \rho(\clubsuit)=(0,1) & \zeta(\clubsuit)=\{\alpha_1\}
\\  
\Sigma=\{\alpha_1,\alpha_2\} & \rho(\varheartsuit)=(1,0) & \zeta(\clubsuit)=\{\alpha_1,\alpha_2\}
\\ 
\mathcal{D} = \{\clubsuit,\varheartsuit,\vardiamondsuit\} &  \rho(\vardiamondsuit)=(0,-1) & \zeta(\vardiamondsuit)=\{\alpha_2\}
\\
\midrule
\kappa = \alpha_1+\alpha_2 & f = (2+x_1+x_2)(2+x_1-x_2) & m_{\clubsuit} = m_{\varheartsuit} = m_{\vardiamondsuit} = 1
\end{array}
\]
\end{prop}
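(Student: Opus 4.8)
The plan is to establish the data for $\SL_2^2/\diag B_1$ directly (the second space in the statement, where $\diag B_1$ is the diagonal embedding of a Borel of one factor; I read the heading $\SL_2/\diag B_1$ as a typo for $\SL_2^2/\diag B_1$), and then obtain the $\PGL_2^2$ case as a finite quotient. Writing a point of $G=\SL_2^2$ as a pair $(g_1,g_2)$ with lower rows $(c_i,d_i)$, I would first exhibit three $B$-stable prime divisors: the pullbacks $\{c_1=0\}$ and $\{c_2=0\}$ of the unique color under the two equivariant projections $\pi_i\colon G/H\to \SL_2/B_1=\bbP^1$ (one per factor), and the pullback $\{c_1d_2-c_2d_1=0\}$ of the color of $\SL_2^2/\diag\SL_2\cong\SL_2$ under $p\colon G/H\to G/\diag\SL_2$ (note $c_1d_2-c_2d_1$ is the lower-left entry of $g_1g_2^{-1}$). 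Since $G/H$ is homogeneous it has no $G$-stable divisors, so its colors are exactly the codimension-one $B$-orbits in the complement of the open $B$-orbit; checking that $G/H$ minus these three divisors is a single $B$-orbit shows $\mathcal{D}=\{\clubsuit,\varheartsuit,\vardiamondsuit\}$ consists of precisely these, with stabilizers giving $\zeta(\clubsuit)=\{\alpha_1\}$, $\zeta(\vardiamondsuit)=\{\alpha_2\}$, $\zeta(\varheartsuit)=\{\alpha_1,\alpha_2\}$.

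For the lattice I would use the two $B$-semi-invariant rational functions $f_1=c_1d_2-c_2d_1$ and $f_2=c_1/c_2$, of respective weights $\varpi_1+\varpi_2$ and $\varpi_1-\varpi_2$, which span the stated basis. To see there is nothing finer, note $\diag(-I_2)\in\diag B_1$ acts trivially, so every weight kills $(-I_2,-I_2)$, forcing $M=\langle \varpi_1+\varpi_2,\varpi_1-\varpi_2\rangle$. The values of $\rho$ are then orders of vanishing: $f_1$ vanishes to order one along $\varheartsuit$ and to order zero along $\clubsuit,\vardiamondsuit$, while $f_2$ has order $+1$ along $\clubsuit$, $-1$ along $\vardiamondsuit$ and $0$ along $\varheartsuit$, yielding $\rho(\varheartsuit)=(1,0)$, $\rho(\clubsuit)=(0,1)$, $\rho(\vardiamondsuit)=(0,-1)$.

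The heart of the argument, and the step I expect to be most delicate, is the determination of $\Sigma$. Since $N_G(\diag B_1)/\diag B_1$ is finite, Proposition~\ref{prop_equivariant_automorphisms} gives $\mathcal{V}\cap(-\mathcal{V})=\{0\}$, so the valuation cone is pointed; in particular $G/H$ is not horospherical and $\Sigma\neq\emptyset$, while by Proposition~\ref{prop_spherical_root_as_sum_of_positive_roots} the only candidates are $\alpha_1,\alpha_2,\alpha_1+\alpha_2$. To pin down $\mathcal{V}$ I would run the Brion--Pauer recipe of Proposition~\ref{prop_valuation_cone}: taking $M_1=\langle a_1,c_1\rangle$ (the first column of $g_1$) and $M_2=\langle (g_1g_2^{-1})_{ij}\rangle\cong V_{\varpi_1}\boxtimes V_{\varpi_2}$ (right $\diag B_1$-invariant), the product $M_1M_2$ contains the summand $\langle a_2,c_2\rangle$ produced by the $\SL_2$-invariant contraction in the first-factor index; feeding the highest-weight vectors $c_1$, $(g_1g_2^{-1})_{21}$ and $c_2$ into the formula yields the eigenvalue $\sigma=\alpha_1$, and the symmetric choice yields $\sigma=\alpha_2$. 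Hence $\mathcal{V}\subseteq\{x:\langle\alpha_1,x\rangle\le 0,\ \langle\alpha_2,x\rangle\le 0\}$; as $\mathcal{V}$ is a pointed, full-dimensional cone contained in this simplicial cone, the two coincide and $\Sigma=\{\alpha_1,\alpha_2\}$ (both primitive in $M$). The anticipated difficulty is precisely choosing modules whose product contracts: the product of the obvious semi-invariant column with itself stays irreducible and only returns $\sigma=0$, so one is forced to introduce the mixed module $V_{\varpi_1}\boxtimes V_{\varpi_2}$ coming from $g_1g_2^{-1}$.

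Finally, $\kappa=\alpha_1+\alpha_2$ because the adapted parabolic is $B$, and $m_D=1$ for each color since every $\zeta(D)$ meets $\Sigma$; the polynomial $f$ is then read off from $\prod_{\alpha\in R^+}\{\kappa+p,\alpha\}/\{\varrho,\alpha\}$. For $\PGL_2^2/\diag B_1$ I would invoke the same geometry: it is the quotient of $\SL_2^2/\diag B_1$ by the residual involution $(-I_2,I_2)$, which fixes all three colors, leaves $\Sigma$ and the $\zeta$'s unchanged, and replaces $M$ by its index-two sublattice $\langle\alpha_1,\alpha_2\rangle$. Re-expressing the functionals $\rho(D)$ in the dual basis of $\langle\alpha_1,\alpha_2\rangle$ via $\alpha_1=(\varpi_1+\varpi_2)+(\varpi_1-\varpi_2)$ and $\alpha_2=(\varpi_1+\varpi_2)-(\varpi_1-\varpi_2)$ converts $(1,0),(0,1),(0,-1)$ into $(1,1),(1,-1),(-1,1)$, matching the stated values, while $f$ rescales to $4(1+x_1)(1+x_2)$.
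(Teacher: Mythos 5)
Your proposal is correct, and it reaches the stated data by a genuinely different route, even though the toolkit (explicit \(B\)-eigenfunctions, orders of vanishing along colors, the Brion--Pauer recipe of Proposition~\ref{prop_valuation_cone}) is shared. The paper argues in the opposite direction: it first treats \(\PGL_2^2/\diag B_1\) on the explicit projective model \(\bbP(\Mat_2)\times\bbP^1\), enumerates all five \(B\)-orbits there (this is where the paper proves what you only assert, namely that the complement of the three divisors is a single open \(B\)-orbit), reads off \(\rho\) from the divisor equations \(y=0\), \(c=0\), \(xc-ay=0\), computes the valuation cone on the \(\SL_2^2\) cover using \(f_1=c_1c_2\) and \(f_2=c_1d_2-d_1c_2\) (both generating copies of \(V_{\varpi_1}\boxtimes V_{\varpi_2}\)) together with Weyl-group translates, and then deduces the \(\SL_2^2\) case indirectly, quoting type properties of colors and the fact that the two homogeneous spaces must be distinguished by their lattices. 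You instead work directly on the simply connected cover: your determination \(M=\langle\varpi_1+\varpi_2,\varpi_1-\varpi_2\rangle\) via triviality of \(\diag(-I_2)\) plus the two explicit functions \(c_1d_2-c_2d_1\) and \(c_1/c_2\) is more self-contained than the paper's uniqueness argument; your Brion--Pauer input (the asymmetric pair \(\langle a_1,c_1\rangle\cong V\boxtimes\bbC\) and the entries of \(g_1g_2^{-1}\), contracted via the symplectic form using \(\det g_1=1\) to produce \(\langle a_2,c_2\rangle\subset M_1M_2\)) is a clean alternative to the paper's translate construction, and your remark that the naive square \(\langle a_1,c_1\rangle^2\cong\operatorname{Sym}^2V\) stays irreducible correctly identifies the same obstruction the paper's combinations are designed to circumvent. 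Your descent to \(\PGL_2^2\) by the involution \((-I_2,I_2)\), which acts freely and fixes each of the three colors setwise so that nothing is identified, together with the change of dual basis converting \((1,0),(0,1),(0,-1)\) into \((1,1),(1,-1),(-1,1)\), checks out exactly, including the rescaled Duistermaat--Heckman polynomial; and your reading of the heading \(\SL_2/\diag B_1\) as a typo for \(\SL_2^2/\diag B_1\) is the intended one.

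Two points of rigor. First, your sentence ``as \(\mathcal{V}\) is a pointed, full-dimensional cone contained in this simplicial cone, the two coincide'' is not valid as literally stated: a pointed full-dimensional cone can sit strictly inside a simplicial one (e.g.\ the cone on \((-1,0)\) and \((-1,-1)\) inside \(\{y_1\leq 0,\ y_2\leq 0\}\)). What closes the argument---and what the paper also leaves implicit when it asserts equality ``by Proposition~\ref{prop_spherical_root_as_sum_of_positive_roots}''---is that \(\mathcal{V}\) is cut out by \(\Sigma\), whose elements are up to multiple among \(\alpha_1,\alpha_2,\alpha_1+\alpha_2\), and of the cones cut out by subsets of these candidates only the one for \(\{\alpha_1,\alpha_2\}\) is contained in \(\{\langle\alpha_1,\cdot\rangle\leq 0,\ \langle\alpha_2,\cdot\rangle\leq 0\}\); your pointedness observation via \(N_G(H)/H\cong\bbZ/2\bbZ\) is correct but then superfluous. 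Second, the claim that the three listed divisors exhaust \(\mathcal{D}\) is stated rather than verified; it is true, but in a complete write-up you should perform the short orbit computation, as the paper does on its projective model.
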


\begin{proof}
We write the proof in detail in the case of \(\PGL_2\), then deduce the result for \(\SL_2\). For now we thus assume \(G=\PGL_2\). 

\textbf{Geometric realization.}
One geometric realization of the homogeneous space \(G/H\) may be obtained by considering the space \(\bbP^3\times \bbP^1= \bbP(\Mat_{2})\times\bbP^1\) of pairs of a two by two matrix and a line in \(\bbC^2\). 
The action of \(\PGL_2^2\) to consider is then the action by 
\((g_1,g_2)\cdot (M,d) = (g_1Mg_2^{-1}, g_1d)\). 

There are three \(\PGL_2^2\) orbits in \(\bbP^3\times \bbP^1\). 
The obvious open orbit is that of pairs \((M,d)\) where \(M\) is invertible. 
The stabilizer of \((I_2,[1:0])\) is the subgroup \(H\), hence \(\bbP^3\times \bbP^1\) is an equivariant embedding of \(G/H\). 
The complement to the open orbit is the projective space of rank one matrices times the projective line. 
Note that the projective space of rank one matrices may be identified with \(\bbP^1\times \bbP^1\) by sending a rank one matrix to the pair of lines in \(\bbC^2\) formed by its kernel and its image. 
Under this identification, the action of \(\PGL_2^2\) simply splits on the two factors \((g_1,g_2)\cdot (k,i) = (g_2k,g_1i)\). 
It is now easy to check that there are two orbits in \(\bbP^1\times \bbP^1\times \bbP^1\) under the \(\PGL_2^2\) action \((g_1,g_2)\cdot (k,i,d)=(g_2k,g_1i,g_1d)\): the diagonal in the last two entries \(\{(k,i,i)\}\) (of dimension two) and its complement, of dimension three. 

\textbf{Colors.}
Let us now determine the colors of \(G/H\), considered as the space of projectivized two by two invertible matrices times \(\bbP^1\): 
\[ G/H = \PGL_2\times \bbP^1 \subset \bbP^3\times \bbP^1 \]
with the action 
\((g_1,g_2)\cdot (g, d) = (g_1gg_2^{-1}, g_1d) \). 

Let \(B_0\times B_0\) denote the Borel subgroup of \(\PGL_2^2\) formed by pairs of upper triangular matrices. 
The \(B_0\) orbits in \(\bbP^1\) are \(\{[1:0]\}\) and its complement, and the \(B_0\times B_0\) orbits in \(\PGL_2\) are \(B_0\) and its complement.  
As a consequence, the following subsets are stable under the action of \(B_0\times B_0\), and it is not hard to check that they are actually orbits: 
\begin{itemize}
    \item \(B_0\times \{[1:0]\}\) of dimension two
    \item \(D_1:= \PGL_2\setminus B_0 \times \{[1:0]\}\) of dimension 3 
    \item \(D_2:= B_0\times \bbP^1\setminus \{[1:0]\}\) of dimension 3
    \item \(D_3:= \{(g,g([1:0]))\mid g \in \PGL_2\setminus B_0\}\) of dimension 3
    \item \(\{(g,d)\mid g\in \PGL_2\setminus B_0, d\in \bbP^1\setminus \{[1:0],g([1:0])\} \}\) the open orbit. 
\end{itemize}
In particular, we have \(\mathcal{D}=\{D_1,D_2,D_3\}\). 

\textbf{Weight lattice.}
The weight lattice is the full weight lattice of \(B_0\times B_0 \subset \PGL_2\times \PGL_2\), identified with \(\bbZ^2\) via the basis \((\alpha_1,\alpha_2)\).
Indeed, we can find two rational functions whose weights form a basis of this lattice. 
These functions are easily defined in homogeneous coordinates on \(\bbP^3\times \bbP^1\) as follows. 
\[ f_{(-1,0)} : \left(\begin{bmatrix} a & b \\ c & d \end{bmatrix},[x:y]\right)\mapsto \frac{xc-ay}{cy} \] 
and 
\[ f_{(1,1)} : \left(\begin{bmatrix} a & b \\ c & d \end{bmatrix},[x:y]\right)\mapsto \frac{c^2}{cb-ad}. \] 
Let more generally \(f_{(m_1,m_2)}=f_{(-1,0)}^{m_2-m_1}f_{(1,1)}^{m_2}\) for \((m_1,m_2)\in \bbZ^2\). 

Since 
\[ \begin{bmatrix} a_1 & b_1 \\ 0 & 1 \end{bmatrix} \begin{bmatrix} a & b \\ c & d \end{bmatrix} \begin{bmatrix} \frac{1}{a_2} & \frac{-b_2}{a_2} \\ 0 & 1 \end{bmatrix} = \begin{bmatrix} \frac{a_1a}{a_2}+\frac{b_1c}{a_2} & \frac{-a_1ab_2}{a_2}+\frac{-b_1cb_2}{a_2}+a_1b+b_1d \\ \frac{c}{a_2} & \frac{-cb_2}{a_2}+d \end{bmatrix} \]
one can check easily that for \(X\in \PGL_2\times \bbP^1\), 
\[ f_{(m_1,m_2)}\left(\left(\begin{bmatrix} a_1 & b_1 \\ 0 & 1\end{bmatrix},\begin{bmatrix} a_2 & b_2 \\ 0 & 1\end{bmatrix}\right)^{-1}\cdot X\right) = a_1^{m_1}a_2^{m_2} f_{(m_1,m_2)}(X) \]
Hence \(f_{(m_1,m_2)}\) is an eigenfunction for the action of \(B\), with weight \(m_1\alpha_1+m_2\alpha_2\). 

\textbf{Images of colors.}
To determine the color map, we consider the order of vanishing of the functions \(f_{(m_1,m_2)}\) along the \(B\)-stable divisors \(D_i\). 
Again, it is convenient to work in homogeneous coordinates on \(\bbP^3\times \bbP^1\), associating to the class of a matrix \(\begin{bmatrix}a&b\\c&d\end{bmatrix}\) the homogeneous coordinates \([a:b:c:d]\) and to the line \(\bbC\begin{bmatrix}x\\y\end{bmatrix}\) the homogeneous coordinates \([x:y]\). 
One can then write down the equations of the divisors \(D_i\): 
\begin{itemize}
    \item \(D_1=\{y=0\}\)
    \item \(D_2=\{c=0\}\)
    \item \(D_3=\{xc-ay=0\}\)
\end{itemize}
From the expression of \(f_{(m_1,m_2)}\), one deduces that 
\begin{itemize}
    \item \(\ord(D_1)(f_{(m_1,m_2)})=m_1-m_2\)
    \item \(\ord(D_2)(f_{(m_1,m_2)})=m_1+m_2\)
    \item \(\ord(D_3)(f_{(m_1,m_2)})=-m_1+m_2\)
\end{itemize}
Hence the description of \(\rho(D_i)\) in the statement. 

\textbf{Valuation cone.}

We use Proposition~\ref{prop_valuation_cone} to determine the valuation cone. 
Since \(\PGL_2^2/\diag(B_0)\) is the quotient of \(\SL_2^2/\diag(B_0)\) by a central subgroup of order two, the valuation cones of the two homogeneous spaces are the same. 
We compute it for \(\SL_2^2/\diag(B_0)\) for simpler notation. 

Consider the functions 
\[ 
f_1:\SL_2^2 \to \bbC, \qquad \left( \begin{bmatrix}a_1&b_1\\ c_1&d_1\end{bmatrix}, \begin{bmatrix}a_2&b_2\\ c_2&d_2\end{bmatrix} \right) \mapsto c_1c_2 \]
and 
\[ 
f_2:\SL_2^2 \to \bbC, \qquad \left( \begin{bmatrix}a_1&b_1\\ c_1&d_1\end{bmatrix}, \begin{bmatrix}a_2&b_2\\ c_2&d_2\end{bmatrix} \right) \mapsto c_1d_2-d_1c_2 \]
It is easy to check that they are regular functions  that are \(B\)-semi-invariant on the left with weight \(\varpi_1+\varpi_2\) and \(\diag(B_0)\)-semi-invariant on the right. 
Let \(M_1\) and \(M_2\) be the simple \(G\)-modules generated by \(f_1\) and \(f_2\). 
The functions 
\[f:=\diag(\begin{bmatrix} 0& 1\\-1&0\end{bmatrix})\cdot f_1 \times f_2 - \left(I_2,\begin{bmatrix} 0& 1\\-1&0\end{bmatrix}\right)\cdot f_1 \times \left(\begin{bmatrix} 0& 1\\-1&0\end{bmatrix}, I_2\right) \cdot f_2\]
and 
\[\tilde{f}:=\diag(\begin{bmatrix} 0& 1\\-1&0\end{bmatrix})\cdot f_1 \times f_2 - \left(\begin{bmatrix} 0& 1\\-1&0\end{bmatrix}, I_2\right)\cdot f_1 \times \left(I_2, \begin{bmatrix} 0& 1\\-1&0\end{bmatrix}\right) \cdot f_2\]
are respectively in simple \(G\)-submodules in \(M_1M_2\) of highest weight \(2\varpi_2\) and \(2\varpi_1\). 

Since \(2\varpi_1+2\varpi_2 - 2\varpi_2= \alpha_1\) and \(2\varpi_1+2\varpi_2 - 2\varpi_1= \alpha_2\), by Proposition~\ref{prop_valuation_cone}, we deduce that the valuation cone \(\mathcal{V}\) is included in \(\{x\in N\otimes \bbQ\mid \langle \alpha_1,x\rangle\leq 0, \langle \alpha_2,x\rangle\leq 0\}\). 
This is in fact an equality, by Proposition~\ref{prop_spherical_root_as_sum_of_positive_roots}, and \(\Sigma=\{\alpha_1,\alpha_2\}\). 

\textbf{The case \(G_0=\SL_2\) follows directly.} 
Since \(\diag(-I_2)\in \diag(B_0)\), we know that \(M\subset (\varpi_1+\varpi_2)\bbZ\oplus (\varpi_1-\varpi_2)\bbZ\). 
But since both homogeneous spaces are related by the quotient by \((I_2,-I_2)\), we have \(\alpha_1\bbZ\oplus \alpha_2\bbZ\subset M\).
This shows that \(\Sigma=\{\alpha_1,\alpha_2\}\). 
Using the properties of colors by type (see e.g. \cite{Gagliardi_Hofscheier_2015_homogeneous}), we see that necessarily all the other combinatorial data are the same. 
To distinguish the two homogeneous spaces, the weight lattices must be different, hence the result. 
\end{proof}

We now determine the possible locally factorial \(G/H\)-reflexive polytopes. We first note that, in view of the combinatorial data, we already know three vertices given by the \(\rho(D)/m_{D}=\rho(D)\) for \(D\in \mathcal{D}\). 

\textbf{Case \(H=\diag(B)\).}
By considering the successive next vertices similarly as in Section~\ref{section_SL_2_horospherical}, we quickly obtain the classification of locally factorial \(G/H\)-reflexive polytopes: there are only three, as follows. 

\begin{center}
\begin{tikzpicture}[scale=0.6]
\draw[dotted] (-2,2) grid (2,-2);
\draw (0,0) node{+};
\draw[thick] (0,-1) -- (1,0) -- (0,1) -- (-1,1) -- cycle;
\draw (1,0) node[color=purple]{\(\varheartsuit\)};
\draw (0,-1) node[color=teal]{\(\clubsuit\)};
\draw (0,1) node[color=violet]{\(\vardiamondsuit\)};
\draw (0,-1) node[below]{\textbf{4-2-19}};
\end{tikzpicture}
\begin{tikzpicture}[scale=0.6]
\draw[dotted] (-2,2) grid (2,-2);
\draw (0,0) node{+};
\draw[thick] (0,-1) -- (1,0) -- (0,1) -- (-1,0) -- cycle;
\draw (1,0) node[color=purple]{\(\varheartsuit\)};
\draw (0,-1) node[color=teal]{\(\clubsuit\)};
\draw (0,1) node[color=violet]{\(\vardiamondsuit\)};
\draw (0,-1) node[below]{\textbf{4-2-20}};
\end{tikzpicture}
\begin{tikzpicture}[scale=0.6]
\draw[dotted] (-2,2) grid (2,-2);
\draw (0,0) node{+};
\draw[thick] (0,-1) -- (1,0) -- (0,1) -- (-1,1) -- (-1,0) -- cycle;
\draw (1,0) node[color=purple]{\(\varheartsuit\)};
\draw (0,-1) node[color=teal]{\(\clubsuit\)};
\draw (0,1) node[color=violet]{\(\vardiamondsuit\)};
\draw (0,-1) node[below]{\textbf{4-2-21}};
\end{tikzpicture}
\end{center}

\textbf{Case \(H=N(\diag(B))\).}
We obtain the following two additional possibilities. 

\begin{center}
\begin{tikzpicture}[scale=0.6]
\draw[dotted] (-2,2) grid (2,-2);
\draw (0,0) node{+};
\draw[thick] (1,-1) -- (1,1) -- (-1,1) -- (-1,0) -- (0,-1) -- cycle;
\draw (1,-1) node[color=teal]{\(\clubsuit\)};
\draw (1,1) node[color=purple]{\(\varheartsuit\)};
\draw (-1,1) node[color=violet]{\(\vardiamondsuit\)};
\draw (0,-1) node[below]{\textbf{4-2-22}};
\end{tikzpicture}
\begin{tikzpicture}[scale=0.6]
\draw[dotted] (-2,2) grid (2,-2);
\draw (0,0) node{+};
\draw[thick] (1,-1) -- (1,1) -- (-1,1) -- (-1,0) -- cycle;
\draw (1,-1) node[color=teal]{\(\clubsuit\)};
\draw (1,1) node[color=purple]{\(\varheartsuit\)};
\draw (-1,1) node[color=violet]{\(\vardiamondsuit\)};
\draw (0,-1) node[below]{\textbf{4-2-23}};
\end{tikzpicture}
\end{center}

\begin{exa}
As follows from the proof for the combinatorial data, or its obvious variant, the embeddings \textbf{4-2-23} and \textbf{4-2-19} are \(\bbP^3\times \bbP^1\) and \(Q^3\times \bbP^1\). 
Blowing up their unique closed orbits yields the embeddings \textbf{4-2-22} and \textbf{4-2-21}. 
\end{exa}

\subsection{Symmetric varieties of type \(A_1\times A_1\)}

We consider the cases when \(n=0\) and the Lie algebra of \(H\) is \(\mathfrak{t}_1\oplus\mathfrak{t}_2\). 
There are four possible corresponding subgroups (satisfying our assumptions). 
All but one are products, so that we can recover their combinatorial data directly. 

\begin{prop}
The combinatorial data for \(H=T_1\times T_2\) is as follows: 
\[\arraycolsep=12pt
\begin{array}{lll}
M=\langle \alpha_1, \alpha_2 \rangle  &  \rho(\clubsuit)=\rho(\varheartsuit)=(1,0) & \rho(\spadesuit)=\rho(\vardiamondsuit)=(0,1)
\\  
\Sigma=\{\alpha_1,\alpha_2\} & \zeta(\clubsuit)=\zeta(\varheartsuit)=\{\alpha_1\} &  \zeta(\spadesuit)=\zeta(\vardiamondsuit)=\{\alpha_2\}
\\ 
\mathcal{D} = \{\clubsuit,\varheartsuit,\spadesuit,\vardiamondsuit\} &  
\\
\midrule
\kappa = \alpha_1+\alpha_2 & f = 4(1+x_1)(1+x_2) & m_{\clubsuit} = m_{\varheartsuit} = m_{\spadesuit} = m_{\vardiamondsuit} = 1
\end{array}
\]
\end{prop}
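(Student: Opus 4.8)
The plan is to exploit the fact that $G/H=\SL_2^2/(T_1\times T_2)$ is a product of two copies of the rank-one spherical homogeneous space $\SL_2/T$, whose combinatorial data were already recorded (see Example~\ref{P^1xP^1} for the weight lattice and Example~\ref{exa_valuation_cone_SL2} for the valuation cone, and the dimension-two proposition for $\SL_2/T$): for each factor one has $M_i=\langle\alpha_i\rangle$, $\Sigma_i=\{\alpha_i\}$, two colors with common image $\rho_i=1$ and $\zeta_i=\{\alpha_i\}$, adapted parabolic $B_i$, and Duistermaat--Heckman polynomial $2(1+x_i)$. Since $H=T_1\times T_2$ respects the decomposition $G=\SL_2\times\SL_2$, we have an equivariant isomorphism $G/H\simeq(\SL_2/T_1)\times(\SL_2/T_2)$, and the whole argument reduces to showing that the combinatorial data of a product decompose factorwise.

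First I would establish this product principle for $(M,\Sigma,(\mathcal{D},\rho,\zeta))$. Writing $B=B_1\times B_2$, a $B$-semi-invariant rational function on the product is a product of $B_i$-semi-invariants on the factors, so $M=M_1\oplus M_2=\langle\alpha_1,\alpha_2\rangle$ and $N=N_1\oplus N_2$, with $(x_1,x_2)$ the coordinates dual to $(\alpha_1,\alpha_2)$. Since $B$ acts on $X_1\times X_2$ with open orbit $O_1\times O_2$, every codimension-one $B$-orbit is of the form $D\times X_2$ with $D$ a color of $X_1$, or $X_1\times D'$ with $D'$ a color of $X_2$; hence $\mathcal{D}=\mathcal{D}_1\sqcup\mathcal{D}_2$, giving the four colors $\{\clubsuit,\varheartsuit,\spadesuit,\vardiamondsuit\}$. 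The valuation attached to $D\times X_2$ depends only on functions pulled back from $X_1$, so $\rho$ lands in $N_1\subset N$ and agrees with $\rho_1$, while the $G$-stabilizer of $D\times X_2$ is (its stabilizer in $G_1$)$\times G_2$, whence $\zeta$ agrees with $\zeta_1$. This yields $\rho(\clubsuit)=\rho(\varheartsuit)=(1,0)$, $\rho(\spadesuit)=\rho(\vardiamondsuit)=(0,1)$ and the stated $\zeta$. Finally the valuation cone is the product cone $\mathcal{V}_1\times\mathcal{V}_2$, so by the definition of the spherical roots $\Sigma=\Sigma_1\sqcup\Sigma_2=\{\alpha_1,\alpha_2\}$.

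Next I would read off the derived quantities. The adapted parabolic is $P_I$ with $I=\bigcup_{D}\zeta(D)=\{\alpha_1,\alpha_2\}$, that is $P=B$, so $\kappa$, the sum of the roots of $P$, is the sum $\alpha_1+\alpha_2$ of the positive roots. Each color satisfies $\zeta(D)\cap\Sigma\neq\emptyset$ (indeed $\zeta(\clubsuit)=\{\alpha_1\}$ and $\alpha_1\in\Sigma$, and similarly for the others), so $m_D=1$ throughout. For the Duistermaat--Heckman polynomial $R_{P^u}=R^+=\{\alpha_1,\alpha_2\}$, and using $\varrho=\tfrac12(\alpha_1+\alpha_2)$, $\kappa+p=(1+x_1)\alpha_1+(1+x_2)\alpha_2$, and the orthogonality $\{\alpha_1,\alpha_2\}=0$ of the two $\SL_2$ factors for the Killing form, each factor $\tfrac{\{\kappa+p,\alpha_i\}}{\{\varrho,\alpha_i\}}$ equals $2(1+x_i)$, so $f=4(1+x_1)(1+x_2)$, the product of the one-factor polynomials.

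The main obstacle is simply to pin down the product principle cleanly, in particular the claim that every color of $X_1\times X_2$ is a color of one factor times the whole other factor, and that the valuation cone is the product cone $\mathcal{V}_1\times\mathcal{V}_2$. Both are standard facts for products of spherical spaces but deserve a line of justification; once granted, everything else is a direct substitution of the $\SL_2/T$ data together with the short Killing-form computation above, all of which is routine.
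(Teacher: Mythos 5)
Your proof is correct and takes exactly the paper's route: the paper gives no written argument for this case, remarking only that \(\SL_2^2/(T_1\times T_2)\) is a product of two copies of \(\SL_2/T\) so that its combinatorial data can be ``recovered directly'' from the rank-one data of Examples~\ref{P^1xP^1} and~\ref{exa_valuation_cone_SL2}, which is precisely the factorwise product principle you spell out. Your decompositions \(M=M_1\oplus M_2\), \(\mathcal{D}=\mathcal{D}_1\sqcup\mathcal{D}_2\), \(\Sigma=\Sigma_1\sqcup\Sigma_2\) (equivalently \(\mathcal{V}=\mathcal{V}_1\times\mathcal{V}_2\)), and the resulting computations of \(\kappa\), the \(m_D\) (using \(\zeta(D)\cap\Sigma\neq\emptyset\)) and \(f=4(1+x_1)(1+x_2)\) via the Killing-form orthogonality of the two factors all check out against the stated data.
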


\begin{prop}
The combinatorial data for \(H=N(T_1)\times T_2\) is as follows: 
\[\arraycolsep=12pt
\begin{array}{lll}
M=\langle 2\alpha_1, \alpha_2 \rangle  &  \rho(\clubsuit)=(2,0) & \rho(\spadesuit)=\rho(\vardiamondsuit)=(0,1)
\\  
\Sigma=\{2\alpha_1,\alpha_2\} & \zeta(\clubsuit)=\{\alpha_1\} &  \zeta(\spadesuit)=\zeta(\vardiamondsuit)=\{\alpha_2\}
\\ 
\mathcal{D} = \{\clubsuit,\spadesuit,\vardiamondsuit\} &  
\\
\midrule
\kappa = \alpha_1+\alpha_2 & f = 4(1+2x_1)(1+x_2) & m_{\clubsuit} = m_{\spadesuit} = m_{\vardiamondsuit} = 1
\end{array}
\]
\end{prop}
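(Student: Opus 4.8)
The plan is to exploit the evident product structure: since $N(T_1)\times T_2$ splits as a product of subgroups of the two $\SL_2$ factors, the homogeneous space is $G$-equivariantly isomorphic to $\SL_2/N(T_1) \times \SL_2/T_2$, on which $\SL_2^2$ acts factorwise. The combinatorial data of each factor is already recorded earlier: for $\SL_2/N(T)$ (the space realized as $\bbP^2$) one has $M_1 = \langle 2\alpha_1\rangle$, $\Sigma_1 = \{2\alpha_1\}$, a single color $\clubsuit$ with $\rho(\clubsuit) = 2$, $\zeta(\clubsuit) = \{\alpha_1\}$; for $\SL_2/T$ (realized as $\bbP^1\times\bbP^1$) one has $M_2 = \langle\alpha_2\rangle$, $\Sigma_2 = \{\alpha_2\}$, two colors $\spadesuit, \vardiamondsuit$ each with $\rho = 1$, $\zeta = \{\alpha_2\}$. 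The proof then reduces to assembling these into the product, invoking the product principle used implicitly throughout this section.

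First I would confirm sphericity: $B = B_1\times B_2$ acts with dense orbit, the product of the two dense $B_i$-orbits. The adapted parabolic is therefore $P = B_1\times B_2$, with $G/P\simeq \bbP^1\times\bbP^1$, giving $\dim(G/P)=2$ and, by Corollary~\ref{cor_local_structure_theorem}, $\rank(G/H)=2$. Applying Proposition~\ref{local_structure_theorem} factorwise, the connected center splits as $Z = Z_1\times Z_2$, whence $M = X^*(Z_1/(Z_1\cap H_1))\oplus X^*(Z_2/(Z_2\cap H_2)) = M_1\oplus M_2 = \langle 2\alpha_1, \alpha_2\rangle$, which I identify with $\bbZ^2$ via the dual coordinates $(x_1,x_2)$. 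Next I would identify the colors: a codimension-one $B$-orbit in a product is the product of a codimension-one $B_i$-orbit in one factor with the open orbit in the other, so $\mathcal{D} = \mathcal{D}_1\sqcup\mathcal{D}_2 = \{\clubsuit, \spadesuit, \vardiamondsuit\}$, with the associated divisorial valuations supported on the respective factor. This yields $\rho(\clubsuit) = (2,0)$ and $\rho(\spadesuit) = \rho(\vardiamondsuit) = (0,1)$, and $\zeta$ is inherited unchanged.

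For the spherical roots I would note that the valuation cone of a product is the product of the valuation cones, so $\mathcal{V} = \mathcal{V}_1\times\mathcal{V}_2$ and $\Sigma = \Sigma_1\sqcup\Sigma_2 = \{2\alpha_1, \alpha_2\}$; rather than re-deriving this via Proposition~\ref{prop_valuation_cone}, the cleanest route is to apply Corollary~\ref{cor_Sigma_SL_2} to each $\SL_2$ factor, which pins down the spherical root of each summand directly. Since $P = B_1\times B_2$, the sum of its roots is $\kappa = \kappa_1 + \kappa_2 = \alpha_1 + \alpha_2$. The weights $m_D$ are computed from $\zeta$ and $\Sigma$ exactly as in each factor: $\alpha_1\in\frac12\Sigma$ and $\alpha_2\in\Sigma$, so all three equal $1$. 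Finally, since the positive roots of $P^u$ split as a disjoint union over the two factors, the function $f$ is multiplicative, $f = f_1 f_2 = 2(1+2x_1)\cdot 2(1+x_2) = 4(1+2x_1)(1+x_2)$.

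I do not expect a genuine obstacle here; the computation is pure bookkeeping once the product decomposition is in hand. The only point requiring a word of care is the multiplicativity of the valuation cone (equivalently of $\Sigma$) and of the color set under products, but both follow transparently from the factorwise application of the local structure theorem and Corollary~\ref{cor_Sigma_SL_2}, so no computation beyond what was done for the two $\SL_2$-spaces is needed.
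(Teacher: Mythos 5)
Your proposal is correct and matches the paper's own (implicit) argument: the paper disposes of this case with the remark that ``all but one are products, so that we can recover their combinatorial data directly,'' i.e.\ exactly your factorwise assembly of the data for \(\SL_2/N(T_1)\) and \(\SL_2/T_2\). Your computations of \(M\), \(\mathcal{D}\), \(\rho\), \(\zeta\), \(\Sigma\), \(\kappa\), the \(m_D\) (via \(\alpha_1\in\tfrac{1}{2}\Sigma\), \(\alpha_2\in\Sigma\)) and the multiplicative \(f=4(1+2x_1)(1+x_2)\) all check out, so you have simply made explicit the bookkeeping the paper leaves to the reader.
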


\begin{prop}
The combinatorial data for \(H=N(T_1)\times N(T_2)\) is as follows: 
\[\arraycolsep=12pt
\begin{array}{lll}
M=\langle 2\alpha_1, 2\alpha_2 \rangle  &  \rho(\clubsuit)=(2,0) & \rho(\spadesuit)=(0,2)
\\  
\Sigma=\{2\alpha_1,2\alpha_2\} & \zeta(\clubsuit)=\{\alpha_1\} &  \zeta(\spadesuit)=\{\alpha_2\}
\\ 
\mathcal{D} = \{\clubsuit,\spadesuit\} &  
\\
\midrule
\kappa = \alpha_1+\alpha_2 & f = 4(1+2x_1)(1+2x_2) & m_{\clubsuit} = m_{\spadesuit} = 1
\end{array}
\]
\end{prop}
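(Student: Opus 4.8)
The plan is to exploit the fact that $H=N(T_1)\times N(T_2)$ is a product, so that $G/H\simeq \bigl(\SL_2/N(T_1)\bigr)\times\bigl(\SL_2/N(T_2)\bigr)$ as $G$-varieties, with each simple factor of $G$ acting on its own factor. This is immediate from the product structure of $G$ and $H$, and it reduces the whole statement to assembling the combinatorial data of a product from the data of its factors, exactly as announced in the remark preceding the proposition.

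The core of the argument is the principle that combinatorial data is multiplicative under products of spherical homogeneous spaces, which I would justify directly in this two-factor situation rather than cite. Concretely: a $B=B_1\times B_2$-semi-invariant rational function on the product is a product $f_1f_2$ of $B_i$-semi-invariants on each factor, whence $M=M_1\oplus M_2$; $G$-invariant valuations split across the factors, so $\mathcal V=\mathcal V_1\times\mathcal V_2$ and therefore $\Sigma=\Sigma_1\sqcup\Sigma_2$; and a codimension-one $B$-orbit in the product is the product of a color of one factor with the open $B_j$-orbit of the other, so $\mathcal D=\mathcal D_1\sqcup\mathcal D_2$ with $\rho$ and $\zeta$ inherited componentwise. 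The Duistermaat–Heckman polynomial factors as $f=f_1f_2$, and the adapted parabolic of the product is $P=B_1\times B_2$, so $\kappa$ is the sum of the roots of $B_1$ and $B_2$.

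I would then feed in the rank-one computation already carried out for $\SL_2/N(T)$, namely $M_i=\langle 2\alpha_i\rangle$, $\Sigma_i=\{2\alpha_i\}$, a single color with $\rho=2$ and $\zeta=\{\alpha_i\}$, adapted parabolic $B_i$ with $\kappa_i=\alpha_i$, and $f_i=2(1+2x_i)$. Assembling gives $M=\langle 2\alpha_1,2\alpha_2\rangle$, $\Sigma=\{2\alpha_1,2\alpha_2\}$, $\mathcal D=\{\clubsuit,\spadesuit\}$ with $\rho(\clubsuit)=(2,0)$, $\rho(\spadesuit)=(0,2)$ and $\zeta(\clubsuit)=\{\alpha_1\}$, $\zeta(\spadesuit)=\{\alpha_2\}$, together with $\kappa=\alpha_1+\alpha_2$ and $f=4(1+2x_1)(1+2x_2)$. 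Finally $m_{\clubsuit}=m_{\spadesuit}=1$, since $\alpha_i\in\tfrac12\Sigma$ and hence $\zeta(D)\cap\bigl(\Sigma\cup\tfrac12\Sigma\bigr)\neq\emptyset$.

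I expect no serious obstacle here. The only points requiring a little care are that the weight lattice acquires no extra elements and that $\Sigma$ is exactly $\{2\alpha_1,2\alpha_2\}$; both follow cleanly from the splitting of $B$-semi-invariants and of $G$-invariant valuations across the two independent factors. The one spot where one must stay alert is the computation of $m_D$: since $\zeta(D)$ meets $\tfrac12\Sigma$, one uses the first clause of the definition of $m_D$ and obtains $1$, rather than the $\langle\alpha^\vee,\kappa\rangle$ clause (which would give $2$).
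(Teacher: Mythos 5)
Your proposal is correct and takes essentially the same route as the paper: the paper gives no explicit proof for this proposition, justifying it only by the preceding remark that \(G/H\) is a product, so the data is assembled from the factors \(\SL_2/N(T_i)\) computed earlier---precisely the reduction you spell out. Your attention to the \(m_D\) computation (using the first clause since \(\alpha_i\in\tfrac{1}{2}\Sigma\), yielding \(m_D=1\) rather than \(\langle\alpha^\vee,\kappa\rangle=2\)) matches the paper's own observation in the \(\SL_2\times\bbG_m^n\) section.
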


Finally, for the remaining case \(H=\diag(N(T_1))\), we can deduce its combinatorial data from that the homogeneous space is obtained as the quotient of \(\SL_2^2/T_1\times T_2\) by the group generated by \(\left(\begin{bmatrix}0&1\\-1&0\end{bmatrix},\begin{bmatrix}0&1\\-1&0\end{bmatrix}\right)\). 

\begin{prop}
The combinatorial data for \(H=\diag N(T_1)\) is as follows: 
\[\arraycolsep=12pt
\begin{array}{lll}
M=\langle \alpha_1+\alpha_2, \alpha_1-\alpha_2 \rangle  &  \rho(\clubsuit)=(1,1) & \rho(\spadesuit)=(1,-1)
\\  
\Sigma=\{2\alpha_1,2\alpha_2\} & \zeta(\clubsuit)=\{\alpha_1\} &  \zeta(\spadesuit)=\{\alpha_2\}
\\ 
\mathcal{D} = \{\clubsuit,\spadesuit\} &  
\\
\midrule
\kappa = \alpha_1+\alpha_2 & f = 4(1+x_1+x_2)(1+x_1-x_2) & m_{\clubsuit} = m_{\spadesuit}  = 1
\end{array}
\]
\end{prop}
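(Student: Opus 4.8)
The plan is to exploit the presentation, noted just before the statement, of $G/H$ as a finite quotient of the torus case. Set $H_0=T_1\times T_2$, $n_0=\begin{bmatrix}0&1\\-1&0\end{bmatrix}$ and $n=(n_0,n_0)$, so that $H=\langle H_0,n\rangle$. Since $n^2=(-I_2,-I_2)\in H_0$, right translation by $n$ defines an involutive $G$-equivariant automorphism $\sigma$ of $G/H_0$, and $G/H=(G/H_0)/\langle\sigma\rangle$. Under the identification $G/H_0=(\SL_2/T)\times(\SL_2/T)$ one has $\sigma=\sigma_1\times\sigma_1$, where $\sigma_1$ is right translation by $n_0$ on $\SL_2/T$, whose quotient is exactly $\SL_2/N(T)$. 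The whole proof then consists in transporting the already-computed combinatorial data of $H_0=T_1\times T_2$ through the degree two quotient $\pi$, using the single-factor information from Examples~\ref{P^1xP^1} and~\ref{P^2}.

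First I would handle the colors and the easy invariants. Since $\SL_2/N(T)$ has a single color (Example~\ref{P^2}) while $\SL_2/T$ has two (Example~\ref{P^1xP^1}), the involution $\sigma_1$ must exchange the two colors of each factor; hence $\sigma$ exchanges the first-factor colors $\clubsuit,\varheartsuit$ and the second-factor colors $\spadesuit,\vardiamondsuit$. The colors of $G/H$ are the $\sigma$-orbits, so there are exactly two, say $\clubsuit=\pi(\{\clubsuit,\varheartsuit\})$ and $\spadesuit=\pi(\{\spadesuit,\vardiamondsuit\})$, with $\zeta$ preserved, giving $\zeta(\clubsuit)=\{\alpha_1\}$, $\zeta(\spadesuit)=\{\alpha_2\}$, adapted parabolic $P=B$, and $\kappa=\alpha_1+\alpha_2$. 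Because $\sigma$ swaps two \emph{distinct} divisors over each color, $\pi$ is unramified over the generic point of each, so orders of vanishing are preserved and $\rho$ is simply the restriction of $\rho_0$ to $M\subset M_0$: the functionals $\rho_0(\clubsuit)=(1,0)$ and $\rho_0(\spadesuit)=(0,1)$ restrict to $(1,1)$ and $(1,-1)$ in the basis $(\alpha_1+\alpha_2,\alpha_1-\alpha_2)$.

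The crux, and the step carrying the real content, is the identification of the weight lattice $M$. Since $\sigma$ commutes with $B$ and the $B$-eigenspaces of $\bbC(G/H_0)$ are one-dimensional, $\sigma$ acts on the $\lambda$-eigenspace by a sign $c_\lambda\in\{\pm1\}$, and $\lambda\mapsto c_\lambda$ is a homomorphism $M_0\to\{\pm1\}$ with $M=\ker$, so $M$ is $M_0$ or an index-two sublattice. To compute the signs I would localize to one factor: on $\SL_2/T$ the involution $\sigma_1$ acts on a weight-$k\alpha_1$ eigenfunction by $(-1)^k$, which is precisely the content of $M(\SL_2/N(T))=\langle 2\alpha_1\rangle=\langle 4\varpi_1\rangle$ from Example~\ref{P^2} (the $\sigma_1$-invariant weights are exactly the even multiples of $\alpha_1$). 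Multiplying across the two factors gives $c_{m_1\alpha_1+m_2\alpha_2}=(-1)^{m_1+m_2}$, whence $M=\{m_1\alpha_1+m_2\alpha_2\mid m_1+m_2\text{ even}\}=\langle\alpha_1+\alpha_2,\alpha_1-\alpha_2\rangle$. This is the main obstacle, and the key insight is that the lattice drop is governed entirely by the one-factor statement of Example~\ref{P^2}.

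It then remains to read off the secondary invariants. The valuation cone is unchanged under the quotient, since $\sigma$ acts trivially on $N_0\otimes\bbQ$ and the identification $N\otimes\bbQ=N_0\otimes\bbQ$ intertwines the cones; thus $\mathcal{V}=\{x\mid\langle\alpha_1,x\rangle\leq0,\ \langle\alpha_2,x\rangle\leq0\}$, and its facet-defining \emph{primitive} elements of the new lattice $M$ are $2\alpha_1$ and $2\alpha_2$, so $\Sigma=\{2\alpha_1,2\alpha_2\}$ (consistent with Proposition~\ref{prop_spherical_root_as_sum_of_positive_roots}). Consequently $\tfrac12\Sigma=\{\alpha_1,\alpha_2\}$ meets each $\zeta(D)$, giving $m_\clubsuit=m_\spadesuit=1$. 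Finally, the Duistermaat--Heckman polynomial depends only on $G$, $P$ and $\kappa$, so it is the same function $4(1+a_1)(1+a_2)$ of $p=a_1\alpha_1+a_2\alpha_2$ as in the $T_1\times T_2$ case; substituting $a_1=x_1+x_2$ and $a_2=x_1-x_2$ for the basis $(\alpha_1+\alpha_2,\alpha_1-\alpha_2)$ yields $f=4(1+x_1+x_2)(1+x_1-x_2)$, completing the verification of every entry.
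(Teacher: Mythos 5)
Your proposal is correct and takes essentially the same approach as the paper: the paper derives this proposition exactly by presenting \(\SL_2^2/\diag N(T_1)\) as the quotient of \(\SL_2^2/(T_1\times T_2)\) by the diagonal element \(\left(\begin{bmatrix}0&1\\-1&0\end{bmatrix},\begin{bmatrix}0&1\\-1&0\end{bmatrix}\right)\) and transporting the combinatorial data, leaving the verification implicit. Your write-up merely supplies the details the paper omits — the sign character on \(M_0\) cutting out the index-two lattice, the fusion of the swapped colors, and the invariance of the valuation cone under the finite quotient — all of which are sound.
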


\subsubsection{Polytopes}

The conditions on locally factorial \(G/H\)-reflexive polytopes for these four homogeneous spaces are very strong, yielding few embeddings. 
Note that there can be facets containing a \(\frac{\rho(D)}{m_D}\) only in the case \(H=\diag(N(T_1))\), otherwise either \(\frac{\rho(D)}{m_D}\) is not primitive, or the color map has several antecedents of \(\rho(D)\). 
By considering successively the possible next vertices, we arrive quickly to the following possibilities. 

\textbf{ For \(H=T_1\times T_2\)}, we obtain two polytopes, and two corresponding locally factorial Fano embeddings. 

\begin{center}
\begin{tikzpicture}[scale=0.6]
\draw[dotted] (-2,2) grid (2,-2);
\draw[thick] (1,0) -- (0,1) -- (-1,0) -- (0,-1) -- cycle;
\draw (1,0) node[below=-4, color=teal]{\(\clubsuit\)};
\draw (1,0) node[right=-4, color=purple]{\(\varheartsuit\)};
\draw (0,1) node[above=-4, color=violet]{\(\vardiamondsuit\)};
\draw (0,1) node[left=-4]{\(\spadesuit\)};
\draw (0,-1) node[below]{\textbf{4-2-24}};
\end{tikzpicture}
\begin{tikzpicture}[scale=0.6]
\draw[dotted] (-2,2) grid (2,-2);
\draw[thick] (1,0) -- (0,1) -- (-1,0) -- (-1,-1) -- (0,-1) -- cycle;
\draw (1,0) node[below=-4, color=teal]{\(\clubsuit\)};
\draw (1,0) node[right=-4, color=purple]{\(\varheartsuit\)};
\draw (0,1) node[above=-4, color=violet]{\(\vardiamondsuit\)};
\draw (0,1) node[left=-4]{\(\spadesuit\)};
\draw (0,-1) node[below]{\textbf{4-2-25}};
\end{tikzpicture}
\end{center}

\textbf{ For \(H=N(T_1)\times T_2\)}, we obtain two polytopes, and two corresponding locally factorial Fano embeddings. 

\begin{center}
\begin{tikzpicture}[scale=0.6]
\draw[dotted] (-2,2) grid (2,-2);
\draw[thick] (2,0) -- (0,1) -- (-1,0) -- (0,-1) -- cycle;
\draw (2,0) node[color=teal]{\(\clubsuit\)};
\draw (0,1) node[above=-4, color=violet]{\(\vardiamondsuit\)};
\draw (0,1) node[left=-4]{\(\spadesuit\)};
\draw (0,-1) node[below]{\textbf{4-2-26}};
\end{tikzpicture}
\begin{tikzpicture}[scale=0.6]
\draw[dotted] (-2,2) grid (2,-2);
\draw[thick] (2,0) -- (0,1) -- (-1,0) -- (-1,-1) -- (0,-1) -- cycle;
\draw (2,0) node[color=teal]{\(\clubsuit\)};
\draw (0,1) node[above=-4, color=violet]{\(\vardiamondsuit\)};
\draw (0,1) node[left=-4]{\(\spadesuit\)};
\draw (0,-1) node[below]{\textbf{4-2-27}};
\end{tikzpicture}
\end{center}

\textbf{ For \(H=N(T_1)\times N(T_2)\)}, we obtain two polytopes, and two corresponding locally factorial Fano embeddings. 

\begin{center}
\begin{tikzpicture}[scale=0.6]
\draw[dotted] (-2,2) grid (2,-2);
\draw[thick] (2,0) -- (0,2) -- (-1,0) -- (0,-1) -- cycle;
\draw (2,0) node[color=teal]{\(\clubsuit\)};
\draw (0,2) node{\(\spadesuit\)};
\draw (0,-1) node[below]{\textbf{4-2-28}};
\end{tikzpicture}
\begin{tikzpicture}[scale=0.6]
\draw[dotted] (-2,2) grid (2,-2);
\draw[thick] (2,0) -- (0,2) -- (-1,0) -- (-1,-1) -- (0,-1) -- cycle;
\draw (2,0) node[color=teal]{\(\clubsuit\)};
\draw (0,2) node{\(\spadesuit\)};
\draw (0,-1) node[below]{\textbf{4-2-29}};
\end{tikzpicture}
\end{center}

\begin{exa}
In the three previous choices of homogeneous spaces, the embeddings obtained are the obvious product embeddings, and their blowups along the unique closed orbit. 
\end{exa}

\textbf{ For \(H=\diag(N(T_1))\)}, we obtain two polytopes, and two corresponding locally factorial Fano embeddings. 

\begin{center}
\begin{tikzpicture}[scale=0.6]
\draw[dotted] (-2,2) grid (2,-2);
\draw[thick] (1,-1) -- (1,1) -- (-1,1) -- (-1,0) -- cycle;
\draw (1,1) node[color=teal]{\(\clubsuit\)};
\draw (1,-1) node{\(\spadesuit\)};
\draw (0,-1) node[below]{\textbf{4-2-30}};
\end{tikzpicture}
\begin{tikzpicture}[scale=0.6]
\draw[dotted] (-2,2) grid (2,-2);
\draw[thick] (1,-1) -- (1,1) -- (-1,0) -- cycle;
\draw (1,1) node[color=teal]{\(\clubsuit\)};
\draw (1,-1) node{\(\spadesuit\)};
\draw (0,-1) node[below]{\textbf{4-2-31}};
\end{tikzpicture}
\end{center}

\begin{exa}
The embedding \textbf{4-2-31} is the quadric \(Q^4\), and the action factors through \(\SO_3\times \SO_3\subset \SO_6\). 
This embedding is described in details in \cite{infinite}. 
Blowing up a one-dimensional subquadric yields the other embedding \textbf{4-2-30}.
\end{exa}

\subsection{Parabolic inductions of type T}

Let \(Q=Q_{\alpha_2}\), \(G_0=\SL_2\times \bbG_m\) and let \(\pi:Q\to G_0\) be the epimorphism given by \((\id_{\SL_2},a_2\varpi_2+\chi_1)\) . 
Letting, as in the paragraph on parabolic induction, \(B_0=\pi(B\cap Q)\), we have \(X^*(B_0)=\bbZ\varpi_1 \oplus \bbZ(a_2\varpi_2+\chi_1)\). 

\begin{prop}
Assume that \(G/H\) is obtained by parabolic induction from \(G_0/H_0\) with respect to \(\pi:Q\to G_0\) as above, with \(H_0=\ker(a_1\varpi_1+a_2\varpi_2+\chi)|_{T_0}\). 
If \(a_1\) is even, the combinatorial data of \(G/H\) is as follows:
\[\arraycolsep=12pt
\begin{array}{lll}
M=\langle \alpha_1, a_2\varpi_2+\chi_1 \rangle  &  \rho(\clubsuit)=(1,a_1/2) & \zeta(\clubsuit)=\{\alpha_1\}
\\  
\Sigma=\{\alpha_1\} & \rho(\varheartsuit)=(1,-a_1/2) & \zeta(\varheartsuit)=\{\alpha_1\}
\\ 
\mathcal{D} = \{\clubsuit,\varheartsuit,\vardiamondsuit\} &  \rho(\vardiamondsuit)=(0,a_2) & \zeta(\vardiamondsuit)=\{\alpha_2\}
\\
\midrule
\kappa = \alpha_1+\alpha_2 & f = 2(1+x_1)(2+a_2x_2) & m_{\clubsuit} = m_{\varheartsuit} = 1,  m_{\vardiamondsuit} = 2
\end{array}
\]
If \(a_1\) is odd, the combinatorial data of \(G/H\) is as follows:
\[\arraycolsep=12pt
\begin{array}{lll}
M=\langle \varpi_1+a_2\varpi_2+\chi_1, \varpi_1-a_2\varpi_2-\chi_1 \rangle  &  \rho(\clubsuit)=(\frac{a_1+1}{2},\frac{1-a_1}{2}) & \zeta(\clubsuit)=\{\alpha_1\}
\\  
\Sigma=\{\alpha_1\} & \rho(\varheartsuit)=(\frac{1-a_1}{2},\frac{a_1+1}{2}) & \zeta(\varheartsuit)=\{\alpha_1\}
\\ 
\mathcal{D} = \{\clubsuit,\varheartsuit,\vardiamondsuit\} &  \rho(\vardiamondsuit)=a_2(1,-1) & \zeta(\vardiamondsuit)=\{\alpha_2\}
\\
\midrule
f = (2+x_1+x_2)(2+a_2(x_1-x_2)) & \kappa = \alpha_1+\alpha_2 & m_{\clubsuit} = m_{\varheartsuit} = 1,  m_{\vardiamondsuit} = 2
\end{array}
\]
\end{prop}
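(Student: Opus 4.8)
The plan is to combine the two ingredients already in hand: the combinatorial data of rank-one $\SL_2\times\bbG_m$-spaces computed in Proposition~\ref{prop_combi_SL2:T}, and the parabolic-induction dictionary of Proposition~\ref{prop_parabolic_induction}. First I would identify the fibre $G_0/H_0$ as an instance of Proposition~\ref{prop_combi_SL2:T} with $n=1$. The only subtlety is that, under the inclusion $X^*(B_0)\subset X^*(B)$ induced by $\pi$, the generator of the $\bbG_m$-factor of $G_0$ is identified with $a_2\varpi_2+\chi_1$; accordingly one obtains the data of $G_0/H_0$ by performing the substitution $\chi_1\rightsquigarrow a_2\varpi_2+\chi_1$ in the conclusion of Proposition~\ref{prop_combi_SL2:T}. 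This yields $M_0$, $\Sigma_0=\{\alpha_1\}$, the two colors $\clubsuit,\varheartsuit$ with $\zeta_0(\clubsuit)=\zeta_0(\varheartsuit)=\{\alpha_1\}$, and the stated $\rho_0$, split according to the parity of $a_1$ (the change of basis for $M_0$ being the one already recorded there in the odd case).

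Next I would feed this into Proposition~\ref{prop_parabolic_induction} with $I=\{\alpha_2\}$, since $Q=Q_{\{\alpha_2\}}$. This directly gives $M=M_0$, $\Sigma=\Sigma_0=\{\alpha_1\}$, the inherited values $\rho|_{\mathcal{D}_0}=\rho_0$ and $\zeta|_{\mathcal{D}_0}=\zeta_0$ for $\clubsuit,\varheartsuit$, and exactly one new color $\vardiamondsuit$ attached to $\alpha_2$, with $\zeta(\vardiamondsuit)=\{\alpha_2\}$ and $\rho(\vardiamondsuit)=\alpha_2^\vee|_M$. Thus $\mathcal{D}=\{\clubsuit,\varheartsuit,\vardiamondsuit\}$ as claimed.

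The remaining work consists of three short computations. For $\rho(\vardiamondsuit)=\alpha_2^\vee|_M$, I would use that $\alpha_1$ and $\alpha_2$ lie in different $\SL_2$-factors, so $\langle\alpha_2^\vee,\alpha_1\rangle=0$ and $\langle\alpha_2^\vee,\chi_1\rangle=0$, while $\langle\alpha_2^\vee,\varpi_2\rangle=1$; evaluating on the chosen basis of $M$ then gives $\rho(\vardiamondsuit)=(0,a_2)$ when $a_1$ is even and $\rho(\vardiamondsuit)=(a_2,-a_2)=a_2(1,-1)$ when $a_1$ is odd. Since the colors cover both simple roots, $\bigcup_D\zeta(D)=\{\alpha_1,\alpha_2\}$, so the adapted parabolic is the Borel subgroup and $\kappa=\alpha_1+\alpha_2$. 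For the multiplicities, $m_{\clubsuit}=m_{\varheartsuit}=1$ because $\alpha_1\in\Sigma$, while $\{\alpha_2\}\cap(\Sigma\cup\tfrac12\Sigma)=\emptyset$ forces $m_{\vardiamondsuit}=\langle\alpha_2^\vee,\kappa\rangle=\langle\alpha_2^\vee,\alpha_1+\alpha_2\rangle=2$.

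Finally I would compute the Duistermaat--Heckman polynomial from the product formula over $R_{P^u}=\{\alpha_1,\alpha_2\}$,
\[
f(p)=\frac{\{\kappa+p,\alpha_1\}}{\{\varrho,\alpha_1\}}\cdot\frac{\{\kappa+p,\alpha_2\}}{\{\varrho,\alpha_2\}},
\]
with $\varrho=\varpi_1+\varpi_2$ and $\varpi_i=\tfrac12\alpha_i$ in each factor. Expanding $p$ in the relevant basis of $M$ and using bilinearity together with the orthogonality of the two factors, the $\alpha_1$-factor reproduces the $\SL_2$-contribution $2(1+x_1)$ (resp. $2+x_1+x_2$) and the $\alpha_2$-factor yields $2+a_2x_2$ (resp. $2+a_2(x_1-x_2)$), giving the stated $f$ in each parity case. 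There is no serious obstacle here: the whole argument is an application of the two cited propositions followed by coroot and Killing-form evaluations, and the only point demanding care is the bookkeeping of the two lattice bases in the odd case and the consistent use of the substitution $\chi_1\rightsquigarrow a_2\varpi_2+\chi_1$.
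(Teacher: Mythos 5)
Your proposal is correct and follows exactly the paper's route: the paper's proof is the one-line observation that the statement ``follows straightforwardly from Proposition~\ref{prop_parabolic_induction} and the combinatorial data for subgroups of \(\SL_2\times\bbG_m^n\)'' (i.e.\ Proposition~\ref{prop_combi_SL2:T}), which is precisely your two ingredients. Your added details --- the substitution \(\chi_1\rightsquigarrow a_2\varpi_1+\chi_1\) encoding \(\pi^*\), the coroot evaluation \(\rho(\vardiamondsuit)=\alpha_2^{\vee}|_M\), the multiplicities via \(\Sigma\) and \(\kappa=\alpha_1+\alpha_2\), and the Killing-form computation of \(f\) over \(R_{P^u}=\{\alpha_1,\alpha_2\}\) --- are all correct verifications of what the paper leaves implicit.
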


\begin{proof}
It follows straightforwardly from the Proposition~\ref{prop_parabolic_induction}, and the combinatorial data for subgroups of \(\SL_2\times \bbG_m^n\). 
\end{proof}

According to our classifications, we may assume that \(a_1\) is a non-negative integer. 
We now determine the possible locally factorial \(G/H\)-reflexive polytopes. 
Let \(\Omega\) be such a polytope. 
Note first that the two points \(\rho(\clubsuit)\) and \(\rho(\varheartsuit)\) are vertices of \(\Omega\). 
If \(\rho(\vardiamondsuit)/2\) is a vertex, then \(\rho(\vardiamondsuit)\) must be primitive, hence \(a_2=1\). 
If it is not a vertex, then we must have \(a_2\leq 1\). 

\textbf{Step~0: \(a_2=0\)} In this case, any embedding is a product of \(\bbP^1\) with a type T, dimension three \(\SL_2\times \bbG_m\)-spherical variety, with the product action. 
There are 14 such products, that we identify as \textbf{4-2-32} to \textbf{4-2-45}. 

From now on we assume that \(a_2=1\). 
As will be clear from the proof of the first few cases, there are no possible \(\Omega\) if \(a_1\geq 3\), we thus deal successively with the cases \(a_1=0\), \(1\) and \(2\). 

\textbf{Step~1: \(a_1=0\), \(\rho(\vardiamondsuit)/2\) is not a vertex}
Since \(\rho(\clubsuit)=\rho(\varheartsuit)\), these vertices cannot be vertices of a facet intersecting the relative interior of the valuation cone. 
As a consequence, the primitive generators \((0,1)\) and \((0,-1)\) of the valuation cone are vertices. 
The vertex following \((0,1)\) in counter-clockwise sense must have first coordinate \(-1\) (to form a basis with \((0,1)\)), and must have second coordinate less than \(1\) (so that \((0,1)\) remains a vertex of \(\Delta\)). If it is \((-1,1)\), then the next vertex in counterclockwise sense can only be \((-1,0)\) or \((0,-1)\). 
If it is \((-1,0)\), then the next vertex in counterclockwise sense is either \((-1,-1)\) or \((0,-1)\). 
It is now easy to exhaust the rest of the possibilities. 
Finally, there are five polytopes

\begin{center}
\begin{tikzpicture}[scale=0.6]
\draw[dotted] (-2,2) grid (2,-2);
\draw[thick] (1,0) -- (0,1) -- (-1,1) -- (-1,0) -- (0,-1) -- cycle;
\draw (1,0) node[above=-4, color=teal]{\(\clubsuit\)};
\draw (1,0) node[below=-4, color=purple]{\(\varheartsuit\)};
\draw (0,1/2) node[color=violet]{\(\vardiamondsuit\)};
\draw (0,-1) node[below]{\textbf{4-2-46}};
\end{tikzpicture}
\begin{tikzpicture}[scale=0.6]
\draw[dotted] (-2,2) grid (2,-2);
\draw[thick] (1,0) -- (0,1) -- (-1,1) -- (0,-1) -- cycle;
\draw (1,0) node[above=-4, color=teal]{\(\clubsuit\)};
\draw (1,0) node[below=-4, color=purple]{\(\varheartsuit\)};
\draw (0,1/2) node[color=violet]{\(\vardiamondsuit\)};
\draw (0,-1) node[below]{\textbf{4-2-47}};
\end{tikzpicture}
\begin{tikzpicture}[scale=0.6]
\draw[dotted] (-2,2) grid (2,-2);
\draw[thick] (1,0) -- (0,1) -- (-1,0) -- (0,-1) -- cycle;
\draw (1,0) node[above=-4, color=teal]{\(\clubsuit\)};
\draw (1,0) node[below=-4, color=purple]{\(\varheartsuit\)};
\draw (0,1/2) node[color=violet]{\(\vardiamondsuit\)};
\draw (0,-1) node[below]{\textbf{4-2-48}};
\end{tikzpicture}
\begin{tikzpicture}[scale=0.6]
\draw[dotted] (-2,2) grid (2,-2);
\draw[thick] (1,0) -- (0,1) -- (-1,0) -- (-1,-1) -- (0,-1) -- cycle;
\draw (1,0) node[above=-4, color=teal]{\(\clubsuit\)};
\draw (1,0) node[below=-4, color=purple]{\(\varheartsuit\)};
\draw (0,1/2) node[color=violet]{\(\vardiamondsuit\)};
\draw (0,-1) node[below]{\textbf{4-2-49}};
\end{tikzpicture}
\begin{tikzpicture}[scale=0.6]
\draw[dotted] (-2,2) grid (2,-2);
\draw[thick] (1,0) -- (0,1) -- (-1,-1) -- (0,-1) -- cycle;
\draw (1,0) node[above=-4, color=teal]{\(\clubsuit\)};
\draw (1,0) node[below=-4, color=purple]{\(\varheartsuit\)};
\draw (0,1/2) node[color=violet]{\(\vardiamondsuit\)};
\draw (0,-1) node[below]{\textbf{4-2-50}};
\end{tikzpicture}
\end{center}

\begin{exa}
The embedding \textbf{4-2-48} is the product of embeddings \((\bbP^1)^2\times \bbF_1\). 
Blowing up one of the two closed orbits yields either \textbf{4-2-46} or \textbf{4-2-49}. 
\end{exa}

\textbf{Step~2: \(a_1=0\) and \((0,\frac{1}{2})\) is a vertex}
The same method yields the following polytopes (with respect to the previous case, some vertices are not allowed as they would lie on the same facet as the previous facet in counterclockwise order). 
\begin{center}
\begin{tikzpicture}[scale=0.6]
\draw[dotted] (-2,2) grid (2,-2);
\draw[thick] (1,0) -- (0,1/2) -- (-1,-1) -- (0,-1) -- cycle;
\draw (1,0) node[above=-4, color=teal]{\(\clubsuit\)};
\draw (1,0) node[below=-4, color=purple]{\(\varheartsuit\)};
\draw (0,1/2) node[color=violet]{\(\vardiamondsuit\)};
\draw (0,-1) node[below]{\textbf{4-2-51}};
\end{tikzpicture}
\begin{tikzpicture}[scale=0.6]
\draw[dotted] (-2,2) grid (2,-2);
\draw[thick] (1,0) -- (0,1/2) -- (-1,0) -- (0,-1) -- cycle;
\draw (1,0) node[above=-4, color=teal]{\(\clubsuit\)};
\draw (1,0) node[below=-4, color=purple]{\(\varheartsuit\)};
\draw (0,1/2) node[color=violet]{\(\vardiamondsuit\)};
\draw (0,-1) node[below]{\textbf{4-2-52}};
\end{tikzpicture}
\begin{tikzpicture}[scale=0.6]
\draw[dotted] (-2,2) grid (2,-2);
\draw[thick] (1,0) -- (0,1/2) -- (-1,0) -- (-1,-1) -- (0,-1) -- cycle;
\draw (1,0) node[above=-4, color=teal]{\(\clubsuit\)};
\draw (1,0) node[below=-4, color=purple]{\(\varheartsuit\)};
\draw (0,1/2) node[color=violet]{\(\vardiamondsuit\)};
\draw (0,-1) node[below]{\textbf{4-2-53}};
\end{tikzpicture}
\end{center}

\begin{exa}
The embedding \textbf{4-2-52} is the product of embeddings \((\bbP^1)^2\times \bbP^2\). 
Blowing up \((\diag \bbP^1)\times H\) where \(H\) is the stable line in \(\bbP^2\) yields the embedding \textbf{4-2-53}. 
\end{exa}

\textbf{Step~3: \(a_1=1\) and \((1/2,-1/2)\) is not a vertex}
We obtain the following list of five polytopes.
\begin{center}
\begin{tikzpicture}[scale=0.6]
\draw[dotted] (-2,2) grid (2,-2);
\draw[thick] (1,0) -- (0,1) -- (-1,1) -- (-1,0) -- (1,-1) -- cycle;
\draw (1,0) node[color=teal]{\(\clubsuit\)};
\draw (0,1) node[color=purple]{\(\varheartsuit\)};
\draw (1/2,-1/2) node[color=violet]{\(\vardiamondsuit\)};
\draw (0,-1) node[below]{\textbf{4-2-54}};
\end{tikzpicture}
\begin{tikzpicture}[scale=0.6]
\draw[dotted] (-2,2) grid (2,-2);
\draw[thick] (1,0) -- (0,1) -- (-1,0) -- (0,-1) -- (1,-1) -- cycle;
\draw (1,0) node[color=teal]{\(\clubsuit\)};
\draw (0,1) node[color=purple]{\(\varheartsuit\)};
\draw (1/2,-1/2) node[color=violet]{\(\vardiamondsuit\)};
\draw (0,-1) node[below]{\textbf{4-2-55}};
\end{tikzpicture}
\begin{tikzpicture}[scale=0.6]
\draw[dotted] (-2,2) grid (2,-2);
\draw[thick] (1,0) -- (0,1) -- (-1,1) -- (-1,0) -- (0,-1) -- (1,-1) -- cycle;
\draw (1,0) node[color=teal]{\(\clubsuit\)};
\draw (0,1) node[color=purple]{\(\varheartsuit\)};
\draw (1/2,-1/2) node[color=violet]{\(\vardiamondsuit\)};
\draw (0,-1) node[below]{\textbf{4-2-56}};
\end{tikzpicture}
\begin{tikzpicture}[scale=0.6]
\draw[dotted] (-2,2) grid (2,-2);
\draw[thick] (1,0) -- (0,1) -- (-1,1) -- (0,-1) -- (1,-1) -- cycle;
\draw (1,0) node[color=teal]{\(\clubsuit\)};
\draw (0,1) node[color=purple]{\(\varheartsuit\)};
\draw (1/2,-1/2) node[color=violet]{\(\vardiamondsuit\)};
\draw (0,-1) node[below]{\textbf{4-2-57}};
\end{tikzpicture}
\begin{tikzpicture}[scale=0.6]
\draw[dotted] (-2,2) grid (2,-2);
\draw[thick] (1,0) -- (0,1) -- (-1,0) -- (1,-1) -- cycle;
\draw (1,0) node[color=teal]{\(\clubsuit\)};
\draw (0,1) node[color=purple]{\(\varheartsuit\)};
\draw (1/2,-1/2) node[color=violet]{\(\vardiamondsuit\)};
\draw (0,-1) node[below]{\textbf{4-2-58}};
\end{tikzpicture}
\end{center}

\textbf{Step~4: \(a_1=1\) and \((1/2,-1/2)\) is a vertex}
Going the arguments above, we obtain only two possibilities. 

\begin{center}
\begin{tikzpicture}[scale=0.6]
\draw[dotted] (-2,2) grid (2,-2);
\draw[thick] (1,0) -- (0,1) -- (-1,0) -- (1/2,-1/2) -- cycle;
\draw (1,0) node[color=teal]{\(\clubsuit\)};
\draw (0,1) node[color=purple]{\(\varheartsuit\)};
\draw (1/2,-1/2) node[color=violet]{\(\vardiamondsuit\)};
\draw (0,-1) node[below]{\textbf{4-2-59}};
\end{tikzpicture}
\begin{tikzpicture}[scale=0.6]
\draw[dotted] (-2,2) grid (2,-2);
\draw[thick] (1,0) -- (0,1) -- (-1,1) -- (-1,0) -- (1/2,-1/2) -- cycle;
\draw (1,0) node[color=teal]{\(\clubsuit\)};
\draw (0,1) node[color=purple]{\(\varheartsuit\)};
\draw (1/2,-1/2) node[color=violet]{\(\vardiamondsuit\)};
\draw (0,-1) node[below]{\textbf{4-2-60}};
\end{tikzpicture}
\end{center}

\begin{exa}
The embedding \textbf{4-2-59} is \(\bbP^3\times \bbP^1\). 
The action of \(\SL_2^2\) to consider is given by considering \(\bbP^3\times \bbP^1\) as \(\bbP(\bbC^2_1\oplus \bbC^2_2)\times \bbP(\bbC^2_1)\) where \(\bbC^2_1\) and \(\bbC^2_2\) denote the standard representations of each of the \(\SL_2\) factor. 
By successive well-chosen blowups, one can obtain a precise description for all embeddings with \(a_1=1\) except \textbf{4-2-57}. 
\end{exa}

\textbf{Step~5: \(a_1=2\)}
Following the same steps as before, we easily see that \((0,1/2)\) cannot be a vertex, and obtain only two polytopes. 

\begin{center}
\begin{tikzpicture}[scale=0.6]
\draw[dotted] (-2,2) grid (2,-2);
\draw[thick] (1,1) -- (0,1) -- (-1,0) --  (1,-1) -- cycle;
\draw (1,1) node[color=teal]{\(\clubsuit\)};
\draw (1,-1) node[color=purple]{\(\varheartsuit\)};
\draw (0,1/2) node[color=violet]{\(\vardiamondsuit\)};
\draw (0,-1) node[below]{\textbf{4-2-61}};
\end{tikzpicture}
\begin{tikzpicture}[scale=0.6]
\draw[dotted] (-2,2) grid (2,-2);
\draw (0,0) node{+}; 
\draw[thick] (1,1) -- (0,1) -- (-1,0) -- (0,-1) -- (1,-1) -- cycle;
\draw (1,1) node[color=teal]{\(\clubsuit\)};
\draw (1,-1) node[color=purple]{\(\varheartsuit\)};
\draw (0,1/2) node[color=violet]{\(\vardiamondsuit\)};
\draw (0,-1) node[below]{\textbf{4-2-62}};
\end{tikzpicture}
\end{center}

\textbf{Step~6: \(a_1\geq 3\)}
From the same method as before, we obtain that there are no locally factorial \(G/H\)-reflexive polytopes. 

\subsection{Parabolic inductions of type N}

Let \(Q=Q_{\alpha_2}\), \(G_0=\SL_2\times \bbG_m\) and let \(\pi:Q\to G_0\) be the epimorphism given by \((\id_{\SL_2},a_2\varpi_2+\chi_1)\) . 
Letting, as in the paragraph on parabolic induction, \(B_0=\pi(B\cap Q)\), we have \(X^*(B_0)=\bbZ\varpi_1 \oplus \bbZ(a_2\varpi_2+\chi_1)\). 

\begin{prop}
Assume that \(G/H\) is obtained by parabolic induction from \(G_0/H_0\) with respect to \(\pi:Q\to G_0\) as above.
If \(H_0=N(T_1)\times \{1\}\), then the combinatorial data is:
\[\arraycolsep=12pt
\begin{array}{lll}
M=\langle 2\alpha_1, a_2\varpi_2+\chi_1 \rangle  &  \rho(\clubsuit)=(2,0) & \zeta(\clubsuit)=\{\alpha_1\}
\\  
\Sigma=\{2\alpha_1\} & \rho(\vardiamondsuit)=(0,a_2) & \zeta(\vardiamondsuit)=\{\alpha_2\}
\\ 
\mathcal{D} = \{\clubsuit,\vardiamondsuit\} 
\\
\midrule
\kappa = \alpha_1+\alpha_2 & f = 2(1+2x_1)(2+a_2x_2) & m_{\clubsuit} = 1,  m_{\vardiamondsuit} = 2
\end{array}
\]
If \(H_0=\diag(N(T_1))\), then the combinatorial data is:
\[\arraycolsep=12pt
\begin{array}{lll}
M=\langle \alpha_1+a_2\varpi_2+\chi_1, \alpha_1-a_2\varpi_2-\chi_1 \rangle  &  \rho(\clubsuit)=(1,1) & \zeta(\clubsuit)=\{\alpha_1\}
\\  
\Sigma=\{2\alpha_1\} & \rho(\vardiamondsuit)=a_2(1,-1) & \zeta(\vardiamondsuit)=\{\alpha_2\}
\\ 
\mathcal{D} = \{\clubsuit,\vardiamondsuit\} 
\\
\midrule
f = 2(1+x_1+x_2)(2+a_2(x_1-x_2)) & \kappa = \alpha_1+\alpha_2 &  m_{\clubsuit} = 1,  m_{\vardiamondsuit} = 2
\end{array}
\]
\end{prop}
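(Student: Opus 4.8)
The plan is to deduce both cases from Proposition~\ref{prop_parabolic_induction}, exactly as for the preceding type~T proposition, feeding in the combinatorial data of the $\SL_2\times\bbG_m$ homogeneous spaces $G_0/H_0$ already recorded in Section~\ref{section_SL2}: for $H_0=N(T_1)\times\{1\}$ one has $M_0=\langle 2\alpha_1,\chi\rangle$, $\Sigma_0=\{2\alpha_1\}$, $\rho_0(\clubsuit)=(2,0)$, $\zeta_0(\clubsuit)=\{\alpha_1\}$, and for $H_0=\diag(N(T_1))$ one has $M_0=\langle\alpha_1+\chi,\alpha_1-\chi\rangle$, $\Sigma_0=\{2\alpha_1\}$, $\rho_0(\clubsuit)=(1,1)$, $\zeta_0(\clubsuit)=\{\alpha_1\}$, where $\chi$ denotes the tautological character of the $\bbG_m$-factor of $G_0$. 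First I would fix the inclusion $X^*(B_0)\subset X^*(B)$ induced by $\pi=(\id_{\SL_2},a_2\varpi_2+\chi_1)$: it sends $\varpi_1$ to $\varpi_1$ and $\chi$ to $a_2\varpi_2+\chi_1$. Substituting this into the two bases of $M_0$ produces verbatim the lattices $M$ displayed in the statement, and since $\Sigma=\Sigma_0$ by the proposition, the spherical root is $\{2\alpha_1\}$ in both cases.

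Next I would read off the colors via $\mathcal{D}\simeq\mathcal{D}_0\sqcup I$ with $I=\{\alpha_2\}$ (because $Q=Q_{\alpha_2}$), so $\mathcal{D}=\{\clubsuit,\vardiamondsuit\}$. The inherited data are unchanged, $\rho(\clubsuit)=\rho_0(\clubsuit)$ and $\zeta(\clubsuit)=\{\alpha_1\}$, while the proposition prescribes $\zeta(\vardiamondsuit)=\{\alpha_2\}$ and $\rho(\vardiamondsuit)=\alpha_2^{\vee}|_M$. The one genuine computation here is this pairing: using that $\alpha_2^{\vee}$ kills $\alpha_1$ and $\chi_1$ while $\langle\alpha_2^{\vee},\varpi_2\rangle=1$, I get $\rho(\vardiamondsuit)=(0,a_2)$ in the first case and $\rho(\vardiamondsuit)=a_2(1,-1)$ in the second, once expressed in the chosen bases.

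It remains to compute $\kappa$, the $m_D$, and the polynomial $f$. The union of the $\zeta(D)$ is $\{\alpha_1,\alpha_2\}$, so the adapted parabolic is the Borel $B_1\times B_2\times\bbG_m^n$, giving $\kappa=\alpha_1+\alpha_2$; then $m_{\clubsuit}=1$ since $\alpha_1\in\tfrac12\Sigma$, and $m_{\vardiamondsuit}=\langle\alpha_2^{\vee},\kappa\rangle=2$. For $f$ I would apply the product formula over $R_{P^u}=\{\alpha_1,\alpha_2\}$: writing $p$ in the coordinates $(x_1,x_2)$ dual to the basis of $M$ and using that, for the Killing form, $\alpha_1$ is orthogonal to every summand coming from the second $\SL_2$-factor and from the torus, while $\alpha_2$ is orthogonal to $\alpha_1$ and $\chi_1$, the two factors of the product decouple and each contributes one affine-linear term. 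This yields $f=2(1+2x_1)(2+a_2x_2)$ in the first case and $f=2(1+x_1+x_2)(2+a_2(x_1-x_2))$ in the second. I do not expect a real obstacle: the argument is the verbatim analogue of the type~T case, and the only point needing care is tracking the change of basis of $M$ induced by $\pi$ when rewriting $\rho(\vardiamondsuit)$ and $f$ in the displayed coordinates.
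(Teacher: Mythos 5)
Your proposal is correct and follows exactly the route the paper intends: the paper omits an explicit proof here precisely because, as in the immediately preceding type~T case, everything follows from Proposition~\ref{prop_parabolic_induction} applied to the combinatorial data of \(\SL_2\times\bbG_m/H_0\) recorded in Section~\ref{section_SL2}, with the inclusion \(X^*(B_0)\subset X^*(B)\) sending \(\chi\) to \(a_2\varpi_2+\chi_1\). Your explicit verifications (the pairing \(\rho(\vardiamondsuit)=\alpha_2^{\vee}|_M\) in the two bases, \(m_{\clubsuit}=1\) via \(\alpha_1\in\tfrac{1}{2}\Sigma\), \(m_{\vardiamondsuit}=\langle\alpha_2^{\vee},\kappa\rangle=2\), and the factorization of \(f\) over \(R_{P^u}=\{\alpha_1,\alpha_2\}\) using orthogonality of the two simple factors) all check out against the stated data.
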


Let us now classify the possible locally factorial \(G/H\)-reflexive polytopes. 
As previously, we observe that \(a_2\in \{0,1\}\), and if \(a_2=0\), then any embedding is a product of \(\bbP^1\) with a type N, dimension three spherical variety under \(\SL_2\times \bbG_m\), with the product action. 
There are nine such products, that we number as \textbf{4-2-63} to \textbf{4-2-71}.
We now deal with the cases where \(a_2=1\).

\textbf{We first assume that \(H_0=\diag(N(T_1))\)} 

\textbf{If \((1/2,-1/2)\) is not a vertex}
By applying the same exhaustion argument as before, we obtain the following list of five polytopes.

\begin{center}
\begin{tikzpicture}[scale=0.6]
\draw[dotted] (-2,2) grid (2,-2);
\draw[thick] (1,1) -- (-1,1) -- (0,-1) --  (1,-1) -- cycle;
\draw (1,1) node[color=teal]{\(\clubsuit\)};
\draw (1/2,-1/2) node[color=violet]{\(\vardiamondsuit\)};
\draw (0,-1) node[below]{\textbf{4-2-72}};
\end{tikzpicture} 
\begin{tikzpicture}[scale=0.6]
\draw[dotted] (-2,2) grid (2,-2);
\draw[thick] (1,1) -- (-1,1) -- (-1,0) -- (0,-1) --  (1,-1) -- cycle;
\draw (1,1) node[color=teal]{\(\clubsuit\)};
\draw (1/2,-1/2) node[color=violet]{\(\vardiamondsuit\)};
\draw (0,-1) node[below]{\textbf{4-2-73}};
\end{tikzpicture} 
\begin{tikzpicture}[scale=0.6]
\draw[dotted] (-2,2) grid (2,-2);
\draw[thick] (1,1) -- (-1,0) -- (0,-1) --  (1,-1) -- cycle;
\draw (1,1) node[color=teal]{\(\clubsuit\)};
\draw (1/2,-1/2) node[color=violet]{\(\vardiamondsuit\)};
\draw (0,-1) node[below]{\textbf{4-2-74}};
\end{tikzpicture} 
\begin{tikzpicture}[scale=0.6]
\draw[dotted] (-2,2) grid (2,-2);
\draw[thick] (1,1) -- (-1,0) --  (1,-1) -- cycle;
\draw (1,1) node[color=teal]{\(\clubsuit\)};
\draw (1/2,-1/2) node[color=violet]{\(\vardiamondsuit\)};
\draw (0,-1) node[below]{\textbf{4-2-75}};
\end{tikzpicture} 
\begin{tikzpicture}[scale=0.6]
\draw[dotted] (-2,2) grid (2,-2);
\draw[thick] (1,1) -- (-1,1) -- (-1,0) --  (1,-1) -- cycle;
\draw (1,1) node[color=teal]{\(\clubsuit\)};
\draw (1/2,-1/2) node[color=violet]{\(\vardiamondsuit\)};
\draw (0,-1) node[below]{\textbf{4-2-76}};
\end{tikzpicture} 
\end{center}

\textbf{If \((1/2, -1/2)\) is a vertex}
By applying the same exhaustion argument as before, we obtain the following list of six polytopes. 

\begin{center}
\begin{tikzpicture}[scale=0.6]
\draw[dotted] (-2,2) grid (2,-2);
\draw[thick] (1,1) -- (-1,1) -- (0,-1) --  (1/2,-1/2) -- cycle;
\draw (1,1) node[color=teal]{\(\clubsuit\)};
\draw (1/2,-1/2) node[color=violet]{\(\vardiamondsuit\)};
\draw (0,-1) node[below]{\textbf{4-2-77}};
\end{tikzpicture} 
\begin{tikzpicture}[scale=0.6]
\draw[dotted] (-2,2) grid (2,-2);
\draw[thick] (1,1) -- (-1,0) -- (0,-1) --  (1/2,-1/2) -- cycle;
\draw (1,1) node[color=teal]{\(\clubsuit\)};
\draw (1/2,-1/2) node[color=violet]{\(\vardiamondsuit\)};
\draw (0,-1) node[below]{\textbf{4-2-78}};
\end{tikzpicture} 
\begin{tikzpicture}[scale=0.6]
\draw[dotted] (-2,2) grid (2,-2);
\draw[thick] (1,1) -- (-1,1) -- (-1,0) -- (0,-1) --  (1/2,-1/2) -- cycle;
\draw (1,1) node[color=teal]{\(\clubsuit\)};
\draw (1/2,-1/2) node[color=violet]{\(\vardiamondsuit\)};
\draw (0,-1) node[below]{\textbf{4-2-79}};
\end{tikzpicture} 
\begin{tikzpicture}[scale=0.6]
\draw[dotted] (-2,2) grid (2,-2);
\draw[thick] (1,1) -- (-1,0) -- (-1,-1) -- (0,-1) --  (1/2,-1/2) -- cycle;
\draw (1,1) node[color=teal]{\(\clubsuit\)};
\draw (1/2,-1/2) node[color=violet]{\(\vardiamondsuit\)};
\draw (0,-1) node[below]{\textbf{4-2-80}};
\end{tikzpicture} 
\begin{tikzpicture}[scale=0.6]
\draw[dotted] (-2,2) grid (2,-2);
\draw[thick] (1,1) -- (-1,0) --  (1/2,-1/2) -- cycle;
\draw (1,1) node[color=teal]{\(\clubsuit\)};
\draw (1/2,-1/2) node[color=violet]{\(\vardiamondsuit\)};
\draw (0,-1) node[below]{\textbf{4-2-81}};
\end{tikzpicture} 
\begin{tikzpicture}[scale=0.6]
\draw[dotted] (-2,2) grid (2,-2);
\draw[thick] (1,1) -- (-1,1) -- (-1,0) --  (1/2,-1/2) -- cycle;
\draw (1,1) node[color=teal]{\(\clubsuit\)};
\draw (1/2,-1/2) node[color=violet]{\(\vardiamondsuit\)};
\draw (0,-1) node[below]{\textbf{4-2-82}};
\end{tikzpicture} 
\end{center}

\begin{exa}
The embedding \textbf{4-2-81} is \(\bbP^4\). 
The action of \(\SL_2\times \SL_2\) factors through \(\SO_3\times \SL_2\) and is given by the standard action of this group on \(\bbC^3\times \bbC^2\). 
Blowing up well-chosen successive orbits allows to recover all the above embeddings but \textbf{4-2-72} and \textbf{4-2-77}.
\end{exa}

\textbf{We now assume that \(H_0=N(T_1)\times \{1\}\)}
Note that in this case, \(\rho(\clubsuit)\) is not primitive, so it cannot be in a facet whose relative interior intersects the valuation cone. 

\textbf{If \((0, \frac{1}{2})\) is not a vertex}
In this case, \((0,1)\) and \((0,-1)\) must be vertices. By exhaustion, we get the following list. 

\begin{center}
\begin{tikzpicture}[scale=0.6]
\draw[dotted] (-2,2) grid (2,-2);
\draw (0,0) node{+};
\draw[thick] (2,0) -- (0,1) -- (-1,0) -- (-1,-1) -- (0,-1) -- cycle;
\draw (2,0) node[color=teal]{\(\clubsuit\)};
\draw (0,1/2) node[color=violet]{\(\vardiamondsuit\)};
\draw (0,-1) node[below]{\textbf{4-2-83}};
\end{tikzpicture} 
\begin{tikzpicture}[scale=0.6]
\draw[dotted] (-2,2) grid (2,-2);
\draw (0,0) node{+};
\draw[thick] (2,0) -- (0,1) -- (-1,0) -- (0,-1) -- cycle;
\draw (2,0) node[color=teal]{\(\clubsuit\)};
\draw (0,1/2) node[color=violet]{\(\vardiamondsuit\)};
\draw (0,-1) node[below]{\textbf{4-2-84}};
\end{tikzpicture} 
\begin{tikzpicture}[scale=0.6]
\draw[dotted] (-2,2) grid (2,-2);
\draw (0,0) node{+};
\draw[thick] (2,0) -- (0,1) -- (-1,1) -- (-1,0) -- (0,-1) -- cycle;
\draw (2,0) node[color=teal]{\(\clubsuit\)};
\draw (0,1/2) node[color=violet]{\(\vardiamondsuit\)};
\draw (0,-1) node[below]{\textbf{4-2-85}};
\end{tikzpicture} 
\begin{tikzpicture}[scale=0.6]
\draw[dotted] (-2,2) grid (2,-2);
\draw (0,0) node{+};
\draw[thick] (2,0) -- (0,1) -- (-1,1) -- (0,-1) -- cycle;
\draw (2,0) node[color=teal]{\(\clubsuit\)};
\draw (0,1/2) node[color=violet]{\(\vardiamondsuit\)};
\draw (0,-1) node[below]{\textbf{4-2-86}};
\end{tikzpicture} 
\begin{tikzpicture}[scale=0.6]
\draw[dotted] (-2,2) grid (2,-2);
\draw (0,0) node{+};
\draw[thick] (2,0) -- (0,1) -- (-1,-1) -- (0,-1) -- cycle;
\draw (2,0) node[color=teal]{\(\clubsuit\)};
\draw (0,1/2) node[color=violet]{\(\vardiamondsuit\)};
\draw (0,-1) node[below]{\textbf{4-2-87}};
\end{tikzpicture} 
\end{center}

\textbf{If \((0, \frac{1}{2})\) is a vertex}
In this case, \((0,1/2)\) and \((0,-1)\) must be vertices. By exhaustion, we get the following list. 

\begin{center}
\begin{tikzpicture}[scale=0.6]
\draw[dotted] (-2,2) grid (2,-2);
\draw (0,0) node{+};
\draw[thick] (2,0) -- (0,1/2) -- (-1,0) -- (-1,-1) -- (0,-1) -- cycle;
\draw (2,0) node[color=teal]{\(\clubsuit\)};
\draw (0,1/2) node[color=violet]{\(\vardiamondsuit\)};
\draw (0,-1) node[below]{\textbf{4-2-88}};
\end{tikzpicture} 
\begin{tikzpicture}[scale=0.6]
\draw[dotted] (-2,2) grid (2,-2);
\draw (0,0) node{+};
\draw[thick] (2,0) -- (0,1/2) -- (-1,-1) -- (0,-1) -- cycle;
\draw (2,0) node[color=teal]{\(\clubsuit\)};
\draw (0,1/2) node[color=violet]{\(\vardiamondsuit\)};
\draw (0,-1) node[below]{\textbf{4-2-89}};
\end{tikzpicture} 
\begin{tikzpicture}[scale=0.6]
\draw[dotted] (-2,2) grid (2,-2);
\draw (0,0) node{+};
\draw[thick] (2,0) -- (0,1/2) -- (-1,0) -- (0,-1) -- cycle;
\draw (2,0) node[color=teal]{\(\clubsuit\)};
\draw (0,1/2) node[color=violet]{\(\vardiamondsuit\)};
\draw (0,-1) node[below]{\textbf{4-2-90}};
\end{tikzpicture} 
\end{center}

\begin{exa}
The embeddings \textbf{4-2-84} and \textbf{4-2-90} are the product of embeddings \(\bbP^2\times \bbF_1\) and \(\bbP^2\times \bbP^2\). 
Blowing up closed orbits allows to recover \textbf{4-2-83}, \textbf{4-2-85} and \textbf{4-2-88}.
\end{exa}

\subsection{Dimension three, horospherical rank one}

We note that all horospherical structures under \(\SL_2\times \SL_2\times \bbG_m^n\) upgrade a toric structure under \(\bbG_m^{n+2}\). 

\begin{prop}
Let \(H=\ker(a_1\varpi_1+a_2\varpi_2+\chi_1)|_{B^-}\). 
The combinatorial data of \(G/H\) is as follows. 
\[\arraycolsep=12pt
\begin{array}{lll}
M=\langle a_1\varpi_1+a_2\varpi_2+\chi_1 \rangle  &  \rho(\clubsuit)=a_1 & \zeta(\clubsuit)=\{\alpha_1\} 
\\  
\Sigma=\emptyset & \rho(\varheartsuit)=a_2 &  \zeta(\varheartsuit)=\{\alpha_2\}
\\ 
\mathcal{D} = \{\clubsuit,\varheartsuit\} &  
\\
\midrule
\kappa = \alpha_1+\alpha_2 & f = (2+a_1x_1)(2+a_2x_2) & m_{\clubsuit} = m_{\varheartsuit}  = 2
\end{array}
\]
\end{prop}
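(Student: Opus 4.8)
The plan is to read off every item of the combinatorial data as a direct instance of parabolic induction, since $H=\ker((a_1\varpi_1+a_2\varpi_2+\chi_1)|_{B^-})$ is by construction horospherical: it is the kernel of a single character of the parabolic $Q=B^-$. First I would record that, in the setting of the horospherical specialization of Proposition~\ref{prop_parabolic_induction} (the example following Remark~\ref{remark_horospherical_subgroups}), this is exactly the datum $(Q_I,\{\lambda\})$ with $Q_I=B^-$, hence $I=S=\{\alpha_1,\alpha_2\}$, and $\lambda=a_1\varpi_1+a_2\varpi_2+\chi_1\in X^*(B^-)$. Assumption~\ref{assumptions} forces $\lambda$ to be primitive (its $\bbG_m$-component is $\chi_1$), so $M=\langle\lambda\rangle$ has rank one; this is consistent with Corollary~\ref{cor_local_structure_theorem}, which gives $\dim(G/H)=\rank(G/H)+\dim(G/P)=1+2=3$.

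Then I would invoke Proposition~\ref{prop_parabolic_induction} directly to obtain $\Sigma=\emptyset$ (equivalently, horosphericity via Corollary~\ref{cor_horospherical_by_valuation_cone}), the two colors $\mathcal{D}=\{\clubsuit,\varheartsuit\}\simeq I$ with $\zeta(\clubsuit)=\{\alpha_1\}$ and $\zeta(\varheartsuit)=\{\alpha_2\}$, and $\rho(\alpha_i)=\alpha_i^\vee|_M$. For the last point I would simply evaluate the coroots on the generator: identifying $N\cong\bbZ$ with the dual of the basis $\{\lambda\}$ of $M$, one has $\rho(\clubsuit)=\langle\alpha_1^\vee,\lambda\rangle=a_1$ and $\rho(\varheartsuit)=\langle\alpha_2^\vee,\lambda\rangle=a_2$, using $\langle\alpha_i^\vee,\varpi_j\rangle=\delta_{ij}$ and that $\chi_1$ pairs trivially with the coroots of $\SL_2^2$.

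Next I would handle the quantities attached to the anticanonical geometry. The adapted parabolic is $P=P_I$ with $I=\zeta(\clubsuit)\cup\zeta(\varheartsuit)=\{\alpha_1,\alpha_2\}$, i.e. $P=B$; its roots are the positive roots $\{\alpha_1,\alpha_2\}$, so $\kappa=\alpha_1+\alpha_2$. For the multiplicities I would apply Definition~\ref{defn_reflexive}: since $\Sigma=\emptyset$ the condition $\zeta(D)\cap(\Sigma\cup\tfrac12\Sigma)\neq\emptyset$ never holds, so $m_D=\langle\alpha^\vee,\kappa\rangle$ in both cases, giving $m_{\clubsuit}=\langle\alpha_1^\vee,\alpha_1+\alpha_2\rangle=2$ and likewise $m_{\varheartsuit}=2$.

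Finally I would compute the Duistermaat–Heckman polynomial $f$ from the explicit product formula in the subsection on the geometry of the associated Fano manifold. Here $R_{P^u}$ is the set of positive roots of $B^u$, namely $\{\alpha_1,\alpha_2\}$, and $\varrho=\tfrac12(\alpha_1+\alpha_2)$. Using that the two $\SL_2$ factors are orthogonal for the Killing form and that $\chi_1$ is orthogonal to all roots, each factor $\{\kappa+p,\alpha_i\}/\{\varrho,\alpha_i\}$ simplifies to a single linear term $2+a_i(\text{coordinate of }p)$, and their product yields the stated polynomial $f$. This last step is the only one requiring genuine computation and is where I expect the main (though mild) obstacle: one must track the Killing-form normalizations and the orthogonality relations $\{\varpi_j,\alpha_i\}=\tfrac12\delta_{ij}\{\alpha_i,\alpha_i\}$ and $\{\chi_1,\alpha_i\}=0$ carefully, so that dividing by $\{\varrho,\alpha_i\}$ cancels the factor $\{\alpha_i,\alpha_i\}$ and leaves exactly the coefficients $2$ and $a_i$.
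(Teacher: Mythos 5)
Your proposal matches the paper's (implicit) proof exactly: this proposition is one of the cases the paper disposes of by direct appeal to Proposition~\ref{prop_parabolic_induction} applied to the horospherical datum \((B^-,\{a_1\varpi_1+a_2\varpi_2+\chi_1\})\), and all of your evaluations of \(M\), \(\Sigma\), \(\mathcal{D}\), \(\rho\), \(\zeta\), \(\kappa\) and \(m_D\) are correct. One caution on the final step: since \(\rank(G/H)=1\), the function \(f\) is univariate, and your Killing-form computation, carried through, gives \(f=(2+a_1x_1)(2+a_2x_1)\) — the \(x_2\) in the printed statement is a typo, so your phrase ``\(2+a_i(\text{coordinate of }p)\)'' should read \(2+a_ix_1\) for both factors rather than silently endorsing the stated formula.
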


The conditions on \(\frac{\rho(\alpha_i)}{2}\) show that if there exists a locally factorial \(G/H\)-reflexive polytope, then \(a_i\in \{-1,0,1\}\). 
If \(a_2=0\) then any embedding is a product of \(\bbP^1\) with an embedding of a rank one horospherical homogeneous space under \(\SL_2\times \bbG_m\). 
There are three such products, numbered as \textbf{3-1-3} to \textbf{3-1-5}. 
Finally, if both are non-zero, we obtain the following four polytopes.

\begin{center}
\begin{tikzpicture}
\draw[dotted] (-2,0) -- (2,0);
\draw (0,0) node{+};
\foreach \i in {0,...,4}
{
\draw[dotted] (-2+\i,.3) -- (-2+\i,-.3);
}
\draw[thick] (-1,0) -- (1,0);
\draw (1/2,0) node[above=-4,color=teal]{\(\clubsuit\)};
\draw (1/2,0) node[below=-4,color=purple]{\(\varheartsuit\)};
\draw (0,0) node[below]{\textbf{3-1-6}};
\end{tikzpicture}
\begin{tikzpicture}
\draw[dotted] (-2,0) -- (2,0);
\draw (0,0) node{+};
\foreach \i in {0,...,4}
{
\draw[dotted] (-2+\i,.3) -- (-2+\i,-.3);
}
\draw[thick] (-1,0) -- (1,0);
\draw (1/2,0) node[color=teal]{\(\clubsuit\)};
\draw (-1/2,0) node[color=purple]{\(\varheartsuit\)};
\draw (0,0) node[below]{\textbf{3-1-7}};
\end{tikzpicture}
\begin{tikzpicture}
\draw[dotted] (-2,0) -- (2,0);
\draw (0,0) node{+};
\foreach \i in {0,...,4}
{
\draw[dotted] (-2+\i,.3) -- (-2+\i,-.3);
}
\draw[thick] (-1,0) -- (1/2,0);
\draw (1/2,0) node[color=teal]{\(\clubsuit\)};
\draw (-1/2,0) node[color=purple]{\(\varheartsuit\)};
\draw (0,0) node[below]{\textbf{3-1-8}};
\end{tikzpicture}
\begin{tikzpicture}
\draw[dotted] (-2,0) -- (2,0);
\draw (0,0) node{+};
\foreach \i in {0,...,4}
{
\draw[dotted] (-2+\i,.3) -- (-2+\i,-.3);
}
\draw[thick] (-1/2,0) -- (1/2,0);
\draw (1/2,0) node[color=teal]{\(\clubsuit\)};
\draw (-1/2,0) node[color=purple]{\(\varheartsuit\)};
\draw (0,0) node[below]{\textbf{3-1-9}};
\end{tikzpicture}
\end{center}

\begin{exa}
The first polytope corresponds to the embedding \(\bbP_{\bbP^1\times \bbP^1}(\mathcal{O}\oplus \mathcal{O}(1,1))\). 
The last one corresponds to \(\bbP^3\) equipped with the action of \(S(\GL_2\times \GL_2)\) induced by the block diagonal embedding of this group in \(\SL_4\) and the linear action of \(\SL_4\) on \(\bbC^4\). 
There are three orbits under this action: the two projective lines given by vanishing of the two first, or two last homogeneous coordinates, and the complement, which is our spherical homogeneous space. 
Blowing up one line yields the embedding associated to the third polytope above, while blowing up both lines yields the second, which may alternatively be described as \(\bbP_{\bbP^1\times \bbP^1}(\mathcal{O}\oplus \mathcal{O}(1,-1))\). 
\end{exa}

\subsection{Dimension four, horospherical rank two}

We note that all horospherical structures under \(\SL_2\times \SL_2\times \bbG_m^n\) upgrade a toric structure under \(\bbG_m^{n+2}\). 

\begin{prop}
Let \(H=\ker(a_1\varpi_1+a_2\varpi_2+\chi_1)\cap \ker(b_2\varpi_2+\chi_2) \subset B^-\). 
The combinatorial data of \(G/H\) is as follows. 
\[\arraycolsep=12pt
\begin{array}{lll}
M=\langle a_1\varpi_1+a_2\varpi_2+\chi_1, b_2\varpi_2+\chi_2 \rangle  &  \rho(\clubsuit)=(a_1,0) & \zeta(\clubsuit)=\{\alpha_1\} 
\\  
\Sigma=\emptyset & \rho(\varheartsuit)=(a_2,b_2) &  \zeta(\varheartsuit)=\{\alpha_2\}
\\ 
\mathcal{D} = \{\clubsuit,\varheartsuit\} &  
\\
\midrule
\kappa = \alpha_1+\alpha_2 & f = (2+a_1x_1)(2+a_2x_1+b_2x_2) & m_{\clubsuit} = m_{\varheartsuit}  = 2
\end{array}
\]
\end{prop}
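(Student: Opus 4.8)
The plan is to recognize $G/H$ as a horospherical homogeneous space and read its combinatorial data straight off parabolic induction, then obtain the remaining invariants $\kappa$, $m_D$ and $f$ by elementary root-system bookkeeping for $\SL_2^2$. Writing $\lambda_1 = a_1\varpi_1+a_2\varpi_2+\chi_1$ and $\lambda_2 = b_2\varpi_2+\chi_2$, the first observation is that $H = \ker(\lambda_1)\cap\ker(\lambda_2) \subset B^-$ with $\lambda_1,\lambda_2$ $\bbZ$-linearly independent (their $\chi_i$-components already are). Since $B^- = Q_{\{\alpha_1,\alpha_2\}}$ for $G = \SL_2^2\times\bbG_m^2$, this exhibits $G/H$ as the horospherical space attached to the parabolic $Q_{\{\alpha_1,\alpha_2\}}$ and the family $\{\lambda_1,\lambda_2\}$ in the sense of Remark~\ref{remark_horospherical_subgroups}. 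The Example following Proposition~\ref{prop_parabolic_induction} then gives at once $M = \langle\lambda_1,\lambda_2\rangle$, $\Sigma = \emptyset$, $\mathcal{D}\simeq\{\alpha_1,\alpha_2\}$ (labelling $\clubsuit\leftrightarrow\alpha_1$, $\varheartsuit\leftrightarrow\alpha_2$), with $\zeta(\clubsuit)=\{\alpha_1\}$, $\zeta(\varheartsuit)=\{\alpha_2\}$ and $\rho(D) = \alpha^\vee|_M$ for the corresponding simple root; this is the same straightforward application of parabolic induction used for the type T and type N propositions above.

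Next I would make the color images explicit in the basis of $N$ dual to $(\lambda_1,\lambda_2)$. Using $\langle\alpha_i^\vee,\varpi_j\rangle=\delta_{ij}$ together with the fact that the coroots $\alpha_1^\vee,\alpha_2^\vee$ lie in $\SL_2^2$ and hence annihilate the torus characters $\chi_k$, one computes $\rho(\clubsuit) = (\langle\alpha_1^\vee,\lambda_1\rangle,\langle\alpha_1^\vee,\lambda_2\rangle) = (a_1,0)$ and $\rho(\varheartsuit) = (\langle\alpha_2^\vee,\lambda_1\rangle,\langle\alpha_2^\vee,\lambda_2\rangle) = (a_2,b_2)$. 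For $\kappa$, the adapted parabolic is $P = P_I$ with $I = \zeta(\clubsuit)\cup\zeta(\varheartsuit) = \{\alpha_1,\alpha_2\}$, which for $\SL_2^2$ is the Borel subgroup; its roots are exactly the two positive roots $\alpha_1,\alpha_2$, whence $\kappa = \alpha_1+\alpha_2$.

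Finally I would compute $m_D$ and $f$. Because $\Sigma=\emptyset$, Definition~\ref{defn_reflexive} gives $m_D = \langle\alpha^\vee,\kappa\rangle$ for $\alpha\in\zeta(D)$, and orthogonality of the two $\SL_2$ factors yields $\langle\alpha_i^\vee,\alpha_1+\alpha_2\rangle = 2$, so $m_\clubsuit = m_\varheartsuit = 2$. For $f$ the unipotent radical of $P = B$ has $R_{P^u} = \{\alpha_1,\alpha_2\}$ and $\varrho = \tfrac12(\alpha_1+\alpha_2)$; converting the Killing form into the coweight pairing via $\langle\alpha^\vee,\cdot\rangle = 2\{\alpha,\cdot\}/\{\alpha,\alpha\}$, each factor becomes $\frac{\{\kappa+p,\alpha_i\}}{\{\varrho,\alpha_i\}} = \frac{\langle\alpha_i^\vee,\kappa+p\rangle}{\langle\alpha_i^\vee,\varrho\rangle} = \langle\alpha_i^\vee,\kappa\rangle + \langle\alpha_i^\vee,p\rangle$ since $\langle\alpha_i^\vee,\varrho\rangle = 1$. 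Writing $p = x_1\lambda_1+x_2\lambda_2$ and substituting the pairings already found gives the two factors $2+a_1x_1$ and $2+a_2x_1+b_2x_2$, hence $f = (2+a_1x_1)(2+a_2x_1+b_2x_2)$.

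There is no genuine obstacle here: the statement is a direct specialization of the horospherical case of parabolic induction, and the only care needed is in the two pieces of bookkeeping, namely the Killing-form/coweight conversion and the repeated use of the orthogonality $\{\alpha_1,\alpha_2\}=0$ of the two $\SL_2$ factors when evaluating the coroot pairings against $\kappa$, $\varrho$ and the basis vectors $\lambda_1,\lambda_2$.
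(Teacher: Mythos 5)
Your proposal is correct and matches the paper's (implicit) argument: the paper states this proposition without proof precisely because, like the neighbouring horospherical cases, it follows directly from Proposition~\ref{prop_parabolic_induction} applied to the parabolic \(Q=B^-\) and the \(\bbZ\)-independent characters \(a_1\varpi_1+a_2\varpi_2+\chi_1\) and \(b_2\varpi_2+\chi_2\), which is exactly your route. Your remaining bookkeeping — expressing \(\rho(D)=\alpha^\vee|_M\) in the dual basis using \(\langle\alpha_i^\vee,\chi_k\rangle=0\), computing \(\kappa=\alpha_1+\alpha_2\) from \(P=B\), \(m_D=\langle\alpha^\vee,\kappa\rangle=2\) since \(\Sigma=\emptyset\), and converting the Killing-form quotients to coroot pairings with \(\langle\alpha_i^\vee,\varrho\rangle=1\) to get \(f\) — is the intended computation and is carried out correctly.
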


Note that if \(a_2=b_2=0\), then any embedding is the product of \(\bbP^1\) with a rank two, dimension three embedding of a horospherical homogeneous space under \(\SL_2\times \bbG_m^2\). 
There are 21 such products, numbered from \textbf{4-2-91} to \textbf{4-2-111}.

We now assume that \(0<a_1\leq a_2\wedge b_2\) and \(0\leq a_2<b_2\). 
The conditions on the points \(\frac{\rho(\clubsuit)}{2}\) imply that \(a_1=1\), and the condition on \(\frac{\rho(\varheartsuit)}{2}\) imply that \((a_2,b_2)\) is primitive. 
When none of these points are vertices, it suffices to identify where these (half-integral) points may be located in a smooth Fano rank two polygon. 
We obtain the following list of twelve possibilities. 

\begin{center}
\begin{tikzpicture}[scale=0.6]
\draw[dotted] (-2,2) grid (2,-2);
\draw (0,0) node{+};
\draw[thick] (1,0) -- (0,1) -- (-1,-1) -- cycle;
\draw (1/2,0) node[color=teal]{\(\clubsuit\)};
\draw (0,1/2) node[color=purple]{\(\varheartsuit\)};
\draw (0,-1) node[below]{\textbf{4-2-112}};
\end{tikzpicture}
\begin{tikzpicture}[scale=0.6]
\draw[dotted] (-2,2) grid (2,-2);
\draw (0,0) node{+};
\draw[thick] (1,0) -- (0,1) -- (-1,0) -- (0,-1) -- cycle;
\draw (1/2,0) node[color=teal]{\(\clubsuit\)};
\draw (0,1/2) node[color=purple]{\(\varheartsuit\)};
\draw (0,-1) node[below]{\textbf{4-2-113}};
\end{tikzpicture}
\begin{tikzpicture}[scale=0.6]
\draw[dotted] (-2,2) grid (2,-2);
\draw (0,0) node{+};
\draw[thick] (1,0) -- (0,1) -- (-1,0) -- (-1,-1) -- cycle;
\draw (1/2,0) node[color=teal]{\(\clubsuit\)};
\draw (0,1/2) node[color=purple]{\(\varheartsuit\)};
\draw (0,-1) node[below]{\textbf{4-2-114}};
\end{tikzpicture}
\begin{tikzpicture}[scale=0.6]
\draw[dotted] (-2,2) grid (2,-2);
\draw (0,0) node{+};
\draw[thick] (1,0) -- (0,1) -- (-1,1) -- (0,-1) -- cycle;
\draw (1/2,0) node[color=teal]{\(\clubsuit\)};
\draw (0,1/2) node[color=purple]{\(\varheartsuit\)};
\draw (0,-1) node[below]{\textbf{4-2-115}};
\end{tikzpicture}
\begin{tikzpicture}[scale=0.6]
\draw[dotted] (-2,2) grid (2,-2);
\draw (0,0) node{+};
\draw[thick] (1,0) -- (1,1) -- (0,1) -- (-1,-1) -- cycle;
\draw (1/2,0) node[color=teal]{\(\clubsuit\)};
\draw (0,1/2) node[color=purple]{\(\varheartsuit\)};
\draw (0,-1) node[below]{\textbf{4-2-116}};
\end{tikzpicture}
\begin{tikzpicture}[scale=0.6]
\draw[dotted] (-2,2) grid (2,-2);
\draw (0,0) node{+};
\draw[thick] (1,0) -- (0,1) -- (-1,0) -- (-1,-1) -- (0,-1) -- cycle;
\draw (1/2,0) node[color=teal]{\(\clubsuit\)};
\draw (0,1/2) node[color=purple]{\(\varheartsuit\)};
\draw (0,-1) node[below]{\textbf{4-2-117}};
\end{tikzpicture}
\begin{tikzpicture}[scale=0.6]
\draw[dotted] (-2,2) grid (2,-2);
\draw (0,0) node{+};
\draw[thick] (1,0) -- (0,1) -- (-1,1)  -- (-1,0) -- (1,-1) -- cycle;
\draw (1/2,0) node[color=teal]{\(\clubsuit\)};
\draw (0,1/2) node[color=purple]{\(\varheartsuit\)};
\draw (0,-1) node[below]{\textbf{4-2-118}};
\end{tikzpicture}
\begin{tikzpicture}[scale=0.6]
\draw[dotted] (-2,2) grid (2,-2);
\draw (0,0) node{+};
\draw[thick] (1,0) -- (0,1) -- (-1,1) -- (-1,0) -- (0,-1) -- cycle;
\draw (1/2,0) node[color=teal]{\(\clubsuit\)};
\draw (0,1/2) node[color=purple]{\(\varheartsuit\)};
\draw (0,-1) node[below]{\textbf{4-2-119}};
\end{tikzpicture}
\begin{tikzpicture}[scale=0.6]
\draw[dotted] (-2,2) grid (2,-2);
\draw (0,0) node{+};
\draw[thick] (1,0) -- (1,1) -- (0,1) -- (-1,0) -- (-1,-1) -- cycle;
\draw (1/2,0) node[color=teal]{\(\clubsuit\)};
\draw (0,1/2) node[color=purple]{\(\varheartsuit\)};
\draw (0,-1) node[below]{\textbf{4-2-120}};
\end{tikzpicture}
\begin{tikzpicture}[scale=0.6]
\draw[dotted] (-2,2) grid (2,-2);
\draw (0,0) node{+};
\draw[thick] (1,0) -- (1,1) -- (0,1) -- (-1,0) -- (0,-1) -- cycle;
\draw (1/2,0) node[color=teal]{\(\clubsuit\)};
\draw (0,1/2) node[color=purple]{\(\varheartsuit\)};
\draw (0,-1) node[below]{\textbf{4-2-121}};
\end{tikzpicture}
\begin{tikzpicture}[scale=0.6]
\draw[dotted] (-2,2) grid (2,-2);
\draw (0,0) node{+};
\draw[thick] (1,0) -- (1,1) -- (0,1) -- (-1,0) -- (-1,-1) -- (0,-1) -- cycle;
\draw (1/2,0) node[color=teal]{\(\clubsuit\)};
\draw (0,1/2) node[color=purple]{\(\varheartsuit\)};
\draw (0,-1) node[below]{\textbf{4-2-122}};
\end{tikzpicture}
\begin{tikzpicture}[scale=0.6]
\draw[dotted] (-2,2) grid (2,-2);
\draw (0,0) node{+};
\draw[thick] (1,0) -- (0,1) -- (-1,1) -- (-1,0) -- (0,-1) -- (1,-1) -- cycle;
\draw (1/2,0) node[color=teal]{\(\clubsuit\)};
\draw (0,1/2) node[color=purple]{\(\varheartsuit\)};
\draw (0,-1) node[below]{\textbf{4-2-123}};
\end{tikzpicture}
\end{center}

Assume now that there is only one non-integral vertex. 
Without loss of generality, we assume that \(\frac{\rho(\clubsuit)}{2}\) is a vertex. 
Then the conditions on locally factorial \(G/H\)-reflexive polytopes are the same as for rank two horospherical threefolds under \(\SL_2\times \bbG_m^2\), with the added condition that \((a_2/2,b_2/2)\) is in the interior of the polytope. 
We can thus go through the list obtained previously, and obtain the following 16 possibilities. 

\begin{center}
\begin{tikzpicture}[scale=0.6]
\draw[dotted] (-2,2) grid (2,-2);
\draw (0,0) node{+};
\draw[thick] (1/2,0) -- (0,1) -- (-1,2) -- (0,-1) -- cycle;
\draw (1/2,0) node[color=teal]{\(\clubsuit\)};
\draw (0,1/2) node[color=purple]{\(\varheartsuit\)};
\draw (0,-1) node[below]{\textbf{4-2-124}};
\end{tikzpicture}
\begin{tikzpicture}[scale=0.6]
\draw[dotted] (-2,2) grid (2,-2);
\draw (0,0) node{+};
\draw[thick] (1/2,0) -- (1,1) -- (1,2) -- (-1,-1) -- cycle;
\draw (1/2,0) node[color=teal]{\(\clubsuit\)};
\draw (1/2,1) node[color=purple]{\(\varheartsuit\)};
\draw (0,-1) node[below]{\textbf{4-2-125}};
\end{tikzpicture}
\begin{tikzpicture}[scale=0.6]
\draw[dotted] (-2,1) grid (2,-3);
\draw (0,0) node{+};
\draw[thick] (1/2,0) -- (0,1) -- (-1,-2) -- (0,-1) -- cycle;
\draw (1/2,0) node[color=teal]{\(\clubsuit\)};
\draw (0,1/2) node[color=purple]{\(\varheartsuit\)};
\draw (0,-2) node[below]{\textbf{4-2-126}};
\end{tikzpicture}
\begin{tikzpicture}[scale=0.6]
\draw[dotted] (-2,2) grid (2,-2);
\draw (0,0) node{+};
\draw[thick] (1/2,0) -- (0,1) -- (-1,2) -- (-1,1) -- (0,-1) -- cycle;
\draw (1/2,0) node[color=teal]{\(\clubsuit\)};
\draw (0,1/2) node[color=purple]{\(\varheartsuit\)};
\draw (0,-1) node[below]{\textbf{4-2-127}};
\end{tikzpicture}
\begin{tikzpicture}[scale=0.6]
\draw[dotted] (-2,2) grid (2,-2);
\draw (0,0) node{+};
\draw[thick] (1/2,0) -- (1,1) -- (1,2) -- (0,1) -- (-1,-1) -- cycle;
\draw (1/2,0) node[color=teal]{\(\clubsuit\)};
\draw (0,1/2) node[color=purple]{\(\varheartsuit\)};
\draw (0,-1) node[below]{\textbf{4-2-128}};
\end{tikzpicture}
\begin{tikzpicture}[scale=0.6]
\draw[dotted] (-2,2) grid (2,-2);
\draw (0,0) node{+};
\draw[thick] (1/2,0) -- (1,1) -- (1,2) -- (0,1) -- (-1,-1) -- cycle;
\draw (1/2,0) node[color=teal]{\(\clubsuit\)};
\draw (1/2,1) node[color=purple]{\(\varheartsuit\)};
\draw (0,-1) node[below]{\textbf{4-2-129}};
\end{tikzpicture}
\begin{tikzpicture}[scale=0.6]
\draw[dotted] (-2,1) grid (2,-3);
\draw (0,0) node{+};
\draw[thick] (1/2,0) -- (0,1) -- (-1,-1) -- (-1,-2) -- (0,-1) -- cycle;
\draw (1/2,0) node[color=teal]{\(\clubsuit\)};
\draw (0,1/2) node[color=purple]{\(\varheartsuit\)};
\draw (0,-2) node[below]{\textbf{4-2-130}};
\end{tikzpicture}
\begin{tikzpicture}[scale=0.6]
\draw[dotted] (-2,2) grid (2,-2);
\draw (0,0) node{+};
\draw[thick] (1/2,0) -- (0,1) -- (-1,1) -- (0,-1) -- cycle;
\draw (1/2,0) node[color=teal]{\(\clubsuit\)};
\draw (0,1/2) node[color=purple]{\(\varheartsuit\)};
\draw (0,-1) node[below]{\textbf{4-2-131}};
\end{tikzpicture}
\begin{tikzpicture}[scale=0.6]
\draw[dotted] (-2,2) grid (2,-2);
\draw (0,0) node{+};
\draw[thick] (1/2,0) -- (0,1) -- (-1,-1) -- (0,-1) -- cycle;
\draw (1/2,0) node[color=teal]{\(\clubsuit\)};
\draw (0,1/2) node[color=purple]{\(\varheartsuit\)};
\draw (0,-1) node[below]{\textbf{4-2-132}};
\end{tikzpicture}
\begin{tikzpicture}[scale=0.6]
\draw[dotted] (-2,2) grid (2,-2);
\draw (0,0) node{+};
\draw[thick] (1/2,0) -- (1,1) -- (0,1) -- (-1,-1) -- cycle;
\draw (1/2,0) node[color=teal]{\(\clubsuit\)};
\draw (0,1/2) node[color=purple]{\(\varheartsuit\)};
\draw (0,-1) node[below]{\textbf{4-2-133}};
\end{tikzpicture}
\begin{tikzpicture}[scale=0.6]
\draw[dotted] (-2,2) grid (2,-2);
\draw (0,0) node{+};
\draw[thick] (1/2,0) -- (0,1) -- (-1,1) -- (-1,0) -- (0,-1) -- cycle;
\draw (1/2,0) node[color=teal]{\(\clubsuit\)};
\draw (0,1/2) node[color=purple]{\(\varheartsuit\)};
\draw (0,-1) node[below]{\textbf{4-2-134}};
\end{tikzpicture}
\begin{tikzpicture}[scale=0.6]
\draw[dotted] (-2,2) grid (2,-2);
\draw (0,0) node{+};
\draw[thick] (1/2,0) -- (1,1) -- (0,1) -- (-1,0) -- (-1,-1) -- cycle;
\draw (1/2,0) node[color=teal]{\(\clubsuit\)};
\draw (0,1/2) node[color=purple]{\(\varheartsuit\)};
\draw (0,-1) node[below]{\textbf{4-2-135}};
\end{tikzpicture}
\begin{tikzpicture}[scale=0.6]
\draw[dotted] (-2,2) grid (2,-2);
\draw (0,0) node{+};
\draw[thick] (1/2,0) -- (0,1) -- (-1,0) -- (-1,-1) -- (0,-1) -- cycle;
\draw (1/2,0) node[color=teal]{\(\clubsuit\)};
\draw (0,1/2) node[color=purple]{\(\varheartsuit\)};
\draw (0,-1) node[below]{\textbf{4-2-136}};
\end{tikzpicture}
\begin{tikzpicture}[scale=0.6]
\draw[dotted] (-2,2) grid (2,-2);
\draw (0,0) node{+};
\draw[thick] (1/2,0) -- (0,1) -- (-1,0) -- (0,-1) -- cycle;
\draw (1/2,0) node[color=teal]{\(\clubsuit\)};
\draw (0,1/2) node[color=purple]{\(\varheartsuit\)};
\draw (0,-1) node[below]{\textbf{4-2-137}};
\end{tikzpicture}
\begin{tikzpicture}[scale=0.6]
\draw[dotted] (-2,2) grid (2,-2);
\draw (0,0) node{+};
\draw[thick] (1/2,0) -- (0,1) -- (-1,0) -- (-1,-1) -- cycle;
\draw (1/2,0) node[color=teal]{\(\clubsuit\)};
\draw (0,1/2) node[color=purple]{\(\varheartsuit\)};
\draw (0,-1) node[below]{\textbf{4-2-138}};
\end{tikzpicture}
\begin{tikzpicture}[scale=0.6]
\draw[dotted] (-2,2) grid (2,-2);
\draw (0,0) node{+};
\draw[thick] (1/2,0) -- (0,1) -- (-1,-1) -- cycle;
\draw (1/2,0) node[color=teal]{\(\clubsuit\)};
\draw (0,1/2) node[color=purple]{\(\varheartsuit\)};
\draw (0,-1) node[below]{\textbf{4-2-139}};
\end{tikzpicture}
\end{center}

We now deal with the case when both \(\frac{\rho(\clubsuit)}{2}\) and \(\frac{\rho(\varheartsuit)}{2}\) are vertices. 
There are infinitely many possibilities for \((a_2,b_2)\) that we need to rule out. 
For this, note that a locally factorial \(G/H\)-reflexive polytope for this homogeneous space cannot contain integral points different from its vertices or the origin. 
Since \((0,0)\) and \((1/2,0)\) are in the polytope, and \((1,1)\) cannot be in the relative interior of one of its facets, the polytope cannot contain any point \((x,y)\) satisfying \(x\leq y \leq 2x-1\) except \((1,1)\) as a vertex. 
In particular, if \(a_2>2\) we must have \(b_2 > 2a_2-1\).

Furthermore, \((1/2,0)\) and \((a_2/2,b_2/2)\) can be in the same facet only if \((1,0)\) and \((a_2,b_2)\) form a basis of \(\bbZ^2\), that is only if \(a_2=0\) (in which case \(b_2=1\)). 
When \(a_2\neq 0\), the vertex adjacent to \((1/2,0)\) in counterclockwise order is of the form \((k,1)\) for some \(k\in \bbZ_{>0}\).
Assume that \(k>1\), then write \((1,1)=x(k,1)+y(a_2/2,b_2/2)\). 
We have \(y=\frac{2(1-k)}{a_2-kb_2}\), and \(x+\frac{b_2}{2}y=1\). 
By the conditions on \(k\), \(a_2\) and \(b_2\), we have \(y>0\). 
Furthermore, \(b_2>a_2\) also implies that \(y< \frac{2}{b_2}\), hence \(x>0\). 
Finally, \(x+\frac{b_2}{2}y=1\) implies that \(x+y\leq 1\) since \(1\leq a_2<b_2\). 
As a consequence, \((1,1)\) is an integral point of the polytope which is not a vertex, a contradiction.  
We have shown by contradiction that \((1,1)\) must be the vertex adjacent to \((1/2,0)\) if \(a_2\neq 0\). 

Keep assuming \(a_2\neq 0\), and consider now the point \((1,2)\). 
It is an integral point, so it cannot be in the relative interior of any face of the polytope. 
Since \((1,1)\) and the origin are in the polytope, we deduce that we cannot have \(b_2\geq 2a_2\) for \(a_2\geq 2\).  
The two inequalities obtained give \(2a_2-1<b_2<2a_2\) for \(a_2>2\), which does not have any integral solution. 
We have thus shown that either \((a_2,b_2)=(0,1)\), or \(a_2=1\), or \(a_2=2\) and \(b_2=3\).

Finally, noting that the vertex adjacent to \((1/2,0)\) in clockwise order is of the form \((k,-1)\) with \(k\leq -1\), we have that \((0,1)\) is an integral point of the polytope which is not vertex if \(a_2=1\) and \(b_2\geq 4\). 
Only four possibilities remain to be explored for \((a_2,b_2)\): \((0,1)\), \((1,2)\), \((1,3)\) or \((2,3)\). 

For each of these cases, by examining the different possibilities for vertices, we quickly reach a classification, with 19 polytopes for \((a_2,b_2)=(0,1)\), one polytope for \((a_2,b_2)=(1,2)\) and \((a_2,b_2)=(2,3)\), and no polytope for \((1,3)\). 

\begin{center}
\begin{tikzpicture}[scale=0.6]
\draw[dotted] (-2,2) grid (2,-2);
\draw (0,0) node{+};
\draw[thick] (1/2,0) -- (1,1) -- (0,1/2) -- (-1,-1) -- cycle;
\draw (1/2,0) node[color=teal]{\(\clubsuit\)};
\draw (0,1/2) node[color=purple]{\(\varheartsuit\)};
\draw (0,-1) node[below]{\textbf{4-2-140}};
\end{tikzpicture}
\begin{tikzpicture}[scale=0.6]
\draw[dotted] (-2,2) grid (2,-2);
\draw (0,0) node{+};
\draw[thick] (1/2,0) -- (0,1/2) -- (-1,1) -- (-2,1) -- (1,-1) -- cycle;
\draw (1/2,0) node[color=teal]{\(\clubsuit\)};
\draw (0,1/2) node[color=purple]{\(\varheartsuit\)};
\draw (0,-1) node[below]{\textbf{4-2-141}};
\end{tikzpicture}
\begin{tikzpicture}[scale=0.6]
\draw[dotted] (-2,2) grid (2,-2);
\draw (0,0) node{+};
\draw[thick] (1/2,0) -- (0,1/2) -- (-1,1) -- (-2,1) -- (-1,0) -- (1,-1) -- cycle;
\draw (1/2,0) node[color=teal]{\(\clubsuit\)};
\draw (0,1/2) node[color=purple]{\(\varheartsuit\)};
\draw (0,-1) node[below]{\textbf{4-2-142}};
\end{tikzpicture}
\begin{tikzpicture}[scale=0.6]
\draw[dotted] (-2,2) grid (2,-2);
\draw (0,0) node{+};
\draw[thick] (1/2,0) -- (0,1/2) -- (-1,1) -- (0,-1) -- (1,-1) -- cycle;
\draw (1/2,0) node[color=teal]{\(\clubsuit\)};
\draw (0,1/2) node[color=purple]{\(\varheartsuit\)};
\draw (0,-1) node[below]{\textbf{4-2-143}};
\end{tikzpicture}
\begin{tikzpicture}[scale=0.6]
\draw[dotted] (-2,2) grid (2,-2);
\draw (0,0) node{+};
\draw[thick] (1/2,0) -- (0,1/2) -- (-1,1) -- (-1,0) -- (0,-1) -- (1,-1) -- cycle;
\draw (1/2,0) node[color=teal]{\(\clubsuit\)};
\draw (0,1/2) node[color=purple]{\(\varheartsuit\)};
\draw (0,-1) node[below]{\textbf{4-2-144}};
\end{tikzpicture}
\begin{tikzpicture}[scale=0.6]
\draw[dotted] (-2,2) grid (2,-2);
\draw (0,0) node{+};
\draw[thick] (1/2,0) -- (0,1/2) -- (-1,1) -- (-1,0) -- (0,-1) -- cycle;
\draw (1/2,0) node[color=teal]{\(\clubsuit\)};
\draw (0,1/2) node[color=purple]{\(\varheartsuit\)};
\draw (0,-1) node[below]{\textbf{4-2-145}};
\end{tikzpicture}
\begin{tikzpicture}[scale=0.6]
\draw[dotted] (-2,2) grid (2,-2);
\draw (0,0) node{+};
\draw[thick] (1/2,0) -- (0,1/2) -- (-1,1) -- (0,-1) -- cycle;
\draw (1/2,0) node[color=teal]{\(\clubsuit\)};
\draw (0,1/2) node[color=purple]{\(\varheartsuit\)};
\draw (0,-1) node[below]{\textbf{4-2-146}};
\end{tikzpicture}
\begin{tikzpicture}[scale=0.6]
\draw[dotted] (-2,2) grid (2,-2);
\draw (0,0) node{+};
\draw[thick] (1/2,0) -- (0,1/2) -- (-1,0) -- (-2,-1) -- cycle;
\draw (1/2,0) node[color=teal]{\(\clubsuit\)};
\draw (0,1/2) node[color=purple]{\(\varheartsuit\)};
\draw (0,-1) node[below]{\textbf{4-2-147}};
\end{tikzpicture}
\begin{tikzpicture}[scale=0.6]
\draw[dotted] (-2,2) grid (2,-2);
\draw (0,0) node{+};
\draw[thick] (1/2,0) -- (0,1/2) -- (-1,0) -- (-2,-1) -- (-1,-1) -- cycle;
\draw (1/2,0) node[color=teal]{\(\clubsuit\)};
\draw (0,1/2) node[color=purple]{\(\varheartsuit\)};
\draw (0,-1) node[below]{\textbf{4-2-148}};
\end{tikzpicture}
\begin{tikzpicture}[scale=0.6]
\draw[dotted] (-2,2) grid (2,-2);
\draw (0,0) node{+};
\draw[thick] (1/2,0) -- (0,1/2) -- (-1,0) -- (0,-1) -- cycle;
\draw (1/2,0) node[color=teal]{\(\clubsuit\)};
\draw (0,1/2) node[color=purple]{\(\varheartsuit\)};
\draw (0,-1) node[below]{\textbf{4-2-149}};
\end{tikzpicture}
\begin{tikzpicture}[scale=0.6]
\draw[dotted] (-2,2) grid (2,-2);
\draw (0,0) node{+};
\draw[thick] (1/2,0) -- (0,1/2) -- (-1,0) -- (-1,-1) -- cycle;
\draw (1/2,0) node[color=teal]{\(\clubsuit\)};
\draw (0,1/2) node[color=purple]{\(\varheartsuit\)};
\draw (0,-1) node[below]{\textbf{4-2-150}};
\end{tikzpicture}
\begin{tikzpicture}[scale=0.6]
\draw[dotted] (-2,2) grid (2,-2);
\draw (0,0) node{+};
\draw[thick] (1/2,0) -- (0,1/2) -- (-1,0) -- (-1,-1) -- (0,-1) -- cycle;
\draw (1/2,0) node[color=teal]{\(\clubsuit\)};
\draw (0,1/2) node[color=purple]{\(\varheartsuit\)};
\draw (0,-1) node[below]{\textbf{4-2-151}};
\end{tikzpicture}
\begin{tikzpicture}[scale=0.6]
\draw[dotted] (-2,2) grid (2,-2);
\draw (0,0) node{+};
\draw[thick] (1/2,0) -- (0,1/2) -- (-1,-1) -- cycle;
\draw (1/2,0) node[color=teal]{\(\clubsuit\)};
\draw (0,1/2) node[color=purple]{\(\varheartsuit\)};
\draw (0,-1) node[below]{\textbf{4-2-152}};
\end{tikzpicture}
\begin{tikzpicture}[scale=0.6]
\draw[dotted] (-2,2) grid (2,-2);
\draw (0,0) node{+};
\draw[thick] (1/2,0) -- (1,1) -- (1,3/2) -- (-1,-1) -- cycle;
\draw (1/2,0) node[color=teal]{\(\clubsuit\)};
\draw (1,3/2) node[color=purple]{\(\varheartsuit\)};
\draw (0,-1) node[below]{\textbf{4-2-153}};
\end{tikzpicture}
\begin{tikzpicture}[scale=0.6]
\draw[dotted] (-2,2) grid (2,-2);
\draw (0,0) node{+};
\draw[thick] (1/2,0) -- (1,1) -- (1/2,1) -- (-1,-1) -- cycle;
\draw (1/2,0) node[color=teal]{\(\clubsuit\)};
\draw (1/2,1) node[color=purple]{\(\varheartsuit\)};
\draw (0,-1) node[below]{\textbf{4-2-154}};
\end{tikzpicture}
\end{center}

\begin{exa}
The embedding \textbf{4-2-152} is \(\bbP^4\), where the action is induced by the linear action on the affine chart \(\bbC^4=\bbC^2\times \bbC^2\) given by the product of the standard actions for each factor \(\SL_2\). 
The orbits are: the fixed point \(\{0\}\subset \bbC^4\), the two \(\bbC^2\setminus\{0\}\times \{0\}\) factors in \(\bbC^4\setminus \{0\}\), the complement of these three orbits in \(\bbC^4\) which is our homogeneous space, then the corresponding non-zero orbits on the hyperplane \(\bbP^3\) at infinity. 

Blowing up a closed orbit in the hyperplane at infinity yields the embedding \textbf{4-2-150}. 
Blowing up one \(\bbC^2\setminus\{0\}\times \{0\}\) yields the embedding \textbf{4-2-139}, and blowing up both yields \textbf{4-2-112}. 
Blowing up the fixed point yields \textbf{4-2-140}. 
Many other examples can be obtained by wisely chosen blowups. 
\end{exa}

\section{Locally factorial Fano spherical \(\SL_3\times \bbG_m^n\)-fourfolds}
\label{section_SL3}

\subsection{Dimension four, rank one symmetric}

\begin{prop}
The combinatorial data for \(\SL_3/S(\GL_1\times \GL_2)\) is as follows. 
\[\arraycolsep=12pt
\begin{array}{lll}
M=\langle \alpha_1+\alpha_2 \rangle  &  \rho(\clubsuit)=1 & \zeta(\clubsuit)=\{\alpha_1\} 
\\  
\Sigma=\alpha_1+\alpha_2 & \rho(\varheartsuit)=1 &  \zeta(\varheartsuit)=\{\alpha_2\}
\\ 
\mathcal{D} = \{\clubsuit,\varheartsuit\} &  
\\
\midrule
\kappa = 2\alpha_1+2\alpha_2 & f = (2+x_1)^3 & m_{\clubsuit} = m_{\varheartsuit} = 2
\end{array}
\]
\end{prop}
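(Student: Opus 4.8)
The plan is to work with an explicit smooth projective model and read off all the combinatorial data from it, exactly as in the earlier group-compactification proofs. Set $V=\bbC^3$ with its standard $\SL_3$-action and consider $\bbP(V)\times\bbP(V^*)$, with homogeneous coordinates $[x_0:x_1:x_2]$ and $[\xi_0:\xi_1:\xi_2]$ and the diagonal $G$-action. First I would check that the open subset $\{\langle\xi,x\rangle\neq0\}$, where $\langle\xi,x\rangle=\xi_0x_0+\xi_1x_1+\xi_2x_2$, is a single $G$-orbit and that the stabilizer of the base point $([e_1],[e_1^*])$ is exactly the group of block-diagonal matrices $S(\GL_1\times\GL_2)=H$; this identifies $G/H$ with this open orbit.

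Next I would determine the colors. The only $B$-stable prime divisors of $\bbP(V)\times\bbP(V^*)$ are the zero loci of the three $B$-eigensections $x_2$ (weight $\varpi_2$, divisor $D_\varheartsuit$), $\xi_0$ (weight $\varpi_1$, divisor $D_\clubsuit$) and $\langle\xi,x\rangle$ (a $G$-invariant section of $\mathcal{O}(1,1)$); the last is precisely the complement of $G/H$, so the colors are $\mathcal{D}=\{D_\clubsuit,D_\varheartsuit\}$. Computing stabilizers, $D_\varheartsuit=\{x_2=0\}$ is stabilized by the stabilizer of the plane $\langle e_1,e_2\rangle$ and $D_\clubsuit=\{\xi_0=0\}$ by that of the point $[e_1]$, giving the two maximal parabolics and hence $\zeta(D_\clubsuit)=\{\alpha_1\}$, $\zeta(D_\varheartsuit)=\{\alpha_2\}$ in the paper's convention. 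Since $\bigcup_D\zeta(D)=\{\alpha_1,\alpha_2\}$, the adapted parabolic is $P=B$; therefore $R_{P^u}=R^+$ and $\kappa=\sum_{\alpha\in R^+}\alpha=2\alpha_1+2\alpha_2$.

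For the lattice and the map $\rho$ I would use the $B$-eigenfunction $f=x_2\xi_0/\langle\xi,x\rangle\in\bbC(G/H)^{(B)}$, which has weight $\varpi_1+\varpi_2=\alpha_1+\alpha_2$ and satisfies $\operatorname{div}_{G/H}(f)=D_\clubsuit+D_\varheartsuit$. As $\bbC(G/H)^{(B)}_\lambda$ is at most one-dimensional, the order-of-vanishing argument used in the $\bbP^2$ example shows $\alpha_1+\alpha_2$ is indivisible in $M$; combined with $\rank(G/H)=1$ (this is one of Akhiezer's rank-one symmetric spaces) this gives $M=\langle\alpha_1+\alpha_2\rangle$ and $\rho(D_\clubsuit)=\rho(D_\varheartsuit)=1$. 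Finally, $G/H$ is not horospherical, because $H$ is reductive and so cannot contain the unipotent radical of a proper parabolic; hence $\Sigma$ is a single primitive generator of $M$, and by Proposition~\ref{prop_spherical_root_as_sum_of_positive_roots} the only admissible one is the positive root $\alpha_1+\alpha_2$, so $\Sigma=\{\alpha_1+\alpha_2\}$.

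It then remains to compute $m_D$ and $f$, which is mechanical: since $\zeta(D)\cap(\Sigma\cup\tfrac12\Sigma)=\emptyset$ for both colors, $m_{D_\clubsuit}=\langle\alpha_1^\vee,\kappa\rangle$ and $m_{D_\varheartsuit}=\langle\alpha_2^\vee,\kappa\rangle$, both equal to $2$ by the $A_2$ Cartan matrix; and writing $p=x_1(\alpha_1+\alpha_2)$ one has $\kappa+p=(2+x_1)\varrho$ with $\varrho=\alpha_1+\alpha_2$, so each of the three factors of the Duistermaat--Heckman product equals $2+x_1$ and $f=(2+x_1)^3$. The only genuinely delicate points are the verification that $D_\clubsuit,D_\varheartsuit$ exhaust the colors and the determination of the sign of the spherical root; both can be confirmed rigorously by the explicit valuation-cone computation of Proposition~\ref{prop_valuation_cone}, carried out exactly as in the $\PGL_2^2/\diag B_1$ case, which I would include as a safeguard.
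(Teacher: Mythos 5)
Your proposal is correct and follows essentially the same route as the paper's proof: the same projective model \(\bbP(V)\times\bbP(V^*)\simeq\bbP^2\times(\bbP^2)^*\) with the diagonal action, the same identification of the two colors as the coordinate hyperplanes \(\{x_2=0\}\) and \(\{\xi_0=0\}\) with the flag variety \(\{\langle\xi,x\rangle=0\}\) as the \(G\)-stable boundary, and the same eigenfunction \(x_2\xi_0/\langle\xi,x\rangle\) of weight \(\alpha_1+\alpha_2\) to pin down \(M\) and \(\rho\), after which \(\kappa\), \(m_D\) and \(f\) are mechanical exactly as you compute them. The only cosmetic difference is that the paper settles the rank and \(\Sigma\) by citing the known data of rank-one Hermitian symmetric spaces (Vust), whereas you derive rank one from Akhiezer's list and the sign of the spherical root from non-horosphericity of the reductive \(H\) together with Proposition~\ref{prop_spherical_root_as_sum_of_positive_roots} — both perfectly adequate.
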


\begin{proof}
Since \(\SL_3/S(\GL_1\times \GL_2)\) is a rank one, Hermitian symmetric space, its combinatorial data is well known since \cite{Vust_1990}. 
It is very quick to recover this data in this case: consider the product \(\bbP^2\times \bbP^2\) equipped with the action of \(\SL_3\) induced by 
\[ A\cdot \left(\begin{bmatrix}x_1 \\ y_1 \\ z_1 \end{bmatrix}, \begin{bmatrix}x_2 & y_2 & z_2 \end{bmatrix}\right) = \left(A\begin{bmatrix}x_1 \\ y_1 \\ z_1 \end{bmatrix}, \begin{bmatrix}x_2 & y_2 & z_2 \end{bmatrix}A^{-1}\right) \]
More geometrically, we consider an element of \(\bbP^2\times \bbP^2\) as the data of a line and a plane in \(\bbC^3\), equipped with the natural linear action of \(\SL_3\). 
The stabilizer of \(\left(\begin{bmatrix}1 \\ 0 \\ 0 \end{bmatrix}, \begin{bmatrix}1 & 0 & 0 \end{bmatrix}\right)\) is \(S(\GL_1\times \GL_2)\), its orbit is open, and its complement is the \(\SL_3\)-orbit defined by \(\{x_1x_2+y_1y_2+z_1z_2=0\}\), which corresponds to flags in \(\bbC^3\). 

We consider now the action of the Borel subgroup \(B\) of upper triangular matrices. 
The colors are easily identified as \(D_1=\{x_2=0\}\) and \(D_2=\{z_1=0\}\), the numbering being chosen so that \(\zeta(D_i)=\{\alpha_i\}\). 
We deduce that the adapted parabolic is \(B\), that the rank is one, that \(\kappa=2\alpha_1+2\alpha_2\) and that \(m_{D_i}=2\). 

Observe that the rational function 
\[ \left(\begin{bmatrix}x_1 \\ y_1 \\ z_1 \end{bmatrix}, \begin{bmatrix}x_2 & y_2 & z_2 \end{bmatrix}\right) \mapsto \frac{x_2z_1}{x_1x_2+y_1y_2+z_1z_2} \]
is \(B\)-semi-invariant with weight \(\alpha_1+\alpha_2\). 
Since it vanishes to order \(1\) on the colors, we deduce that its weight generates \(M\) as well as the images of the colors. 
\end{proof}

In view of the combinatorial data, there is only one possible complete embedding. 
It has already been described in the previous proof as \(\bbP^2\times (\bbP^2)^*\) equipped with the diagonal action. 
Its polytope is the following.

\begin{center}
\begin{tikzpicture}
\draw[dotted] (-2,0) -- (2,0);
\draw (0,0) node{+};
\foreach \i in {0,...,4}
{
\draw[dotted] (-2+\i,.3) -- (-2+\i,-.3);
}
\draw[thick] (-1,0) -- (1/2,0);
\draw (1/2,0) node[above=-4, color=teal]{\(\clubsuit\)};
\draw (1/2,0) node[below=-4, color=purple]{\(\varheartsuit\)};
\draw (-0.2,0) node[below]{\textbf{4-1-1}};
\end{tikzpicture}
\end{center}

\subsection{Dimension four, rank one horosymmetric}

Consider the case of a non-trivial parabolic induction from \(\SL_2/T\) with respect to a maximal parabolic subgroup in \(\SL_3\). 
The combinatorial data is deduced from the case of \(\SL_2/T\) and the parabolic induction procedure. 

\begin{prop}
For \(H=\langle Q^u , T \rangle \), the combinatorial data is as follows. 
\[\arraycolsep=12pt
\begin{array}{lll}
M=\langle \alpha_2 \rangle  &  \rho(\clubsuit)=-1 & \zeta(\clubsuit)=\{\alpha_1\} 
\\  
\Sigma=\alpha_2 & \rho(\varheartsuit)=\rho(\vardiamondsuit)=1 &  \zeta(\varheartsuit)=\zeta(\vardiamondsuit)=\{\alpha_2\}
\\ 
\mathcal{D} = \{\clubsuit,\varheartsuit,\vardiamondsuit\} &   &  
\\
\midrule
\kappa = 2\alpha_1+2\alpha_2 & f = (2-x_1)(1+x_1)(4+x_1) & m_{\clubsuit} = 2, \quad m_{\vardiamondsuit}=m_{\varheartsuit} = 1
\end{array}
\]
\end{prop}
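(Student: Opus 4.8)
The plan is to derive every entry of the combinatorial data as a direct application of the parabolic induction formulas of Proposition~\ref{prop_parabolic_induction}, starting from the combinatorial data of $\SL_2/T$ recalled in Section~\ref{section_dimension_2} (see also Example~\ref{P^1xP^1} and Example~\ref{exa_valuation_cone_SL2}), and then to read off $\kappa$, the multiplicities $m_D$, and the function $f$ from their definitions.

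First I would pin down the induction datum. The relevant parabolic is $Q=Q_{\{\alpha_1\}}$, whose Levi has semisimple part the copy of $\SL_2$ generated by $\alpha_2$; the epimorphism $\pi\colon Q\to G_0$ is the projection onto the adjoint quotient $\PGL_2$ of that factor, and $H_0$ is the image of $T$, so that $Q/H\cong \PGL_2/T\cong \SL_2/T$. I would check that the induced subgroup is exactly $H=\langle Q^u,T\rangle$: one has $Q^u\subset\langle Q^u,T\rangle\subset Q$ and $\pi(\langle Q^u,T\rangle)=H_0$, which by Remark~\ref{induction_from_Lie} characterises the induced $H$; the dimension count $\dim(H)=\dim(Q^u)+\dim(T)=2+2=4$, hence $\dim(G/H)=4$, confirms consistency. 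The one genuinely important bookkeeping point is the inclusion $\pi^{*}\colon X^{*}(B_0)\hookrightarrow X^{*}(B)$, which sends the simple root of $G_0$ to $\alpha_2$.

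Next I would transport the fiber data. Since for $\SL_2/T$ one has $M_0=\langle\alpha_1\rangle$, $\Sigma_0=\{\alpha_1\}$, and two colors $\varheartsuit,\vardiamondsuit$ with $\rho_0=1$ and $\zeta_0=\{\alpha_1\}$, Proposition~\ref{prop_parabolic_induction} applied with $I=\{\alpha_1\}$ yields $M=M_0=\langle\alpha_2\rangle$, $\Sigma=\{\alpha_2\}$, and $\mathcal{D}\simeq\mathcal{D}_0\sqcup\{\alpha_1\}=\{\varheartsuit,\vardiamondsuit,\clubsuit\}$. The two colors coming from the fiber keep $\rho(\varheartsuit)=\rho(\vardiamondsuit)=1$ and $\zeta(\varheartsuit)=\zeta(\vardiamondsuit)=\{\alpha_2\}$, while the color $\clubsuit$ coming from $I$ satisfies $\zeta(\clubsuit)=\{\alpha_1\}$ and $\rho(\clubsuit)=\alpha_1^{\vee}|_M$, which evaluated on the generator $\alpha_2$ equals $\langle\alpha_1^{\vee},\alpha_2\rangle=-1$.

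Finally I would compute the remaining invariants. The adapted parabolic is $P_I$ for $I=\bigcup_{D}\zeta(D)=\{\alpha_1,\alpha_2\}$, that is the Borel $B$ (consistent with $\dim(G/P)=3$ from Corollary~\ref{cor_local_structure_theorem}), so $\kappa$ is the sum of the positive roots of $A_2$, namely $2\alpha_1+2\alpha_2$. For the multiplicities, $\Sigma\cup\tfrac{1}{2}\Sigma$ meets $\zeta(\varheartsuit)=\zeta(\vardiamondsuit)=\{\alpha_2\}$, giving $m_{\varheartsuit}=m_{\vardiamondsuit}=1$, whereas $\zeta(\clubsuit)=\{\alpha_1\}$ is disjoint from it, so $m_{\clubsuit}=\langle\alpha_1^{\vee},\kappa\rangle=4-2=2$. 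For $f$ I would use the product formula over $R_{P^u}=R^{+}=\{\alpha_1,\alpha_2,\alpha_1+\alpha_2\}$ with $p=x_1\alpha_2$, $\varrho=\alpha_1+\alpha_2$ and the $A_2$ form normalised by $\{\alpha_i,\alpha_i\}=2$, $\{\alpha_1,\alpha_2\}=-1$; since each factor is a scale-invariant ratio, the three factors come out to $2-x_1$, $2(1+x_1)$ and $(4+x_1)/2$, whose product is $(2-x_1)(1+x_1)(4+x_1)$. I expect the only (mild) obstacle to be one of conventions: orienting $\pi^{*}$ so that the fiber root is identified with $\alpha_2$ and not $\alpha_1$, fixing the sign $\rho(\clubsuit)=-1$, and carrying out the scale-free evaluation of $f$ correctly; none of this is conceptually difficult once the induction datum is fixed.
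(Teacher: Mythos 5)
Your proposal is correct and takes essentially the same approach as the paper, which offers no written proof beyond the remark that the data ``is deduced from the case of \(\SL_2/T\) and the parabolic induction procedure'' --- i.e.\ Proposition~\ref{prop_parabolic_induction} applied with \(Q=Q_{\{\alpha_1\}}\), \(I=\{\alpha_1\}\) and fiber \(\SL_2/T\cong\PGL_2/T\), exactly as you set up. Your explicit verifications (\(H=\pi^{-1}(H_0)=\langle Q^u,T\rangle\), \(\rho(\clubsuit)=\langle\alpha_1^{\vee},\alpha_2\rangle=-1\), adapted parabolic \(B\) giving \(\kappa=2\alpha_1+2\alpha_2\), \(m_{\clubsuit}=\langle\alpha_1^{\vee},\kappa\rangle=2\), and the three factors \(2-x_1\), \(2(1+x_1)\), \((4+x_1)/2\) of \(f\)) all check out against the stated data.
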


There are two possible locally factorial \(G/H\)-reflexive polytopes:

\begin{center}
\begin{tikzpicture}
\draw[dotted] (-2,0) -- (2,0);
\draw (0,0) node{+};
\foreach \i in {0,...,4}
{
\draw[dotted] (-2+\i,.3) -- (-2+\i,-.3);
}
\draw[thick] (-1/2,0) -- (1,0);
\draw (-1/2,0) node[color=teal]{\(\clubsuit\)};
\draw (1,0) node[below=-4, color=purple]{\(\varheartsuit\)};
\draw (1,0) node[above=-4, color=violet]{\(\vardiamondsuit\)};
\draw (0.2,0) node[below]{\textbf{4-1-2}};
\end{tikzpicture}
\begin{tikzpicture}
\draw[dotted] (-2,0) -- (2,0);
\draw (0,0) node{+};
\foreach \i in {0,...,4}
{
\draw[dotted] (-2+\i,.3) -- (-2+\i,-.3);
}
\draw[thick] (-1,0) -- (1,0);
\draw (-1/2,0) node[color=teal]{\(\clubsuit\)};
\draw (1,0) node[below=-4, color=purple]{\(\varheartsuit\)};
\draw (1,0) node[above=-4, color=violet]{\(\vardiamondsuit\)};
\draw (0.2,0) node[below]{\textbf{4-1-3}};
\end{tikzpicture}
\end{center}

The corresponding embeddings are easily identified: consider \(\bbP^2\times \bbP^2\) equipped with the diagonal action of \(\SL_3\). 
Then there are two orbits: the diagonal embedding of \(\bbP^2\) and its complement. 
It is easy to check that the complement is indeed our spherical homogeneous space. Note that this embedding is colored, since the closed orbit has codimension two, hence it corresponds to the first embedding. 
The second embedding is the blowup \(X\) of \(\bbP^2\times \bbP^2\) along the diagonal. 

Consider now the case of non-trivial parabolic induction from \(\SL_2/N(T)\) with respect to a maximal parabolic subgroup \(Q\) in \(\SL_3\). 
The combinatorial data is deduced from the case of \(\SL_2/N(T)\) and the parabolic induction procedure. 

\begin{prop}
For \(H=N(\langle Q^u , T \rangle) \), the combinatorial data is as follows. 
\[\arraycolsep=12pt
\begin{array}{lll}
M=\langle 2\alpha_2 \rangle  &  \rho(\clubsuit)=-2 & \zeta(\clubsuit)=\{\alpha_1\} 
\\  
\Sigma=\{2\alpha_2\} & \rho(\varheartsuit)=2 &  \zeta(\varheartsuit)=\{\alpha_2\}
\\ 
\mathcal{D} = \{\clubsuit,\varheartsuit\} &   &  
\\
\midrule
\kappa = 2\alpha_1+2\alpha_2 & f = 4(1-x_1)(1+2x_1)(2+x_1) & m_{\clubsuit} = 2, \quad m_{\varheartsuit} = 1
\end{array}
\]
\end{prop}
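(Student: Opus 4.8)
The plan is to apply the parabolic induction formalism of Proposition~\ref{prop_parabolic_induction}, exactly as in the preceding horosymmetric case, but now starting from the rank one homogeneous space $\SL_2/N(T)$ instead of $\SL_2/T$. First I would record that $G/H$ is the parabolic induction of $G_0/H_0=\SL_2/N(T)$ with respect to the maximal parabolic $Q=Q_{\{\alpha_1\}}$, the epimorphism $\pi\colon Q\to \SL_2$ identifying the fiber $\SL_2$ with the root subgroup attached to $\alpha_2$; thus $I=\{\alpha_1\}$. The combinatorial data of $\SL_2/N(T)$ was computed in Section~\ref{section_dimension_2} (see also Example~\ref{P^2}): under the identification of the fiber weight lattice with $\langle 2\alpha_2\rangle$, there is a single color with $\rho_0=2$, $\zeta_0=\{\alpha_2\}$ and $\Sigma_0=\{2\alpha_2\}$.

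Feeding this into Proposition~\ref{prop_parabolic_induction} gives $M=M_0=\langle 2\alpha_2\rangle$, $\Sigma=\Sigma_0=\{2\alpha_2\}$, and $\mathcal{D}\simeq\mathcal{D}_0\sqcup I=\{\varheartsuit,\clubsuit\}$. The color $\varheartsuit$ inherited from the fiber keeps $\rho(\varheartsuit)=\rho_0=2$ and $\zeta(\varheartsuit)=\{\alpha_2\}$, while the color $\clubsuit$ produced by $I$ satisfies $\zeta(\clubsuit)=\{\alpha_1\}$ and $\rho(\clubsuit)=\alpha_1^\vee|_M$. The only genuine computation at this stage is the pairing $\rho(\clubsuit)(2\alpha_2)=\langle\alpha_1^\vee,2\alpha_2\rangle=-2$, the sign coming from the off-diagonal $-1$ of the Cartan matrix of $\SL_3$; in the dual coordinate this reads $\rho(\clubsuit)=-2$.

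It then remains to extract $\kappa$, the $m_D$ and the Duistermaat-Heckman polynomial $f$ from the definitions. Since the adapted parabolic is $P_I$ for $I=\zeta(\clubsuit)\cup\zeta(\varheartsuit)=\{\alpha_1,\alpha_2\}$, and this full set of simple roots corresponds to $P=B$, the element $\kappa$ is the sum of the positive roots $\alpha_1,\alpha_2,\alpha_1+\alpha_2$, namely $2\alpha_1+2\alpha_2$. For the multiplicities, $\alpha_2\in\frac12\Sigma$ forces $m_{\varheartsuit}=1$, whereas $\{\alpha_1\}\cap(\Sigma\cup\frac12\Sigma)=\emptyset$, so $m_{\clubsuit}=\langle\alpha_1^\vee,\kappa\rangle=4-2=2$. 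Finally I would compute $f$ by evaluating $\prod_{\alpha\in R_{P^u}}\frac{\{\kappa+p,\alpha\}}{\{\varrho,\alpha\}}$ over the three positive roots at $p=x_1\cdot 2\alpha_2$, using the $A_2$ normalization $\{\alpha_i,\alpha_i\}=2$, $\{\alpha_1,\alpha_2\}=-1$ (the normalization is irrelevant, as it cancels between numerator and denominator). The three factors come out as $2(1-x_1)$, $2(1+2x_1)$ and $2+x_1$, whose product is $4(1-x_1)(1+2x_1)(2+x_1)$.

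I do not expect a real obstacle here: the whole statement is a mechanical instance of Proposition~\ref{prop_parabolic_induction} together with the definitions of $\kappa$, $m_D$ and $f$. The only points demanding care are bookkeeping — keeping track of which $\SL_2$ simple root is identified with $\alpha_2$, and the resulting sign in $\rho(\clubsuit)$ — and checking that the adapted parabolic is the Borel, so that $R_{P^u}$ is the whole set of positive roots; the latter is forced by $\dim(G/H)=4$ and $\rank(G/H)=1$ through Corollary~\ref{cor_local_structure_theorem}.
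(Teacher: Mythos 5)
Your proposal is correct and matches the paper's own (unwritten) argument: the paper derives this proposition exactly by applying Proposition~\ref{prop_parabolic_induction} to the fiber \(\SL_2/N(T)\) (equivalently \(\PSL_2/N(T)\)) over \(Q_{\{\alpha_1\}}\), with \(\kappa\), the \(m_D\) and \(f\) then read off from the definitions. All your numerical checks are right, in particular \(\rho(\clubsuit)=\langle\alpha_1^\vee,2\alpha_2\rangle=-2\), \(m_\clubsuit=\langle\alpha_1^\vee,2\alpha_1+2\alpha_2\rangle=2\), \(m_\varheartsuit=1\) since \(\alpha_2\in\frac12\Sigma\), and the three factors \(2(1-x_1)\), \(2(1+2x_1)\), \(2+x_1\) of \(f\).
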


There are no locally factorial \(G/H\)-reflexive polytopes. 

\subsection{Dimension three, rank one horospherical}

The next proposition follows directly from Proposition~\ref{prop_parabolic_induction}. 

\begin{prop}
The combinatorial data of \(\SL_3\times \bbG_m/\ker(a_1\varpi_1+\chi_1)|_{Q_{\alpha_1}}\) is as follows. 
\[\arraycolsep=12pt
\begin{array}{lll}
M=\langle a_1\varpi_1+\chi_1 \rangle  &  \rho(\clubsuit)=a_1 & \zeta(\clubsuit)=\{\alpha_1\} 
\\  
\Sigma=\emptyset & 
\\ 
\mathcal{D} = \{\clubsuit\} &   &  
\\
\midrule
\kappa = 2\alpha_1+\alpha_2 & f = \frac{(3+a_1x_1)^2}{2} & m_{\clubsuit} = 3
\end{array}
\]
\end{prop}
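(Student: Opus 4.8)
The plan is to recognize $G/H$ as a horospherical homogeneous space obtained by parabolic induction and to read off the discrete data $(M,\Sigma,(\mathcal{D},\rho,\zeta))$ directly from Proposition~\ref{prop_parabolic_induction}, reserving explicit computation only for $\kappa$, $m_{\clubsuit}$ and the function $f$.

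First I would observe that $H=\ker(a_1\varpi_1+\chi_1)|_{Q_{\alpha_1}}$ is exactly the horospherical subgroup attached to the parabolic $Q=Q_{\{\alpha_1\}}$ and the single character $\lambda=a_1\varpi_1+\chi_1\in X^*(Q)$, in the sense of Remark~\ref{remark_horospherical_subgroups} and the example following it. Applying Proposition~\ref{prop_parabolic_induction} with $I=\{\alpha_1\}$ and fiber $\bbG_m$ gives immediately $M=\langle\lambda\rangle$, $\Sigma=\emptyset$, $\mathcal{D}=\{\clubsuit\}\simeq I$, $\zeta(\clubsuit)=\{\alpha_1\}$ and $\rho(\clubsuit)=\alpha_1^{\vee}|_M$. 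Identifying $M$ with $\bbZ$ via the basis $\lambda$, the coordinate of $\rho(\clubsuit)$ is $\langle\alpha_1^{\vee},a_1\varpi_1+\chi_1\rangle=a_1$, since $\chi_1$ pairs trivially with the coroots of $\SL_3$ and $\langle\alpha_1^{\vee},\varpi_1\rangle=1$.

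Next I would pin down the adapted parabolic, which is $P_I=P_{\{\alpha_1\}}$ whose unipotent radical has root set $R_{P^u}=\{\alpha_1,\alpha_1+\alpha_2\}$; summing these (the Levi roots cancelling in $\pm$ pairs) gives $\kappa=2\alpha_1+\alpha_2$. Because $G/H$ is horospherical we have $\Sigma=\emptyset$, so $\zeta(\clubsuit)\cap(\Sigma\cup\frac{1}{2}\Sigma)=\emptyset$ and $m_{\clubsuit}=\langle\alpha_1^{\vee},\kappa\rangle=2\cdot 2+(-1)=3$, using the $A_2$ Cartan pairings $\langle\alpha_1^{\vee},\alpha_1\rangle=2$ and $\langle\alpha_1^{\vee},\alpha_2\rangle=-1$.

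The only genuinely computational step, which I expect to be the main obstacle, is $f$. Writing $p=x_1\lambda$ and normalizing the Killing form by $\{\alpha_i,\alpha_i\}=2$, $\{\alpha_1,\alpha_2\}=-1$, I would compute $\{p,\alpha_1\}=\{p,\alpha_1+\alpha_2\}=a_1 x_1$ (again the $\chi_1$ part drops, being orthogonal to the root lattice of $\SL_3$), $\{\kappa,\alpha_1\}=\{\kappa,\alpha_1+\alpha_2\}=3$, and the normalizing values $\{\varrho,\alpha_1\}=1$, $\{\varrho,\alpha_1+\alpha_2\}=2$ with $\varrho=\alpha_1+\alpha_2$. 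Substituting into $f(p)=\prod_{\alpha\in R_{P^u}}\{\kappa+p,\alpha\}/\{\varrho,\alpha\}$ yields $\frac{3+a_1 x_1}{1}\cdot\frac{3+a_1 x_1}{2}=\frac{(3+a_1 x_1)^2}{2}$. The difficulty is entirely in fixing consistent conventions (the $B$ versus $B^-$ and the $P_I$ labelling subtleties) and in the Killing form bookkeeping; once these are fixed, each equality is a one-line verification.
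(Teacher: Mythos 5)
Your proposal is correct and matches the paper's argument exactly: the paper proves this proposition by citing Proposition~\ref{prop_parabolic_induction} directly, which is your first step, and your remaining computations of \(\kappa=2\alpha_1+\alpha_2\), \(m_{\clubsuit}=\langle\alpha_1^{\vee},\kappa\rangle=3\) and \(f=\frac{(3+a_1x_1)^2}{2}\) are the routine verifications the paper leaves implicit. All your Cartan-pairing and Killing-form bookkeeping checks out, including the key observation that \(\chi_1\) is orthogonal to the root lattice of \(\SL_3\).
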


We deduce easily the possible locally factorial \(G/H\)-reflexive polytopes and corresponding embeddings. 
It suffices to note that the conditions imply that \(a_1=1\) if \(\frac{\rho(\alpha_1)}{m_{\alpha_1}}=a_1/3\) is a vertex, and otherwise the fact that \(a_1/3\) is in the interior of the polytope shows that \(a_1\in \{0,1,2\}\). 
If \(a_1=0\) then the homogeneous space is the product \(\bbP^2\times \bbG_m\), yielding only one Fano embedding \(\bbP^2\times \bbP^1\), numbered as \textbf{3-1-10}. 

We obtain three non-product embeddings, corresponding to the following polytopes.

\begin{center}
\begin{tikzpicture}
\draw[dotted] (-2,0) -- (2,0);
\draw (0,0) node{+};
\foreach \i in {0,...,4}
{
\draw[dotted] (-2+\i,.3) -- (-2+\i,-.3);
}
\draw[thick] (-1,0) -- (1,0);
\draw (1/3,0) node[color=teal]{\(\clubsuit\)};
\draw (0,0) node[below]{\textbf{3-1-11}};
\end{tikzpicture}
\begin{tikzpicture}
\draw[dotted] (-2,0) -- (2,0);
\draw (0,0) node{+};
\foreach \i in {0,...,4}
{
\draw[dotted] (-2+\i,.3) -- (-2+\i,-.3);
}
\draw[thick] (-1,0) -- (1,0);
\draw (2/3,0) node[color=teal]{\(\clubsuit\)};
\draw (0,0) node[below]{\textbf{3-1-12}};
\end{tikzpicture}
\begin{tikzpicture}
\draw[dotted] (-2,0) -- (2,0);
\draw (0,0) node{+};
\foreach \i in {0,...,4}
{
\draw[dotted] (-2+\i,.3) -- (-2+\i,-.3);
}
\draw[thick] (-1,0) -- (1/3,0);
\draw (1/3,0) node[color=teal]{\(\clubsuit\)};
\draw (0,0) node[below]{\textbf{3-1-13}};
\end{tikzpicture}
\end{center}

\begin{exa}
The embeddings are easily identified as \(\bbP_{\bbP^2}(\mathcal{O}\oplus \mathcal{O}(k))\) for \(k\in \{1,2\}\), and \(\bbP^3\) as the Fano cone over the Kodaira embedding of \(\bbP^2\) in \(\bbP^3\) via \(\mathcal{O}(1)\). 
\end{exa}

\subsection{Dimension four, rank one horospherical}

\begin{prop}
The combinatorial data of \(\SL_3\times \bbG_m/\ker(a_1\varpi_1+a_2\varpi_2+\chi_1)|_{B^-}\) with \(a_i\in \bbZ\) and \(a_1\geq \lvert a_2\rvert\) is as follows. 
\[\arraycolsep=12pt
\begin{array}{lll}
M=\langle a_1\varpi_1+a_2\varpi_2+\chi_1 \rangle  &  \rho(\clubsuit)=a_1 & \zeta(\clubsuit)=\{\alpha_1\} 
\\  
\Sigma=\emptyset &   \rho(\varheartsuit)=a_2 & \zeta(\varheartsuit)=\{\alpha_2\} 
\\ 
\mathcal{D} = \{\clubsuit,\varheartsuit\} &   &  
\\
\midrule
\kappa = 2\alpha_1+2\alpha_2 & f = \frac{(2+a_1x_1)(2+a_2x_1)(4+(a_1+a_2)x_1)}{2} & m_{\clubsuit} = m_{\varheartsuit} = 2
\end{array}
\]
\end{prop}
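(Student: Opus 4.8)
The plan is to recognize $G/H$ as a rank-one horospherical homogeneous space, read off $(M,\Sigma,(\mathcal{D},\rho,\zeta))$ from parabolic induction, and then compute $\kappa$, the $m_D$ and $f$ directly from the formulas of Section~\ref{section_reminder}.

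First I would note that $H=\ker(\lambda)|_{B^-}$ with $\lambda=a_1\varpi_1+a_2\varpi_2+\chi_1$ is horospherical with $N_G(H)=B^-$, so by Remark~\ref{remark_horospherical_subgroups} and the horospherical example following Proposition~\ref{prop_parabolic_induction}, $G/H$ is obtained by parabolic induction from the trivial subgroup of $G_0=\bbG_m$ along $Q=B^-$. Proposition~\ref{prop_parabolic_induction} then yields at once $M=\langle\lambda\rangle$, $\Sigma=\emptyset$, $\mathcal{D}\simeq S=\{\alpha_1,\alpha_2\}$ with $\zeta(\clubsuit)=\{\alpha_1\}$, $\zeta(\varheartsuit)=\{\alpha_2\}$, and $\rho(\alpha_i)=\alpha_i^\vee|_M$. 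To obtain the stated values $\rho(\clubsuit)=a_1$, $\rho(\varheartsuit)=a_2$ I would identify $N$ with $\bbZ$ via the basis dual to $\lambda$ and evaluate $\langle\alpha_i^\vee,\lambda\rangle=\langle\alpha_i^\vee,a_1\varpi_1+a_2\varpi_2\rangle=a_i$, using that $\chi_1$ is trivial on the coroots of $\SL_3$ and $\langle\alpha_i^\vee,\varpi_j\rangle=\delta_{ij}$.

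Next I would pin down the adapted parabolic: since $I=\zeta(\clubsuit)\cup\zeta(\varheartsuit)=S$, it is $P=P_S=B$, consistently with $\dim(G/P)=\dim(\SL_3/B)=3=\dim(G/H)-\rank(G/H)$ via Corollary~\ref{cor_local_structure_theorem}. Hence $\kappa$ is the sum of all positive roots of $\SL_3$, namely $\alpha_1+\alpha_2+(\alpha_1+\alpha_2)=2\alpha_1+2\alpha_2$. Because $\Sigma=\emptyset$, the multiplicities are $m_D=\langle\alpha^\vee,\kappa\rangle$, so the $A_2$ Cartan pairings $\langle\alpha_1^\vee,\alpha_1\rangle=2$, $\langle\alpha_1^\vee,\alpha_2\rangle=-1$ give $m_\clubsuit=\langle\alpha_1^\vee,2\alpha_1+2\alpha_2\rangle=2$ and symmetrically $m_\varheartsuit=2$.

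Finally, for $f$ I would use $P^u=B^u$, so $R_{P^u}=R^+=\{\alpha_1,\alpha_2,\alpha_1+\alpha_2\}$. As $\SL_3$ is simply laced, each factor reduces to $\{\kappa+p,\alpha\}/\{\varrho,\alpha\}=\langle\alpha^\vee,\kappa+p\rangle/\langle\alpha^\vee,\varrho\rangle$ with $\varrho=\varpi_1+\varpi_2=\alpha_1+\alpha_2$. Writing $p=x_1\lambda$, the two simple roots (where $\langle\alpha_i^\vee,\varrho\rangle=1$) contribute $2+a_1x_1$ and $2+a_2x_1$, while the highest root, for which $(\alpha_1+\alpha_2)^\vee=\alpha_1^\vee+\alpha_2^\vee$ and $\langle(\alpha_1+\alpha_2)^\vee,\varrho\rangle=2$, contributes $(4+(a_1+a_2)x_1)/2$; their product is the asserted $f$. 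No step is a genuine obstacle here: once parabolic induction supplies $(M,\Sigma,(\mathcal{D},\rho,\zeta))$ the computation is mechanical, and the only point requiring mild care is the coroot of the highest root entering the last factor of $f$.
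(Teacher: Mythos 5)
Your proposal is correct and follows the same route as the paper, which derives this proposition (like the other rank-one horospherical cases in Sections~\ref{section_SL3} and \ref{section_remaining}) directly from Proposition~\ref{prop_parabolic_induction} applied to the parabolic induction from a torus along \(Q=B^-\), leaving the evaluation of \(\rho(\alpha_i)=\alpha_i^\vee|_M\), \(\kappa\), \(m_D\) and \(f\) as a mechanical computation. Your explicit checks — \(\langle\alpha_i^\vee,\lambda\rangle=a_i\), \(\kappa=2\alpha_1+2\alpha_2\) as the sum of the positive roots of \(A_2\), \(m_D=\langle\alpha_i^\vee,\kappa\rangle=2\), and the highest-root factor \((4+(a_1+a_2)x_1)/2\) coming from \((\alpha_1+\alpha_2)^\vee=\alpha_1^\vee+\alpha_2^\vee\) with \(\langle(\alpha_1+\alpha_2)^\vee,\varrho\rangle=2\) — all match the stated data.
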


We now determine the possible locally factorial \(G/H\)-reflexive polytopes. 
The conditions on \(\frac{\rho(\clubsuit)}{2}\) and \(\frac{\rho(\varheartsuit)}{2}\) show that, if one is a vertex, \(a_1=1\) and \(a_2\in \{0,1\}\), and if it is not, then \((a_1,a_2)\in \{(0,0),(1,0),(1,1),(1,-1)\}\). 
Furthermore, if \((a_1,a_2)=(0,0)\), the embedding \textbf{4-1-4} is the product \(W\times \bbP^1\), where \(W=\SL_3/B^-\) is the variety of full flags in \(\bbC^3\). 
We deduce that there are six polytopes corresponding to non-product embeddings. 

\begin{center}
\begin{tikzpicture}
\draw[dotted] (-2,0) -- (2,0);
\draw (0,0) node{+};
\foreach \i in {0,...,4}
{
\draw[dotted] (-2+\i,.3) -- (-2+\i,-.3);
}
\draw[thick] (-1,0) -- (1,0);
\draw (1/2,0) node[above=-4, color=teal]{\(\clubsuit\)};
\draw (1/2,0) node[below=-4, color=purple]{\(\varheartsuit\)};
\draw (-1/3,0) node[below]{\textbf{4-1-5}};
\end{tikzpicture}
\begin{tikzpicture}
\draw[dotted] (-2,0) -- (2,0);
\draw (0,0) node{+};
\foreach \i in {0,...,4}
{
\draw[dotted] (-2+\i,.3) -- (-2+\i,-.3);
}
\draw[thick] (-1,0) -- (1,0);
\draw (1/2,0) node[color=teal]{\(\clubsuit\)};
\draw (0,0) node[color=purple]{\(\varheartsuit\)};
\draw (0,0) node[below]{\textbf{4-1-6}};
\end{tikzpicture}
\begin{tikzpicture}
\draw[dotted] (-2,0) -- (2,0);
\draw (0,0) node{+};
\foreach \i in {0,...,4}
{
\draw[dotted] (-2+\i,.3) -- (-2+\i,-.3);
}
\draw[thick] (-1,0) -- (1/2,0);
\draw (1/2,0) node[color=teal]{\(\clubsuit\)};
\draw (0,0) node[color=purple]{\(\varheartsuit\)};
\draw (0,0) node[below]{\textbf{4-1-7}};
\end{tikzpicture}
\begin{tikzpicture}
\draw[dotted] (-2,0) -- (2,0);
\draw (0,0) node{+};
\foreach \i in {0,...,4}
{
\draw[dotted] (-2+\i,.3) -- (-2+\i,-.3);
}
\draw[thick] (-1,0) -- (1,0);
\draw (1/2,0) node[color=teal]{\(\clubsuit\)};
\draw (-1/2,0) node[color=purple]{\(\varheartsuit\)};
\draw (0,0) node[below]{\textbf{4-1-8}};
\end{tikzpicture}
\begin{tikzpicture}
\draw[dotted] (-2,0) -- (2,0);
\draw (0,0) node{+};
\foreach \i in {0,...,4}
{
\draw[dotted] (-2+\i,.3) -- (-2+\i,-.3);
}
\draw[thick] (-1,0) -- (1/2,0);
\draw (1/2,0) node[color=teal]{\(\clubsuit\)};
\draw (-1/2,0) node[color=purple]{\(\varheartsuit\)};
\draw (0,0) node[below]{\textbf{4-1-9}};
\end{tikzpicture}
\begin{tikzpicture}
\draw[dotted] (-2,0) -- (2,0);
\draw (0,0) node{+};
\foreach \i in {0,...,4}
{
\draw[dotted] (-2+\i,.3) -- (-2+\i,-.3);
}
\draw[thick] (-1/2,0) -- (1/2,0);
\draw (1/2,0) node[color=teal]{\(\clubsuit\)};
\draw (-1/2,0) node[color=purple]{\(\varheartsuit\)};
\draw (0,0) node[below]{\textbf{4-1-10}};
\end{tikzpicture}
\end{center}

\begin{exa}
The embeddings corresponding to \textbf{4-1-5}, \textbf{4-1-6} and \textbf{4-1-8}  are \(\bbP^1\)-bundles over \(W=\SL_3/B^-\) of the form \(\bbP(\mathcal{O}\oplus\mathcal{O}(a_1\varpi_1+a_2\varpi_2))\) where \(\mathcal{O}(a_1\varpi_1+a_2\varpi_2)\) is the homogeneous line bundle associated to the weight \(a_1\varpi_1+a_2\varpi_2\). 
\end{exa}

\begin{exa}
The geometrical description for the embedding associated with \textbf{4-1-10} follows from \cite[Proposition~1.8]{Pasquier_2009}: consider \(\bbP^5\) as the projectivization of the direct sum of the representations \(\bbC^3\) and \((\bbC^3)^*\) of \(\SL_3\). 
Consider furthermore the quadratic form on \(\bbC^3\oplus (\bbC^3)^*\) given by the duality bracket. 
Then the embedding is the corresponding quadric \(Q^4\subset \bbP^5\). 
Furthermore, as follows from the encoding of morphisms between spherical varieties, the variety corresponding to the penultimate polytope is the blowup of \(Q^4\) along the subvariety \(\bbP^2=\bbP(\{0\}\oplus (\bbC^3)^*)\). 
Note that we may also recover \textbf{4-1-8} as the blowup of \(Q^4\) along the two copies of \(\bbP^2\) given by each subrepresentation of \(\bbC^6=\bbC^3\oplus (\bbC^3)^*\). 
\end{exa}


\subsection{Dimension four, rank two horospherical}

In this case, we obtain upgradings of toric structures. 

\subsubsection{Combinatorial data}

\begin{prop}
Let \(H=\ker(a_1\varpi_1+\chi_1)\cap \ker(\chi_2) \subset Q_{\alpha_1}\subset G=\SL_3\times \bbG_m^2\). 
The combinatorial data of \(G/H\) is as follows.
\[\arraycolsep=12pt
\begin{array}{lll}
M=\langle a_1\varpi_1+\chi_1,\chi_2 \rangle  &  \rho(\clubsuit)=a_1 & \zeta(\clubsuit)=\{\alpha_1\} 
\\  
\Sigma=\emptyset &  
\\ 
\mathcal{D} = \{\clubsuit\} &   &  
\\
\midrule
\kappa = 2\alpha_1+\alpha_2 & f = \frac{(3+a_1x_1)^2}{2} & m_{\clubsuit} = 3
\end{array}
\]
\end{prop}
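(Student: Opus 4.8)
The plan is to exploit a product decomposition hidden in the definition of $H$. Writing $\bbG_m^2=\bbG_m\times\bbG_m$ so that $\chi_1$ and $\chi_2$ are the two coordinate projections, the equation $\chi_2=1$ simply kills the second factor: a direct check gives $H=H'\times\{1\}$ where $H'=\ker\big((a_1\varpi_1+\chi_1)|_{Q_{\alpha_1}}\big)\subset\SL_3\times\bbG_m$ is the rank one horospherical subgroup treated in the dimension three horospherical case earlier in this section. Hence $G/H\cong\big((\SL_3\times\bbG_m)/H'\big)\times\bbG_m$, the product of that threefold with a one dimensional torus on which only the new $\bbG_m$ factor acts. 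First I would invoke the fact, used repeatedly in the paper, that the combinatorial data of a product splits: the weight lattice is $M=\langle a_1\varpi_1+\chi_1\rangle\oplus\bbZ\chi_2$, and since the extra torus contributes no colors and no spherical roots, the triple $(\Sigma,\mathcal{D},\rho,\zeta)$ is literally that of $(\SL_3\times\bbG_m)/H'$. This yields at once $\Sigma=\emptyset$, $\mathcal{D}=\{\clubsuit\}$, $\zeta(\clubsuit)=\{\alpha_1\}$ and $\rho(\clubsuit)=a_1$ (the zero second coordinate being suppressed in the statement).

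Equivalently, and this is the route I would actually write down to avoid quoting an unstated product principle, the whole of $(M,\Sigma,\mathcal{D},\rho,\zeta)$ follows from Proposition~\ref{prop_parabolic_induction}: $G/H$ is the horospherical space induced from the torus $\bbG_m^2$ through $Q_{\alpha_1}$ with the two $\bbZ$-independent characters $a_1\varpi_1+\chi_1$ and $\chi_2$. That proposition gives $M$ as the lattice they generate, $\Sigma=\emptyset$, one color $\clubsuit$ with $\zeta(\clubsuit)=\{\alpha_1\}$, and $\rho(\clubsuit)=\alpha_1^{\vee}|_M$; evaluating on the basis via $\langle\alpha_1^{\vee},\varpi_1\rangle=1$ and the orthogonality of torus characters to roots gives $\langle\alpha_1^{\vee},a_1\varpi_1+\chi_1\rangle=a_1$ and $\langle\alpha_1^{\vee},\chi_2\rangle=0$.

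Next I would compute the remaining invariants from their definitions, all of which only see the $\SL_3$ factor. The adapted parabolic is $P=P_{\{\alpha_1\}}$ since $\bigcup_{D}\zeta(D)=\{\alpha_1\}$; its roots in $\SL_3$ are $\alpha_1,\alpha_2,\alpha_1+\alpha_2$ and $-\alpha_2$, summing to $\kappa=2\alpha_1+\alpha_2$ (the torus contributes nothing). As $\Sigma=\emptyset$, the definition of $m_D$ forces $m_\clubsuit=\langle\alpha_1^{\vee},\kappa\rangle=\langle\alpha_1^{\vee},2\alpha_1+\alpha_2\rangle=4-1=3$. For the Duistermaat--Heckman polynomial I would use $R_{P^u}=\{\alpha_1,\alpha_1+\alpha_2\}$ together with the identity $\{\kappa+p,\alpha\}/\{\varrho,\alpha\}=\langle\alpha^{\vee},\kappa+p\rangle/\langle\alpha^{\vee},\varrho\rangle$, valid since $\SL_3$ is simply laced. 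Writing $p=x_1(a_1\varpi_1+\chi_1)+x_2\chi_2$ and using that $\chi_1,\chi_2$ pair trivially with every coroot, the factor for $\alpha_1$ is $(3+a_1x_1)/1$ and the factor for $\alpha_1+\alpha_2$ is $(3+a_1x_1)/2$, whose product is $f=\tfrac{(3+a_1x_1)^2}{2}$.

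There is no genuine obstacle here — the statement is a routine specialization of the induction machinery already established. The only points demanding care are bookkeeping: remembering that the Levi of $P_{\{\alpha_1\}}$ contributes the negative root $-\alpha_2$ to the sum defining $\kappa$, and that the characters $\chi_1,\chi_2$ are orthogonal to all roots of $\SL_3$ so that they drop out of both $\rho(\clubsuit)$ and the $\alpha$-factors of $f$, leaving $f$ independent of $x_2$ exactly as the product structure predicts.
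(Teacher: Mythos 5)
Your proposal is correct and follows essentially the same route as the paper, whose entire proof is the one-line citation of Proposition~\ref{prop_parabolic_induction}; your second paragraph is exactly that argument, and your explicit checks of $\kappa=2\alpha_1+\alpha_2$, $m_{\clubsuit}=\langle\alpha_1^{\vee},\kappa\rangle=3$, and $f=\tfrac{(3+a_1x_1)^2}{2}$ (with the $\chi_i$ pairing trivially with all coroots, so $f$ is independent of $x_2$) correctly supply the bookkeeping the paper leaves implicit. The alternative product decomposition $G/H\cong\bigl((\SL_3\times\bbG_m)/H'\bigr)\times\bbG_m$ in your first paragraph is also valid and consistent with Remark~\ref{rem_SL3_non-uniqueness_of_chi}, but it is not needed once the induction proposition is invoked.
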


\begin{proof}
This follows directly from Proposition~\ref{prop_parabolic_induction}.
\end{proof}

\subsubsection{Polytopes}

When \(a_1=0\), the homogeneous space is the product of homogeneous spaces \(\bbP^2\times \bbG_m^2\). 
All locally factorial embeddings are thus products of \(\bbP^2\) with toric Fano surfaces, there are 5 such embeddings, that we number from \textbf{4-2-155} to \textbf{4-2-159}.

We now assume that \(a_1>0\). 
Recall that by Remark~\ref{rem_SL3_non-uniqueness_of_chi}, we can work up to the action of \(\begin{bmatrix} 1 & k \\ 0& \pm 1\end{bmatrix}\) for \(k\in \bbZ\). 
If \(\frac{\rho(\clubsuit)}{3}=(\frac{a_1}{3},0)\) is a vertex, then \(\rho(\alpha_1)\) must be primitive, hence \(a_1=1\). 
If it is not a vertex, it must be an interior point, hence we must have \(a_i\in \{1,2\}\). 

If \((\frac{a_1}{3},0)\) is not a vertex, then it suffices to go through the list of smooth Fano polytopes up to \(\GL_2(\bbZ)\), and find the representatives that work up to the action of  \(\begin{bmatrix} 1 & k \\ 0& \pm 1\end{bmatrix}\). 
There are nine possibilities for \(a_1=2\) and 17 for \(a_1=1\), given by the following possible locations for \(\frac{\rho(\clubsuit)}{3}\). 
To draw the polytopes, we change conventions a bit: we print only the lattice \(\frac{1}{3}\bbZ^2\), modify freely the representation by a unimodular representation for conciseness, and display the possible locations for \(\frac{\rho(\clubsuit)}{3}\) by the number \textbf{I} such that the corresponding embedding is labelled \textbf{4-2-I}. 

\begin{center}
\begin{tikzpicture}[scale=0.8]
\draw[dotted] (-3,3) grid (3,-3);
\draw (0,0) node{+};
\draw[thick] (3,0) -- (0,3) -- (-3,-3) -- cycle;
\draw (2,0) node{\textbf{160}};
\draw (1,0) node{\textbf{161}};
\draw (1,1) node{\textbf{162}};
\end{tikzpicture}
\begin{tikzpicture}[scale=0.8]
\draw[dotted] (-3,3) grid (3,-3);
\draw (0,0) node{+};
\draw[thick] (3,0) -- (0,3) -- (-3,0) -- (0,-3) -- cycle;
\draw (2,0) node{\textbf{163}};
\draw (1,0) node{\textbf{164}};
\draw (1,1) node{\textbf{165}};
\end{tikzpicture}
\begin{tikzpicture}[scale=0.8]
\draw[dotted] (-3,3) grid (3,-3);
\draw (0,0) node{+};
\draw[thick] (3,0) -- (3,3) -- (0,3) -- (-3,-3) -- cycle;
\draw (2,0) node{\textbf{166}};
\draw (1,0) node{\textbf{167}};
\draw (2,2) node{\textbf{168}};
\draw (1,1) node{\textbf{169}};
\draw (-2,-2) node{\textbf{170}};
\draw (-1,-1) node{\textbf{171}};
\draw (2,1) node{\textbf{172}};
\draw (0,-1) node{\textbf{173}};
\end{tikzpicture}
\end{center}
\begin{center}
\begin{tikzpicture}[scale=0.8]
\draw[dotted] (-3,3) grid (3,-3);
\draw (0,0) node{+};
\draw[thick] (3,0) -- (3,3) -- (0,3) -- (-3,0) -- (0,-3) -- cycle;
\draw (2,0) node{\textbf{174}};
\draw (1,0) node{\textbf{175}};
\draw (2,2) node{\textbf{176}};
\draw (1,1) node{\textbf{177}};
\draw (0,-2) node{\textbf{178}};
\draw (0,-1) node{\textbf{179}};
\draw (-1,-1) node{\textbf{180}};
\draw (1,-1) node{\textbf{181}};
\draw (2,1) node{\textbf{182}};
\end{tikzpicture}
\begin{tikzpicture}[scale=0.8]
\draw[dotted] (-3,3) grid (3,-3);
\draw (0,0) node{+};
\draw[thick] (3,0) -- (3,3) -- (0,3) -- (-3,0) -- (-3,-3) -- (0,-3) -- cycle;
\draw (2,0) node{\textbf{183}};
\draw (1,0) node{\textbf{184}};
\draw (2,1) node{\textbf{185}};
\end{tikzpicture}
\end{center}

Let us now deal with the case when \((\frac{1}{3},0)\) is a vertex. 
By considering the possible successive next vertices, and working up to \((x,y)\mapsto (x,-y)\) which induces a \(G\)-equivariant isomorphism, we obtain the following nine polytopes. 

\begin{center}
\begin{tikzpicture}[scale=0.6]
\draw[dotted] (-2,3) grid (2,-2);
\draw (0,0) node{+};
\draw[thick] (1/3,0) -- (0,1) -- (-1,3) -- (0,-1) -- cycle;
\draw (1/3,0) node[color=teal]{\(\clubsuit\)};
\draw (0,-1) node[below]{\textbf{4-2-186}};
\end{tikzpicture}
\begin{tikzpicture}[scale=0.6]
\draw[dotted] (-2,3) grid (2,-2);
\draw (0,0) node{+};
\draw[thick] (1/3,0) -- (0,1) -- (-1,3) -- (-1,2) -- (0,-1) -- cycle;
\draw (1/3,0) node[color=teal]{\(\clubsuit\)};
\draw (0,-1) node[below]{\textbf{4-2-187}};
\end{tikzpicture}
\begin{tikzpicture}[scale=0.6]
\draw[dotted] (-2,3) grid (2,-2);
\draw (0,0) node{+};
\draw[thick] (1/3,0) -- (0,1) -- (-1,2) -- (0,-1) -- cycle;
\draw (1/3,0) node[color=teal]{\(\clubsuit\)};
\draw (0,-1) node[below]{\textbf{4-2-188}};
\end{tikzpicture}
\begin{tikzpicture}[scale=0.6]
\draw[dotted] (-2,3) grid (2,-2);
\draw (0,0) node{+};
\draw[thick] (1/3,0) -- (0,1) -- (-1,2)  -- (-1,1) -- (0,-1) -- cycle;
\draw (1/3,0) node[color=teal]{\(\clubsuit\)};
\draw (0,-1) node[below]{\textbf{4-2-189}};
\end{tikzpicture}
\begin{tikzpicture}[scale=0.6]
\draw[dotted] (-2,2) grid (2,-2);
\draw (0,0) node{+};
\draw[thick] (1/3,0) -- (0,1) -- (-1,1) -- (0,-1) -- cycle;
\draw (1/3,0) node[color=teal]{\(\clubsuit\)};
\draw (0,-1) node[below]{\textbf{4-2-190}};
\end{tikzpicture}
\begin{tikzpicture}[scale=0.6]
\draw[dotted] (-2,2) grid (2,-2);
\draw (0,0) node{+};
\draw[thick] (1/3,0) -- (0,1) -- (-1,1) -- (-1,0) -- (0,-1) -- cycle;
\draw (1/3,0) node[color=teal]{\(\clubsuit\)};
\draw (0,-1) node[below]{\textbf{4-2-191}};
\end{tikzpicture}
\begin{tikzpicture}[scale=0.6]
\draw[dotted] (-2,2) grid (2,-2);
\draw (0,0) node{+};
\draw[thick] (1/3,0) -- (0,1) -- (-1,0) -- (0,-1) -- cycle;
\draw (1/3,0) node[color=teal]{\(\clubsuit\)};
\draw (0,-1) node[below]{\textbf{4-2-192}};
\end{tikzpicture}
\begin{tikzpicture}[scale=0.6]
\draw[dotted] (-2,2) grid (2,-2);
\draw (0,0) node{+};
\draw[thick] (1/3,0) -- (0,1) -- (-1,-1) -- cycle;
\draw (1/3,0) node[color=teal]{\(\clubsuit\)};
\draw (0,-1) node[below]{\textbf{4-2-193}};
\end{tikzpicture}
\begin{tikzpicture}[scale=0.6]
\draw[dotted] (-2,2) grid (2,-2);
\draw (0,0) node{+};
\draw[thick] (1/3,0) -- (0,1) -- (-1,0) -- (-1,-1) -- cycle;
\draw (1/3,0) node[color=teal]{\(\clubsuit\)};
\draw (0,-1) node[below]{\textbf{4-2-194}};
\end{tikzpicture}
\end{center}

\begin{exa}
The embedding \textbf{4-2-193} is \(\bbP^4\), and the action is induced by the standard action of \(\GL_4\) on an affine chart \(\bbC^4\), and the embedding of \(\SL_3\times \bbG_m\) in \(\GL_4\) by block-diagonal matrices. 
Blowing up one of the fixed points yields the embedding \textbf{4-2-190}, while blowing up the \(\bbP^2\) at infinity yields \textbf{4-2-194}. 
Blowing up \(\{0\}\times \bbC\) on the other hand yields the embedding \textbf{4-2-160}. 
Several examples can be obtained by further explicit blowups of either one of these, or of \textbf{4-2-192}, which is the product of embeddings \(\bbP^3\times \bbP^1\). 
\end{exa}

\section{Remaining rank one examples}
\label{section_remaining}

\subsection{Under \(\Sp_4\)}

The combinatorial data of rank one spherical homogeneous spaces are well-known, and by \cite{Akhiezer_1983}, we know that for \(\Sp_4/N(\SL_2\times \Sp_2)\) there exists a unique complete embedding, which is \(\bbP^4\) and that we label \textbf{4-1-11}, while \(Q^4\) is the unique complete embedding of \(\Sp_4/\SL_2\times \Sp_2\), labelled as \textbf{4-1-12}. 
We recall the combinatorial data, and the details on the embeddings, which allows to recover the combinatorial data easily. 

\begin{prop}
For \(H=N(\SL_2\times \Sp_2)\), the combinatorial data is as follows. 
\[\arraycolsep=12pt
\begin{array}{lll}
M=\langle 2(\alpha_1+\alpha_2) \rangle  &  \rho(\clubsuit)=2 & \zeta(\clubsuit)=\{\alpha_2\} 
\\  
\Sigma=\{2(\alpha_1+\alpha_2)\} &  
\\ 
\mathcal{D} = \{\clubsuit\} &   &  
\\
\midrule
\kappa = 3\alpha_1+3\alpha_2 & f = \frac{(3+2x_1)^3}{3} & m_{\clubsuit} = 3
\end{array}
\]
\end{prop}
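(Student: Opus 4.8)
The plan is to realize the unique complete embedding $\bbP^4$ explicitly and to read off every item of the combinatorial data from it, in the same spirit as the treatment of $\SL_3/S(\GL_1\times\GL_2)$ above. Concretely, I would take $\bbP^4=\bbP(V(\varpi_2))$ with $V(\varpi_2)=\Lambda^2_0\bbC^4\simeq\bbC^5$ the primitive part of $\Lambda^2$ of the standard representation; this is the standard representation of $\SO_5=\Sp_4/\{\pm 1\}$ and carries an invariant quadratic form $q$. The closed orbit is then the quadric $Q^3=\{q=0\}$, which is the highest weight orbit $\Sp_4/Q_{\{\alpha_2\}}$ and hence the unique $G$-stable prime divisor; the open orbit, consisting of the non-isotropic lines, is $\SO_5/S(O_1\times O_4)$, and a short check on connected components shows that its $\Sp_4$-stabilizer is the full normalizer $N(\SL_2\times\Sp_2)$ (the nontrivial component of $O_4$ realizing the swap of the two symplectic planes). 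This confirms that $\bbP^4$ is an embedding of $\Sp_4/N(\SL_2\times\Sp_2)$.

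I would then extract the colors, $\rho$ and $\zeta$. As $V(\varpi_2)^*$ is irreducible it has a single $B$-eigenline, spanned by a highest weight vector $\ell$ of weight $\varpi_2=\alpha_1+\alpha_2$ (in type $C_2$), so $\bbP^4$ has exactly one $B$-stable hyperplane $\{\ell=0\}$; its trace on the open orbit is the unique color $\clubsuit$, whence $\mathcal{D}=\{\clubsuit\}$. The stabilizer of this hyperplane is the maximal parabolic attached to $\varpi_2$, namely $P_{\{\alpha_2\}}$, so $\zeta(\clubsuit)=\{\alpha_2\}$; this is consistent with Corollary~\ref{cor_local_structure_theorem}, the adapted parabolic being $P_{\{\alpha_2\}}$ with $\dim(G/P)=3=\dim(G/H)-\rank(G/H)$. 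Since $q$ is $G$-invariant, $\varphi=\ell^2/q$ is a $B$-semi-invariant rational function of weight $2\varpi_2=2(\alpha_1+\alpha_2)$ with $\ord_{\clubsuit}(\varphi)=2$ and $\ord_{Q^3}(\varphi)=-1$ (a simple pole along the $G$-stable divisor). This yields $2(\alpha_1+\alpha_2)\in M$ and $\rho(\clubsuit)=2$.

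The decisive point, and the step I expect to be the main obstacle, is to show that $M=\langle 2(\alpha_1+\alpha_2)\rangle$ and not $\langle\alpha_1+\alpha_2\rangle$: this is exactly what separates the normalizer from $\SL_2\times\Sp_2$, and it is the analogue of the parity argument of Example~\ref{P^2}. If a $B$-semi-invariant $r$ of weight $\alpha_1+\alpha_2$ existed, then $r^2$ and $\varphi$ would share a weight, so $r^2=C\varphi$ for some constant $C$ by the spherical property; but $\ord_{Q^3}(\varphi)=-1$ is odd while $\ord_{Q^3}(r^2)$ is even, a contradiction. The embedding being rank one and not horospherical (the order along $Q^3$ is a nonzero $G$-invariant valuation), the valuation cone is a half-line and $\Sigma$ is the primitive positive generator of $M$, namely $\{2(\alpha_1+\alpha_2)\}$; as a cross-check, $2(\alpha_1+\alpha_2)=(2\alpha_1+\alpha_2)+\alpha_2$ is a sum of two strongly orthogonal long roots, as required by Proposition~\ref{prop_spherical_root_as_sum_of_positive_roots}. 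Finally I would compute the numerical data directly: $\kappa=3\alpha_1+3\alpha_2$ is the sum of the roots $\alpha_2,\alpha_1+\alpha_2,2\alpha_1+\alpha_2$ of the unipotent radical of $P_{\{\alpha_2\}}$; then $m_\clubsuit=\langle\alpha_2^\vee,\kappa\rangle=3$ because $\alpha_2\notin\Sigma\cup\frac12\Sigma$; and for the Duistermaat--Heckman polynomial, writing $p=2x_1(\alpha_1+\alpha_2)$ gives $\kappa+p=(3+2x_1)(\alpha_1+\alpha_2)$, so that $f=(3+2x_1)^3\prod_{\alpha\in R_{P^u}}\frac{\{\alpha_1+\alpha_2,\alpha\}}{\{\varrho,\alpha\}}=\frac{(3+2x_1)^3}{3}$, the constant being a one-line Killing-form computation.
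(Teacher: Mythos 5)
Your construction is, in substance, the paper's own proof: the paper likewise realizes the unique complete embedding as \(\bbP^4\) with the action factoring through \(\SO_5\) (written there via an explicit quadratic form \(x_1x_5+x_2x_4+x_3^2\) rather than your intrinsic \(\bbP(\Lambda^2_0\bbC^4)\)), identifies the single color as the \(B\)-stable hyperplane (there \(\{x_5=0\}\), your \(\{\ell=0\}\)), and uses the very same rational function \(x_5^2/q=\ell^2/q\) of weight \(2(\alpha_1+\alpha_2)\) with orders \(2\) on the color and \(-1\) along \(Q^3\); your computations of \(\kappa\), \(m_{\clubsuit}\) and \(f\) agree with the paper's. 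One instructive difference: where you run the parity argument of Example~\ref{P^2} to exclude \(\alpha_1+\alpha_2\in M\), the paper gets primitivity for free from the same order computation, since \(\lambda\mapsto \ord_{Q^3}(f_\lambda)\) is a homomorphism \(M\to\bbZ\) taking the value \(-1\) on \(2(\alpha_1+\alpha_2)\), which already forces that weight to generate \(M\). Your route is correct too, but slightly longer than needed.

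Two points require repair. First, your justification that the space is not horospherical is wrong as stated: the existence of a nonzero \(G\)-invariant valuation (the order along \(Q^3\)) holds for any complete embedding with a \(G\)-stable divisor, horospherical ones included — for those one even has \(\mathcal{V}=N\otimes\bbQ\), all of whose elements are invariant valuations (Corollary~\ref{cor_horospherical_by_valuation_cone}). So this does not show \(\mathcal{V}\) is a half-line, and hence does not establish \(\Sigma=\{2(\alpha_1+\alpha_2)\}\) rather than \(\Sigma=\emptyset\). A correct one-liner: \(H=N(\SL_2\times\Sp_2)\) is reductive (alternatively: self-normalizing, so \(\Aut^G(G/H)=N_G(H)/H\) is finite and \(\mathcal{V}\cap(-\mathcal{V})=\{0\}\) by Proposition~\ref{prop_equivariant_automorphisms}), hence \(G/H\) is not horospherical; then \(\mathcal{V}\) is a half-line, and since \(\nu_{Q^3}\in\mathcal{V}\) pairs to \(-1<0\) with the generator, \(\Sigma\) is the positive primitive generator as claimed. (The paper itself leans on the known rank-one classification here, so it states no more than you do — but it also does not assert a false reason.) Second, and more minor: uniqueness of the \(B\)-stable \emph{hyperplane} does not by itself give \(\mathcal{D}=\{\clubsuit\}\), since the closure of a color in \(\bbP^4\) could a priori be a \(B\)-stable hypersurface of higher degree. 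This is easily patched: every \(B\)-stable prime divisor of \(\bbP^4\) is cut out by a \(B\)-eigenvector in some \(\mathrm{Sym}^d V(\varpi_2)^*\), and those eigenvectors are exactly the monomials \(q^j\ell^{d-2j}\), so \(\{q=0\}\) and \(\{\ell=0\}\) are the only \(B\)-stable prime divisors; the paper instead checks directly that there is a single codimension-one \(B\)-orbit in \(\bbP^4\setminus Q^3\).
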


\begin{proof}
Consider the natural linear action of \(\SO_5\) on \(\bbC^5\), where \(\SO_5\) is realized in \(\SL_5\) as the group of matrices \(A\) such that \(A^{T}KAK=I_5\), where 
\[ K = \begin{bmatrix} 0 &0 &0&0&1\\0&0&0&1&0\\0&0&1&0&0\\0&1&0&0&0\\1&0&0&0&0\end{bmatrix} \]
There are two orbits of \(\SO_5\) on the induced projective space \(\bbP^4\): the three dimensional quadric \(Q^3=\{x_1x_5+x_2x_4+x_3^2=0\}=\SO_5/Q_{\alpha_2}\) and its complement. 
Furthermore, the stabilizer of a point in the complement is isomorphic to \(S(O(1)\times O(4))\). 
Note that the action of \(\Sp_4\) on \(\Sp_4/N(\SL_2\times \Sp_2)\) factors through \(\SO_5\), and that the image of the isotropy group is conjugate to \(S(O(1)\times O(4))\), so we indeed have a model of the desired homogeneous space. 

The subgroup of upper triangular matrices contained in \(\SO_5\) is a Borel subgroup of \(\SO_5\), and under this action there is only one codimension one orbit in \(\bbP^4\setminus Q^3\), hence only one color \(D_1\). 
The equation of the color is given by \(\{x_5=0\}\), this is a hyperplane in \(\bbP^4\). 
We have thus determined \(\mathcal{D}\), and the description of \(\zeta\) follows easily. 
From \(\zeta\), we have the adapted parabolic, which is \(Q_{\alpha_2}\). 
From that, we recover that the rank of the homogeneous space is one, that \(\kappa=3\alpha_1+3\alpha_2\) and that \(m_{D_1}=3\). 

Consider the rational function defined by \(\frac{x_5^2}{x_1x_5+x_2x_4+x_3^2}\), it is \(B\)-semi-invariant with weight \(2(\alpha_1+\alpha_2)\). It vanishes to order \(2\) on \(D_1\), and to order \(-1\) on the \(\SO_5\) stable prime divisor \(Q^3\). 
Hence it is a generator of \(M\) and we deduce \(\rho\). 
\end{proof}

For \(\Sp_4/\SL_2\times \Sp_2\simeq \SO_5/\SO_4\), the projective model is given by compactifying the affine quadric \(\{x_1x_5+x_2x_4+x_3^2=1\}\subset \bbC^5\) inside \(\bbP^5\). 
The result is the smooth projective quadric \(Q^4\), on which \(\SO_5\) acts with two orbits: the three dimensional quadric in the hyperplane at infinity, and the affine quadric. 
The same arguments as in the above proof give the combinatorial data. 

\begin{prop}
For \(H=\SL_2\times \Sp_2\), the combinatorial data is as follows. 
\[\arraycolsep=12pt
\begin{array}{lll}
M=\langle \alpha_1+\alpha_2 \rangle  &  \rho(\clubsuit)=1 & \zeta(\clubsuit)=\{\alpha_2\} 
\\  
\Sigma=\{\alpha_1+\alpha_2\} &  
\\ 
\mathcal{D} = \{\clubsuit\} &   &  
\\
\midrule
\kappa = 3\alpha_1+3\alpha_2 & f = \frac{(3+x_1)^3}{3} & m_{\clubsuit} = 3
\end{array}
\]
\end{prop}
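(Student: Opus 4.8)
The plan is to run the same geometric argument as in the proof for $N(\SL_2\times\Sp_2)$, but on the honest double cover furnished by the smooth quadric $Q^4$ rather than on $\bbP^4$, paying attention to the one place where the two computations genuinely differ: the weight lattice. Concretely, realize $\SO_5$ as before as the stabilizer of the antidiagonal form with matrix $K$, write $Q^4\subset\bbP^5$ as $\{x_1x_5+x_2x_4+x_3^2=x_6^2\}$ with homogeneous coordinates $[x_1:\cdots:x_6]$, and observe that $\SO_5$ acts with exactly two orbits: the open orbit, which in the chart $x_6=1$ is the affine quadric $\{q=1\}$ and is our homogeneous space $\SO_5/\SO_4\simeq\Sp_4/(\SL_2\times\Sp_2)$, and the closed orbit $Q^3=\{x_6=0\}\cap Q^4$ in the hyperplane at infinity. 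As in the previous proof the $\Sp_4$-action factors through $\SO_5$ and the isotropy group maps to $\SO_4$, so this is a valid model.

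Next I would determine the colors exactly as before. Taking $B$ to be the upper-triangular matrices of $\SO_5$, there is a single codimension-one $B$-orbit in the open orbit, whose closure is cut out by $\{x_5=0\}$; hence $\mathcal{D}=\{\clubsuit\}$ with $\zeta(\clubsuit)=\{\alpha_2\}$. This identifies the adapted parabolic as $P_{\{\alpha_2\}}$, so that $G/P\simeq Q^3$ and, by Corollary~\ref{cor_local_structure_theorem}, the rank is $\dim(G/H)-\dim(G/P)=4-3=1$. Summing the roots of the adapted parabolic gives $\kappa=3\alpha_1+3\alpha_2$, exactly as in the normalizer case, and since $\alpha_2\notin\Sigma\cup\tfrac12\Sigma$, the definition of $m_D$ yields $m_{\clubsuit}=\langle\alpha_2^{\vee},\kappa\rangle=3$. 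All of this is identical to the normalizer computation because the adapted parabolic, and hence $\kappa$, is unchanged.

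The substantive difference is the weight lattice. In the $\bbP^4$ model the central $\bbZ/2$ forced one to use $x_5^2$, producing a semi-invariant of weight $2(\alpha_1+\alpha_2)$; here, on the genuine quadric, the linear coordinate already descends. Explicitly, $\frac{x_5x_6}{q}=\frac{x_5}{x_6}$ on $Q^4$ is $B$-semi-invariant of weight $\alpha_1+\alpha_2$, vanishing to order one along the color $\{x_5=0\}$ and to order $-1$ along $Q^3$. Thus $\alpha_1+\alpha_2\in M$ and $\rho(\clubsuit)=1$. Finally, the Duistermaat--Heckman polynomial $f$ is obtained by inserting $\kappa=3(\alpha_1+\alpha_2)$, $p=x_1(\alpha_1+\alpha_2)$, $R_{P^u}=\{\alpha_2,\alpha_1+\alpha_2,2\alpha_1+\alpha_2\}$ and $\varrho=2\alpha_1+\tfrac32\alpha_2$ into the product formula of Section~\ref{section_reminder}; this is a routine evaluation giving $f=\frac{(3+x_1)^3}{3}$.

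The one point that needs a genuine argument, rather than transcription, is primitivity: that $M=\langle\alpha_1+\alpha_2\rangle$ \emph{exactly}, so that $\alpha_1+\alpha_2$ is the spherical root. I would argue as in Example~\ref{P^2}: the space is of rank one and not horospherical, so $M$ is infinite cyclic and $\Sigma$ is its primitive generator in the direction of the nonzero valuation cone, the latter witnessed by the $G$-invariant valuation of the closed orbit $Q^3$. If $M$ were strictly larger than $\langle\alpha_1+\alpha_2\rangle$, there would be a $B$-semi-invariant rational function $r$ of weight $\tfrac1k(\alpha_1+\alpha_2)$ for some $k>1$; by sphericity $r^k$ would be a scalar multiple of $x_5/x_6$, forcing its order of vanishing along the prime divisor $\{x_5=0\}$ to equal $1$ while it must be a multiple of $k$, a contradiction. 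Hence $M=\langle\alpha_1+\alpha_2\rangle$ and $\Sigma=\{\alpha_1+\alpha_2\}$, consistent with Proposition~\ref{prop_spherical_root_as_sum_of_positive_roots} since $\alpha_1+\alpha_2$ is a short positive root. I expect this primitivity step to be the only real obstacle; everything else is a direct replay of the preceding proof.
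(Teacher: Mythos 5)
Your proposal is correct and takes essentially the same route as the paper, whose proof of this proposition is precisely the pointer you reconstruct: take the projective model \(Q^4\subset\bbP^5\) compactifying the affine quadric \(\{x_1x_5+x_2x_4+x_3^2=1\}\), note the two \(\SO_5\)-orbits, and transport the color/parabolic/\(\kappa\)/\(m_{\clubsuit}\) arguments from the \(N(\SL_2\times\Sp_2)\) case, with the linear semi-invariant \(x_5/x_6\) of weight \(\alpha_1+\alpha_2\) replacing \(x_5^2/q\). One remark: your primitivity step, while valid, is actually automatic, since \(\alpha_1+\alpha_2=\varpi_2\) is already a primitive element of \(X^*(T)\) for \(\Sp_4\), so a weight \(\tfrac1k(\alpha_1+\alpha_2)\) with \(k>1\) is not even a character of \(T\).
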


\subsection{Under \(\SL_3\times \SL_2\)}

Since there is a parabolic subgroup of \(\SL_3\) as a factor, we only obtain products of lower dimensional spherical embeddings, namely \(\bbP^2\times \bbP^1\times \bbP^1\), labelled as \textbf{4-1-13}, and \(\bbP^2\times \bbP^2\) labelled as \textbf{4-1-14}.  

\subsection{Under \(\SL_2^3\)}

Again, since there is at least one parabolic subgroup of \(\SL_2\) as a factor, we only obtain products of lower dimensional spherical embeddings, namely \textbf{4-1-15} is \(\bbP^1\times Q^3\), \textbf{4-1-16} is \(\bbP^1\times \bbP^3\), \textbf{4-1-17} is \(\bbP^1\times \bbP^1\times (\bbP^1\times \bbP^1)\), and \textbf{4-1-18} is \(\bbP^1\times \bbP^1\times \bbP^2\). 

\subsection{Under \(\SL_2^3\times \bbG_m\)}

\begin{prop}
Let \(H\) be a rank one horospherical subgroup of \(G=(\SL_2)^3 \times \bbG_m\), with normalizer the Borel subgroup \(B^-\), satisfying Assumptions~\ref{assumptions}. 
Then \(H=\ker(a_1\varpi_1+a_2\varpi_2+a_3\varpi_3+\chi_1)\) for some integers \(a_1\geq \lvert a_2\rvert \geq \lvert a_3\rvert\), where \(\chi_1\) is the projection \(G\to \bbG_m\) to the last factor. 
The combinatorial data of \(G/H\) is:
\[\arraycolsep=12pt
\begin{array}{lll}
M=\langle a_1\varpi_1+a_2\varpi_2+a_3\varpi_3+\chi_1\rangle  & \zeta(\clubsuit) =  \{\alpha_1\}  & \rho(\clubsuit) = a_1 \\
\Sigma = \emptyset &  \zeta(\varheartsuit) = \{\alpha_2\} & \rho(\varheartsuit) = a_2 \\
\mathcal{D} =\{\clubsuit,\varheartsuit,\spadesuit\} & \zeta(\spadesuit) =  \{\alpha_3\} & \rho(\spadesuit) = a_3 \\
\midrule
f=(2+a_1x_1)(2+a_2x_1)(2+a_3x_1) & \kappa = \alpha_1+\alpha_2+\alpha_3 & m_{\clubsuit} = m_{\varheartsuit} = m_{\spadesuit} = 2 \\
\end{array}
\]
\end{prop}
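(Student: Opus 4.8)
The plan is to get the form of $H$ from the rank one analysis already done, and then read off the entire combinatorial table from the parabolic induction recipe, since $G/H$ is horospherical with $N_G(H)=B^-$. First I would recall the form of $H$: by Proposition~\ref{prop_rk_one}, item~(\ref{prop_rk_one:SL23_horo}), any $G/H$ satisfying Assumption~\ref{assumptions} with $G=\SL_2^3\times\bbG_m$ that is rank one horospherical with $N_G(H)=B^-$ has the stated shape $H=\ker(a_1\varpi_1+a_2\varpi_2+a_3\varpi_3+\chi_1)|_{B^-}$ with $a_1\geq\lvert a_2\rvert\geq\lvert a_3\rvert$. Alternatively this is immediate from Remark~\ref{remark_horospherical_subgroups}: a single defining character $\lambda=a_1\varpi_1+a_2\varpi_2+a_3\varpi_3+c\chi_1\in X^*(B^-)$ must have $c=\pm1$ by the faithfulness part of Assumption~\ref{assumptions}, and one normalizes $c=1$ together with the ordering by reindexing the $\SL_2$ factors and possibly replacing $\chi_1$ by $-\chi_1$.

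Next I would apply the horospherical specialization of Proposition~\ref{prop_parabolic_induction} (the displayed example following Remark~\ref{remark_horospherical_subgroups}) with $I=S=\{\alpha_1,\alpha_2,\alpha_3\}$ and the single character $\lambda$. This yields at once $M=\langle\lambda\rangle$, $\Sigma=\emptyset$, and $\mathcal{D}\simeq S$, which I label $\{\clubsuit,\varheartsuit,\spadesuit\}$ with $\zeta(\clubsuit)=\{\alpha_1\}$, $\zeta(\varheartsuit)=\{\alpha_2\}$, $\zeta(\spadesuit)=\{\alpha_3\}$, and $\rho(D)=\alpha^{\vee}|_M$ for $\alpha\in\zeta(D)$. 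Identifying $N$ with $\bbZ$ via the dual of the generator $\lambda$, I compute the pairings: since the three $\SL_2$ factors and the torus factor are mutually orthogonal, $\langle\alpha_1^{\vee},\lambda\rangle=a_1\langle\alpha_1^{\vee},\varpi_1\rangle=a_1$, and likewise $\langle\alpha_2^{\vee},\lambda\rangle=a_2$, $\langle\alpha_3^{\vee},\lambda\rangle=a_3$, giving $\rho(\clubsuit)=a_1$, $\rho(\varheartsuit)=a_2$, $\rho(\spadesuit)=a_3$.

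It remains to compute $\kappa$, the $m_D$, and $f$. Since the adapted parabolic is $P_I$ with $I=\bigcup_D\zeta(D)=S$, we have $P=B$, so $\kappa$ is the sum of all positive roots, namely $\alpha_1+\alpha_2+\alpha_3$. Because $\Sigma=\emptyset$, the definition of $m_D$ gives $m_D=\langle\alpha^{\vee},\kappa\rangle=\langle\alpha_i^{\vee},\alpha_i\rangle=2$ for each color (cross terms vanishing by orthogonality of the factors), so $m_{\clubsuit}=m_{\varheartsuit}=m_{\spadesuit}=2$. Finally, the unipotent radical of $P=B$ has positive roots $R_{P^u}=\{\alpha_1,\alpha_2,\alpha_3\}$, so $f(p)=\prod_{i=1}^{3}\frac{\{\kappa+p,\alpha_i\}}{\{\varrho,\alpha_i\}}$; writing $p=x_1\lambda$ and using $\varrho=\tfrac12(\alpha_1+\alpha_2+\alpha_3)$ together with $\{\lambda,\alpha_i\}=a_i\{\varpi_i,\alpha_i\}=\tfrac12 a_i\lVert\alpha_i\rVert^2$, each factor simplifies to $2+a_ix_1$, giving $f=(2+a_1x_1)(2+a_2x_1)(2+a_3x_1)$.

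The computation is entirely routine once the horospherical recipe is invoked; the only point requiring care is the bookkeeping in the $f$ computation, namely correctly isolating $R_{P^u}$ and cancelling the Killing-form normalizations factor by factor. But this reduces to three independent copies of the single-$\SL_2$ calculation already carried out in the rank one $\SL_2$ cases, so no genuine obstacle arises.
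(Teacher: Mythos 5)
Your proposal is correct and follows exactly the route the paper intends: the form of \(H\) is item~(\ref{prop_rk_one:SL23_horo}) of Proposition~\ref{prop_rk_one}, and the table is read off from the horospherical specialization of Proposition~\ref{prop_parabolic_induction}, with \(P=B\) giving \(\kappa=\alpha_1+\alpha_2+\alpha_3\), \(m_D=\langle\alpha_i^{\vee},\kappa\rangle=2\), and the factor-by-factor Killing-form cancellation yielding \(f=(2+a_1x_1)(2+a_2x_1)(2+a_3x_1)\). All pairings \(\langle\alpha_i^{\vee},\lambda\rangle=a_i\) and normalizations check out, so there is nothing to add.
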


Again, if one color is sent to the origin, then the embeddings are products of embeddings, there are seven such products, labelled \textbf{4-1-19} to \textbf{4-1-25}. 
The non-product examples are given by the following three polytopes 

\begin{center}
\begin{tikzpicture}
\draw[dotted] (-2,0) -- (2,0);
\draw (0,0) node{+};
\foreach \i in {0,...,4}
{
\draw[dotted] (-2+\i,.3) -- (-2+\i,-.3);
}
\draw[thick] (-1,0) -- (1,0);
\draw (1/2,0) node[below, color=teal]{\(\clubsuit\)};
\draw (1/2,0) node[color=purple]{\(\varheartsuit\)};
\draw (1/2,0) node[above, color=violet]{\(\vardiamondsuit\)};
\draw (-1/3,0) node[below]{\textbf{4-1-26}};
\end{tikzpicture}
\begin{tikzpicture}
\draw[dotted] (-2,0) -- (2,0);
\draw (0,0) node{+};
\foreach \i in {0,...,4}
{
\draw[dotted] (-2+\i,.3) -- (-2+\i,-.3);
}
\draw[thick] (-1,0) -- (1,0);
\draw (1/2,0) node[below=-4, color=teal]{\(\clubsuit\)};
\draw (1/2,0) node[above=-4, color=purple]{\(\varheartsuit\)};
\draw (-1/2,0) node[color=violet]{\(\vardiamondsuit\)};
\draw (-1/3,0) node[below]{\textbf{4-1-27}};
\end{tikzpicture}
\begin{tikzpicture}
\draw[dotted] (-2,0) -- (2,0);
\draw (0,0) node{+};
\foreach \i in {0,...,4}
{
\draw[dotted] (-2+\i,.3) -- (-2+\i,-.3);
}
\draw[thick] (-1,0) -- (1/2,0);
\draw (1/2,0) node[color=teal]{\(\clubsuit\)};
\draw (-1/2,0) node[below=-4, color=purple]{\(\varheartsuit\)};
\draw (-1/2,0) node[above=-4, color=violet]{\(\vardiamondsuit\)};
\draw (1/3,0) node[below]{\textbf{4-1-28}};
\end{tikzpicture}
\end{center}

\begin{exa}
The first and second correspond to \(\bbP_{\bbP^1\times\bbP^1\times \bbP^1}(\mathcal{O}\oplus \mathcal{O}(1,1,1))\) and \(\bbP_{\bbP^1\times\bbP^1\times \bbP^1}(\mathcal{O}\oplus \mathcal{O}(1,1,-1))\). \end{exa}

\subsection{Under \(\SL_3\times \SL_2\times \bbG_m\)}

\begin{prop}
Let \(H\) be a rank one horospherical subgroup of \(G=\SL_3\times \SL_2 \times \bbG_m\), with normalizer the parabolic subgroup \(Q_{{\alpha_1,\alpha_3}}\), satisfying Assumptions~\ref{assumptions}. 
Then \(H=\ker(a_1\varpi_1+a_3\varpi_3+\chi_1)\) for some non-negative integer \(a_1\), some integer \(a_3\), where \(\chi_1\) is the projection \(G\to \bbG_m\) to the third factor. 
The combinatorial data of \(G/H\) is:
\[\arraycolsep=12pt
\begin{array}{lll}
M=\langle a_1\varpi_1+a_3\varpi_3+\chi_1\rangle  & \zeta(\clubsuit) =  \{\alpha_1\}  & \rho(\clubsuit) = a_1 \\
\Sigma = \emptyset &  \zeta(\varheartsuit) = \{\alpha_3\} & \rho(\varheartsuit) = a_3 \\
\mathcal{D} =\{\clubsuit,\varheartsuit\} &  \\
\midrule
\kappa = 2\alpha_1+\alpha_2+\alpha_3 & f=\frac{(3+a_1x_1)^2(2+a_3x_1)}{2} & m_{\clubsuit} = 3, \quad  m_{\varheartsuit} =  2 \\
\end{array}
\]
\end{prop}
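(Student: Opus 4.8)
The plan is to deduce every entry of the table from the parabolic induction machinery, since $G/H$ is horospherical and hence obtained by parabolic induction from a torus. First I would pin down $H$ itself. By Remark~\ref{remark_horospherical_subgroups}, a rank one horospherical homogeneous space whose normalizer is $Q=Q_{\{\alpha_1,\alpha_3\}}$ is determined by a single character $\lambda\in X^*(Q)$ via $H=\ker(\lambda)$. Since $X^*(Q)$ is freely generated by $\varpi_1$, $\varpi_3$ and $\chi_1$, we may write $\lambda=a_1\varpi_1+a_3\varpi_3+c\chi_1$; Assumption~\ref{assumptions} forces the $\bbG_m$-factor to act faithfully, so $c=\pm 1$, and up to replacing $\chi_1$ by $-\chi_1$ and reorienting we reach the stated normalization with $a_1\geq 0$. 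This is exactly item~\ref{prop_rk_one:SL3xSL2_horo} of Proposition~\ref{prop_rk_one}.

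Next I would read off $M$, $\Sigma$, $\mathcal{D}$ and $\zeta$ directly from the horospherical specialization of Proposition~\ref{prop_parabolic_induction}, applied with $I=\{\alpha_1,\alpha_3\}$ and the single character $\lambda$: this gives $M=\langle\lambda\rangle$, $\Sigma=\emptyset$, a color set $\mathcal{D}\simeq I=\{\clubsuit,\varheartsuit\}$ with $\zeta(\clubsuit)=\{\alpha_1\}$ and $\zeta(\varheartsuit)=\{\alpha_3\}$, and $\rho(\alpha_i)=\alpha_i^{\vee}|_M$. Identifying $M$ with $\bbZ$ through the generator $\lambda$, the value $\rho(\clubsuit)$ is $\langle\alpha_1^{\vee},\lambda\rangle=a_1$ and $\rho(\varheartsuit)=\langle\alpha_3^{\vee},\lambda\rangle=a_3$, using that $\langle\alpha_1^{\vee},\varpi_1\rangle=1$ while $\alpha_1^{\vee}$ annihilates the other two summands (and symmetrically for $\alpha_3^{\vee}$).

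Then I would compute $\kappa$ and the multiplicities $m_D$. The adapted parabolic is $P=P_I$ with $I=\zeta(\clubsuit)\cup\zeta(\varheartsuit)=\{\alpha_1,\alpha_3\}$, whose Levi has the single simple root $\alpha_2$; hence $G/P\simeq\bbP^2\times\bbP^1$ (consistent with $\dim(G/P)=3$ from Corollary~\ref{cor_local_structure_theorem}), and the roots of $P^u$ are $R_{P^u}=\{\alpha_1,\alpha_1+\alpha_2,\alpha_3\}$. Summing these gives $\kappa=2\alpha_1+\alpha_2+\alpha_3$. Since $\Sigma=\emptyset$, every $m_D$ equals $\langle\alpha^{\vee},\kappa\rangle$, and a short pairing computation yields $m_\clubsuit=\langle\alpha_1^{\vee},\kappa\rangle=3$ and $m_\varheartsuit=\langle\alpha_3^{\vee},\kappa\rangle=2$.

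Finally, $f$ comes from the explicit product formula $f(p)=\prod_{\alpha\in R_{P^u}}\{\kappa+p,\alpha\}/\{\varrho,\alpha\}$. Writing $p=x_1\lambda$ and using $\{\mu,\alpha\}/\{\varrho,\alpha\}=\langle\mu,\alpha^{\vee}\rangle/\langle\varrho,\alpha^{\vee}\rangle$, the factors for the simple roots $\alpha_1$ and $\alpha_3$ are $3+a_1x_1$ and $2+a_3x_1$. The one genuinely subtle point, and the step most likely to trip up the bookkeeping, is the factor coming from the non-simple highest root $\alpha_1+\alpha_2$ of the $\SL_3$-factor: here $\langle\varrho,(\alpha_1+\alpha_2)^{\vee}\rangle=2$ rather than $1$, while $\langle\kappa,(\alpha_1+\alpha_2)^{\vee}\rangle=3$ and $\langle\lambda,(\alpha_1+\alpha_2)^{\vee}\rangle=a_1$, so this factor is $(3+a_1x_1)/2$. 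Multiplying the three factors produces $f=(3+a_1x_1)^2(2+a_3x_1)/2$, the extra $\tfrac12$ being precisely this halving, and all entries of the table are verified.
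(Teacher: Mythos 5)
Your proposal is correct and follows exactly the route the paper intends: the paper gives no separate argument here, noting that these horospherical statements follow directly from Proposition~\ref{prop_parabolic_induction} (with the normal form of \(H\) coming from Proposition~\ref{prop_rk_one}), which is precisely your derivation of \(M\), \(\Sigma\), \(\mathcal{D}\), \(\zeta\), \(\rho\). Your explicit computations of \(\kappa=2\alpha_1+\alpha_2+\alpha_3\) from \(R_{P^u}=\{\alpha_1,\alpha_1+\alpha_2,\alpha_3\}\), of \(m_\clubsuit=3\), \(m_\varheartsuit=2\), and of the factor \((3+a_1x_1)/2\) for the non-simple root \(\alpha_1+\alpha_2\) (where \(\langle\varrho,(\alpha_1+\alpha_2)^{\vee}\rangle=2\)) are all accurate and account correctly for the \(\tfrac12\) in \(f\).
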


We easily deduce the possible locally factorial \(G/H\)-reflexive polytope. 
If one of the colors is sent to the origin, then the embedding is a product of embeddings, there are six such products, labelled \textbf{4-1-29} to \textbf{4-1-34}. 
The non-product examples are given by the following nine polytopes. 

\begin{center}
\begin{tikzpicture}
\draw[dotted] (-2,0) -- (2,0);
\draw (0,0) node{+};
\foreach \i in {0,...,4}
{
\draw[dotted] (-2+\i,.3) -- (-2+\i,-.3);
}
\draw[thick] (-1,0) -- (1,0);
\draw (1/3,0) node[below=-4, color=teal]{\(\clubsuit\)};
\draw (1/2,0) node[above=-4, color=purple]{\(\varheartsuit\)};
\draw (-1/3,0) node[below]{\textbf{4-1-35}};
\end{tikzpicture}
\begin{tikzpicture}
\draw[dotted] (-2,0) -- (2,0);
\draw (0,0) node{+};
\foreach \i in {0,...,4}
{
\draw[dotted] (-2+\i,.3) -- (-2+\i,-.3);
}
\draw[thick] (-1,0) -- (1,0);
\draw (1/3,0) node[color=teal]{\(\clubsuit\)};
\draw (-1/2,0) node[color=purple]{\(\varheartsuit\)};
\draw (-1/3,0) node[below]{\textbf{4-1-36}};
\end{tikzpicture}
\begin{tikzpicture}
\draw[dotted] (-2,0) -- (2,0);
\draw (0,0) node{+};
\foreach \i in {0,...,4}
{
\draw[dotted] (-2+\i,.3) -- (-2+\i,-.3);
}
\draw[thick] (-1,0) -- (1,0);
\draw (2/3,0) node[below=-4, color=teal]{\(\clubsuit\)};
\draw (1/2,0) node[above=-4, color=purple]{\(\varheartsuit\)};
\draw (-1/3,0) node[below]{\textbf{4-1-37}};
\end{tikzpicture}
\begin{tikzpicture}
\draw[dotted] (-2,0) -- (2,0);
\draw (0,0) node{+};
\foreach \i in {0,...,4}
{
\draw[dotted] (-2+\i,.3) -- (-2+\i,-.3);
}
\draw[thick] (-1,0) -- (1,0);
\draw (2/3,0) node[color=teal]{\(\clubsuit\)};
\draw (-1/2,0) node[color=purple]{\(\varheartsuit\)};
\draw (-1/3,0) node[below]{\textbf{4-1-38}};
\end{tikzpicture}
\begin{tikzpicture}
\draw[dotted] (-2,0) -- (2,0);
\draw (0,0) node{+};
\foreach \i in {0,...,4}
{
\draw[dotted] (-2+\i,.3) -- (-2+\i,-.3);
}
\draw[thick] (-1,0) -- (1/3,0);
\draw (1/3,0) node[color=teal]{\(\clubsuit\)};
\draw (-1/2,0) node[color=purple]{\(\varheartsuit\)};
\draw (-1/3,0) node[below]{\textbf{4-1-39}};
\end{tikzpicture}
\begin{tikzpicture}
\draw[dotted] (-2,0) -- (2,0);
\draw (0,0) node{+};
\foreach \i in {0,...,4}
{
\draw[dotted] (-2+\i,.3) -- (-2+\i,-.3);
}
\draw[thick] (-1/2,0) -- (1,0);
\draw (1/3,0) node[color=teal]{\(\clubsuit\)};
\draw (-1/2,0) node[color=purple]{\(\varheartsuit\)};
\draw (-1/3,0) node[below]{\textbf{4-1-40}};
\end{tikzpicture}
\begin{tikzpicture}
\draw[dotted] (-2,0) -- (2,0);
\draw (0,0) node{+};
\foreach \i in {0,...,4}
{
\draw[dotted] (-2+\i,.3) -- (-2+\i,-.3);
}
\draw[thick] (-1/2,0) -- (1,0);
\draw (2/3,0) node[color=teal]{\(\clubsuit\)};
\draw (-1/2,0) node[color=purple]{\(\varheartsuit\)};
\draw (-1/3,0) node[below]{\textbf{4-1-41}};
\end{tikzpicture}
\begin{tikzpicture}
\draw[dotted] (-2,0) -- (2,0);
\draw (0,0) node{+};
\foreach \i in {0,...,4}
{
\draw[dotted] (-2+\i,.3) -- (-2+\i,-.3);
}
\draw[thick] (-1,0) -- (1/2,0);
\draw (1/3,0) node[below=-4, color=teal]{\(\clubsuit\)};
\draw (1/2,0) node[above=-4, color=purple]{\(\varheartsuit\)};
\draw (-1/3,0) node[below]{\textbf{4-1-42}};
\end{tikzpicture}
\begin{tikzpicture}
\draw[dotted] (-2,0) -- (2,0);
\draw (0,0) node{+};
\foreach \i in {0,...,4}
{
\draw[dotted] (-2+\i,.3) -- (-2+\i,-.3);
}
\draw[thick] (-1/2,0) -- (1/3,0);
\draw (1/3,0) node[color=teal]{\(\clubsuit\)};
\draw (-1/2,0) node[color=purple]{\(\varheartsuit\)};
\draw (-1/3,0) node[below]{\textbf{4-1-43}};
\end{tikzpicture}
\end{center}

\begin{exa}
The corresponding toroidal embeddings are the \(\bbP_{\bbP^1\times \bbP^2}(\mathcal{O}\oplus \mathcal{O}(k,l))\) for \(1\leq k\leq 3\) and \(l=\pm 1\).  
The last embedding is \(\bbP^4\) equipped with the action of \(S(\GL_2\times\GL_3)\subset \SL_5\), which may be blown up at either of the two closed orbits to recover two other examples. 
\end{exa}

\subsection{Under \(\Sp_4\times \bbG_m\)}

The result below, as next statements in the remaining of the section, follows directly from Proposition~\ref{prop_parabolic_induction}.
\begin{prop}
Let \(H=\ker(a_i\varpi_i+\chi_1)|_{Q_{\alpha_i}}\subset G=\Sp_4\times \bbG_m\) for some \(i\in \{1,2\}\) and \(a_i\geq 0\). 
The combinatorial data of \(G/H\) when \(i=1\) is as follows. 
\[\arraycolsep=12pt
\begin{array}{lll}
M=\langle a_1\varpi_1+\chi_1\rangle  & \zeta(\clubsuit) =  \{\alpha_1\}  & \rho(\clubsuit) = a_1 \\
\Sigma = \emptyset &   \\
\mathcal{D} =\{\clubsuit\} &  \\
\midrule
\kappa = 4\alpha_1+2\alpha_2 & f=\frac{(4+a_1x_1)^3}{6} & m_{\clubsuit} = 4 \\
\end{array}
\]
The combinatorial data of \(G/H\) when \(i=2\) is as follows. 
\[\arraycolsep=12pt
\begin{array}{lll}
M=\langle a_2\varpi_2+\chi_1\rangle  & \zeta(\clubsuit) =  \{\alpha_2\}  & \rho(\clubsuit) = a_2 \\
\Sigma = \emptyset &   \\
\mathcal{D} =\{\clubsuit\} &  \\
\midrule
\kappa = 3\alpha_1+3\alpha_2 & f=\frac{(3+a_2x_1)^3}{3} & m_{\clubsuit} = 3 \\
\end{array}
\]
\end{prop}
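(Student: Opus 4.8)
The plan is to apply the parabolic induction machinery of Proposition~\ref{prop_parabolic_induction} directly, since \(H=\ker(a_i\varpi_i+\chi_1)|_{Q_{\alpha_i}}\) is by construction the kernel of a single character of the parabolic \(Q_{\alpha_i}\) (extended by the \(\bbG_m\)-factor). By Remark~\ref{remark_horospherical_subgroups}, \(G/H\) is thus the horospherical homogeneous space associated to the parabolic \(Q_{\{\alpha_i\}}\) and the single indivisible character \(\lambda_i:=a_i\varpi_i+\chi_1\). First I would invoke the horospherical special case of Proposition~\ref{prop_parabolic_induction} (the example following it, with \(G_0=\bbG_m\) and \(H_0\) trivial) to read off immediately that \(M=\langle \lambda_i\rangle\), \(\Sigma=\emptyset\), \(\mathcal{D}\simeq \{\alpha_i\}=\{\clubsuit\}\) and \(\zeta(\clubsuit)=\{\alpha_i\}\). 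The image \(\rho(\clubsuit)=\alpha_i^{\vee}|_M\) is then computed by pairing: \(\langle \alpha_i^{\vee},\lambda_i\rangle=a_i\langle \alpha_i^{\vee},\varpi_i\rangle+\langle \alpha_i^{\vee},\chi_1\rangle=a_i\), since \(\chi_1\) is a character of the central torus and hence annihilated by all coroots of \(\Sp_4\). Under the identification of \(M\) with \(\bbZ\) via \(\lambda_i\), this yields \(\rho(\clubsuit)=a_i\), as stated.

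Next I would determine the remaining invariants \(\kappa\), \(m_{\clubsuit}\) and \(f\) from the root system of \(\Sp_4=C_2\). Since \(I=\bigcup_{D}\zeta(D)=\{\alpha_i\}\), the adapted parabolic is \(P=P_{\{\alpha_i\}}\), whose Levi has the single simple root \(\{\alpha_1,\alpha_2\}\setminus\{\alpha_i\}\). As the roots of the Levi cancel in pairs, \(\kappa\) equals the sum of the roots of the unipotent radical \(R_{P^u}\), i.e.\ the positive roots of \(C_2\) not lying in the Levi. Listing the positive roots \(\alpha_1,\alpha_2,\alpha_1+\alpha_2,2\alpha_1+\alpha_2\), this gives \(\kappa=4\alpha_1+2\alpha_2\) when \(i=1\) and \(\kappa=3\alpha_1+3\alpha_2\) when \(i=2\). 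Because \(\Sigma=\emptyset\), the definition of \(m_D\) forces \(m_{\clubsuit}=\langle \alpha_i^{\vee},\kappa\rangle\); using the \(C_2\) Cartan pairings \(\langle \alpha_1^{\vee},\alpha_1\rangle=\langle \alpha_2^{\vee},\alpha_2\rangle=2\), \(\langle \alpha_1^{\vee},\alpha_2\rangle=-2\), \(\langle \alpha_2^{\vee},\alpha_1\rangle=-1\) (with \(\alpha_1\) short, \(\alpha_2\) long), this produces \(m_{\clubsuit}=4\) for \(i=1\) and \(m_{\clubsuit}=3\) for \(i=2\).

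Finally, for the Duistermaat--Heckman polynomial I would use the product formula \(f(p)=\prod_{\alpha\in R_{P^u}}\{\kappa+p,\alpha\}/\{\varrho,\alpha\}\) from the subsection on the geometry of the associated Fano manifold. I would realize \(C_2\) concretely with \(\alpha_1=e_1-e_2\) and \(\alpha_2=2e_2\), so that \(\varrho=\varpi_1+\varpi_2=2e_1+e_2\), and note that the Killing-orthogonality of \(\chi_1\) to the roots lets me replace \(\kappa+p\) by its \(\Sp_4\)-component \((4+a_1x_1)e_1\) (resp.\ \((3+a_2x_1)(e_1+e_2)\)) when pairing against a root; the scalar ambiguity of the Killing form then cancels in each ratio. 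Evaluating the three factors over \(R_{P^u}\) gives \(f=(4+a_1x_1)^3/6\) for \(i=1\) and \(f=(3+a_2x_1)^3/3\) for \(i=2\).

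The only real subtlety — the main obstacle — is keeping the short/long root conventions of \(C_2\) straight: the non-symmetric Cartan matrix means that both \(m_{\clubsuit}\) and the precise normalizing constants appearing in \(f\) differ between the two cases, so the coroot pairings and the explicit coordinate realization must be handled with care. Everything else is a direct transcription of Proposition~\ref{prop_parabolic_induction} together with the standard combinatorial formulas for \(\kappa\), \(m_D\) and \(f\) recalled earlier.
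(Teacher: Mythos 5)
Your proposal is correct and matches the paper's own treatment: the paper proves this proposition by simply observing that it ``follows directly from Proposition~\ref{prop_parabolic_induction}'', which is exactly your route, and your explicit verifications of \(\kappa\), \(m_{\clubsuit}\) and \(f\) from the \(C_2\) root data (with \(\alpha_1=e_1-e_2\) short, \(\alpha_2=2e_2\) long, \(\varrho=2e_1+e_2\)) all check out, including the pairings \(\langle\alpha_1^{\vee},4\alpha_1+2\alpha_2\rangle=4\) and \(\langle\alpha_2^{\vee},3\alpha_1+3\alpha_2\rangle=3\) and the normalizing constants \(1\cdot 3\cdot 4\) and \(2\cdot 3\cdot 4\) in the denominators of \(f\). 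You merely spell out computations the paper leaves implicit, so nothing further is needed.
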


The locally factorial \(G/H\)-reflexive polytopes are easily derived. 
Note that they are products if \(a_i=0\): \textbf{4-1-44} is \(\bbP^3\times \bbP^1\) and \textbf{4-1-45} is \(Q^3\times \bbP^1\). 
The non-product possibilities are, when \(i=1\), the following four polytopes.

\begin{center}
\begin{tikzpicture}
\draw[dotted] (-2,0) -- (2,0);
\draw (0,0) node{+};
\foreach \i in {0,...,4}
{
\draw[dotted] (-2+\i,.3) -- (-2+\i,-.3);
}
\draw[thick] (-1,0) -- (1,0);
\draw (1/4,0) node[color=teal]{\(\clubsuit\)};
\draw (0,0) node[below]{\textbf{4-1-46}};
\end{tikzpicture}
\begin{tikzpicture}
\draw[dotted] (-2,0) -- (2,0);
\draw (0,0) node{+};
\foreach \i in {0,...,4}
{
\draw[dotted] (-2+\i,.3) -- (-2+\i,-.3);
}
\draw[thick] (-1,0) -- (1,0);
\draw (2/4,0) node[color=teal]{\(\clubsuit\)};
\draw (0,0) node[below]{\textbf{4-1-47}};
\end{tikzpicture}
\begin{tikzpicture}
\draw[dotted] (-2,0) -- (2,0);
\draw (0,0) node{+};
\foreach \i in {0,...,4}
{
\draw[dotted] (-2+\i,.3) -- (-2+\i,-.3);
}
\draw[thick] (-1,0) -- (1,0);
\draw (3/4,0) node[color=teal]{\(\clubsuit\)};
\draw (0,0) node[below]{\textbf{4-1-48}};
\end{tikzpicture}
\begin{tikzpicture}
\draw[dotted] (-2,0) -- (2,0);
\draw (0,0) node{+};
\foreach \i in {0,...,4}
{
\draw[dotted] (-2+\i,.3) -- (-2+\i,-.3);
}
\draw[thick] (-1,0) -- (1/4,0);
\draw (1/4,0) node[color=teal]{\(\clubsuit\)};
\draw (0,0) node[below]{\textbf{4-1-49}};
\end{tikzpicture}
\end{center}

\begin{exa}
The corresponding embeddings are the \(\bbP_{\bbP^3}(\mathcal{O}\oplus \mathcal{O}(k))\) for \(k\in \{1,2,3\}\), and \(\bbP^4\) as the cone over \(\bbP^3\) given by the Kodaira embedding with respect to \(\mathcal{O}(1)\). 
\end{exa}

When \(i=2\), the non-product possibilities are the following three polytopes.

\begin{center}
\begin{tikzpicture}
\draw[dotted] (-2,0) -- (2,0);
\draw (0,0) node{+};
\foreach \i in {0,...,4}
{
\draw[dotted] (-2+\i,.3) -- (-2+\i,-.3);
}
\draw[thick] (-1,0) -- (1,0);
\draw (1/3,0) node[color=teal]{\(\clubsuit\)};
\draw (0,0) node[below]{\textbf{4-1-50}};
\end{tikzpicture}
\begin{tikzpicture}
\draw[dotted] (-2,0) -- (2,0);
\draw (0,0) node{+};
\foreach \i in {0,...,4}
{
\draw[dotted] (-2+\i,.3) -- (-2+\i,-.3);
}
\draw[thick] (-1,0) -- (1,0);
\draw (2/3,0) node[color=teal]{\(\clubsuit\)};
\draw (0,0) node[below]{\textbf{4-1-51}};
\end{tikzpicture}
\begin{tikzpicture}
\draw[dotted] (-2,0) -- (2,0);
\draw (0,0) node{+};
\foreach \i in {0,...,4}
{
\draw[dotted] (-2+\i,.3) -- (-2+\i,-.3);
}
\draw[thick] (-1,0) -- (1/3,0);
\draw (1/3,0) node[color=teal]{\(\clubsuit\)};
\draw (0,0) node[below]{\textbf{4-1-52}};
\end{tikzpicture}
\end{center}

\begin{exa}
The corresponding embeddings are the \(\bbP_{Q^3}(\mathcal{O}\oplus \mathcal{O}(k))\) for \(k\in \{1,2\}\), and the cone over \(Q^3\) given by the Kodaira embedding with respect to \(\mathcal{O}(1)\), which is a singular quadric in \(\bbP^5\). 
\end{exa}

\subsection{Under \(\SL_4\times \bbG_m\)}

\begin{prop}
Let \(H\) be a rank one horospherical subgroup of \(G=\SL_4\times \bbG_m\), with normalizer the maximal parabolic subgroup \(Q_{\alpha_1}\), satisfying Assumptions~\ref{assumptions}. 
Then \(H=\ker(a_1\varpi_1+\chi_1)\) for some non-negative integer \(a_1\), where \(\chi_1\) is the projection \(G\to \bbG_m\) to the second factor. 
The combinatorial data of \(G/H\) is:
\[\arraycolsep=12pt
\begin{array}{lll}
M=\langle a_1\varpi_1+\chi_1\rangle  & \zeta(\clubsuit) =  \{\alpha_1\}  & \rho(\clubsuit) = a_1 \\
\Sigma = \emptyset &   \\
\mathcal{D} =\{\clubsuit\} &  \\
\midrule
\kappa = 3\alpha_1 + 2\alpha_2 + \alpha_3 & f=\frac{(4+a_1x_1)^3}{6} & m_{\clubsuit} = 4 \\
\end{array}
\]
\end{prop}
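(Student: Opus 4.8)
The plan is to recognize $G/H$ as a horospherical space obtained by parabolic induction from the trivial subgroup of a one-dimensional torus, and then to read off each entry of the combinatorial data either from Proposition~\ref{prop_parabolic_induction} or directly from its definition. First I would record the induction data: by Remark~\ref{remark_horospherical_subgroups} the horospherical subgroup $H=\ker(a_1\varpi_1+\chi_1)$ has normalizer $Q=Q_{\alpha_1}$, and $G/H$ is the parabolic induction of $G_0/H_0=\bbG_m/\{\id\}$ along the epimorphism $\pi:Q_{\alpha_1}\to\bbG_m$ given by $a_1\varpi_1+\chi_1$, with inducing set $I=\{\alpha_1\}$. Proposition~\ref{prop_parabolic_induction} then yields at once $M=\langle a_1\varpi_1+\chi_1\rangle$, $\Sigma=\Sigma_0=\emptyset$, $\mathcal{D}\simeq\mathcal{D}_0\sqcup I=\{\clubsuit\}$ with $\zeta(\clubsuit)=\{\alpha_1\}$, and $\rho(\clubsuit)=\alpha_1^{\vee}|_M$. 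Identifying $M$ with $\bbZ$ via the generator $a_1\varpi_1+\chi_1$, and using $\langle\alpha_1^{\vee},\varpi_1\rangle=1$ together with the vanishing of $\alpha_1^{\vee}$ on the $\bbG_m$-factor, gives $\rho(\clubsuit)=a_1$.

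Next I would compute the remaining invariants from their definitions. Since $I=\bigcup_{D\in\mathcal{D}}\zeta(D)=\{\alpha_1\}$, the adapted parabolic is $P=P_{\alpha_1}$, whose unipotent radical in type $A_3$ has roots $\alpha_1$, $\alpha_1+\alpha_2$, $\alpha_1+\alpha_2+\alpha_3$; summing these gives $\kappa=3\alpha_1+2\alpha_2+\alpha_3$. As $\Sigma=\emptyset$, the definition of $m_D$ reduces to $m_{\clubsuit}=\langle\alpha_1^{\vee},\kappa\rangle$, which evaluates to $4$ using the $A_3$ Cartan pairings. For the Duistermaat--Heckman polynomial I would exploit that $\chi_1$ is orthogonal to every root of $\SL_4$, so that for $p=x_1(a_1\varpi_1+\chi_1)$ one has $\{\kappa+p,\alpha\}=\{\kappa,\alpha\}+a_1x_1\{\varpi_1,\alpha\}$ for each $\alpha\in R_{P^u}$.

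The one genuinely computational step is the product over $R_{P^u}$. I would observe that $\{\kappa,\alpha_2\}=\{\kappa,\alpha_3\}=0$ while $\{\kappa,\alpha_1\}=4$, and $\{\varpi_1,\alpha_j\}=\delta_{1j}$, so that every root of $R_{P^u}$ (each having $\alpha_1$-coefficient one) gives numerator $\{\kappa+p,\alpha\}=4+a_1x_1$; meanwhile the denominators $\{\varrho,\alpha\}$ equal $1$, $2$, $3$ for the three roots. The product then collapses to $f=\frac{(4+a_1x_1)^3}{6}$. I do not expect any real obstacle: the only points requiring care are the bookkeeping of the Killing-form pairings and the remark that the proportionality constant between the Killing form and the normalized simply-laced inner product cancels in each ratio $\{\kappa+p,\alpha\}/\{\varrho,\alpha\}$, so one may carry out the whole computation with the latter.
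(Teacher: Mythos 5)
Your proposal is correct and follows essentially the same route as the paper, which derives this proposition directly from Proposition~\ref{prop_parabolic_induction} (the form of \(H\) itself having been established in Proposition~\ref{prop_rk_one}); your explicit verifications of \(\kappa=3\alpha_1+2\alpha_2+\alpha_3\), \(m_{\clubsuit}=\langle\alpha_1^{\vee},\kappa\rangle=4\), and the product \(f=\frac{(4+a_1x_1)^3}{6}\) (with numerators all equal to \(4+a_1x_1\) since \(\{\kappa,\alpha_2\}=\{\kappa,\alpha_3\}=0\) and \(\chi_1\) is orthogonal to the roots, and denominators \(1,2,3\)) all check out. The only detail you gloss over — that faithfulness forces the \(\chi_1\)-coefficient to be \(\pm 1\) and that \(a_1\geq 0\) may be arranged by replacing \(\chi_1\) with \(-\chi_1\) — is exactly how the paper handles it in the proof of Proposition~\ref{prop_rk_one}, so nothing is missing.
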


We easily deduce the possible locally factorial \(G/H\)-reflexive polytope, and the corresponding four non-product embeddings (we omit the polytope for the product \(\bbP^3\times \bbP^1\), numbered \textbf{4-1-53}, that is when \(a_1=0\)).

\begin{center}
\begin{tikzpicture}
\draw[dotted] (-2,0) -- (2,0);
\draw (0,0) node{+};
\foreach \i in {0,...,4}
{
\draw[dotted] (-2+\i,.3) -- (-2+\i,-.3);
}
\draw[thick] (-1,0) -- (1,0);
\draw (1/4,0) node[color=teal]{\(\clubsuit\)};
\draw (0,0) node[below]{\textbf{4-1-54}};
\end{tikzpicture}
\begin{tikzpicture}
\draw[dotted] (-2,0) -- (2,0);
\draw (0,0) node{+};
\foreach \i in {0,...,4}
{
\draw[dotted] (-2+\i,.3) -- (-2+\i,-.3);
}
\draw[thick] (-1,0) -- (1,0);
\draw (2/4,0) node[color=teal]{\(\clubsuit\)};
\draw (0,0) node[below]{\textbf{4-1-55}};
\end{tikzpicture}
\begin{tikzpicture}
\draw[dotted] (-2,0) -- (2,0);
\draw (0,0) node{+};
\foreach \i in {0,...,4}
{
\draw[dotted] (-2+\i,.3) -- (-2+\i,-.3);
}
\draw[thick] (-1,0) -- (1,0);
\draw (3/4,0) node[color=teal]{\(\clubsuit\)};
\draw (0,0) node[below]{\textbf{4-1-56}};
\end{tikzpicture}
\begin{tikzpicture}
\draw[dotted] (-2,0) -- (2,0);
\draw (0,0) node{+};
\foreach \i in {0,...,4}
{
\draw[dotted] (-2+\i,.3) -- (-2+\i,-.3);
}
\draw[thick] (-1,0) -- (1/4,0);
\draw (1/4,0) node[color=teal]{\(\clubsuit\)};
\draw (0,0) node[below]{\textbf{4-1-57}};
\end{tikzpicture}
\end{center}

\begin{exa}
The corresponding embeddings are \(\bbP_{\bbP^3}(\mathcal{O}\oplus \mathcal{O}(-k))\) for \(0\leq k\leq 3\), and \(\bbP^4\) as the locally factorial Fano cone over \(\bbP^3\) obtained by contracting the zero section of \(\mathcal{O}(-1)\) in \(\bbP_{\bbP^3}(\mathcal{O}\oplus \mathcal{O}(-1))\).
\end{exa}

\appendix

\section{Rank 1 and 2 spherical actions on Fano threefolds}

In the following table, we provide for each faithful spherical action on a Fano threefold the identifier allowing to find the detailed combinatorial data in the body of the article, followed by several additional useful characteristics. 
We computed the Picard rank and anticanonical degree, determined whether or not the variety admitted Kähler-Einstein metrics, then using this data identified the underlying Fano threefold using the standard Iskovskikh–Prokhorov numbering \cite{Iskovskikh_Prokhorov_1999}. 
We also recall the group acting faithfully (up to finite cover), and the type of spherical homogeneous space. 

\begin{longtable}{CCCCCCC}
\text{Identifier} & \text{Pic} & \text{Degree} & \text{KE?} & \text{Fano threefold n°} & \text{Group} & \text{Type}\\
\toprule
\text{3-1-1} & 1 & 54 & \text{True} & 1.16 & (\SL_2)^2 & \text{symmetric}\\
\text{3-1-2} & 1 & 64 & \text{True} & 1.17 & (\SL_2)^2 & \text{symmetric}\\
\midrule
\text{3-1-3} & 3 & 48 & \text{True} & 3.27 & (\SL_2)^2\times \bbG_m & \text{horospherical}\\
\text{3-1-4} & 3 & 48 & \text{False} & 3.28 & (\SL_2)^2\times \bbG_m & \text{horospherical}\\
\text{3-1-5} & 2 & 54 & \text{True} & 2.34 & (\SL_2)^2\times \bbG_m & \text{horospherical}\\
\text{3-1-6} & 3 & 52 & \text{False} & 3.31 & (\SL_2)^2\times \bbG_m & \text{horospherical}\\
\text{3-1-7} & 3 & 44 & \text{True} & 3.25 & (\SL_2)^2\times \bbG_m & \text{horospherical}\\
\text{3-1-8} & 2 & 54 & \text{False} & 2.33 & (\SL_2)^2\times \bbG_m & \text{horospherical}\\
\text{3-1-9} & 1 & 64 & \text{True} & 1.17 & (\SL_2)^2\times \bbG_m & \text{horospherical}\\
\midrule
\text{3-1-10} & 2 & 54 & \text{True} & 2.34 & \SL_3\times\bbG_m & \text{horospherical}\\
\text{3-1-11} & 2 & 56 & \text{False} & 2.35 & \SL_3\times\bbG_m & \text{horospherical}\\
\text{3-1-12} & 2 & 62 & \text{False} & 2.36 & \SL_3\times\bbG_m & \text{horospherical}\\
\text{3-1-13} & 1 & 64 & \text{True} & 1.17 & \SL_3\times\bbG_m & \text{horospherical}\\
\midrule
\text{3-2-1} & 3 & 48 & \text{True} & 3.27 & \SL_2\times\bbG_m & \text{symmetric}\\
\text{3-2-2} & 3 & 52 & \text{False} & 3.31 & \SL_2\times\bbG_m & \text{symmetric}\\
\text{3-2-3} & 4 & 38 & \text{False} & 4.8 & \SL_2\times\bbG_m & \text{symmetric}\\
\text{3-2-4} & 1 & 54 & \text{True} & 1.16 & \SL_2\times\bbG_m & \text{type T}\\
\text{3-2-5} & 2 & 48 & \text{True} & 2.32 & \SL_2\times\bbG_m & \text{type T}\\
\text{3-2-6} & 2 & 46 & \text{False} & 2.31 & \SL_2\times\bbG_m & \text{type T}\\
\text{3-2-7} & 2 & 54 & \text{True} & 2.34 & \SL_2\times\bbG_m & \text{type T}\\
\text{3-2-8} & 3 & 42 & \text{False} & 3.24 & \SL_2\times\bbG_m & \text{type T}\\
\text{3-2-9} & 3 & 38 & \text{True} & 3.20 & \SL_2\times\bbG_m & \text{type T}\\
\text{3-2-10} & 3 & 48 & \text{False} & 3.28 & \SL_2\times\bbG_m & \text{type T}\\
\text{3-2-11} & 4 & 36 & \text{True} & 4.7 & \SL_2\times\bbG_m & \text{type T}\\
\text{3-2-12} & 1 & 64 & \text{True} & 1.17 & \SL_2\times\bbG_m & \text{type T}\\
\text{3-2-13} & 2 & 54 & \text{False} & 2.33 & \SL_2\times\bbG_m & \text{type T}\\
\text{3-2-14} & 3 & 44 & \text{True} & 3.25 & \SL_2\times\bbG_m & \text{type T}\\
\text{3-2-15} & 2 & 54 & \text{True} & 2.34 & \SL_2\times\bbG_m & \text{symmetric}\\
\text{3-2-16} & 2 & 62 & \text{False} & 2.36 & \SL_2\times\bbG_m & \text{symmetric}\\
\text{3-2-17} & 3 & 40 & \text{False} & 3.22 & \SL_2\times\bbG_m & \text{symmetric}\\
\text{3-2-18} & 1 & 54 & \text{True} & 1.16 & \SL_2\times\bbG_m & \text{symmetric}\\
\text{3-2-19} & 2 & 40 & \text{True} & 2.29 & \SL_2\times\bbG_m & \text{symmetric}\\
\text{3-2-20} & 1 & 64 & \text{True} & 1.17 & \SL_2\times\bbG_m & \text{symmetric}\\
\text{3-2-21} & 2 & 46 & \text{False} & 2.30 & \SL_2\times\bbG_m & \text{symmetric}\\
\text{3-2-22} & 2 & 56 & \text{False} & 2.35 & \SL_2\times\bbG_m & \text{symmetric}\\
\text{3-2-23} & 3 & 38 & \text{True} & 3.19 & \SL_2\times\bbG_m & \text{symmetric}\\
\midrule
\text{3-2-24} & 2 & 54 & \text{True} & 2.34 & \SL_2\times\bbG_m^2 & \text{horospherical}\\
\text{3-2-25} & 3 & 48 & \text{False} & 3.28 & \SL_2\times\bbG_m^2 & \text{horospherical}\\
\text{3-2-26} & 3 & 48 & \text{True} & 3.27 & \SL_2\times\bbG_m^2 & \text{horospherical}\\
\text{3-2-27} & 4 & 42 & \text{False} & 4.10 & \SL_2\times\bbG_m^2 & \text{horospherical}\\
\text{3-2-28} & 5 & 36 & \text{True} & 5.3 & \SL_2\times\bbG_m^2 & \text{horospherical}\\
\text{3-2-29} & 5 & 36 & \text{False} & 5.2 & \SL_2\times\bbG_m^2 & \text{horospherical}\\
\text{3-2-30} & 4 & 40 & \text{False} & 4.9 & \SL_2\times\bbG_m^2 & \text{horospherical}\\
\text{3-2-31} & 4 & 44 & \text{False} & 4.11 & \SL_2\times\bbG_m^2 & \text{horospherical}\\
\text{3-2-32} & 4 & 46 & \text{False} & 4.12 & \SL_2\times\bbG_m^2 & \text{horospherical}\\
\text{3-2-33} & 3 & 50 & \text{False} & 3.30 & \SL_2\times\bbG_m^2 & \text{horospherical}\\
\text{3-2-34} & 3 & 44 & \text{True} & 3.25 & \SL_2\times\bbG_m^2 & \text{horospherical}\\
\text{3-2-35} & 3 & 52 & \text{False} & 3.31 & \SL_2\times\bbG_m^2 & \text{horospherical}\\
\text{3-2-36} & 3 & 48 & \text{False} & 3.28 & \SL_2\times\bbG_m^2 & \text{horospherical}\\
\text{3-2-37} & 2 & 54 & \text{False} & 2.33 & \SL_2\times\bbG_m^2 & \text{horospherical}\\
\text{3-2-38} & 2 & 62 & \text{False} & 2.36 & \SL_2\times\bbG_m^2 & \text{horospherical}\\
\text{3-2-39} & 3 & 50 & \text{False} & 3.29 & \SL_2\times\bbG_m^2 & \text{horospherical}\\
\text{3-2-40} & 2 & 56 & \text{False} & 2.35 & \SL_2\times\bbG_m^2 & \text{horospherical}\\
\text{3-2-41} & 3 & 46 & \text{False} & 3.26 & \SL_2\times\bbG_m^2 & \text{horospherical}\\
\text{3-2-42} & 2 & 54 & \text{True} & 2.34 & \SL_2\times\bbG_m^2 & \text{horospherical}\\
\text{3-2-43} & 2 & 54 & \text{False} & 2.33 & \SL_2\times\bbG_m^2 & \text{horospherical}\\
\text{3-2-44} & 1 & 64 & \text{True} & 1.17 & \SL_2\times\bbG_m^2 & \text{horospherical}\\
\bottomrule
\end{longtable}

\section{Rank 1 and 2 spherical actions on locally factorial Fano fourfolds}

As in the previous table, we gather additional useful data on the fourfolds for which we obtained a spherical action. 

\begin{longtable}{CCCCCCC}
\text{Identifier} & \text{Pic} & \text{Degree} & \text{KE?}  & \text{Group} & \text{Type}\\
\toprule
\text{4-1-1} & 2 & 486 & \text{True}  & \SL_3 & \text{symmetric}\\
\text{4-1-2} & 2 & 486 & \text{True}  & \SL_3 & \text{horosymmetric}\\
\text{4-1-3} & 3 & 336 & \text{True}  & \SL_3 & \text{horosymmetric}\\
\midrule
\text{4-1-4} & 3 & 384 & \text{True}  & \SL_3\times\bbG_m & \text{horospherical}\\
\text{4-1-5} & 3 & 480 & \text{False}  & \SL_3\times\bbG_m & \text{horospherical}\\
\text{4-1-6} & 3 & 400 & \text{False}  & \SL_3\times\bbG_m & \text{horospherical}\\
\text{4-1-7} & 2 & 432 & \text{False}  & \SL_3\times\bbG_m & \text{horospherical}\\
\text{4-1-8} & 3 & 352 & \text{True}  & \SL_3\times\bbG_m & \text{horospherical}\\
\text{4-1-9} & 2 & 432 & \text{False}  & \SL_3\times\bbG_m & \text{horospherical}\\
\text{4-1-10} & 1 & 512 & \text{True}  & \SL_3\times\bbG_m & \text{horospherical}\\
\midrule
\text{4-1-11} & 1 & 625 & \text{True}  & \Sp_4 & \text{symmetric}\\
\text{4-1-12} & 1 & 512 & \text{True}  & \Sp_4 & \text{symmetric}\\
\midrule
\text{4-1-13} & 3 & 432 & \text{True}  & \SL_3\times\SL_2 & \text{horosymmetric}\\
\text{4-1-14} & 2 & 486 & \text{True}  & \SL_3\times\SL_2 & \text{horosymmetric}\\
\midrule
\text{4-1-15} & 2 & 432 & \text{True}  & (\SL_2)^3 & \text{horosymmetric}\\
\text{4-1-16} & 2 & 512 & \text{True}  & (\SL_2)^3 & \text{horosymmetric}\\
\text{4-1-17} & 4 & 384 & \text{True}  & (\SL_2)^3 & \text{horosymmetric}\\
\text{4-1-18} & 3 & 432 & \text{True}  & (\SL_2)^3 & \text{horosymmetric}\\
\midrule
\text{4-1-19} & 4 & 384 & \text{True}  & (\SL_2)^3\times\bbG_m & \text{horospherical}\\
\text{4-1-20} & 4 & 384 & \text{False} & (\SL_2)^3\times\bbG_m & \text{horospherical}\\
\text{4-1-21} & 3 & 432 & \text{True}  & (\SL_2)^3\times\bbG_m & \text{horospherical}\\
\text{4-1-22} & 4 & 416 & \text{False}  & (\SL_2)^3\times\bbG_m & \text{horospherical}\\
\text{4-1-23} & 4 & 352 & \text{True}  & (\SL_2)^3\times\bbG_m & \text{horospherical}\\
\text{4-1-24} & 3 & 432 & \text{False}  & (\SL_2)^3\times\bbG_m & \text{horospherical}\\
\text{4-1-25} & 2 & 512 & \text{True}  & (\SL_2)^3\times\bbG_m & \text{horospherical}\\
\text{4-1-26} & 4 & 480 & \text{False}  & (\SL_2)^3\times\bbG_m & \text{horospherical}\\
\text{4-1-27} & 4 & 352 & \text{False}  & (\SL_2)^3\times\bbG_m & \text{horospherical}\\
\text{4-1-28} & 3 & 486 & \text{False}  & (\SL_2)^3\times\bbG_m & \text{horospherical}\\
\midrule
\text{4-1-29} & 3 & 432 & \text{True}  & \SL_3\times\SL_2\times\bbG_m & \text{horospherical}\\
\text{4-1-30} & 3 & 432 & \text{False}  & \SL_3\times\SL_2\times\bbG_m & \text{horospherical}\\
\text{4-1-31} & 2 & 486 & \text{True}  & \SL_3\times\SL_2\times\bbG_m & \text{horospherical}\\
\text{4-1-32} & 3 & 448 & \text{False}  & \SL_3\times\SL_2\times\bbG_m & \text{horospherical}\\
\text{4-1-33} & 3 & 496 & \text{False}  & \SL_3\times\SL_2\times\bbG_m & \text{horospherical}\\
\text{4-1-34} & 2 & 512 & \text{True}  & \SL_3\times\SL_2\times\bbG_m & \text{horospherical}\\
\text{4-1-35} & 3 & 496 & \text{False}  & \SL_3\times\SL_2\times\bbG_m & \text{horospherical}\\
\text{4-1-36} & 3 & 400 & \text{False}  & \SL_3\times\SL_2\times\bbG_m & \text{horospherical}\\
\text{4-1-37} & 3 & 592 & \text{False}  & \SL_3\times\SL_2\times\bbG_m & \text{horospherical}\\
\text{4-1-38} & 3 & 400 & \text{False}  & \SL_3\times\SL_2\times\bbG_m & \text{horospherical}\\
\text{4-1-39} & 2 & 512 & \text{False}  & \SL_3\times\SL_2\times\bbG_m & \text{horospherical}\\
\text{4-1-40} & 2 & 513 & \text{False}  & \SL_3\times\SL_2\times\bbG_m & \text{horospherical}\\
\text{4-1-41} & 2 & 594 & \text{False}  & \SL_3\times\SL_2\times\bbG_m & \text{horospherical}\\
\text{4-1-42} & 2 & 513 & \text{False} & \SL_3\times\SL_2\times\bbG_m & \text{horospherical}\\
\text{4-1-43} & 1 & 625 & \text{True} &  \SL_3\times\SL_2\times\bbG_m & \text{horospherical}\\
\midrule
\text{4-1-44} & 2 & 512 & \text{True}  & \Sp_4\times\bbG_m & \text{horospherical}\\
\text{4-1-45} & 2 & 432 & \text{True} & \Sp_4\times\bbG_m & \text{horospherical}\\
\text{4-1-46} & 2 & 544 & \text{False}  & \Sp_4\times\bbG_m & \text{horospherical}\\
\text{4-1-47} & 2 & 640 & \text{False}  & \Sp_4\times\bbG_m & \text{horospherical}\\
\text{4-1-48} & 2 & 800 & \text{False}  & \Sp_4\times\bbG_m & \text{horospherical}\\
\text{4-1-49} & 1 & 625 & \text{True}  & \Sp_4\times\bbG_m & \text{horospherical}\\
\text{4-1-50} & 2 & 480 & \text{False}  & \Sp_4\times\bbG_m & \text{horospherical}\\
\text{4-1-51} & 2 & 624 & \text{False}  & \Sp_4\times\bbG_m & \text{horospherical}\\
\text{4-1-52} & 1 & 512 & \text{False}  & \Sp_4\times\bbG_m & \text{horospherical}\\
\midrule
\text{4-1-53} & 2 & 512 & \text{True}   & \SL_4\times\bbG_m & \text{horospherical}\\
\text{4-1-54} & 2 & 544 & \text{False}  & \SL_4\times\bbG_m & \text{horospherical}\\
\text{4-1-55} & 2 & 640 & \text{False}  & \SL_4\times\bbG_m & \text{horospherical}\\
\text{4-1-56} & 2 & 800 & \text{False}  & \SL_4\times\bbG_m & \text{horospherical}\\
\text{4-1-57} & 1 & 625 & \text{True}  & \SL_4\times\bbG_m & \text{horospherical}\\
\midrule
\text{4-2-1} & 3 & 430 & \text{False}  & (\SL_2^2)\times\bbG_m & \text{symmetric}\\
\text{4-2-2} & 2 & 624 & \text{False}  & (\SL_2^2)\times\bbG_m & \text{symmetric}\\
\text{4-2-3} & 3 & 346 & \text{False}  & (\SL_2^2)\times\bbG_m & \text{symmetric}\\
\text{4-2-4} & 2 & 480 & \text{False}  & (\SL_2^2)\times\bbG_m & \text{symmetric}\\
\text{4-2-5} & 2 & 432 & \text{True} & (\SL_2^2)\times\bbG_m & \text{symmetric}\\
\text{4-2-6} & 1 & 512 & \text{False}   & (\SL_2^2)\times\bbG_m & \text{symmetric}\\
\text{4-2-7} & 2 & 378 & \text{False}  & (\SL_2^2)\times\bbG_m & \text{symmetric}\\
\text{4-2-8} & 2 & 512 & \text{True} & (\SL_2^2)\times\bbG_m & \text{symmetric}\\
\text{4-2-9} & 2 & 640 & \text{False}  & (\SL_2^2)\times\bbG_m & \text{symmetric}\\
\text{4-2-10} & 3 & 376 & \text{False}  & (\SL_2^2)\times\bbG_m & \text{symmetric}\\
\text{4-2-11} & 3 & 454 & \text{False}  & (\SL_2^2)\times\bbG_m & \text{symmetric}\\
\text{4-2-12} & 2 & 544 & \text{False}  & (\SL_2^2)\times\bbG_m & \text{symmetric}\\
\text{4-2-13} & 3 & 350 & \text{True}  & (\SL_2^2)\times\bbG_m & \text{symmetric}\\
\text{4-2-14} & 2 & 800 & \text{False}  & (\SL_2^2)\times\bbG_m & \text{symmetric}\\
\text{4-2-15} & 2 & 378 & \text{True}  & (\SL_2^2)\times\bbG_m & \text{symmetric}\\
\text{4-2-16} & 1 & 512 & \text{True}  & (\SL_2^2)\times\bbG_m & \text{symmetric}\\
\text{4-2-17} & 2 & 431 & \text{False}  & (\SL_2^2)\times\bbG_m & \text{symmetric}\\
\text{4-2-18} & 1 & 625 & \text{True}  & (\SL_2^2)\times\bbG_m & \text{symmetric}\\
\midrule
\text{4-2-19} & 2 & 432 & \text{True} & (\SL_2^2) & \text{solvable}\\
\text{4-2-20} & 2 & 384 & \text{True}  & (\SL_2^2) & \text{solvable}\\
\text{4-2-21} & 3 & 304 & \text{True}  & (\SL_2^2) & \text{solvable}\\
\text{4-2-22} & 3 & 324 & \text{True}  & (\SL_2^2) & \text{solvable}\\
\text{4-2-23} & 2 & 512 & \text{True}  & (\SL_2^2) & \text{solvable}\\
\text{4-2-24} & 4 & 384 & \text{True}  & (\SL_2^2) & \text{symmetric}\\
\text{4-2-25} & 5 & 252 & \text{True} & (\SL_2^2) & \text{symmetric}\\
\text{4-2-26} & 3 & 432 & \text{True}  & (\SL_2^2) & \text{symmetric}\\
\text{4-2-27} & 4 & 248 & \text{True}  & (\SL_2^2) & \text{symmetric}\\
\text{4-2-28} & 2 & 486 & \text{True}  & (\SL_2^2) & \text{symmetric}\\
\text{4-2-29} & 3 & 230 & \text{True}  & (\SL_2^2) & \text{symmetric}\\
\text{4-2-30} & 2 & 352 & \text{True}  & (\SL_2^2) & \text{symmetric}\\
\text{4-2-31} & 1 & 512 & \text{True}  & (\SL_2^2) & \text{symmetric}\\
\midrule
\text{4-2-32} & 4 & 384 & \text{True}  & (\SL_2^2)\times\bbG_m & \text{horosymmetric}\\
\text{4-2-33} & 4 & 416 & \text{False}  & (\SL_2^2)\times\bbG_m & \text{horosymmetric}\\
\text{4-2-34} & 5 & 304 & \text{False}  & (\SL_2^2)\times\bbG_m & \text{horosymmetric}\\
\text{4-2-35} & 2 & 432 & \text{True}  & (\SL_2^2)\times\bbG_m & \text{ind. type T}\\
\text{4-2-36} & 3 & 384 & \text{True}  & (\SL_2^2)\times\bbG_m & \text{ind. type T} \\
\text{4-2-37} & 3 & 368 & \text{False}  & (\SL_2^2)\times\bbG_m & \text{ind. type T} \\
\text{4-2-38} & 3 & 432 & \text{True}  & (\SL_2^2)\times\bbG_m & \text{ind. type T} \\
\text{4-2-39} & 4 & 336 & \text{False}  & (\SL_2^2)\times\bbG_m & \text{ind. type T} \\
\text{4-2-40} & 4 & 304 & \text{True}  & (\SL_2^2)\times\bbG_m & \text{ind. type T} \\
\text{4-2-41} & 4 & 384 & \text{False}  & (\SL_2^2)\times\bbG_m & \text{ind. type T} \\
\text{4-2-42} & 5 & 288 & \text{True}  & (\SL_2^2)\times\bbG_m & \text{ind. type T} \\
\text{4-2-43} & 2 & 384 & \text{True}  & (\SL_2^2)\times\bbG_m & \text{ind. type T} \\
\text{4-2-44} & 3 & 432 & \text{False}  & (\SL_2^2)\times\bbG_m & \text{ind. type T} \\
\text{4-2-45} & 4 & 352 & \text{True}  & (\SL_2^2)\times\bbG_m & \text{ind. type T} \\
\text{4-2-46} & 5 & 330 & \text{False}  & (\SL_2^2)\times\bbG_m & \text{horosymmetric}\\
\text{4-2-47} & 4 & 480 & \text{False}  & (\SL_2^2)\times\bbG_m & \text{horosymmetric}\\
\text{4-2-48} & 4 & 384 & \text{False}  & (\SL_2^2)\times\bbG_m & \text{horosymmetric}\\
\text{4-2-49} & 5 & 278 & \text{False}  & (\SL_2^2)\times\bbG_m & \text{horosymmetric}\\
\text{4-2-50} & 4 & 352 & \text{False}  & (\SL_2^2)\times\bbG_m & \text{horosymmetric}\\
\text{4-2-51} & 3 & 486 & \text{False}  & (\SL_2^2)\times\bbG_m & \text{horosymmetric}\\
\text{4-2-52} & 3 & 432 & \text{True} & (\SL_2^2)\times\bbG_m & \text{horosymmetric}\\
\text{4-2-53} & 4 & 326 & \text{False}  & (\SL_2^2)\times\bbG_m & \text{horosymmetric}\\
\text{4-2-54} & 4 & 288 & \text{False}  & (\SL_2^2)\times\bbG_m &  \text{ind. type T}\\
\text{4-2-55} & 4 & 368 & \text{False}  & (\SL_2^2)\times\bbG_m & \text{ind. type T} \\
\text{4-2-56} & 5 & 288 & \text{False}  & (\SL_2^2)\times\bbG_m & \text{ind. type T} \\
\text{4-2-57} & 4 & 416 & \text{False}  & (\SL_2^2)\times\bbG_m & \text{ind. type T} \\
\text{4-2-58} & 3 & 432 & \text{False}  & (\SL_2^2)\times\bbG_m & \text{ind. type T} \\
\text{4-2-59} & 2 & 512 & \text{True}  & (\SL_2^2)\times\bbG_m & \text{ind. type T} \\
\text{4-2-60} & 3 & 432 & \text{False}  & (\SL_2^2)\times\bbG_m & \text{ind. type T} \\
\text{4-2-61} & 3 & 486 & \text{False}  & (\SL_2^2)\times\bbG_m & \text{ind. type T} \\
\text{4-2-62} & 4 & 352 & \text{False}  & (\SL_2^2)\times\bbG_m & \text{ind. type T} \\
\text{4-2-63} & 3 & 432 & \text{True} & (\SL_2^2)\times\bbG_m & \text{horosymmetric}\\
\text{4-2-64} & 3 & 496 & \text{False}  & (\SL_2^2)\times\bbG_m & \text{horosymmetric}\\
\text{4-2-65} & 4 & 320 & \text{False}  & (\SL_2^2)\times\bbG_m & \text{horosymmetric}\\
\text{4-2-66} & 2 & 432 & \text{True}  & (\SL_2^2)\times\bbG_m & \text{horosymmetric}\\
\text{4-2-67} & 3 & 320 & \text{True}  & (\SL_2^2)\times\bbG_m & \text{horosymmetric}\\
\text{4-2-68} & 2 & 512 & \text{True}  & (\SL_2^2)\times\bbG_m & \text{horosymmetric}\\
\text{4-2-69} & 3 & 368 & \text{False}  & (\SL_2^2)\times\bbG_m & \text{horosymmetric}\\
\text{4-2-70} & 3 & 448 & \text{False}  & (\SL_2^2)\times\bbG_m & \text{horosymmetric}\\
\text{4-2-71} & 4 & 304 & \text{True}  & (\SL_2^2)\times\bbG_m & \text{horosymmetric}\\
\text{4-2-72} & 3 & 496 & \text{False}  & (\SL_2^2)\times\bbG_m & \text{horosymmetric}\\
\text{4-2-73} & 4 & 304 & \text{False} & (\SL_2^2)\times\bbG_m & \text{horosymmetric}\\
\text{4-2-74} & 3 & 416 & \text{False}  & (\SL_2^2)\times\bbG_m & \text{horosymmetric}\\
\text{4-2-75} & 2 & 512 & \text{False}  & (\SL_2^2)\times\bbG_m & \text{horosymmetric}\\
\text{4-2-76} & 3 & 400 & \text{False}  & (\SL_2^2)\times\bbG_m & \text{horosymmetric}\\
\text{4-2-77} & 2 & 513 & \text{False}  & (\SL_2^2)\times\bbG_m & \text{horosymmetric}\\
\text{4-2-78} & 2 & 433 & \text{False}  & (\SL_2^2)\times\bbG_m & \text{horosymmetric}\\
\text{4-2-79} & 3 & 321 & \text{False}  & (\SL_2^2)\times\bbG_m & \text{horosymmetric}\\
\text{4-2-80} & 3 & 321 & \text{False}  & (\SL_2^2)\times\bbG_m & \text{horosymmetric}\\
\text{4-2-81} & 1 & 625 & \text{True} & (\SL_2^2)\times\bbG_m & \text{horosymmetric}\\
\text{4-2-82} & 2 & 433 & \text{False}  & (\SL_2^2)\times\bbG_m & \text{horosymmetric}\\
\text{4-2-83} & 4 & 284 & \text{False}  & (\SL_2^2)\times\bbG_m & \text{horosymmetric}\\
\text{4-2-84} & 3 & 432 & \text{False}  & (\SL_2^2)\times\bbG_m & \text{horosymmetric}\\
\text{4-2-85} & 4 & 356 & \text{False}  & (\SL_2^2)\times\bbG_m & \text{horosymmetric}\\
\text{4-2-86} & 3 & 592 & \text{False}  & (\SL_2^2)\times\bbG_m & \text{horosymmetric}\\
\text{4-2-87} & 3 & 400 & \text{False}  & (\SL_2^2)\times\bbG_m & \text{horosymmetric}\\
\text{4-2-88} & 3 & 338 & \text{False} & (\SL_2^2)\times\bbG_m & \text{horosymmetric}\\
\text{4-2-89} & 2 & 550 & \text{False}  & (\SL_2^2)\times\bbG_m & \text{horosymmetric}\\
\text{4-2-90} & 2 & 486 & \text{True} & (\SL_2^2)\times\bbG_m & \text{horosymmetric}\\
\midrule
\text{4-2-91} & 3 & 432 & \text{True}  & (\SL_2^2)\times\bbG_m^2 & \text{horospherical}\\
\text{4-2-92} & 4 & 384 & \text{False}  & (\SL_2^2)\times\bbG_m^2 & \text{horospherical}\\
\text{4-2-93} & 4 & 384 & \text{True}  & (\SL_2^2)\times\bbG_m^2 & \text{horospherical}\\
\text{4-2-94} & 5 & 336 & \text{False}  & (\SL_2^2)\times\bbG_m^2 & \text{horospherical}\\
\text{4-2-95} & 6 & 288 & \text{True}  & (\SL_2^2)\times\bbG_m^2 & \text{horospherical}\\
\text{4-2-96} & 6 & 288 & \text{False}  & (\SL_2^2)\times\bbG_m^2 & \text{horospherical}\\
\text{4-2-97} & 5 & 320 & \text{False}  & (\SL_2^2)\times\bbG_m^2 & \text{horospherical}\\
\text{4-2-98} & 5 & 352 & \text{False}  & (\SL_2^2)\times\bbG_m^2 & \text{horospherical}\\
\text{4-2-99} & 5 & 368 & \text{False}  & (\SL_2^2)\times\bbG_m^2 & \text{horospherical}\\
\text{4-2-100} & 4 & 400 & \text{False}  & (\SL_2^2)\times\bbG_m^2 & \text{horospherical}\\
\text{4-2-101} & 4 & 352 & \text{True}  & (\SL_2^2)\times\bbG_m^2 & \text{horospherical}\\
\text{4-2-102} & 4 & 416 & \text{False}  & (\SL_2^2)\times\bbG_m^2 & \text{horospherical}\\
\text{4-2-103} & 4 & 384 & \text{False}  & (\SL_2^2)\times\bbG_m^2 & \text{horospherical}\\
\text{4-2-104} & 3 & 432 & \text{False}  & (\SL_2^2)\times\bbG_m^2 & \text{horospherical}\\
\text{4-2-105} & 3 & 496 & \text{False}  & (\SL_2^2)\times\bbG_m^2 & \text{horospherical}\\
\text{4-2-106} & 4 & 400 & \text{False}  & (\SL_2^2)\times\bbG_m^2 & \text{horospherical}\\
\text{4-2-107} & 3 & 448 & \text{False}  & (\SL_2^2)\times\bbG_m^2 & \text{horospherical}\\
\text{4-2-108} & 4 & 368 & \text{False}  & (\SL_2^2)\times\bbG_m^2 & \text{horospherical}\\
\text{4-2-109} & 3 & 432 & \text{True}  & (\SL_2^2)\times\bbG_m^2 & \text{horospherical}\\
\text{4-2-110} & 3 & 432 & \text{False}  & (\SL_2^2)\times\bbG_m^2 & \text{horospherical}\\
\text{4-2-111} & 2 & 512 & \text{True}  & (\SL_2^2)\times\bbG_m^2 & \text{horospherical}\\
\text{4-2-112} & 3 & 405 & \text{False}  & (\SL_2^2)\times\bbG_m^2 & \text{horospherical}\\
\text{4-2-113} & 4 & 384 & \text{False}  & (\SL_2^2)\times\bbG_m^2 & \text{horospherical}\\
\text{4-2-114} & 4 & 352 & \text{False}  & (\SL_2^2)\times\bbG_m^2 & \text{horospherical}\\
\text{4-2-115} & 4 & 448 & \text{False}  & (\SL_2^2)\times\bbG_m^2 & \text{horospherical}\\
\text{4-2-116} & 4 & 384 & \text{False}  & (\SL_2^2)\times\bbG_m^2 & \text{horospherical}\\
\text{4-2-117} & 5 & 299 & \text{False}  & (\SL_2^2)\times\bbG_m^2 & \text{horospherical}\\
\text{4-2-118} & 5 & 405 & \text{False}  & (\SL_2^2)\times\bbG_m^2 & \text{horospherical}\\
\text{4-2-119} & 5 & 341 & \text{False}  & (\SL_2^2)\times\bbG_m^2 & \text{horospherical}\\
\text{4-2-120} & 5 & 331 & \text{False}  & (\SL_2^2)\times\bbG_m^2 & \text{horospherical}\\
\text{4-2-121} & 5 & 363 & \text{False}  & (\SL_2^2)\times\bbG_m^2 & \text{horospherical}\\
\text{4-2-122} & 6 & 298 & \text{False}  & (\SL_2^2)\times\bbG_m^2 & \text{horospherical}\\
\text{4-2-123} & 6 & 298 & \text{False} & (\SL_2^2)\times\bbG_m^2 & \text{horospherical}\\
\text{4-2-124} & 3 & 592 & \text{False}  & (\SL_2^2)\times\bbG_m^2 & \text{horospherical}\\
\text{4-2-125} & 3 & 560 & \text{False}  & (\SL_2^2)\times\bbG_m^2 & \text{horospherical}\\
\text{4-2-126} & 3 & 400 & \text{False}  & (\SL_2^2)\times\bbG_m^2 & \text{horospherical}\\
\text{4-2-127} & 4 & 463 & \text{False}  & (\SL_2^2)\times\bbG_m^2 & \text{horospherical}\\
\text{4-2-128} & 4 & 433 & \text{False}  & (\SL_2^2)\times\bbG_m^2 & \text{horospherical}\\
\text{4-2-129} & 4 & 496 & \text{False}  & (\SL_2^2)\times\bbG_m^2 & \text{horospherical}\\
\text{4-2-130} & 4 & 337 & \text{False}  & (\SL_2^2)\times\bbG_m^2 & \text{horospherical}\\
\text{4-2-131} & 3 & 496 & \text{False}  & (\SL_2^2)\times\bbG_m^2 & \text{horospherical}\\
\text{4-2-132} & 3 & 400 & \text{False}  & (\SL_2^2)\times\bbG_m^2 & \text{horospherical}\\
\text{4-2-133} & 3 & 464 & \text{False}  & (\SL_2^2)\times\bbG_m^2 & \text{horospherical}\\
\text{4-2-134} & 4 & 389 & \text{False}  & (\SL_2^2)\times\bbG_m^2 & \text{horospherical}\\
\text{4-2-135} & 4 & 411 & \text{False} & (\SL_2^2)\times\bbG_m^2 & \text{horospherical}\\
\text{4-2-136} & 4 & 347 & \text{False}  & (\SL_2^2)\times\bbG_m^2 & \text{horospherical}\\
\text{4-2-137} & 3 & 432 & \text{False}  & (\SL_2^2)\times\bbG_m^2 & \text{horospherical}\\
\text{4-2-138} & 3 & 459 & \text{False}  & (\SL_2^2)\times\bbG_m^2 & \text{horospherical}\\
\text{4-2-139} & 2 & 512 & \text{False}  & (\SL_2^2)\times\bbG_m^2 & \text{horospherical}\\
\text{4-2-140} & 2 & 544 & \text{False}  & (\SL_2^2)\times\bbG_m^2 & \text{horospherical}\\
\text{4-2-141} & 3 & 529 & \text{False}  & (\SL_2^2)\times\bbG_m^2 & \text{horospherical}\\
\text{4-2-142} & 4 & 368 & \text{False}  & (\SL_2^2)\times\bbG_m^2 & \text{horospherical}\\
\text{4-2-143} & 3 & 433 & \text{False}  & (\SL_2^2)\times\bbG_m^2 & \text{horospherical}\\
\text{4-2-144} & 4 & 326 & \text{False}  & (\SL_2^2)\times\bbG_m^2 & \text{horospherical}\\
\text{4-2-145} & 3 & 406 & \text{False}  & (\SL_2^2)\times\bbG_m^2 & \text{horospherical}\\
\text{4-2-146} & 2 & 513 & \text{False}  & (\SL_2^2)\times\bbG_m^2 & \text{horospherical}\\
\text{4-2-147} & 2 & 594 & \text{False}  & (\SL_2^2)\times\bbG_m^2 & \text{horospherical}\\
\text{4-2-148} & 3 & 450 & \text{False} & (\SL_2^2)\times\bbG_m^2 & \text{horospherical}\\
\text{4-2-149} & 2 & 486 & \text{True} & (\SL_2^2)\times\bbG_m^2 & \text{horospherical}\\
\text{4-2-150} & 2 & 513 & \text{False}  & (\SL_2^2)\times\bbG_m^2 & \text{horospherical}\\
\text{4-2-151} & 3 & 401 & \text{False}  & (\SL_2^2)\times\bbG_m^2 & \text{horospherical}\\
\text{4-2-152} & 1 & 625 & \text{True}  & (\SL_2^2)\times\bbG_m^2 & \text{horospherical}\\
\text{4-2-153} & 2 & 800 & \text{False}  & (\SL_2^2)\times\bbG_m^2 & \text{horospherical}\\
\text{4-2-154} & 2 & 544 & \text{False}  & (\SL_2^2)\times\bbG_m^2 & \text{horospherical}\\
\midrule
\text{4-2-155} & 2 & 486 & \text{True}  & \SL_3\times\bbG_m^2 & \text{horospherical}\\
\text{4-2-156} & 3 & 432 & \text{False} & \SL_3\times\bbG_m^2 & \text{horospherical}\\
\text{4-2-157} & 3 & 432 & \text{True}  & \SL_3\times\bbG_m^2 & \text{horospherical}\\
\text{4-2-158} & 4 & 378 & \text{False}  & \SL_3\times\bbG_m^2 & \text{horospherical}\\
\text{4-2-159} & 5 & 324 & \text{True}  & \SL_3\times\bbG_m^2 & \text{horospherical}\\
\text{4-2-160} & 2 & 594 & \text{False}  & \SL_3\times\bbG_m^2 & \text{horospherical}\\
\text{4-2-161} & 2 & 513 & \text{False}  & \SL_3\times\bbG_m^2 & \text{horospherical}\\
\text{4-2-162} & 2 & 513 & \text{False}  & \SL_3\times\bbG_m^2 & \text{horospherical}\\
\text{4-2-163} & 3 & 496 & \text{False}  & \SL_3\times\bbG_m^2 & \text{horospherical}\\
\text{4-2-164} & 3 & 448 & \text{False}  & \SL_3\times\bbG_m^2 & \text{horospherical}\\
\text{4-2-165} & 3 & 464 & \text{False}  & \SL_3\times\bbG_m^2 & \text{horospherical}\\
\text{4-2-166} & 3 & 576 & \text{False}  & \SL_3\times\bbG_m^2 & \text{horospherical}\\
\text{4-2-167} & 3 & 480 & \text{False}  & \SL_3\times\bbG_m^2 & \text{horospherical}\\
\text{4-2-168} & 3 & 592 & \text{False}  & \SL_3\times\bbG_m^2 & \text{horospherical}\\
\text{4-2-169} & 3 & 496 & \text{False}  & \SL_3\times\bbG_m^2 & \text{horospherical}\\
\text{4-2-170} & 3 & 400 & \text{False}  & \SL_3\times\bbG_m^2 & \text{horospherical}\\
\text{4-2-171} & 3 & 400 & \text{False}  & \SL_3\times\bbG_m^2 & \text{horospherical}\\
\text{4-2-172} & 3 & 560 & \text{False}  & \SL_3\times\bbG_m^2 & \text{horospherical}\\
\text{4-2-173} & 3 & 432 & \text{False}  & \SL_3\times\bbG_m^2 & \text{horospherical}\\
\text{4-2-174} & 4 & 478 & \text{False} & \SL_3\times\bbG_m^2 & \text{horospherical}\\
\text{4-2-175} & 4 & 415 & \text{False}  & \SL_3\times\bbG_m^2 & \text{horospherical}\\
\text{4-2-176} & 4 & 558 & \text{False}  & \SL_3\times\bbG_m^2 & \text{horospherical}\\
\text{4-2-177} & 4 & 447 & \text{False}  & \SL_3\times\bbG_m^2 & \text{horospherical}\\
\text{4-2-178} & 4 & 382 & \text{False}  & \SL_3\times\bbG_m^2 & \text{horospherical}\\
\text{4-2-179} & 4 & 367 & \text{False}  & \SL_3\times\bbG_m^2 & \text{horospherical}\\
\text{4-2-180} & 4 & 351 & \text{False} & \SL_3\times\bbG_m^2 & \text{horospherical}\\
\text{4-2-181} & 4 & 409 & \text{False}  & \SL_3\times\bbG_m^2 & \text{horospherical}\\
\text{4-2-182} & 4 & 505 & \text{False}  & \SL_3\times\bbG_m^2 & \text{horospherical}\\
\text{4-2-183} & 5 & 364 & \text{False}  & \SL_3\times\bbG_m^2 & \text{horospherical}\\
\text{4-2-184} & 5 & 334 & \text{False}  & \SL_3\times\bbG_m^2 & \text{horospherical}\\
\text{4-2-185} & 5 & 354 & \text{False}  & \SL_3\times\bbG_m^2 & \text{horospherical}\\
\text{4-2-186} & 2 & 800 & \text{False}  & \SL_3\times\bbG_m^2 & \text{horospherical}\\
\text{4-2-187} & 3 & 605 & \text{False}  & \SL_3\times\bbG_m^2 & \text{horospherical}\\
\text{4-2-188} & 2 & 640 & \text{False} & \SL_3\times\bbG_m^2 & \text{horospherical}\\
\text{4-2-189} & 3 & 489 & \text{False}  & \SL_3\times\bbG_m^2 & \text{horospherical}\\
\text{4-2-190} & 2 & 544 & \text{False}  & \SL_3\times\bbG_m^2 & \text{horospherical}\\
\text{4-2-191} & 3 & 431 & \text{False}  & \SL_3\times\bbG_m^2 & \text{horospherical}\\
\text{4-2-192} & 2 & 512 & \text{True}  & \SL_3\times\bbG_m^2 & \text{horospherical}\\
\text{4-2-193} & 1 & 625 & \text{True}  & \SL_3\times\bbG_m^2 & \text{horospherical}\\
\text{4-2-194} & 2 & 512 & \text{False}  & \SL_3\times\bbG_m^2 & \text{horospherical}\\
\end{longtable}

\bibliographystyle{alpha}
\bibliography{spherical4folds}
\end{document}